\documentclass{amsart}
\usepackage{subfiles}
\usepackage{amssymb,euscript,amsmath, mathrsfs,tensor}
\usepackage{mathtools}
\usepackage[dvips]{graphicx}
\usepackage[dvips]{color}
\include{headerstemplate}
\usepackage{stmaryrd}
\usepackage[all,cmtip]{xy}
\usepackage{xcolor}
\usepackage[margin=1in]{geometry}
\usepackage{tikz-cd}
\usepackage{enumitem}

\newcounter{ENUM}

\usepackage{hyperref}

\def\Sp{\mathrm{Sp}}
\def\SO{\mathrm{SO}}

\def\SL{\mathrm{SL}}

\def\Gal{\mathrm{Gal}}
\def\Hom{\mathrm{Hom}}
\def\End{\mathrm{End}}
\def\Res{\mathrm{Res}}

\def\Rep{\mathrm{Rep}}

\def\Irr{\mathrm{Irr}}

\def\LG{{\tensor*[^L]\G{}}}

\def\Fl{\overline{\mathbb{F}_\ell}}
\def\Zl{\overline{\mathbb{Z}_\ell}}

\def\Ql{\overline{\mathbb{Q}_\ell}}

\def\GL{\mathrm{GL}}

\def\diag{\mathrm{diag}}

\def\A{\mathrm{A}}
\def\B{\mathrm{B}}

\def\F{\mathrm{F}}
\def\E{\mathrm{E}}
\def\G{\mathrm{G}}
\def\H{\mathrm{H}}

\def\J{\mathrm{J}}
\def\K{\mathrm{K}}

\def\M{\mathrm{M}}
\def\N{\mathrm{N}}
\def\P{\mathrm{P}}
\def\Q{\mathrm{Q}}
\def\R{\mathrm{R}}
\def\T{\mathrm{T}}
\def\U{\mathrm{U}}
\def\X{\mathrm{X}}

\def\W{\mathrm{W}}
\def\Z{\mathrm{Z}}

\def\GSpin{\mathrm{GSpin}}

\def\Ind{\mathrm{Ind}}

\def\GSpin{\mathrm{GSpin}}

\def\Tr{\mathrm{Tr}}

\def\lsmat{\left(\begin{smallmatrix}}
	
	\def\rsmat{\end{smallmatrix}\right)}

\makeatletter
\def\Ddots{\mathinner{\mkern1mu\raise\p@
		\vbox{\kern7\p@\hbox{.}}\mkern2mu
		\raise4\p@\hbox{.}\mkern2mu\raise7\p@\hbox{.}\mkern1mu}}
\makeatother

\newcommand{\margh}[1]{}

\newtheorem{thm}{Theorem}[section]
\newtheorem{proposition}[thm]{Proposition}
\newtheorem{lemma}[thm]{Lemma}
\newtheorem{corollary}[thm]{Corollary}
\newtheorem{conj}[thm]{Conjecture}
\newtheorem{remark}[thm]{Remark}

\theoremstyle{definition}
\newtheorem{definition}[thm]{Definition}

\newtheorem{example}[thm]{Example}

\numberwithin{equation}{section}
\def\presuper#1#2{\mathop{}\mathopen{\vphantom{#2}}^{#1}\kern-\scriptspace#2}
\def\rob#1{\textcolor{black}{#1}}

\begin{document}
	\title{Langlands-Shahidi gamma factors and local transfer in families}
	\author{Johannes Girsch}
	\address{Johannes Girsch, Department of Mathematical Sciences, Durham University, DH1 3LE, United Kingdom.}
\email{johannes.girsch@live.de}

	\author{Robert Kurinczuk}
	\address{Robert Kurinczuk, School of Mathematics and Statistics, University of Sheffield, Sheffield, S3 7RH, United Kingdom.}
\email{robkurinczuk@gmail.com}
	\maketitle
	\begin{abstract}
	Let~$\G$ be a quasi-split~$p$-adic group, defined over a non-archimedean local field of residue cardinality~$q$, and~$\R$ be a~$\mathbb{Z}[\sqrt{q}^{-1},\mu_{4p^\infty}]$-algebra.	We generalize the constructions of Shahidi local coefficients and Langlands--Shahidi gamma factors to the setting of co-Whittaker~$\R$-representations.  For classical~$p$-adic groups, we prove there is a unique integral morphism of endomorphism rings of Gelfand--Graev representations, interpolating the (standard) local generic transfer to a general linear group. This yields a local transfer morphism in families, which allows us to deduce properties of the local~$\ell$-adic generic transfer and congruences modulo primes~$\ell\neq p$.
	\end{abstract}

	\setcounter{tocdepth}{1}
	\tableofcontents
	\section{Introduction}
	\subsection{Langlands--Shahidi gamma factors in the local Langlands programme}
	In the local Langlands programme for $\mathbb{C}$-representations of a $p$-adic group defined over a non-archimedean local field~$\F$, gamma factors are useful arithmetic invariants. Developed by Shahidi when~$\F/\mathbb{Q}_p$ and extended by Lomeli to~$\F$ of positive characteristic, the Langlands--Shahidi method is a general method which attaches families of gamma factors to generic~$\mathbb{C}$-representations of~$p$-adic groups which appear as Levi subgroups of a maximal parabolic subgroup in a quasi-split~$p$-adic group.  Langlands--Shahidi gamma factors in the complex local Langlands programme have numerous applications, including: to constructing and studying local transfer morphisms for classical groups, to reducibility points of parabolic inductions, and to the normalization of intertwining operators.
	

	\subsection{Local Langlands in families}
	%
	Suppose for this subsection that~$\F$ is a finite extension of $\mathbb Q_p$, and let~$A$ be a reduced complete Noetherian local ring with maximal ideal~$\mathfrak{m}$, with finite residue field~$k$ of characteristic~$\ell\neq p$, and suppose further that~$A$ is~$\ell$-torsion-free. 
	
	Then Emerton--Helm \cite{EH} conjectured an interpolation of the generic~$\ell$-adic local Langlands correspondence in families for~$\GL_n(\F)$, which associates
	\[\text{continuous representation $\rho:\Gal(\overline{\F}/\F)\rightarrow \GL_n(A)$} \mapsto \text{ \emph{co-Whittaker} $A$-representation~$\pi(\rho)$ of~$\GL_n(\F)$}\] (using the formulation of Helm in \cite{HelmCW}).  This conjecture was proved in subsequent work of Helm \cite{HelmCurtis} and Helm--Moss \cite{HM}, with one key step in their proof a descent theorem utilizing Moss' theory of gamma factors for co-Whittaker representations of~$\GL_n(\F)$ \cite{Moss}.
		
Families encode congruences between representations. For a simple example, given~$\rho,\rho':\Gal(\overline{\F}/\F)\rightarrow  \GL_n(\mathfrak{o}_{\K})$, where~$\mathfrak{o}_{\K}$ is the ring of integers in an~$\ell$-adic field~$\K$, which are congruent modulo~$\mathfrak{p}_{\K}$ the maximal ideal of~$\mathfrak{o}_{\K}$, i.e.,~$\rho\equiv \rho'\pmod{\mathfrak{p}_{\K}}$, then if we let~$\overline{\rho}=\rho\otimes k_{\K}$, where~$k_{\K}=\mathfrak{o}_{\K}/\mathfrak{p}_\K$ denotes the residue field, we can define a family:
\[\rho\times_{\overline{\rho}}\rho'=\{(v,v')\in \rho\times \rho': v\equiv v'\pmod{\mathfrak{p}_{\K}}\}\]
	over~$\mathfrak{o}_{\K}\times_{k_{\K}}\mathfrak{o}_{\K}$.  More universally, for~$\overline{\rho}$, there exists a universal framed deformation ring~$\R_{\overline{\rho}}^{\square}/\W(\Fl)$ and a continuous representation~$\overline{\rho}^{\square}:\Gal(\overline{\F}/\F)\rightarrow  \GL_n(\R_{\overline{\rho}}^{\square})$ which parametrizes all framed lifts of~$\overline{\rho}$ to complete Noetherian local~$\W(\Fl)$-algebras (cf., \cite[§7]{HelmCW}).  Helm then introduced a scheme interpolating the rings~$\R_{\overline{\rho}}^{\square}$ over all such~$\Fl$-representations~$\overline{\rho}$ in \cite{HelmCurtis}; the construction of this scheme was subsequently generalized to Langlands parameters for a general~$p$-adic group (and coefficients over~$\mathbb{Z}[1/p]$) in \cite{DHKMmoduli}, and local Langlands in families conjectured for a general~$p$-adic group in \cite{DHKMconj}.  
	
	To approach local Langlands in families for other groups via gamma factors, we thus need a theory of co-Whittaker~representations and gamma factors in families.  In his PhD thesis, and subsequent papers \cite{Girsch1,Girsch}, the first author developed the doubling method in families. This constructs gamma factors of classical groups under very mild hypotheses on the representation (which we expect to lead to applications outside the co-Whittaker setting), however it is currently not normalized to be compatible with local Langlands.   In this paper, on the other hand, we develop the Langlands--Shahidi method in families.  As well as applying in different instances to the doubling method, this method has the advantage that the canonical normalizations of Shahidi extend naturally to the families setting so the factors we introduce are compatible with local Langlands, and with local generic transfer of complex representations of classical groups.  This allows us to interpolate this transfer in families using gamma factors.

	\subsection{Co-Whittaker~$R$-representations for a quasi-split reductive group}
	Let~$\F$ be a non-archimedean local field, and~$\G$ be the~$\F$-points of a quasi-split reductive group defined over~$\F$.    Let~$\B=\T\U$ be a Borel subgroup of~$\G$, with unipotent radical~$\U$, and maximal torus~$\T$.  Let~$\mathfrak{w}=(\U,\Psi)$ be a Whittaker datum for~$\G$ over~$\R_0=\mathbb{Z}[\sqrt{q}^{-1},\mu_{4p^\infty}]$, i.e.,~$\Psi:\U\rightarrow \R_0^{\times}$ is a non-degenerate character valued in~$\R_0^{\times}$ acting on a free~$\R_0$-module of rank one.\footnote{If~$\F$ has positive characteristic, it is not necessary to include all~$p$-power roots of unity in the coefficients; we let the reader discover this in the main text.}
		Given an~$\R_0$-algebra~$\R$, by a~$\mathfrak{w}$-generic representation, we mean an (irreducible) representation which is~$\mathfrak{w}\otimes\R$-generic.
	
	Let~$\pi$ be an integral (irreducible)~$\mathfrak{w}$-generic~$\Ql$-representation of~$\G$.  Then its reduction modulo~$\ell$ is not necessarily~$\mathfrak{w}$-generic (such examples already occur for~$\GL_2(\F)$ when~$\ell\mid q+1$).  Thus to consider families of representations, we need a more flexible notion than genericity.  
		
Let~$\R$ denote an~$\R_0$-algebra.  We let~$\Gamma^{\G}_{\mathfrak w}=\operatorname{c-Ind}_{\U}^{\G}(\Psi)$ denote the compactly induced Gelfand-Graev representation over~$\R_0$, and~$\Gamma^{\G}_{\mathfrak w,\R}=\Gamma^{\G}_{\mathfrak w}\otimes_{\R_0}\R$.	An~$\R$-representation~$\rho$ is called a \emph{co-Whittaker~$\R$-representation of depth~$n$} if~$\rho$ has depth~$n$,~$\Hom_{\R[\G]}(\Gamma^{\G}_{\mathfrak w,\R},\rho)$ is free of rank one over~$\R$, and the natural map
\[\Gamma^{\G}_{\mathfrak w,\R}\otimes \Hom_{\R[\G]}(\Gamma^{\G}_{\mathfrak w,\R},\rho)\rightarrow \rho\]
is surjective.  In Section \ref{cwrepssect}, we introduce co-Whittaker $R$-representations and establish the basic properties we need for our construction and applications to local transfer.  

Let
\[\E^{\G}_{\mathfrak w}=\End_{\R_0[\G]}(\Gamma^{\G}_{\mathfrak w});\] it is a commutative~$\R_0$-torsion free reduced ring.  Let~$e_n$ denote the idempotent of the centre of the category corresponding to depth~$n$ projection (defined over~$\mathbb{Z}[1/p]$ by \cite{Datfinitude}), then the representation~$\Gamma^{\G}_{\mathfrak{w},n}:=e_n\Gamma^{\G}_{\mathfrak w}$ considered as a representation over its endomorphism ring~$ \E^{\G}_{\mathfrak w,n}$ is the \emph{universal} co-Whittaker representation of depth $n$ for Helm's dominance relation.  For an~$\R_0$-algebra~$\R$, an~$\R_0$-algebra homomorphism~$\phi:\E^{\G}_{\mathfrak w,n}\rightarrow \R$ defines a co-Whittaker~$\R$-representation by~$\Gamma^{\G}_{\mathfrak w,n}\otimes_{\E^{\G}_{\mathfrak w,n},\phi}\R$, and conversely given a co-Whittaker~$\R$-representation~$\rho$ of depth~$n$ the natural action of~$ \E^{\G}_{\mathfrak w,n}$ on~$\Hom_{\R[\G]}(\Gamma^{\G}_{\mathfrak w,n},\rho)$ defines a morphism~$\phi_{\rho}:\E^{\G}_{\mathfrak w,n}\rightarrow \R$; satisfying,~$\Gamma^{\G}_{\mathfrak w,n}\otimes_{\E^{\G}_{\mathfrak w,n},\phi_{\rho}}\R$ \emph{dominates}~$\rho$.



	\subsection{Shahidi's local coefficients in families}
	The first step is to construct Shahidi's local coefficients for co-Whittaker~$R$-representations, for which there are two main steps to algebraize and integralify: (i) Whittaker functionals on parabolic inductions and (ii) intertwining operators. 
	
	Let~$\P=\M\N$ be a standard Levi decomposition of a standard parabolic subgroup of~$\G$ corresponding to a subset~$\theta$ of our fixed set of simple roots; we write~$\P=\P_{\theta}$,~$\N=\N_{\theta}$, and~$\M=\M_{\theta}$.  Then\[\mathfrak{w}_{\M}:=(\U_\M:=\U\cap\M, \Psi_{\M}:=\Psi\mid_{\U_{\M}})\] is a Whittaker datum for~$\M$ over~$\R_0$.  Let~$w_0$ be the element of the Weyl group of~$\G$ associated to~$\P$ in Section \ref{Whittfunctionalssect}, and~$\widetilde{w_0}$ its Langlands--Shelstad lift.
	
	 First, in Proposition \ref{propcoinv}, we prove that a co-Whittaker $\R$-representation is of Whittaker type (our proof following a suggestion of Helm), and then study explicitly Whittaker functionals on parabolic inductions of Whittaker type~$\R$-representations, proving:
	
	\begin{proposition}[Section \ref{Whittfunctionalssect}]\label{11intro}
	Let~$\P=\M\N$ be a maximal standard parabolic subgroup of~$\G$ and~$\chi_{\M,\R}:\M\rightarrow \R[\M/\M^{\circ}]^\times$ the universal unramified character.     Let~$\rho$ be a~$\mathfrak{w}_\M$-Whittaker type~$\R$-representation of~$\M$,~$\vartheta\in \Hom_{\R[\U_{\M}]}(\rho,\Psi_{\M}\otimes\R)$ a surjective~$\R$-module generator, and~$\nu_{\N_{w_0(\theta)}}$ be an~$\R$-Haar measure on~$\N_{w_0(\theta)}$.  Then~$i_{\M,\P}^{\G}(\rho \otimes \chi_{\M,\R})$ is of $\mathfrak{w}$-Whittaker type, and there is a unique element
	\[\Omega_{\mathfrak{w}}(\rho,\vartheta,\nu_{\N_{w_0(\theta)}})\in \Hom_{\R[\M/\M^{\circ}][\U]}(i_{\M,\P}^{\G}(\rho \otimes \chi_{\M,\R}),\Psi_\R\otimes\R[\M/\M^\circ])\]
	satisfying, for~$f\in i_{\M,\P}^{\G}(\rho \otimes \chi_{\M,\R})$ with support contained in~$\P w_0^{-1}\B$,
	\[	\Omega_{\mathfrak w}(\rho,\vartheta,\nu_{\N_{w_0(\theta)}})(f)=\int_{\N_{w_0(\theta)}} \vartheta(f(\widetilde{w}_{0}^{-1}n))\Psi_\R^{-1}(n)dn.\]
	Moreover,~$\Omega_{\mathfrak{w}}(\rho,\vartheta,\nu_{\N_{w_0(\theta)}})$ generates the~$\R[\M/\M^{\circ}]$-module of~$\mathfrak{w}$-Whittaker functionals on~$i_{\M,\P}^{\G}(\rho \otimes \chi_{\M,\R})$. 
	\end{proposition}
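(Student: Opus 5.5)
Our plan is to follow the classical Bernstein--Zelevinsky/Casselman--Shalika route, computing the twisted Jacquet module $(i_{\M,\P}^{\G}(\rho\otimes\chi_{\M,\R}))_{\U,\Psi}$ by the Bruhat filtration, and to check that every step is integral and commutes with base change over $\R[\M/\M^\circ]$. Write $\sigma=\rho\otimes\chi_{\M,\R}$, viewed as a representation over $\R[\M/\M^\circ]$. Order the double cosets $\P\backslash\G/\B$, indexed by $w\in W_\M\backslash W$, by closure. The open cell is $\P w_0^{-1}\B$. This gives a filtration of $i_{\M,\P}^{\G}(\sigma)$ by $\B$-stable (hence $\U$-stable) subrepresentations, with subquotients $\operatorname{c-Ind}$ of $\sigma$ restricted along the relevant conjugate of $\U$. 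Since taking $(\U,\Psi)$-coinvariants is exact over $\R$ ($\U$ is a union of pro-$p$ compact open subgroups and $p$ is invertible), it suffices to compute the twisted coinvariants of each graded piece.

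\textbf{Step 1: the non-open cells.} For $w\neq w_0^{-1}$, we expect $w\U w^{-1}\cap\M$ to contain a simple root subgroup of $\U_\M$ on which $\Psi$ is nontrivial but the induced character is trivial. Equivalently, non-degeneracy of $\Psi$ is incompatible with the cell, which is the standard Bernstein--Zelevinsky argument. The twisted coinvariants of these pieces then vanish. The same combinatorial lemma as over $\mathbb{C}$ applies, and the only input is the exactness of coinvariants just noted.

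\textbf{Step 2: the open cell.} The bottom piece consists of functions supported in $\P w_0^{-1}\B$, which is isomorphic to $\operatorname{c-Ind}_{\U_\M}^{\U}$-type data via $n\mapsto f(\widetilde w_0^{-1}n)$ for $n\in\N_{w_0(\theta)}$. Here we use $w_0\U_\M w_0^{-1}\subset\U$ together with the compatibility of $\Psi$ under the Langlands--Shelstad lift, so that $\Psi(\widetilde w_0 u\widetilde w_0^{-1})=\Psi_\M(u)$. Its $(\U,\Psi)$-coinvariants are then identified with $\rho_{\U_\M,\Psi_\M}\otimes\R[\M/\M^\circ]$. By the Whittaker type hypothesis this module is free of rank one, with generator dual to $\vartheta$. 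The identification is given by $f\mapsto\int_{\N_{w_0(\theta)}}\vartheta(f(\widetilde w_0^{-1}n))\Psi^{-1}(n)\,dn$. This integral is a finite sum, because $f$ has compact support modulo $\P$ and the $\R$-valued Haar measure exists since $p\in\R^\times$. Combining Steps 1 and 2 shows that $i_{\M,\P}^{\G}(\sigma)$ is of Whittaker type and that restriction to the open cell is an isomorphism on twisted coinvariants.

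\textbf{Step 3: existence, uniqueness and generation.} Dualising, $\Hom_{\U}(i_{\M,\P}^{\G}(\sigma),\Psi\otimes\R[\M/\M^\circ])$ is free of rank one and maps isomorphically to the functionals on the open-cell piece. Hence the displayed integral extends uniquely, giving $\Omega_{\mathfrak w}$, and this $\Omega_{\mathfrak w}$ generates.

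We expect the main obstacle to be Step 1 in families: making the filtration argument work for an arbitrary Whittaker type $\rho$ over a general $\R$. Over $\mathbb{C}$ one uses that coinvariants of $\operatorname{c-Ind}$ of a locally constant sheaf vanish when the character is nontrivial on a stabilizer. Here we would verify this directly using averaging over compact open subgroups of the root subgroup, which is legitimate since their volumes are invertible. A secondary point is ensuring that the extension in Step 3 is well defined. Over $\R[\M/\M^\circ]$ the sequence of coinvariants is exact, but the dual sequence is not automatically so. The way around this is to define $\Omega_{\mathfrak w}$ directly through the coinvariant isomorphism, composed with the trivialisation induced by $\vartheta$, rather than by extending functionals.
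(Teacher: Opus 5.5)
Your architecture is the paper's. You filter $i_{\M,\P}^{\G}(\rho\otimes\chi_{\M,\R})$ by the pieces $J_j$ of functions supported on the open unions $\bigcup_{i\le j}\P w_i^{-1}\B$, with $w_0$ first. You use exactness of $(\U,\Psi_\R)$-coinvariants, kill $J_j/J_{j-1}$ for $j\ge1$, and compute $J_0$ via $n\mapsto f(\widetilde w_0^{-1}n)$.

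\textbf{Step 1 fails as formulated.} The vanishing does not come from the $\M$-part of the stabiliser. For the closed cell $\P\B=\P$ the stabiliser is all of $\U$. Its $\M$-part is $\U_\M$, which carries exactly the non-degenerate $\Psi_\M$, so the simple root subgroup of $\U_\M$ you ask for does not exist. Moreover, on the $\M$-part the fibre is $\rho$ itself, not a character. The Whittaker-type hypothesis says nothing about coinvariants of $\rho$ for characters of $\U_\M$ other than $\Psi_\M$.

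The correct input is the combinatorial fact (Lomeli, Lemma 2.4) that for each $j\ge1$ there is $x\in w_j\N_\theta w_j^{-1}\cap\U$ with $\Psi(x)\neq1$. Since $J_j/J_{j-1}\simeq\operatorname{c-Ind}_{w_j\P_\theta w_j^{-1}\cap\U}^{\U}\bigl((\rho\otimes\chi_{\M,\R})^{\widetilde w_j^{-1}}\bigr)$, its coinvariants are $\Psi_\R^{-1}\otimes_{\R[w_j\P_\theta w_j^{-1}\cap\U]}(\rho\otimes\chi_{\M,\R})^{\widetilde w_j^{-1}}$. The element $x$ acts trivially there, because it lies in the conjugate of the unipotent radical. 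So this module is killed by $1-\Psi_\R(x)$, which is a unit because $\Psi(x)$ is a non-trivial $p$-power root of unity and $p$ is invertible. Your averaging argument also works, but only over a compact open subgroup of $w_j\N_\theta w_j^{-1}\cap\U$.

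\textbf{Step 2 overclaims.} Being of Whittaker type means $\Hom_{\R[\U_\M]}(\rho,\Psi_{\M,\R})$ is free of rank one with a surjective generator. It does not make $\rho_{\U_\M,\Psi_{\M,\R}}$ free of rank one: it may have a summand $T$ with $\Hom_\R(T,\R)=0$, e.g.\ a torsion summand. So you must work with Hom-spaces throughout.

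Passing to $\R[\M/\M^\circ]$-coefficients is then a genuine step. An $\R[\M/\M^\circ]$-valued functional has each Laurent coefficient of the form $c_{\underline m}\vartheta$. It is the surjectivity of $\vartheta$ that forces only finitely many $c_{\underline m}\neq0$: evaluate at $v$ with $\vartheta(v)=1$. This is Lemma \ref{lemwhituniv}.

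You also need $\Omega_{\mathfrak w}$ to be a surjective generator, both for the Whittaker-type conclusion and for generation. For this, exhibit $f_0\in J_0$ with $\Omega_{\mathfrak w}(f_0)=1$. Take $\N_0\subseteq\N_{w_0(\theta)}$ a small compact open subgroup on which $\Psi$ is trivial, and $v_0$ with $\vartheta(v_0)=1$. Let $f_0$ correspond to the function equal to $\operatorname{vol}(\N_0)^{-1}v_0$ on $\N_0$ and zero elsewhere.

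With these fixes your Step 3 goes through. The map $(J_0)_{\U,\Psi_\R}\to(i_{\M,\P}^{\G}(\rho\otimes\chi_{\M,\R}))_{\U,\Psi_\R}$ is an isomorphism, so dualising causes no problem.
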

	
	The second ingredient in the construction of local coefficients is intertwining operators.  In \cite{Datnu}, following the work of Waldspurger \cite{W}, Dat explained intertwining operators exist under certain conditions: ``the existence of~$(\sigma,\P,\Q)$-regular elements''.  In the doubling method, for the intertwining operators of classical groups which appear there, intertwining operators were studied in the PhD thesis of the first named author, and one can follow the argument in \cite[Proposition 9.1]{Girsch} to establish these conditions.  These approaches have been recently extended and improved by Moss--Trias \cite{MT}, and we use their general results to short-cut the approach of \cite{Girsch}, and establish the results on intertwining operators adapted for the construction of gamma factors which we need.
	
	Let~$\mathcal{S}_{\M,\R}$ denote the multiplicative subset of~$\R[\M/\M^{\circ}]$ generated by elements $a_0+a_1m+\cdots+a_r m^r$,~$m$ non-trivial, with leading and trailing coefficients a unit, i.e.,~$a_0,a_r\in\R^\times$. Suppose~$\theta=\Delta\backslash\{\alpha\}$, and~$\rho_{\theta}$ denotes half the sum of absolute roots  in~$\N_{\theta}$.  We write~$X^c$ for $X^{\langle\rho_\theta,\alpha^\vee\rangle^{-1}/2}$,~$\mathcal{S}_{X^c,\R}$ for the multiplicative subset of~$\R[X^{\pm c}]$ of elements with leading and trailing coefficient a unit, and define a ring homomorphism~$\Delta_{X,\R}\colon \R[\M/\M^\circ]\rightarrow \R[X^{\pm c}]$ in Section \ref{int}.  Our variable~$X$ is an algebraic substitute for~$q^{-s}$ where~$s\in\mathbb{C}$ in Shahidi's work.
	
		\begin{thm}[Theorem \ref{intertwiningops}]
		Suppose that~$\R$ is a Noetherian~$\mathbb{Z}[1/\sqrt{q}]$-algebra, and
		let $\rho$ be a finitely generated smooth~$\R[\M]$-module which satisfies Schur's lemma. Then there exists an~$\R[\M/\M^{\circ}][\G]$-morphism
		\[\mathbb{M}(\rho,\nu_{\N_{w_0(\theta)}})\colon i_{\M,\P_\theta}^{\G}(\rho\otimes \chi_{\M,\R})\rightarrow i_{w_0\M w_0^{-1},\P_{w_0(\theta)}}^{\G}((\rho\otimes \chi_{\M,\R})^{\widetilde{w}_0^{-1}})\otimes_{\R[\M/\M^{\circ}]} \mathcal{S}_{\M,\R}^{-1}\R[\M/\M^{\circ}]\]
		such that:
		\begin{enumerate}
			\item for all $f\in i_{\M,\P_\theta}^{\G}(\rho\otimes \chi_{\M,\R})$ that are supported on $\P_\theta w_0^{-1}\P_{w_0(\theta)}w_0$, 
			\[\mathbb{M}(\rho,\nu_{\N_{w_0(\theta)}})(f)(\widetilde{w}_0)=\int_{\N_{w_0(\theta)}} f(\widetilde{w}_0^{-1}n\widetilde{w}_0) dn;\]
			\item and there exists a polynomial~$\mathcal Q\in \mathcal{S}_{\M,\R}$ such that
			\[\mathcal Q\cdot \mathbb{M}(\rho,\nu_{\N_{w_0(\theta)}})\colon i_{\M,\P_\theta}^{\G}(\rho\otimes \chi_{\M,\R})\rightarrow i_{w_0\M w_0^{-1},\P_{w_0(\theta)}}^{\G}((\rho\otimes \chi_{\M,\R})^{\widetilde{w}_0^{-1}})\]
			and the image of $\mathcal Q$ under $\Delta_{X,\R}$ lies in $\mathcal S_{X,\R}$. 
		\end{enumerate}
	\end{thm}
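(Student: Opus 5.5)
The plan is to deduce the theorem from the general rationality results of Moss--Trias \cite{MT}, which refine Dat's \cite{Datnu} approach via $(\sigma,\P,\Q)$-regular elements. First, I would set up the formal intertwining operator. Put $\sigma=\rho\otimes\chi_{\M,\R}$, viewed as a smooth $\R[\M/\M^\circ][\M]$-module. It is finitely generated, since $\rho$ is. For $f$ supported on the open cell $\P_\theta w_0^{-1}\P_{w_0(\theta)}w_0$, the integral $\int_{\N_{w_0(\theta)}} f(\widetilde{w}_0^{-1}n\widetilde{w}_0)\,dn$ is a finite sum, because $f$ is compactly supported modulo $\P_\theta$ there. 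It therefore defines an $\R[\M/\M^\circ][\P_{w_0(\theta)}]$-equivariant map on the subspace $I_{\Omega}$ of such functions. The task is to extend this map uniquely to the whole induced representation after inverting $\mathcal{S}_{\M,\R}$.

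Second, I would invoke the extension statement of \cite{MT}. The hypotheses to check are that $\R$ is Noetherian (so $\R[\M/\M^\circ]$ is Noetherian), that $\rho$ is finitely generated, and that $\rho$ satisfies Schur's lemma. Under these, the Jacquet module $r_{\P_{w_0(\theta)}}$ of $i_{\M,\P_\theta}^{\G}(\sigma)$ has a filtration indexed by the Bruhat cells, by the geometric lemma. The graded pieces other than the open-cell piece carry actions of the centre $\Z(w_0\M w_0^{-1})$ through twists $\sigma^{w}$ with $w\neq w_0$. Schur's lemma makes the centre of $\M$ act on $\rho$ through $\R$-valued scalars. Consequently, for a suitable element $z$ of the centre, whose image in $\M/\M^\circ$ is a non-trivial $m$, the operator $z - c\,\chi_{\M,\R}(z)$ is invertible on the other pieces once we invert a polynomial of the form $a_0+\dots+a_rm^r$ with unit extreme coefficients. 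Such an element is exactly a $(\sigma,\P,\Q)$-regular element. The Moss--Trias argument then produces a unique extension of the map from $I_\Omega$, satisfying (1), and this extension is rational with a denominator $\mathcal Q\in\mathcal{S}_{\M,\R}$. This gives the first half of (2).

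Third, I must check that $\Delta_{X,\R}(\mathcal Q)\in\mathcal{S}_{X,\R}$. Since $\P$ is maximal, $\M/\M^\circ$ modulo the part killed by $\Delta_{X,\R}$ is essentially rank one, and $\Delta_{X,\R}$ sends a generator to a power $X^{\pm kc}$ with $k\neq0$. I would choose $z$ and $m$ from the centre of $\M$ so that $m$ maps to a non-trivial power of $X^c$; the $\alpha^\vee$-component is nonzero because $w_0$ acts nontrivially on the central character. The polynomial $\mathcal Q$ is a product of factors $a_0+\dots+a_rm^r$, and each factor then maps to $a_0+\dots+a_rX^{\pm krc}$, which still has unit leading and trailing coefficients. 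Making this step work is the main obstacle: one must track carefully that the regular element supplied by \cite{MT} can be taken with image of the required form, not merely non-trivial in $\M/\M^\circ$, so that no cancellation of extreme coefficients occurs after applying $\Delta_{X,\R}$. If needed, I would replace $z$ by a power and pass to the rank-one subtorus of the centre, using that $\Delta_{X,\R}$ is injective on it.
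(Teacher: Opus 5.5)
Your first two steps follow the paper's route. The paper also takes the submodule $\mathcal I$ of functions supported on the open cell. It then uses \cite[Corollary 5.4]{MT} to produce $\beta\in\R[\M/\M^\circ][\A_\M]$ that kills the Jacquet module of the quotient and acts on $r(\mathcal I)\cong\rho\otimes\chi_{\M,\R}$ by some $b\in\mathcal S_{\M,\R}$, and reads off (1) from the open cell.

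The gap is in your third step. This is the only part of the theorem needing an argument beyond \cite{MT}, and you leave it open.

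\textbf{What the factors of $b$ look like.} In \cite{MT} the factors of $b$ are $\sum_{i=0}^k b_i(a_ww^{-1}a_w^{-1}w)^i$ with $b_0,b_k\in\R^\times$. There is one such factor for each $1\neq w\in\W_\M\backslash\W/\W_\M$, not just for $w_0$. Here $a_w\in\A_\M$ is constrained only by $a_ww^{-1}a_w^{-1}w\notin\M^\circ$. These elements are in general not central in $\M$; for instance, in $\GL_3$ with $\M=\GL_2\times\GL_1$ one gets $\diag(1,\varpi,\varpi^{-1})$. So you cannot simply choose a central $z$ whose image behaves well.

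\textbf{Why your rank-one remark does not help.} When $\G$ has non-compact centre, $\ker\Delta_{X,\R}\supseteq \Z_\G\M^\circ/\M^\circ$ is a non-trivial subgroup of $\M/\M^\circ$. A factor $1-m$ with $1\neq m\in\ker\Delta_{X,\R}$ lies in $\mathcal S_{\M,\R}$ but is sent to $0$. Your justification via ``$w_0$ acting nontrivially on the central character'' does not exclude this for the elements that actually occur.

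\textbf{The missing idea.} Every $a_ww^{-1}a_w^{-1}w$ is killed by all rational characters of $\G$, because these are $\W$-invariant. So it suffices to show that $\Delta_{X,\R}$ is injective on the subgroup of $\M/\M^\circ$ killed by $\X^*(\mathbb{G})$, and the paper proves exactly this.
\begin{itemize}
\item Suppose $\delta_{\P_\theta}(a_ww^{-1}a_w^{-1}w)=1$. Since $\P_\theta$ is maximal, $\ker(\delta_{\P_\theta})/\Z_\G\M^\circ$ is finite, so some power of $a_ww^{-1}a_w^{-1}w$ equals $zm$ with $z\in\Z_\G$ and $m\in\M^\circ$.
\item Then $z\in\Z_\G\cap\G^\circ$, which is compact, so $z\in\M^\circ$.
\item Since $\M/\M^\circ$ is torsion-free, $a_ww^{-1}a_w^{-1}w\in\M^\circ$, a contradiction.
\end{itemize}

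\textbf{Integrality of the powers of $X$.} The argument above only puts each factor's image in $\mathcal S_{X^c,\R}$; your own factors $a_0+\dots+a_rX^{\pm krc}$ have the same defect. To land in $\mathcal S_{X,\R}$ as the theorem requires, use the freedom in \cite{MT} to replace $a_w$ by $a_w^k$. This is allowed because $(a_ww^{-1}a_w^{-1}w)^k=a_w^kw^{-1}a_w^{-k}w$ still lies outside $\M^\circ$. Choosing $k$ suitably makes each $\Delta_{X,\R}(a_w^kw^{-1}a_w^{-k}w)$ an integral power of $X$.
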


By using the intertwining operator and Proposition \ref{11intro} we can then construct an element that interpolates Shahidi's local coefficient.

		\begin{proposition}
	Let $\rho$ be a co-Whittaker type~$\R$-representation of~$\M$ with respect to $\mathfrak w_\M$. Then there exists a unique element
	\[\mathcal{D}_\R(\rho,\P_\theta,\Xi,\psi)\in \mathcal{S}_{\M,\R}^{-1}\R[\M/\M^\circ],\] 
	satisfying the functional equation
	\[\mathcal{D}_\R(\rho,\P_\theta,\Xi,\psi)\cdot\Omega_{\mathfrak w}(\rho,\vartheta,\nu_{\N_{w_0(\theta)}})(f)= \Omega_{\mathfrak w}({}^{\widetilde{w}_0}\rho,{}^{\widetilde{w}_0}\vartheta,\nu_{\N_\theta})( \mathbb{M}(\rho,\nu_{\N_{w_0(\theta)}})(f))\]
	for all $f\in i_{\M,\P_\theta}^\G(\rho\otimes\chi_{\M,\R})$. Moreover, there is an element $\mathcal Q\in\mathcal S_{\M,\R}$ such that $\mathcal Q\cdot \mathcal{D}_\R(\rho,\P_\theta,\Xi,\psi)\in \R[\M/\M^\circ]$ and the image of $\mathcal Q$ under $\Delta_{X,\R}$ lies in $\mathcal S_{X,\R}$.
	
\end{proposition}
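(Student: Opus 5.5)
The plan is to compare two Whittaker functionals on the same induced representation. I will work over the localized ring $\R' := \mathcal{S}_{\M,\R}^{-1}\R[\M/\M^\circ]$ and base change everything along $\R[\M/\M^\circ]\to \R'$. By the intertwining operator theorem (Theorem \ref{intertwiningops}), the map $\mathbb{M}(\rho,\nu_{\N_{w_0(\theta)}})$ is an $\R[\M/\M^\circ][\G]$-morphism into $i_{w_0\M w_0^{-1},\P_{w_0(\theta)}}^{\G}((\rho\otimes\chi_{\M,\R})^{\widetilde w_0^{-1}})\otimes \R'$.

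Next, ${}^{\widetilde w_0}\rho$ is a Whittaker type representation of $w_0\M w_0^{-1}$ with respect to the conjugated datum, with generator ${}^{\widetilde w_0}\vartheta$. This is where the Langlands--Shelstad lift is used: it ensures $\Psi$ restricts compatibly. Proposition \ref{11intro} applied to the parabolic $\P_{w_0(\theta)}$ then gives a Whittaker functional $\Omega_{\mathfrak w}({}^{\widetilde w_0}\rho,{}^{\widetilde w_0}\vartheta,\nu_{\N_\theta})$. One point needs care: the universal unramified character of $w_0\M w_0^{-1}$ should be identified with $\chi_{\M,\R}^{\widetilde w_0^{-1}}$ via the isomorphism $\R[\M/\M^\circ]\cong \R[w_0\M w_0^{-1}/(w_0\M w_0^{-1})^\circ]$, so that the right-hand side makes sense as an $\R[\M/\M^\circ]$-linear functional.

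The composite $\Omega_{\mathfrak w}({}^{\widetilde w_0}\rho,\dots)\circ\mathbb{M}(\rho,\dots)$ is therefore an $\R'$-valued $\mathfrak w$-Whittaker functional on $i_{\M,\P_\theta}^{\G}(\rho\otimes\chi_{\M,\R})$. Equivalently, it is an element of $\Hom_{\R[\M/\M^\circ][\U]}(i_{\M,\P_\theta}^{\G}(\rho\otimes\chi_{\M,\R}),\Psi_\R\otimes\R')$. The key step is to show this module is generated by $\Omega_{\mathfrak w}(\rho,\vartheta,\nu_{\N_{w_0(\theta)}})$ and that the generator is torsion free, i.e.\ $\Hom\cong \R'$.

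To get this, I will use the last assertion of Proposition \ref{11intro} together with flat base change. Localization is flat, and the $\U$-coinvariants of the induced representation, twisted by $\Psi$, form a free rank one $\R[\M/\M^\circ]$-module; this is the Whittaker type statement proved via the Bernstein--Zelevinsky geometric lemma in Section \ref{Whittfunctionalssect}. Hence Whittaker functionals into $\R'$ are $\Hom_{\R[\M/\M^\circ]}(\R[\M/\M^\circ],\R')\cong\R'$, generated by $\Omega_{\mathfrak w}(\rho,\vartheta,\nu_{\N_{w_0(\theta)}})$.

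This forces the existence of a unique $\mathcal D_\R\in\R'$ satisfying the functional equation. Uniqueness follows because $\Omega_{\mathfrak w}(\rho,\vartheta,\dots)$ is a free generator, or more concretely because there is an $f$ supported in $\P w_0^{-1}\B$ with $\Omega(f)=1$. To construct such an $f$, choose $f(\widetilde w_0^{-1} n)=\phi(n)v$ with $\vartheta(v)=1$ and $\phi$ a small compact open indicator normalized by $\nu$; $\R$-valued Haar measures make this possible since $q$ is invertible.

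For the integrality statement, take $\mathcal Q\in\mathcal S_{\M,\R}$ from part (2) of Theorem \ref{intertwiningops}. Then $\mathcal Q\,\mathbb M$ lands in the unlocalized induced representation, so $\mathcal Q\,\mathcal D_\R=\Omega(\mathcal Q\,\mathbb M f)$ lies in $\R[\M/\M^\circ]$, evaluating at the $f$ above. The condition $\Delta_{X,\R}(\mathcal Q)\in\mathcal S_{X,\R}$ is inherited directly from the theorem.

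I expect the main obstacle to be the freeness (torsion-freeness) of the Whittaker functional module after localization, and in particular checking that the co-Whittaker type hypothesis, rather than just Whittaker type, is what makes $\Omega$ a generator compatible with base change. A secondary subtlety is verifying that the right-hand functional is well defined on the localized target: $\Omega$ extends $\R'$-linearly since it is $\R[\M/\M^\circ]$-linear and localization commutes with coinvariants.
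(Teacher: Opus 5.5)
Your argument is correct, but the steps come in a different order from the paper's.

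\textbf{The paper's route.} It clears denominators first. It takes $\mathcal Q\in\mathcal S_{\M,\R}$ from Theorem \ref{intertwiningops}(2), so that $\Omega_{\mathfrak w}({}^{\widetilde w_0}\rho,{}^{\widetilde w_0}\vartheta,\nu_{\N_\theta})\circ(\mathcal Q\cdot\mathbb M(\rho,\nu_{\N_{w_0(\theta)}}))$ is a Whittaker functional valued in the unlocalised ring $\R[\M/\M^\circ]$. Corollary \ref{generates} then gives a unique $\mathcal C\in\R[\M/\M^\circ]$ with this composite equal to $\mathcal C\cdot\Omega_{\mathfrak w}(\rho,\vartheta,\nu_{\N_{w_0(\theta)}})$. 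Setting $\mathcal D_\R=\mathcal C\mathcal Q^{-1}$ makes $\mathcal Q\mathcal D_\R=\mathcal C\in\R[\M/\M^\circ]$ automatic.

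\textbf{Your route needs a stronger input.} You localise first, so you need multiplicity one for functionals valued in $\mathcal S_{\M,\R}^{-1}\R[\M/\M^\circ]$. Section \ref{Whittfunctionalssect} does not prove that the coinvariants are free of rank one, contrary to your attribution. It only shows that $\Hom$ into $\R[\M/\M^\circ]$ is free of rank one on $\Omega$, and this does not by itself control $\Hom$ into the localisation: a summand of the coinvariants admitting no maps to $\R[\M/\M^\circ]$ can still map nontrivially to its localisation.

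\textbf{How to close it.} For co-Whittaker $\rho$ the coinvariant module itself is free of rank one. Combine the isomorphisms of that section,
$(i_{\M,\P_\theta}^\G(\rho\otimes\chi_{\M,\R}))_{\U,\Psi_\R}\simeq(J_0)_{\U,\Psi_\R}\simeq\rho_{\U_\M,\Psi_{\M,\R}}\otimes_\R\R[\M/\M^\circ]$
(using $\U_\M\subseteq\M^\circ$), with Proposition \ref{propcoinv}, which gives $\rho_{\U_\M,\Psi_{\M,\R}}\simeq\R$. Flatness of localisation is then irrelevant, since $\Hom$ out of a free rank-one module is just the target. So you were right that the co-Whittaker hypothesis is the crux.

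\textbf{Trade-off.} Your version gives the cleaner statement that every localised Whittaker functional is a multiple of $\Omega$. The paper's ordering uses only the $\Hom$-space statement, so, as in the remark following the proposition, it carries over to finitely generated Whittaker-type representations over Noetherian rings, where your freeness input can fail.

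Your uniqueness and integrality arguments, via a test function $f_0$ with $\Omega(f_0)=1$ as in Lemma \ref{lemmaconstfunction}, are fine.
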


This allows us to define
$$\mathcal D_\R(X,\rho,\P_\theta,\Xi,\psi)\coloneqq\Delta_{X,\R}(\mathcal Q)^{-1}\Delta_{X,\R}(\mathcal Q\cdot \mathcal D_\R(\rho,\P_\theta,\Xi,\psi))\in \mathcal S_{X,\R}^{-1}\R[X^{\pm c}].$$
We can then prove that only integral powers of $X$ appear and show that it is a unit, i.e. $$\mathcal D_\R(X,\rho,\P_\theta,\Xi,\psi)\in(\mathcal S_{X,\R}^{-1}\R[X^{\pm 1}])^\times.$$
By using these results we can then use the inductive procedure of Shahidi to define gamma-factors associated to co-Whittaker~$\R$-representations of~$\M$.
		\subsection{Langlands-Shahidi gamma factors in families}
		Our main theorem is the following construction of gamma factors:
\begin{thm}[{Theorem \ref{mainthm}}]
		Let~$\R$ be a~$\R_0$-algebra. Suppose that~$\P=\M\N$ is a maximal standard parabolic subgroup of~$\G$, and~$\rho$ is a co-Whittaker type~$\R$-representation of~$\M$ with respect to~$\mathfrak w_\M$. Moreover, we denote by~$r=\bigoplus_{i=1}^{m_r}r_i$ the decomposition of the adjoint representation~$r\colon{^L}\M(\mathbb{C})\rightarrow \GL({}^L\mathfrak{n})$ of~\eqref{equationr}.  Then for each~$i\in\{1,\ldots,m_r\}$, there exists
			 $$\gamma(X,\rho,r_i,\psi)\in (\mathcal{S}_{X,\R}^{-1}\R[X^{\pm 1}])^{\times},$$ depending only on the isomorphism class of~$\rho$, satisfying:
			\begin{enumerate}
				\item (Compatibility with homomorphisms)  Suppose~$\alpha\colon \R\rightarrow \R'$ is a morphism of~$\R_0$-algebras, and $\alpha_X\colon\mathcal{S}_{X,\R}^{-1}\R[X^{\pm 1}]\rightarrow \mathcal{S}_{X,\R'}^{-1}\R'[X^{\pm 1}]$ is the unique~$\R_0$-algebra morphism extending $\alpha$ which is the identity on~$X$. Then, for each~$i\in\{1,\ldots,m_r\}$, we have 
				\[\gamma(X,\rho\otimes_\alpha \R',r_i,\psi)=\alpha_X(\gamma(X,\rho,r_i,\psi)).\]
				\item When~$\R=\mathbb{C}$, and~$\rho$ is a co-Whittaker~$\mathbb{C}$-representation with respect to $\mathfrak w_\M$ with $\mathfrak w_\M$-generic cosocle~$\rho_{gen}$, then
				\[\gamma(X,\rho,r_i,\psi)=\gamma_{Sh}(X,\rho_{gen},r_i,\psi),\]
				where~$\gamma_{Sh}(X,\rho_{gen},r_i,\psi)$ is the gamma factor constructed by Shahidi and Lomeli.
				\item (Dominance) Let~$\rho'$ be a co-Whittaker type~$\R$-representation of~$\M$ with respect to $\mathfrak w_\M$ such that~$\rho$ dominates~$\rho'$. Then, for all~$i\in\{1,\ldots,m_r\}$, we have that
				$$\gamma(X,\rho,r_i,\psi)=\gamma(X,\rho',r_i,\psi).$$
			\item(Local Functional Equation) For each~$i\in\{1,\ldots,m_r\}$, we have
			\[\gamma(X,\rho,r_i,\psi)\gamma(q^{-1}X^{-1},\rho,\overline{r_i},\psi^{-1})=1\]
						\end{enumerate}

	\end{thm}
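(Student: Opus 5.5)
The plan is to follow Shahidi's inductive procedure, with the local coefficient $\mathcal D_\R(X,\rho,\P_\theta,\Xi,\psi)\in(\mathcal S_{X,\R}^{-1}\R[X^{\pm1}])^\times$ constructed above as the basic input. Recall Shahidi's multiplicativity identity over $\mathbb{C}$:
\[C_\psi(s,\sigma,\P_\theta)=\prod_{i=1}^{m_r}\gamma(is,\sigma,r_i,\psi)\]
(up to the standard normalization), where the $r_i$ for $i\ge2$ arise as $r_1$ for smaller maximal parabolics $\P_{\theta_i}\subset\M_i\subset\G_i$ inside Levi subgroups of $\G$ determined by the action of $w_0$. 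So we define $\gamma(X,\rho,r_i,\psi)$ by induction on $m_r$, that is, on the semisimple rank of $\G$. Set
\[\gamma(X,\rho,r_1,\psi):=\mathcal D_\R(X,\rho,\P_\theta,\Xi,\psi)\cdot\prod_{i\ge2}\gamma(X^i,\rho,r_i,\psi)^{-1},\]
where each $\gamma(X^i,\rho,r_i,\psi)$ is defined, by the inductive hypothesis, as the $r_1$-factor attached to the restriction of $\rho$ to the appropriate smaller Levi configuration, with $X\mapsto X^i$. Two points have to be checked here. First, the restriction/induction step must preserve co-Whittaker type; here $\rho$ is a representation of the same $\M$, regarded as a Levi of $\G_i$, with the compatible Whittaker datum $\mathfrak w_\M$. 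Second, the substitution $X\mapsto X^i$ must send $\mathcal S_{X,\R}$ into itself, which is clear because leading and trailing coefficients are unchanged. Units are preserved, so $\gamma$ lands in $(\mathcal S_{X,\R}^{-1}\R[X^{\pm1}])^\times$.

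Properties (1) and (3) are proved by reducing to the same properties of $\mathcal D_\R$, via the inductive formula.
\begin{itemize}
\item For (1), uniqueness in the functional equation is the key. $\Omega_{\mathfrak w}$ and $\mathbb M$ are compatible with base change $\alpha$: the integral formulas are defined over $\R_0$ and commute with $\otimes_\alpha\R'$. Also $\vartheta\otimes 1$ is still a generator for $\rho\otimes_\alpha\R'$, since co-Whittaker type is stable under base change. Hence $\alpha(\mathcal D_\R(\rho))$ satisfies the defining equation for $\rho\otimes_\alpha\R'$, and uniqueness gives equality. One also has to check that $\Delta_{X,\R}$ and $\mathcal Q$ commute with $\alpha$.
\item For (3), let $\rho\twoheadrightarrow\rho'$ be the dominance map. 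It induces a surjection on parabolic inductions, compatible with $\mathbb M$ and with the Whittaker functionals. Because Whittaker functionals of the two representations correspond, via $\Hom(\Gamma,\cdot)$ being rank one in both, the defining functional equations coincide. Hence $\mathcal D_\R(\rho)=\mathcal D_\R(\rho')$, and this passes to the $\gamma$'s by induction.
\end{itemize}
Independence of the isomorphism class is the special case where the map is an isomorphism.

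For (2), we use (1) with $\R=\mathbb C$ and identify $\mathcal D_{\mathbb C}$ with Shahidi's local coefficient $C_\psi(s,\rho_{gen},\P)$ under $X=q^{-s}$. The $\mathfrak w_\M$-Whittaker functional on the co-Whittaker $\rho$ factors through the generic cosocle, so $\rho$ dominates $\rho_{gen}$, and by (3) we may replace $\rho$ by $\rho_{gen}$. For generic $\rho_{gen}$, our $\Omega_{\mathfrak w}$ specializes at $\chi_{\M}=q^{-s\langle\cdot\rangle}$ to Shahidi's Jacquet integral, and $\mathbb M$ specializes to the standard intertwining operator. The same Langlands–Shelstad lift $\widetilde{w}_0$ and the same measure normalization are used. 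Both sides are then rational functions in $q^{-s}$ that agree for $\operatorname{Re}s\gg0$, hence they agree identically. Shahidi–Lomeli's $\gamma$ satisfies the same inductive identity, so induction gives equality.

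For (4), we would reduce via (1) and (3) to a universal or generic situation where the identity can be checked over $\mathbb C$. Concretely, take $\rho$ to be the universal co-Whittaker representation over $\E^{\M}_{\mathfrak w_\M,n}$, which is $\R_0$-torsion free and reduced, and dominates every co-Whittaker $\rho$ after base change. The desired identity then follows in $\mathcal S^{-1}\E[X^{\pm1}]$ once it holds after every map $\E\to\mathbb C$, which is the classical functional equation $\gamma(s)\gamma(1-s,\overline{\cdot})=1$ of Shahidi, with $q^{-(1-s)}=q^{-1}X^{-1}$. This last step is the main obstacle. We need a co-Whittaker type $\rho$ to be dominated by a specialization of the universal one, so that $\gamma(\rho)=\phi(\gamma(\Gamma_n))$. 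We also need injectivity of $\mathcal S^{-1}\E[X^{\pm1}]\to\prod_{\phi:\E\to\mathbb C}\mathbb C(X)$, for which I would use reducedness, torsion-freeness and the finiteness of $\E_n$ over its Bernstein centre. Passing from co-Whittaker type to genuinely co-Whittaker may require replacing $\rho$ by its co-Whittaker quotient and invoking (3).
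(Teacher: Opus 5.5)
Your arguments for (1) and (3) are the paper's. So is the mechanism you propose for (4): pass to the universal ring $\E$, which is reduced, torsion-free and finitely generated, and check an identity after specialising at its minimal primes into $\mathbb{C}$.

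\textbf{The gap.} You take as input that $\mathcal D_\R(X,\rho,\P,\Xi,\psi)$ is a \emph{unit} of $\mathcal S_{X,\R}^{-1}\R[X^{\pm1}]$. Before this theorem one only has Proposition~\ref{propositiondowntointX}, i.e.\ $\mathcal D_\R(X,\rho,\P,\Xi,\psi)\in\mathcal S_{X,\R}^{-1}\R[X^{\pm1}]$. Invertibility is Theorem~\ref{mainthm}(1), proved inside the same induction, and it is exactly what makes $\gamma(X,\rho,r_1,\psi)$ a unit. It is not formal: it is an integrality statement that specialisations to $\mathbb{C}$ do not detect. For instance, over $\mathbb{Z}_\ell'$ the element $1-\ell X$ has non-zero image under every ring map to $\mathbb{C}$, yet is not invertible, since its leading coefficient is not a unit. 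The paper proves, on the universal ring,
\[\mathcal D_\E(X,\Gamma,\P,\Xi,\psi)\,\mathcal D_\E(q^{-1}X^{-1},\Gamma,\overline{\P},\Xi',\psi^{-1})=\frac{\lambda(w_0,\psi)\lambda(w_0',\psi^{-1})}{\prod_{i=2}^{m_r}\gamma(X^{i},\Gamma,r_i,\psi)\gamma(q^{-i}X^{-i},\Gamma,\overline{r_i},\psi^{-1})}.\]
It checks this at each minimal prime using Shahidi's relation between the local coefficient, $\lambda(w_0,\psi)$ and the $\gamma_{Sh}$'s, together with the classical functional equation. The right-hand side is a unit by the inductive hypothesis, hence so is $\mathcal D$. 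Only then is $\gamma(X,\rho,r_1,\psi)$ defined, and (4) falls out of the same identity. So your reducedness argument must be run on the local coefficients, inside the induction, before $\gamma(\cdot,r_1)$ is defined. As written, ``units are preserved'' presupposes what that argument is needed to prove.

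\textbf{Further points to fix.}

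\emph{Inversion.} $\mathcal D$ interpolates the \emph{inverse} of Shahidi's local coefficient: its defining equation is $\mathcal D\cdot\Omega_{\mathfrak w}(\rho)=\Omega_{\mathfrak w}({}^{\widetilde{w}_0}\rho)\circ\mathbb{M}(\rho)$. So the definition must read
\[\gamma(X,\rho,r_1,\psi)=\lambda(w_0,\psi)\,\mathcal D_\R(X,\rho,\P,\Xi,\psi)^{-1}\prod_{i\geq2}\gamma(X^i,\rho,r_i,\psi)^{-1}.\]
With your formula, which has no inverse and no $\lambda(w_0,\psi)$, property (2) fails.

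\emph{The auxiliary groups.} The groups used for $i\geq2$ are not Levi subgroups of $\G$, and the induction cannot be on semisimple rank. Lemma~\ref{keylemma} gives a quasi-split $\G_i$ containing the \emph{same} $\M$ as a maximal Levi, with $r_1'=r_i$ and $m_{r'}<m_r$. Such a $\G_i$ has the same semisimple rank as $\G$. For example, for the Siegel Levi $\GL_n\subset\Sp_{2n}$, $r_2=\Lambda^2$ is the $r_1$ of the Siegel parabolic of $\SO_{2n}$. The induction is on $m_r$ over all such pairs. You must also show the resulting $\gamma(X,\rho,r_i,\psi)$ is independent of the choice of $\G_i$; the paper does this again by reducedness and specialisation.

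\emph{The dual datum.} For $\gamma(q^{-1}X^{-1},\rho,\overline{r_i},\psi^{-1})$ to be defined for the same $\rho$, you need the datum $(\overline{\P},\Xi',\psi^{-1})$. Here $\overline{\P}$ is standard for $\B'=(\B\cap\M)\overline{\N}$, and $\Xi'$ uses $x'_\alpha(a)=x_\alpha(-a)$ for $\alpha\in\theta$, so that the resulting Whittaker datum restricts to $\mathfrak w_\M$. With $\psi^{-1}$ and the original pinning it restricts to $\Psi_\M^{-1}$, for which $\rho$ need not be co-Whittaker.
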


%
%

	\subsection{Local transfer in families}
			We now suppose that~$\F$ is a $p$-adic field, and let~$\G$ be one of the quasi-split groups
		\[\Sp_{2n}(\F),\SO_r(\F),\SO_{2n,\E'/\F}(\F),\U_n(\E/\F),\mathrm{GSpin}_r(\F),\mathrm{GSpin}_{2n,\E'/\F}(\F).\]  
		Let~$\Pi_{\mathfrak{w},\mathbb{C}}(\G)$ denote the set of isomorphism classes of~$\mathfrak{w}$-generic~$\mathbb{C}$-representations of~$\G$.  In the split case setting~$\E=\F$, and letting~$\phi_{\LG}:\LG(\mathbb{C})\rightarrow \presuper{L}\GL_N(\mathbb{C})$ denote the embedding associated with local transfer (see Section \ref{notationclassical}), then we let~$\Pi_{gen,\mathbb{C}}(\GL_N(\E))$ denote the set of isomorphism classes of generic~$\mathbb{C}$-representations of~$\GL_N(\E)$.
		
Then work of Asgari--Cogdell--Heiermann--Kim--Krishnamurthy--Shahidi--Piatetski-Shapiro \cite{CKPSS1,MR2127169,MR2149370,MR2219256,CKPSS2,AsgariShahidiFunct,ACS,Heiermanngeneric}  defines a transfer of~$\mathfrak{w}$-generic~$\mathbb{C}$-representations:
\[\mathrm{t}_{\G,\mathfrak{w}}\colon\Pi_{\mathfrak{w},\mathbb{C}}(\G)\rightarrow \Pi_{gen,\mathbb{C}}(\GL_N(\E)).\]
Let~$\mathfrak{W}$ be a Whittaker datum for~$\GL_N(\E)$ over~$\R_0$.  Our main application of Langlands--Shahidi gamma factors in families is the following theorem:
	
	\begin{thm}[Local transfer in families]
	\begin{enumerate}
\item	 There exists a unique morphism of~$\R_0$-algebras
			\[\mathbf{t}_{\G,\mathfrak{W},\mathfrak{w}}\colon\E^{\GL_{N}(\E)}_{\mathfrak{W}}\rightarrow \E^{\G}_{\mathfrak{w}}\]
			such that if~$\pi$ is a generic irreducible~$\mathbb{C}$-representation of~$\G$ then the unique generic subquotient of~$\Gamma_{\mathfrak{W}}^{\GL_N(\E)}\otimes_{ f_{\pi,0}\circ \mathbf{t}_{\G,\mathfrak{W},\mathfrak{w}}}\mathbb{C}$ is~$\mathrm{t}_{\G,\mathfrak{w}}(\pi)$.  
			\item Moreover,~$\mathbf{t}_{\G,\mathfrak{W},\mathfrak{w}}$ is compatible with Langlands--Shahidi gamma factors for~$\G\times\GL_m(\E)$, and the equality of Langlands--Shahidi gamma factors characterizes the morphism uniquely (cf., Theorem \ref{mainthm2} for a precise statement).
			\end{enumerate}
	\end{thm}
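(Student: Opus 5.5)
Uniqueness is the easier half, so I treat it first. The ring $\E^{\G}_{\mathfrak w}$ is $\R_0$-torsion free and reduced. It is also a product over depths of the rings $\E^{\G}_{\mathfrak w,n}$, each of which is finitely generated over the Bernstein centre, and so is Noetherian of finite type. Hence $\E^{\G}_{\mathfrak w}$ embeds into $\prod_{x} \kappa(x)$, the product over points $x$ with values in $\mathbb{C}$. The relevant points are the maps $f_{\pi,0}\colon \E^{\G}_{\mathfrak w}\to\mathbb{C}$ attached to irreducible $\mathfrak w$-generic $\mathbb{C}$-representations $\pi$, and these are Zariski dense (use density of generic points in the Bernstein variety, after fixing an embedding $\R_0\hookrightarrow\mathbb{C}$). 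Now take two morphisms $\mathbf t,\mathbf t'$ with the stated property. Then $f_{\pi,0}\circ\mathbf t$ and $f_{\pi,0}\circ\mathbf t'$ are both the character of $\E^{\GL_N(\E)}_{\mathfrak W}$ attached to $\mathrm{t}_{\G,\mathfrak w}(\pi)$, because a co-Whittaker $\mathbb{C}$-representation of $\GL_N$ is determined, up to dominance, by its generic subquotient. Density then forces $\mathbf t=\mathbf t'$.

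For existence I follow Helm's strategy for $\GL_n$ and aim to describe $\E^{\GL_N(\E)}_{\mathfrak W}$, depth by depth, through gamma factors. The first key input is a converse-type statement: $\E^{\GL_N}_{\mathfrak W,n}\otimes\mathbb{Q}$, or its image in the relevant product of fields, is generated over the Bernstein centre by the coefficients of the gamma factors $\gamma(X,\Gamma^{\GL_N}_{\mathfrak W,n}\times\tau,\psi)$, where $\tau$ runs over generic representations of $\GL_m(\E)$ with $m\le N/2$ or $m\le N-1$. This comes from Helm--Moss together with the local converse theorem (Jacquet--Liu / Chai). On the $\G$ side, the main theorem (Theorem \ref{mainthm}) applied to the Levi $\G'\times\GL_m(\E)$ of a larger classical group gives gamma factors $\gamma(X,\Gamma^{\G}_{\mathfrak w,n}\times\tau,r_1,\psi)$ in $\mathcal S^{-1}_{X}\E^{\G}_{\mathfrak w,n}[X^{\pm1}]$ (working over a ring large enough for $\tau$). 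I then define $\mathbf t$ on $\mathbb{Q}$-coefficients by sending each $\GL_N\times\GL_m$ gamma coefficient to the corresponding Langlands--Shahidi coefficient for $\G\times\GL_m$, and the Bernstein centre of $\GL_N$ to that of $\G$ via the transfer of cuspidal supports. This is well defined and a ring map: a relation holding in $\E^{\GL_N}$ holds at every complex point, and at complex points the two sets of gamma factors agree. That agreement is exactly the defining property of $\mathrm{t}_{\G,\mathfrak w}$ in the works of Cogdell--Kim--Piatetski-Shapiro--Shahidi and Asgari--Shahidi, combined with compatibility (2) of Theorem \ref{mainthm} and with density.

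The main obstacle is integrality, i.e.\ showing that the resulting map $\E^{\GL_N}_{\mathfrak W}\otimes\mathbb{Q}\to\E^{\G}_{\mathfrak w}\otimes\mathbb{Q}$ carries $\E^{\GL_N}_{\mathfrak W}$ into $\E^{\G}_{\mathfrak w}$. I would try two things. First, each $\E^{\G}_{\mathfrak w,n}$ is integrally closed in its normalization inside $\E^{\G}_{\mathfrak w,n}\otimes\mathbb{Q}$ at the relevant points, or more robustly is characterized as the set of elements that are integral over the Bernstein centre and take $\Zl$-values at all $\Ql$-points. The image elements are integral over the centre, because $\E^{\GL_N}$ is finite over its centre and the centres correspond. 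They are $\ell$-integral at $\ell$-adic points, because gamma factor coefficients of integral representations are integral by part (1) of Theorem \ref{mainthm}. Second, where that characterization fails I would instead build $\mathbf t$ after base change to each $\W(\Fl)$ separately, using Helm's description of $\E^{\GL_N}$ as the ring of functions, and glue the results. Finally, the compatibility with gamma factors in (2) holds by construction, and uniqueness under that compatibility again follows from the density argument.
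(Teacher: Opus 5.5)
Your uniqueness argument is fine: $\E^{\G}_{\mathfrak w}$ is torsion-free and reduced, and every $\mathbb{C}$-point of $\E^{\G}_{\mathfrak w,\mathfrak t}$ is some $f_{\pi,0}$ with $\pi$ generic (Corollary \ref{corbijection}), so these points separate elements.

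\textbf{The gap is in integrality, which is the real content of the theorem.} Your characterization fails. The set of elements that are integral over the centre and $\Zl$-valued at all $\Ql$-points is essentially the \emph{normalization} of $\E^{\G}_{\mathfrak w,W(\Fl),\mathfrak t}$, not the ring itself. At non-banal $\ell$ these rings are in general not normal. An $\ell$-block can contain several characteristic-zero Bernstein components: already for $\GL_2(\F)$ with $\ell\mid q+1$, unramified principal series and level-zero cuspidals lie in one block. The corresponding factor is then connected with several minimal primes, and such a Noetherian ring is never normal.

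So your first argument only shows that the image of $\E^{\GL_N(\E)}_{\mathfrak W}$ lands in the normalization. That is precisely the paper's intermediate lemma. The paper proves it via Helm's criterion on DVR-valued points, Lemma \ref{cowhittakerDVR}, and integrality of central characters of transferred cuspidal supports. It does not use ``the centres correspond'', which presupposes an integral map of Bernstein centres (cf.\ Conjecture \ref{conjfactors}).

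Your fallback, viewing $\E^{\GL_N(\E)}$ as functions on a moduli space of parameters, would need a map from functions on ${}^L\G$-parameters to $\E^{\G}_{\mathfrak w}$ compatible with the generic transfer. That is the Fargues--Scholze route of Section \ref{FSconsequences}, whose compatibility with the transfer is known only in a few cases.

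\textbf{The missing idea is to feed the universal family into the Helm--Moss descent theorem (Theorem \ref{converse}), rather than evaluating gamma factors at points.}
\begin{enumerate}
\item Put $\R'=\E^{\G}_{\mathfrak w,W(k)',\mathfrak t}$ and $\R=\R'[\mathbf{t}_{\G,\ell}(\E^{\GL_N(\E)}_{\mathfrak W,W(k)',\mathfrak t'})]$. The normalization lemma makes $\R$ a finite $\R'$-module.
\item The representation $\rho=\Gamma^{\GL_N(\E)}_{\mathfrak W,W(k)',\mathfrak t'}\otimes_{\mathbf{t}_{\G,\ell}}\R$ is co-Whittaker with $f_\rho=\mathbf{t}_{\G,\ell}$.
\item Compatibility of Langlands--Shahidi factors with the transfer identifies the two $\GL_N\times\GL_{N-1}$ gamma factors required in Theorem \ref{converse} with Langlands--Shahidi factors of $\Gamma^{\G}\otimes\Gamma^{\GL_{N-1}(\E)}$. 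The second identification uses the functional equation in Theorem \ref{mainthm}.
\item These factors are constructed over the universal ring, so their coefficients lie in $\R'\otimes\E^{\GL_{N-1}(\E)}_{\mathfrak W_{N-1},W(k)',\mathfrak s}$. Descent then forces $f_\rho$ to be $\R'$-valued.
\end{enumerate}
Note that Theorem \ref{mainthm}(1), which you cite, only says the local coefficient is a unit.

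Your rational step is also thin. Since $\E^{\GL_N(\E)}_{\mathfrak W}$ \emph{is} the Bernstein centre, ``send the Bernstein centre via transfer of cuspidal supports'' is the entire rational construction. A local converse theorem separates points but does not by itself produce algebra generators. The paper does this step after inverting only the non-banal primes (Theorem \ref{banalmainthm}):
\begin{enumerate}
\item it writes explicit maps of Laurent polynomial rings using Lemma \ref{unramifiedtwisting};
\item it checks invariance under $\H_{[\M,\tau]}$ and $\W_{[\M,\tau]}$;
\item it descends from $\overline{\mathbb{Z}}$ to $\R_0$ by Lemma \ref{galoisinvariance}.
\end{enumerate}
Working this way also leaves only finitely many primes for the descent step.
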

	The most interesting primes for modular representation theory of a~$p$-adic group are called \emph{non-banal}, these are the primes which divide the pro-order of a compact open subgroup.    If we invert all non-banal primes, the situation becomes much simpler.  Indeed, letting~$\M_{\G}$ denote the product of the non-banal primes for~$\G$ and~$\GL_N(\E)$, then by \cite{DHKMconj} the rings~$\E^{\GL_{N}(\E)}_{\mathfrak{W}}\otimes \overline{\mathbb{Z}}[1/\M_{\G}]$ and~$\E^{\G}_{\mathfrak{w}}\otimes \overline{\mathbb{Z}}[1/\M_{\G}]$ admit a simple description, and we establish the existence of a morphism after inverting~$\M_{\G}$ in Theorem \ref{banalmainthm} using the approach of \emph{op. cit.} The difficulty of proving this theorem as stated is thus not to invert the primes~$\ell\neq p$ which are non-banal (and this is where we use gamma factors invoking the strategy  Helm and Moss developed to prove local Langlands in families for~$\GL_n(\F)$).

	The amazing work of Fargues--Scholze \cite{fargues2024geometrizationlocallanglandscorrespondence}, and Scholze \cite{Scholze}, together with Helm--Moss' isomorphism \cite{HM}, also defines an integral transfer morphism
	\[\mathbf{t}_{\G,\mathfrak{W},\mathfrak{w}}^{\mathrm{FS}}\colon\E^{\GL_{N}(\E)}_{\mathfrak{W}}\rightarrow \E^{\G}_{\mathfrak{w}},\]
	and we explain the connections in Section \ref{FSconsequences}, and for which cases one can deduce (from recent results in the literature) that it agrees with~$\mathbf{t}_{\G,\mathfrak{W},\mathfrak{w}}$.
	
		 We expect gamma factors will prove useful in understanding the finer points of this morphism -- for example in determining its image.  For symplectic groups, unitary groups, and odd special orthogonal groups, we expect that this morphism is surjective (Conjecture \ref{conjecturesurjectivity}).  Approaching this conjecture via a descent theorem involving gamma factors for classical groups, and normalization of intertwining operators in families using the results of this paper are current projects of the authors. 
	
	
	\subsection{Congruences under local transfer}
	As a corollary of the integral transfer morphism, we can also deduce pleasing properties of the local transfer of~$\mathfrak{w}$-generic~$\ell$-adic representations and congruences modulo primes~$\ell\neq p$, in the style of M\'inguez--S\'echerre \cite{MS}, and Vign\'eras \cite{VigInv}.  
	
	Let~$\G$ be a quasi-split~$p$-adic group, and~$\mathfrak{w}$ be a~$\Zl$-Whittaker datum in~$\G$.
Following Vign\'eras, for an integral~$\mathfrak{w}$-generic~$\Ql$-representation~$\pi$ of $\G$, we write~$J_{\ell}(\pi)$ for the unique~$\mathfrak{w}$-generic subquotient of the reduction modulo~$\ell$ of~$\pi$.
	
		\begin{corollary}[{cf.,~M\'inguez--S\'echerre \cite{MS}}]
		Let~$\G$ be one of the quasi-split groups
		\[\Sp_{2n}(\F),\SO_r(\F),\SO_{2n,\E'/\F}(\F),\U_n(\E/\F),\mathrm{GSpin}_r(\F),\mathrm{GSpin}_{2n,\E'/\F}(\F).\]  
		\begin{enumerate}
			\item  The generic local transfer\[\mathrm{t}_{\G,\mathfrak{w},\ell}\colon\Pi_{\mathfrak{w},\Ql}(\G)\rightarrow \Pi_{gen,\Ql}(\GL_N(\E))\] preserves integrality of~$\Ql$-representations.  
			\item If~$\pi$ and~$\pi'$ are integral~$\mathfrak{w}$-generic~$\Ql$-representations of~$\G$ then
			\[J_{\ell}(\pi)\simeq J_{\ell}(\pi')\Rightarrow J_{\ell}(\mathrm{t}_{\G,\mathfrak{w},\ell}(\pi))\simeq J_{\ell}(\mathrm{t}_{\G,\mathfrak{w},\ell}(\pi')).\]
			\item Suppose~$\pi$ and~$\pi'$ are integral~$\mathfrak{w}$-generic~$\Ql$-representations of~$\G$ with~$J_{\ell}(\pi)\simeq J_{\ell}(\pi')$.  The irreducible components of~$r_{\ell}(\mathrm{t}_{\G,\mathfrak{w},\ell}(\pi))\oplus r_{\ell}(\mathrm{t}_{\G,\mathfrak{w},\ell}(\pi'))$ all have the same supercuspidal support.
		\end{enumerate}
	\end{corollary}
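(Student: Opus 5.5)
The plan is to deduce all three parts from the integral transfer morphism $\mathbf{t}_{\G,\mathfrak{W},\mathfrak{w}}\colon\E^{\GL_N(\E)}_{\mathfrak{W}}\rightarrow\E^{\G}_{\mathfrak{w}}$ of the local transfer theorem, base-changed to $\Zl$. The argument uses the standard dictionary between co-Whittaker representations and points of $\E^{\G}_{\mathfrak{w}}$. To an integral $\mathfrak{w}$-generic $\Ql$-representation $\pi$ we attach the character $f_{\pi}\colon\E^{\G}_{\mathfrak{w}}\rightarrow\Ql$ given by the action of endomorphisms on the line $\Hom(\Gamma^{\G}_{\mathfrak{w}},\pi)$. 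First I would show that integrality of $\pi$ is equivalent to $f_\pi$ factoring through $\Zl$. For the forward direction, take a $\G$-stable lattice $L\subset\pi$ generated by the image of the Whittaker generator; this gives a co-Whittaker $\Zl$-representation, and $\E^{\G}_{\mathfrak{w}}$ acts on the rank-one free module $\Hom(\Gamma,L)$ through $\Zl$. For the converse, $\Gamma^{\G}_{\mathfrak{w},n}\otimes_{f}\Zl$ is a co-Whittaker $\Zl$-representation, finitely generated by depth-$n$ finiteness (Dat), and its image in $\pi$ is a lattice.

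For (1), compose $f_\pi$ with $\mathbf{t}$. Then $f_\pi\circ\mathbf{t}$ is $\Zl$-valued, and by part (1) of the transfer theorem (after fixing $\Ql\simeq\mathbb{C}$ and transporting the statement) the generic subquotient of $\Gamma^{\GL_N}_{\mathfrak{W}}\otimes_{f_\pi\circ\mathbf{t}}\Ql$ is $\mathrm{t}_{\G,\mathfrak{w},\ell}(\pi)$. Hence $f_{\mathrm{t}(\pi)}=f_\pi\circ\mathbf{t}$, since the character is read off from the generic subquotient. This character is integral, so $\mathrm{t}(\pi)$ is integral by the equivalence above. One point needs checking: the transfer theorem is stated over $\mathbb{C}$, and $\mathrm{t}_{\G,\mathfrak{w},\ell}$ is defined by transport of structure. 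This requires the transfer to commute with field automorphisms, which follows from the gamma-factor characterization and part (1) of the gamma factor theorem.

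For (2), the reduction $\overline{f_\pi}\colon\E^{\G}_{\mathfrak{w}}\rightarrow\Fl$ determines $J_\ell(\pi)$. Indeed, $J_\ell(\pi)$ is the generic subquotient of $\Gamma\otimes_{\overline{f_\pi}}\Fl$, which is the reduction of the co-Whittaker lattice. Conversely, $J_\ell(\pi)$ determines $\overline{f_\pi}$, because the action of $\E^{\G}_{\mathfrak{w}}$ on the one-dimensional $\Hom(\Gamma_{\Fl},J_\ell(\pi))$ equals $\overline{f_\pi}$ by exactness of Whittaker functors. So $J_\ell(\pi)\simeq J_\ell(\pi')$ gives $\overline{f_\pi}=\overline{f_{\pi'}}$. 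Composing with $\mathbf{t}$ gives $\overline{f_{\mathrm{t}(\pi)}}=\overline{f_{\mathrm{t}(\pi')}}$, and hence $J_\ell(\mathrm{t}(\pi))\simeq J_\ell(\mathrm{t}(\pi'))$.

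For (3), I would use the fact that $\overline{f}$ factors through the Bernstein centre of $\GL_N(\E)$ over $\Fl$. The Helm isomorphism identifies the integral Bernstein centre with $\E^{\GL_N}_{\mathfrak{W}}$, and over $\Fl$ the mod-$\ell$ Bernstein centre character is determined by, and determines, the supercuspidal support. Every irreducible constituent of $r_\ell(\mathrm{t}(\pi))$ has centre character equal to the reduction of $f_{\mathrm{t}(\pi)}$ restricted to the centre. Part (2)'s equality $\overline{f_{\mathrm{t}(\pi)}}=\overline{f_{\mathrm{t}(\pi')}}$ then shows that all constituents of both reductions share a supercuspidal support (Vignéras). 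I expect the main obstacle to be this last step: invoking precisely that mod-$\ell$ points of Helm's integral Bernstein centre separate supercuspidal supports. I would cite Helm's description of $\E^{\GL_N}_{\mathfrak{W}}$ as the centre of the block, together with the Vignéras/Helm result that maximal ideals of the mod-$\ell$ centre correspond to supercuspidal supports.
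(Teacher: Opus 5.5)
Your proposal is correct and follows essentially the same route as the paper. Integrality and congruences are read off from the points $f_{\pi,0}$ of $\E^{\G}_{\mathfrak{w}}$ (Lemmas \ref{cowhittakerDVR} and \ref{cowhittakerfields}) and pushed through the integral transfer morphism, and part (3) comes from the integral Bernstein centre of $\GL_N(\E)$ acting through $\E^{\GL_N(\E)}_{\mathfrak{W}}$ together with Helm's result that this central character determines supercuspidal support; the only differences are technical: the paper first descends $\pi,\pi'$ to a finite extension of $\mathbb{Q}_\ell^{ur}$ so its DVR lattice lemma applies (rather than arguing over $\Zl$), and it defines $\mathrm{t}_{\G,\mathfrak{w},\ell}$ directly via the integral morphism, so your transport-of-structure/Galois-invariance check is unnecessary.
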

This extends the generic supercuspidal case of \cite{MS} to further groups and all~$\mathfrak{w}$-generic representations.

	\subsection{Acknowledgements} It is a pleasure to thank David Helm for introducing us to co-Whittaker type representations and his continued support.  In particular, we learnt most of the basic properties of co-Whittaker representations from him (cf., Lemma \ref{cwlemma} - any mistakes, of course, remain the authors').  We thank Jean-Fran\c cois Dat, Luis Lomeli, Nadir Matringe, Gil Moss, Vincent S\'echerre, and Jack Shotton for useful conversations, and Tasho Kaletha for explaining a key choice in Shahidi's construction to us.   Considering the Langlands--Shahidi method in families was first suggested to the second author by Colin Bushnell, who suggested (for~$\GL_n(\F)$ at least) the method should extend to a general setting.
	
	Both authors were supported by EPSRC grant EP/V001930/1. J.\ G.\ was moreover supported by UKRI1020.
	
	\subsection*{AI Acknowledgements}
	After finishing a complete first draft of the article without AI assistance, we asked ChatGPT model 6 Pro to review the manuscript. We then fixed ourselves many typos and minor inaccuracies pointed out by this model.	
	
	\section{Preliminaries}
	
	\subsection{Notation}
	Let~$F$ be a non-archimedean local field with ring of integers~$\mathfrak{o}_\F$.  We let~$\mathfrak{p}_\F$ denote the unique maximal ideal of~$\mathfrak{o}_\F$ and denote by~$k_\F=\mathfrak{o}_\F/\mathfrak{p}_\F$ the residue field.  We set~$q=| k_\F|$ to be the cardinality of the residue field, which is a power of a prime number~$p$.  We fix a square root~$\sqrt{q}\in \overline{\mathbb{Q}}$ of~$q$ in (a fixed algebraic closure)~$\overline{\mathbb{Q}}$ of~$\mathbb{Q}$.
	
	Let~$\mathbb{G}$ be a quasi-split connected reductive group defined over~$\F$.  Fix a maximal~$\F$-split torus~$\mathbb{S}$ in~$\mathbb{G}$, with centralizer~$\mathbb{T}$ a maximal torus in~$\mathbb{G}$.  Fix a Borel subgroup~$\mathbb{B}$ of $\mathbb{G}$ with unipotent radical~$\mathbb{U}$ and Levi decomposition~$\mathbb{B}=\mathbb{T}\mathbb{U}$.  
	
	We write~$\X^*=\Hom(\mathbb{S},\mathbb{G}_m)$ and~$\X_*=\Hom(\mathbb{G}_m,\mathbb{S})$. Let~$\Phi\subset\X^*$ (resp.~$\Phi^+\subset \Phi$) be the subset of non-trivial characters appearing in the adjoint representation on the Lie algebra of~$\mathbb{G}$ (resp.~$\mathbb{B}$), and~$\Delta\subset \Phi^+$ the subset of simple roots.  For~$\alpha\in\Phi$, we write~$\alpha>0$ if~$\alpha\in\Phi^+$, and for a subset~$\theta$ of~$\Phi$ write~$\theta>0$ if $\theta\subseteq \Phi^+$.
	
	We fix a pinning~$\Xi$ extending~$(\mathbb{B},\mathbb{S})$.  Equivalently we fix an isomorphism, for each absolute simple root $\alpha$ of $\mathbf{U}_{\overline{\F}}$, 
		\[x_{\alpha}\colon(\mathbb{G}_{a})_{\overline{\F}}\rightarrow \mathbb{U}_{\overline{\F},\alpha}.\]
	
	For~$\mathbb{M}$ a reductive subgroup of~$\mathbb{G}$, we write~$\X^*(\mathbb{M})=\Hom(\mathbb{M},\mathbb{G}_m)$ for the abelian group of~$\F$-rational characters of~$\mathbb{M}$.  
	
	We denote by roman letters the topological group of~$\F$-points of the algebraic group, for example~$\G=\mathbb{G}(\F)$,~$\B=\mathbb{B}(\F)$,~$\T=\mathbb{T}(\F)$,~$\U=\mathbb{U}(\F)$.
	
	Let $\R$ be a commutative $\mathbb Z[1/p]$-algebra. We denote by~$\Rep_{\R}(\G)$ the abelian category of smooth~$\R$-representations of~$\G$, i.e. smooth $\R[\G]$-modules. Let~$\mathfrak{Z}_{\R}(\G)$ be the centre of~$\Rep_\R(\G)$.
	
	For~$\H$ a closed subgroup of~$\G$, and~$\theta$ an~$\R$-character of~$\H$, we let~$(-)_{\H,\theta}\colon\Rep_{\R}(\G)\rightarrow \R\text{-Mod}$ denote the functor of~$\theta$-coinvariants, where for a smooth $\R$-representation $(\pi,\mathscr{V})$ of $\G$ we set
	\[\mathscr{V}_{\H,\theta}=\mathscr{V}/\langle \pi(h)v-\theta(h)v\mid h\in\H, v\in\mathscr{V}\rangle.\]
	If $H$ is a filtered union of pro-$p$ compact open subgroups, this functor is exact cf.\ \cite[I.4.11]{Vig96}. Moreover, we set $\mathscr{V}^{\H,\theta}=\{v\in\mathscr{V}\mid\pi(h)v=\theta(h)v\text{ for all }h\in\H\}$. Note that if $\J\leq\operatorname{N}_\G(\H)$ also normalizes $\theta$, i.e.\ $\theta(h)=\theta(jhj^{-1})$ for all $h\in\H,j\in\J$, then both $\mathscr V_{\H,\theta}$ and $\mathscr V^{\H,\theta}$ are naturally smooth $R[\J]$-modules.
	
	Moreover, for an element $g\in\G$ we set $^{g}\H=g\H g^{-1}$ and $\H^g=g^{-1}\H g$. If $(\pi,\mathscr V)$ is a smooth representation of $\H$ we set $^{g}\pi$ (respectively $\pi^g$) to be the smooth representation of $^{g}\H$ (respectively $\H^g$) defined by $^{g}\pi(h)v=\pi(g^{-1}hg)v$ (respectively $\pi^g(h)v=\pi(ghg^{-1})v$) for $v\in\mathscr V$.

	\subsection{Parabolic subgroups and the Bruhat decomposition}\label{parabolics}
	Recall that a parabolic subgroup~$\mathbb{P}$ of~$\mathbb{G}$ is called \emph{standard} if it contains~$\mathbb{B}$, and (as in \cite[Theorem 15.4.6]{Springer}) we have a bijection
	\begin{align*}
		\text{Subsets of }\Delta &\rightarrow \text{Standard parabolic subgroups}\\
		\theta&\mapsto \mathbb{P}_\theta,
	\end{align*}
	and we write~$\P_\theta=\mathbb{P}_\theta(\F)$ for the~$\F$-points as above. So~$\P_{\emptyset}=\B$, and a maximal standard parabolic subgroup is of the form~$\P_{\Delta\backslash \{\alpha\}}$ for some simple root~$\alpha\in\Delta$.  
	
	A standard parabolic subgroup~$\mathbb{P}_\theta$ has a standard Levi decomposition~$\mathbb{P}_\theta=\mathbb{M}_{\theta}\mathbb{N}_{\theta}$ where~$\mathbb{N}_\theta\subseteq\mathbb{U}$ denotes the unipotent radical of~$\mathbb{P}_{\theta}$ and the Levi factor~$\mathbb{M}_{\theta}$ contains~$\mathbb{T}$.   
	
	Let~$\W$ denote the Weyl group of~$\Phi$, and for~$\alpha\in\Delta$, write~$w_\alpha$ for its corresponding reflection.  For a subset~$\theta$ of~$\Delta$ we write~$\W_\theta=\langle w_{\alpha}\mid\alpha\in\theta\rangle$ {(with the understanding that~$\W_\emptyset=1$ the trivial group)}. 
	For $\theta,\eta\subseteq\Delta$ we set
	\begin{align*}
		[\W_\theta\backslash\W]&=\{w\in \W\mid w^{-1}\theta >0\}\\
		[\W/\W_{\eta}]&=\{w\in \W\mid w\eta >0\}\\
		[\W_\theta\backslash\W/\W_{\eta}]&=[\W_\theta\backslash\W]\cap [\W/\W_{\eta}].
	\end{align*}
	
	Any $w\in \W$ can be represented by an element of~$\N_{\mathbb{G}}(\mathbb{S})(F)$ and we write~$\P_{\theta} w \P_{\eta}$ for the double coset associated to~$w\in [\W_{\theta}\backslash \W/\W_{\eta}]$ (as the double coset is independent of the choice of representative).
	\begin{proposition}[The Bruhat Decomposition, {\cite[1.3.1]{Casselman}}]
		For subsets~$\theta,\eta$ of~$\Delta$, we have a decomposition
		\[\G=\bigsqcup_{w\in[\W_{\theta}\backslash \W/\W_{\eta}]} \P_{\theta} w \P_{\eta}.\]
	\end{proposition}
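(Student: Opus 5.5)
The statement is the standard relative Bruhat decomposition, cited from Casselman. I would deduce it from the decomposition $\G=\bigsqcup_{w\in\W}\B w\B$ for the Borel subgroup. That decomposition comes from the Tits system $(\G,\B,\N_{\mathbb{G}}(\mathbb{S})(\F),\{w_\alpha\}_{\alpha\in\Delta})$ attached to the quasi-split group $\G$ via the relative root system $\Phi$ (Borel--Tits). I take it as known, exactly as the paper does.

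\emph{Covering.} Since $\P_\theta=\bigcup_{u\in\W_\theta}\B u\B$ and $\P_\eta=\bigcup_{v\in\W_\eta}\B v\B$, we can use the Tits axiom $\B s\B\cdot\B w\B\subseteq \B sw\B\cup\B w\B$ for simple reflections $s$. This gives $\P_\theta w\P_\eta=\bigcup_{w'\in\W_\theta w\W_\eta}\B w'\B$. Hence $\G=\bigcup_{\W_\theta w\W_\eta}\P_\theta w\P_\eta$, and the double coset $\P_\theta w\P_\eta$ depends only on the class of $w$ in $\W_\theta\backslash\W/\W_\eta$.

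\emph{Disjointness.} Distinct double cosets of $\W$ give unions of $\B$-double cosets indexed by disjoint subsets of $\W$. By disjointness of the Borel Bruhat decomposition, the sets $\P_\theta w\P_\eta$ for distinct classes are therefore disjoint.

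\emph{Indexing by $[\W_\theta\backslash\W/\W_\eta]$.} It remains to see that each class in $\W_\theta\backslash\W/\W_\eta$ contains exactly one element $w$ with $w^{-1}\theta>0$ and $w\eta>0$. This is the combinatorial core and the only step needing care. First, $w^{-1}\theta>0$ holds iff $w$ is of minimal length in $\W_\theta w$: if some $\alpha\in\theta$ had $w^{-1}\alpha<0$, then $\ell(w_\alpha w)<\ell(w)$. Symmetrically, $w\eta>0$ holds iff $w$ is of minimal length in $w\W_\eta$. The standard Coxeter-group fact is that each double coset $\W_\theta w\W_\eta$ has a unique element of minimal length, and that this element is minimal in both its left and right cosets. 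One then shows that any element minimal on both sides is that unique minimal element. For this I would use the factorization $x=u\,d\,v$ with $u\in\W_\theta$, $v\in\W_\eta$, $d$ the minimal element and $\ell(x)=\ell(u)+\ell(d)+\ell(v)$ for suitable choices (Bourbaki/Carter). Given this, if $x$ is left- and right-minimal, then $u$ and $v$ can be taken trivial, so $x=d$.

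Combining the three steps yields the stated disjoint decomposition. The main obstacle is only the Coxeter-theoretic uniqueness of distinguished double coset representatives, which I would quote from the standard literature rather than reprove.
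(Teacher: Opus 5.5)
Your proposal is correct. The paper gives no proof of its own, only the citation to Casselman, and your argument is the standard one behind that reference: reduce to the Borel--Tits decomposition $\G=\bigsqcup_{w\in\W}\B w\B$ via the Tits axioms, then use the Coxeter-group fact that each double coset $\W_\theta w\W_\eta$ contains a unique element that is minimal both in $\W_\theta w$ and in $w\W_\eta$. Your length-additive factorization argument, which shows that a left- and right-minimal element must be the minimal one, correctly handles the only step needing care.
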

	We have the Bruhat (partial) order~$\leqslant$ on double cosets in the above decomposition given by~$ \P_{\theta} w\P_{\eta}\leqslant  \P_{\theta} w'\P_{\eta}$ if $ \P_{\theta} w\P_{\eta}\subseteq  \overline{\P_{\theta} w'\P_{\eta}}$ (contained in the closure for the~$p$-adic topology on~$\G$).  The identity coset is minimal with respect to this ordering.
	
	\subsection{The universal unramified twist and parabolic inductions}
	Let~$\R$ be a~$\mathbb{Z}[\sqrt{q}^{-1}]$-algebra.  Let~$\H$ be a closed subgroup of~$\G$ which contains a compact open subgroup~$\K_\H$ of pro-order invertible in~$\R$. Then there is an~$\R$-Haar measure on~$\H$ normalized to give unit volume to~$\K_\H$, and we write ~$\delta_{\H}$ for the modulus character of~$\H$ (cf.~ \cite{Vig96}). Whenever, we choose an $\R$-Haar measure on a closed subgroup $\H$, we assume that the volume of a pro-$p$ compact open subgroup is a unit in $\R$. Note that,~if~$\H$ is a parabolic subgroup of~$\G$, then $\delta_{\H}$ is valued in $q^\mathbb Z$. 
	
	We set
	\[\G^\circ=\{g\in\G\mid\chi(g)\in\mathfrak{o}_\F^\times\text{ for all }\chi\in\X^*(\mathbb{G})\}.\]
	Then~$\G^{\circ}$ is an open normal subgroup of~$\G$, and~$\G/\G^{\circ}$ is a free abelian group of rank equal to the rank of a maximal split torus in the centre of~$\mathbb{G}$. We define $\chi_{\G,\R}$ to be the universal unramified character of $\G$ over $\R$, i.e.,
	\begin{align*}
		\chi_{\G,\R}\colon \G&\to \R[\G/\G^\circ]^\times\\
		g&\mapsto g\G^\circ.
	\end{align*}

	Let~$\P=\M\N$ be a standard Levi decomposition of a standard parabolic subgroup~$\P$ of~$\G$.  We denote by ~$i_{\M,\P}^{\G}\colon\Rep_{\R}(\M)\rightarrow \Rep_{\R}(\G)$ the functor of normalized parabolic induction:
	\[i_{\M,\P}^{\G}(-)=\operatorname{Ind}_{\P}^{\G}(\delta_{\P}^{1/2}\otimes_\R(-)),\]
	and write~$r_{\M,\P}^{\G}\colon\Rep_{\R}(\G)\rightarrow\Rep_{\R}(\M)$ for its left adjoint, the (normalized) Jacquet functor.


	
	\subsection{Coefficients of gamma factors}
	Classically, as in the Rankin--Selberg method, the objects involved in the construction of $\gamma$-factors are often first defined to be convergent integrals (i.e.\ they are Laurent series that converge in certain regions) and afterwards shown to be rational functions. However, in this article following Shahidi we take a different approach, in particular the $\gamma$-factors we construct are rational functions by definition. For our applications, we will expand $\gamma$-factors as Laurent series, for which the following result will be needed. For a commutative ring $\R$ we set 
	\[\mathcal{S}_{X,\R}=\{a_{r} X^{r}+a_{(r+1)} X^{r+1}+\cdots +a_{s-1}X^{s-1}+a_s X^{s}\mid r,s\in\mathbb Z, r\leq s\text{ and }a_{r},a_s\in \R^\times\},\]
	i.e.,~$\mathcal{S}_{X,\R}$ is the multiplicative subset of~$\R[X^{\pm 1}]$ consisting of elements, whose leading and trailing coefficient are units.
	\begin{proposition}\label{powerseriesexpansion}
		Let $\R$ be any commutative ring and $Q\in\mathcal{S}_{X,\R}^{-1}\R[X^{\pm 1}]$. Then there is a unique Laurent series $L_Q\in \R[[X]][X^{-1}]$ such that for any representative $r/s$ of $Q$ we have $s\cdot L_Q=r$.
	\end{proposition}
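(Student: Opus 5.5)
The plan is to reduce to inverting a single element of $\mathcal{S}_{X,\R}$ inside $\R[[X]][X^{-1}]$, and then to check that the resulting series does not depend on the chosen representative.

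\emph{Invertibility.} Let $s=a_rX^r+\cdots+a_sX^s\in\mathcal{S}_{X,\R}$ with $a_r,a_s\in\R^\times$. Write $s=a_rX^r(1+Xh)$ with $h\in\R[X]$. Here $a_rX^r$ is a unit in $\R[[X]][X^{-1}]$, and $1+Xh$ is a unit in $\R[[X]]$, with inverse $\sum_{k\ge0}(-Xh)^k$; this sum converges $X$-adically. Hence every element of $\mathcal{S}_{X,\R}$ is invertible in $\R[[X]][X^{-1}]$. Only the trailing coefficient is used; the leading-coefficient condition plays no role here. By the universal property of localization, the inclusion $\R[X^{\pm1}]\hookrightarrow\R[[X]][X^{-1}]$ therefore extends uniquely to a ring homomorphism
\[\iota\colon\mathcal{S}_{X,\R}^{-1}\R[X^{\pm1}]\to\R[[X]][X^{-1}].\]

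\emph{Existence.} Set $L_Q:=\iota(Q)$. If $r/s$ is any representative of $Q$, then $Q=r/s$ in the localization, and applying $\iota$ gives $L_Q=\iota(r)\iota(s)^{-1}$, that is, $s\cdot L_Q=r$. The point to be careful about is the meaning of ``representative'': two fractions $r/s$ and $r'/s'$ represent the same $Q$ when $t(rs'-r's)=0$ for some $t\in\mathcal{S}_{X,\R}$. This is no problem, because $\iota$ is well defined on the localization, so $s\cdot L_Q=r$ holds for every representative, not just one.

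\emph{Uniqueness.} Suppose $L,L'$ both satisfy $sL=r$ for some fixed representative $r/s$. Then $s(L-L')=0$, and since $s$ is a unit in $\R[[X]][X^{-1}]$, we get $L=L'$. So uniqueness follows from the same invertibility statement. The only step needing real care is the first one, invertibility, and there it suffices that the trailing coefficient is a unit; no Noetherian or domain hypothesis on $\R$ is needed.
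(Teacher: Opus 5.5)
Your proof is correct and follows essentially the same route as the paper. Both arguments invert $s$ in $\R[[X]][X^{-1}]$ using only that its trailing coefficient is a unit; your geometric series $\sum_k(-Xh)^k$ is the closed form of the paper's coefficient-by-coefficient recursion. Both also get uniqueness from the fact that such an $s$ cannot annihilate a nonzero Laurent series.

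The only difference is packaging. You obtain independence of the representative from the universal property of localization. The paper instead compares two representatives $r_1/s_1$ and $r_2/s_2$ directly: it uses that elements of $\mathcal{S}_{X,\R}$ are non-zero-divisors in $\R[X^{\pm1}]$ to get $r_1s_2=r_2s_1$, and then applies a trailing-coefficient argument to $s_1s_2(L_1-L_2)=0$.
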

	\begin{proof}
		We first prove that for any $r\in\R[X^{\pm1}]$ and $s\in\mathcal{S}_{X,\R}$ there is an $L\in \R[[X]][X^{-1}]$ such that $s\cdot L=r$. It is straightforward to see that we may assume that $r=1$. Let
		$$s=\sum_{i=m}^n\alpha_iX^{i},$$
		where $\alpha_m,\alpha_n\in \R^\times$. Set $t_{j}=0$ if $j<-m$ and $t_{-m}=\alpha_m^{-1}$. Then for $L_0=t_{-m}X^{-m}$ we clearly have that $s\cdot L_0-1\in X\R[X]$. 
		Assume now that for some $k\geq0$ we have found elements $t_{-m},\dotsc, t_{-m+k}\in \R$  such that $L_k=\sum_{j=0}^{k}t_{-m+j}X^{-m+j}\in \R[X^{\pm1}]$ satisfies $s\cdot L_k-1\in X^{k+1}\R[X]$. Then we set 
		$$t_{-m+k+1}=-\alpha_m^{-1}(\alpha_{m+1}t_{-m+k}+\dotsc+\alpha_nt_{-n+k+1})$$
		and a quick computation shows that $L_{k+1}=\sum_{j=0}^{k+1}t_{-m+j}X^{-m+j}$ satisfies that $s\cdot L_{k+1}-1\in X^{k+2}\R[X]$. Hence by induction we obtain a Laurent series $L$ such that $s\cdot L-1=0$.
		
		Now assume that $Q=r_1/s_1=r_2/s_2$ and there are two Laurent series $L_1,L_2\in \R[[X]][X^{-1}]$ such that $s_i\cdot L_i=r_i$ for $i=1,2$.  Since $\mathcal{S}_{X,\R}$ contains no zero-divisors we must have that $r_1s_2-r_2s_1=0$. This implies that $s_1s_2\cdot(L_1-L_2)=0$ and by considering the trailing coefficients of this equation we obtain that $L_1=L_2$.
		
	\end{proof}
	
	\begin{definition}
		For~$L\in\R[[X]][X^{-1}]$ and~$t\in\mathbb{Z}$, we write~$L_t$ for the coefficient of~$X^t$ in $L$.  
	\end{definition}
	
	\section{Co-Whittaker representations}\label{seccowhit}
	For this section, we let
	\begin{equation*}
		\R_0=\begin{cases}\mathbb{Z}[1/p,\mu_{p^\infty}]&\text{if char}(F)=0;\\\\\mathbb{Z}[1/p,\mu_{p}]&\text{if char}(F)=p.
		\end{cases}
	\end{equation*}
	The inclusion of~$p$-power roots of unity (or~$p$-th roots in the positive characteristic setting) in~$\R_0$, allows us to fix a nontrivial character~$\psi\colon \F\rightarrow \R_0^\times$.  
	
	Let~$\R$ be a commutative~$\R_0$-algebra and we set~$\psi_\R\coloneqq\psi\otimes\R$. Then $\psi_\R$ is also nontrivial, which can be seen as follows. Let $\iota\colon\R_0\to\R$ be the ring homomorphism that endows $\R$ with an $\R_0$-algebra structure and suppose that for $x\in\F$ the element $\zeta=\psi(x)\in\R_0$ is not equal to $1$. We claim then that $\psi_\R(x)=\iota(\zeta)$ does not equal $1_\R$, the multiplicative unit in $\R$. Note that there is some smallest $p$-power $p^\alpha$ such that $\zeta^{p^\alpha}=1$. However, then
	$$1-\zeta^{p^{\alpha}-1}+1-\zeta^{p^{\alpha}-2}+\dotsc+1-\zeta=p^\alpha,$$
	which implies that $1-\zeta\in\R_0^\times$. This implies that $\iota(1-\zeta)\in\R$ is nonzero and the claim follows.

	\subsection{Blocks}
	Let~$e_{\mathfrak{t}}\in\mathfrak{Z}_{\R_0}(\G)$ be a finite sum of primitive idempotents; for example~$e_{\mathfrak{t}}$ could be the idempotent associated to the depth~$n$ subcategory of~$\Rep_{\R_0}(\G)$ of \cite[Appendix A]{Datfinitude}, or for a classical group~$\G$ with~$p$ odd a primitive idempotent associated to an ``endo-parameter'' in \cite{HKSS}, or when~$\G$ is tamely ramified and~$p$ does not divide the order of its Weyl group, an idempotent associated to a Dat--Fintzen block in \cite{datfintzen}. 
	
	Via the morphism of centres of categories $\mathfrak{Z}_{\R_0}(\G)\rightarrow \mathfrak{Z}_{\R}(\G)$, as in \cite[3.1]{DHKMconj}, this defines an idempotent, which we still denote by~$ e_{\mathfrak{t}}$, of the centre of~$\Rep_{\R}(\G)$.  For an~$\R$-representation~$\Pi$ of~$\G$ we write
	\[\Pi_{\mathfrak{t}}\coloneqq e_{\mathfrak{t}}\Pi,\quad \Rep_{\R}(\mathfrak{t})\coloneqq e_{\mathfrak{t}}\Rep_{\R}(\G),\]
	i.e.,~$\Pi_{\mathfrak{t}}$ is the direct summand of~$\Pi$ in~$\Rep_{\R}(\mathfrak{t})=e_{\mathfrak{t}}\Rep_{\R}(\G)$.  If~$\Pi=\Pi_{\mathfrak{t}}$, we say that~$\Pi$ has \emph{type} $\mathfrak{t}$.
	
	\subsection{Whittaker data and representations of Whittaker type}\label{whittdata}

	\begin{definition} 
 A \emph{Whittaker datum for $\G$} is a pair~$(\U,\Psi)$ where~$\U$ is the group of $\F$-points of the unipotent radical of a Borel subgroup of~$\mathbb{G}$, and~$\Psi\colon\U\rightarrow\R_0^\times$ is an~$\R_0$-character of~$\U$ whose restriction to all simple root subgroups of~$\U$ is non-trivial (it is \emph{non-degenerate}), i.e. the restriction of~$\Psi$ to~$\U_{\alpha}\coloneqq\mathbb{U}_{\alpha}(\F)$ is non-trivial for all~$\alpha\in\Delta$.
		
	\end{definition}
	Let~$\B=\T\U$ be a Borel subgroup of~$\G$.  Choosing a character~$\psi\colon\F\rightarrow \R_0^\times$, gives rise to a well-known bijective correspondence between Whittaker data of the form~$(\U,\Psi)$ over~$\R_0$ and pinnings for~$\G$ compatible with~$(\B,\T)$, see for example \cite[§ 5]{DHKMconj}. 
	
	For a Whittaker datum $(\U,\Psi)$ of $\G$ and a commutative $\R_0$-algebra $\R$, we set $\Psi_\R=\Psi\otimes\R$.
	 
	Let~$\mathfrak w=(\U,\Psi)$ be a Whittaker datum for~$\G$. Moreover, let~$\P=\M\N$ be a standard parabolic subgroup of~$\G$. Then, as in \cite{BushnellHenniart}, by setting~$\U_{\M}=\U\cap\M$ and~$\Psi_{\M}=\Psi_{|\U_{\M}}$, the pair~$\mathfrak w_{\M}=(\U_{\M},\Psi_\M)$ is a Whittaker datum for~$\M$.

	We continue with~$e_{\mathfrak{t}}\in\mathfrak{Z}_{\R_0}(\G)$ denoting a finite sum of primitive idempotents, as in the last section.    We will need the following result:

	\begin{proposition}\label{propcoinv}
		Let~$(\pi,V)$ be a smooth~$\R$-representation of~$\G$ of type~$\mathfrak{t}$ and let $(\U,\Psi)$ be a Whittaker datum for $\G$. Then we have an isomorphism of~$\R$-modules
		\[V_{\U,\Psi_\R}\simeq \Hom_{\R[\G]}(\operatorname{c-Ind}_{\U}^{\G}(\Psi_\R),V).\]
	\end{proposition}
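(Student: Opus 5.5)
The plan is to pass through the contragredient and compact induction, and to use the finiteness of blocks to trade the full smooth dual for something we control. By Frobenius reciprocity for compact induction from the open-in-nothing closed subgroup~$\U$ (note~$\U$ is not open, so compact induction is not left adjoint to restriction in general), the naive reciprocity instead gives
\[\Hom_{\R[\G]}(\operatorname{c-Ind}_{\U}^{\G}\Psi_\R,V)\quad\text{versus}\quad \Hom_{\R[\U]}(\Psi_\R, V)=V^{\U,\Psi_\R},\]
which is the wrong side. So I would not use reciprocity for~$\U$ directly. Instead I write~$\U=\bigcup_k \U_k$ as an increasing union of compact open pro-$p$ subgroups, and choose compact open pro-$p$ subgroups~$\K_k\subseteq\G$ with~$\K_k\cap\U=\U_k$ on which~$\Psi$ extends to a character~$\Psi_k$ (Rodier-type approximation, as in Bushnell--Henniart, or \cite[§5]{DHKMconj}). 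Then~$\operatorname{c-Ind}_\U^\G\Psi_\R$ is the colimit/limit of the~$\operatorname{c-Ind}_{\K_k}^\G\Psi_{k,\R}$ in the appropriate sense.

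The key steps, in order, are as follows. First, since~$\K_k$ is open and pro-$p$, we have
\[\Hom_{\R[\G]}(\operatorname{c-Ind}_{\K_k}^\G\Psi_{k},V)\simeq V^{\K_k,\Psi_k}\simeq V_{\K_k,\Psi_k},\]
the last isomorphism because the~$\Psi_k$-isotypic projector~$e_{\K_k,\Psi_k}$ is defined over~$\R$ (pro-order invertible) and splits~$V$. Second, I identify~$V_{\U,\Psi}=\varinjlim_k V_{\U_k,\Psi}$ and relate the transition maps: for~$k\le k'$ the element~$e_{\K_{k'},\Psi_{k'}}$ applied to~$e_{\K_k,\Psi_k}$-vectors gives the system of maps. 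Third, I use the type~$\mathfrak t$ hypothesis: by Dat's finiteness \cite{Datfinitude} (and the second adjointness / Bernstein stabilisation for the block~$e_{\mathfrak t}$), the idempotents~$e_{\mathfrak t}e_{\K_k,\Psi_k}$ stabilise in the sense that for~$k$ large the maps~$V^{\K_k,\Psi_k}\to V^{\K_{k'},\Psi_{k'}}$ are isomorphisms for every~$V$ of type~$\mathfrak t$. This is the Bushnell--Henniart/Bernstein-type statement that~$e_{\mathfrak t}\operatorname{c-Ind}_\U^\G\Psi\simeq e_{\mathfrak t}\operatorname{c-Ind}_{\K_k}^\G\Psi_k$ for~$k\gg0$. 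Granting this, both sides equal~$V^{\K_k,\Psi_k}$ for large~$k$, and the isomorphism follows and is natural.

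The main obstacle is the stabilisation in the third step over a general~$\R$ rather than~$\mathbb C$. I would first try to prove it for~$\R=\R_0$ using that~$e_{\mathfrak t}\Rep_{\R_0}(\G)$ has a finitely generated projective generator (depth-$n$ generator~$\operatorname{c-Ind}_{\G_{x,n+}}^\G 1$ summed over finitely many vertices), so that~$e_{\mathfrak t}\operatorname{c-Ind}_{\K_k}^\G\Psi_k$ are all summands of finitely many such and the increasing chain of images stabilises by Noetherianity of the Hecke algebra over its centre \cite{Datfinitude}. I would then base change to~$\R$, using that all objects involved are projective and compatible with~$-\otimes_{\R_0}\R$, and that~$e_{\mathfrak t}$ commutes with base change.
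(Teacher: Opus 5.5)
You handle the right-hand side correctly, and in the same way as the paper: granting the Rodier-type isomorphism $e_{\mathfrak t}\operatorname{c-Ind}_\U^\G(\Psi_\R)\simeq e_{\mathfrak t}\operatorname{c-Ind}_\K^\G(\omega)$ for a suitable pro-$p$ pair $(\K,\omega)$, you get $\Hom_{\R[\G]}(\operatorname{c-Ind}_\U^\G\Psi_\R,V)\simeq V^{\K,\omega}$ for $V$ of type $\mathfrak t$.

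\textbf{The gap is the left-hand side.} Nothing you write shows $V_{\U,\Psi_\R}\simeq V^{\K_k,\Psi_k}$.
\begin{itemize}
\item In Step 2, $V_{\U,\Psi}=\varinjlim_k V_{\U_k,\Psi}$ is true, but $V_{\U_k,\Psi}\simeq V^{\U_k,\Psi}$, which is in general much larger than $V^{\K_k,\Psi_k}$.
\item Replacing it by the system $(V^{\K_k,\Psi_k},\, e_{\K_{k'},\Psi_{k'}})$ is Rodier's lemma. That lemma needs more than $\K_k\cap\U=\U_k$: you need an Iwahori factorization of $\K_k$ with $\Psi_k$ trivial on the parts in the opposite Borel, and those parts shrinking to $1$. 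Only then does each $v$ satisfy $e_{\K_k,\Psi_k}v\equiv v$ in $V_{\U,\Psi}$ for $k\gg0$.
\item Even then, you must show this direct system is eventually constant on type-$\mathfrak t$ objects. That is not the same statement as the compact-induction isomorphism. By Yoneda the transition maps correspond to maps $e_{\mathfrak t}\operatorname{c-Ind}_{\K_{k'}}^\G\Psi_{k'}\to e_{\mathfrak t}\operatorname{c-Ind}_{\K_k}^\G\Psi_k$, and their compatibility with the maps into $e_{\mathfrak t}\operatorname{c-Ind}_\U^\G\Psi$ is not formal. Knowing that all the $V^{\K_k,\Psi_k}$ are abstractly isomorphic to $\Hom(\operatorname{c-Ind}_\U^\G\Psi_\R,V)$ says nothing about whether the transition maps are isomorphisms.
\item Your Noetherianity sketch does not close this either. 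A stabilising increasing chain of images only shows that $e_{\mathfrak t}\operatorname{c-Ind}_{\K_k}^\G\Psi_k\to e_{\mathfrak t}\operatorname{c-Ind}_\U^\G\Psi$ is eventually surjective, not injective.
\end{itemize}

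\textbf{The missing idea is the adjunction you set aside at the start.} Compact induction from the closed subgroup $\U$ is not left adjoint to restriction. However, smooth induction $\Ind_\U^\G$ is right adjoint to restriction, and this puts the coinvariants on the correct side. For every $\R$-module $\mathcal M$,
\[\Hom_\R(V_{\U,\Psi_\R},\mathcal M)\simeq\Hom_{\R[\U]}(V,\Psi_\R\otimes\mathcal M)\simeq\Hom_{\R[\G]}(V,\Ind_\U^\G(\Psi_\R\otimes\mathcal M)_{\mathfrak t}),\]
where the last step uses that $V$ has type $\mathfrak t$. The paper then proceeds as follows.
\begin{itemize}
\item It identifies $\Ind_\U^\G(\Psi_\R\otimes\mathcal M)_{\mathfrak t}$ with $\Hom_\R(\operatorname{c-Ind}_\U^\G(\Psi_\R^{-1})_{\mathfrak t^\vee},\mathcal M)^{sm}$.
\item It applies Rodier approximation on the contragredient block to replace this by $\Ind_\K^\G(\omega\otimes\mathcal M)_{\mathfrak t}$.
\item Frobenius reciprocity for the open subgroup $\K$, together with the splitting $V=V^{\K,\omega}\oplus(1-e_{\K,\omega})V$, gives $\Hom_\R(V^{\K,\omega},\mathcal M)\simeq\Hom_\R(\Hom_{\R[\G]}(\operatorname{c-Ind}_\U^\G\Psi_\R,V),\mathcal M)$, naturally in $\mathcal M$.
\item Yoneda finishes the argument.
\end{itemize}
This treats the coinvariant side with the same kind of input you use for the right-hand side, and needs no stabilisation of a direct system.
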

	\begin{proof}
		By Rodier approximation (cf.,~\cite[Appendix A]{Hansen}), there is a compact open pro-$p$ subgroup $\K$ of $\G$ and a character $\omega\colon\K\to\R^\times$ such that $\operatorname{c-Ind}_{\U}^\G(\Psi_\R)_{\mathfrak t}\simeq\operatorname{c-Ind}_{\K}^\G(\omega)_{\mathfrak t}$. The construction of Rodier also implies that $\operatorname{c-Ind}_{\U}^\G(\Psi_\R^{-1})_{\mathfrak t^\vee}\simeq\operatorname{c-Ind}_{\K}^\G(\omega^{-1})_{\mathfrak t^\vee}$.
		Let $\mathcal{M}$ be any $\R$-module and a straightforward computation yields that
		\begin{align*}
			\Ind_{\U}^{\G}(\Psi_\R\otimes \mathcal{M})_{\mathfrak t}&\cong\Hom_\R(\operatorname{c-Ind}_{\U}^\G(\Psi_\R^{-1})_{\mathfrak t^\vee},\mathcal M)^{sm}\\
			&\cong\Hom_\R(\operatorname{c-Ind}_{\K}^\G(\omega^{-1})_{\mathfrak t^\vee},\mathcal M)^{sm}\\
			&\cong\Ind_{\K}^\G(\omega\otimes \mathcal{M})_{\mathfrak t},
		\end{align*}
		where $\Hom_\R(\pi,\mathcal M)^{sm}$ denotes the smooth vectors in $\Hom_\R(\pi,\mathcal M)$ for any smooth $\R$-representation $\pi$ of $\G$.
		
		As $\K$ is compact and pro-$p$ we have an idempotent operator $e_{\K,\omega}\colon V\to V^{\K,\omega}$, given by 
		\[e_{\K,\omega}(v)=\int_{\K}\omega^{-1}(k)\pi(k)vdk,\]
		where we choose a nontrivial~$\R$-Haar measure on $\K$ such that $\K$ has volume one. This idempotent induces a direct sum decomposition of $\R[\K]$-modules
		$V=V^{\K,\omega}\oplus(1-e_{\K,\omega})(V)$, which in particular implies that
		$$\Hom_{\R[\K]}(V,\omega\otimes \mathcal{M})\simeq \Hom_{\R[\K]}(V^{\K,\omega},\omega\otimes \mathcal{M}).$$
		
		These facts allow us to compute
		\begin{align*}
			\Hom_{\R}(V_{\U,\Psi_\R},\mathcal{M})&\simeq\Hom_{\R[\G]}(V,\Ind_{\U}^{\G}(\Psi_\R\otimes \mathcal{M})_{\mathfrak t})\\
			&\simeq\Hom_{\R[\G]}(V,\Ind_{\K}^G(\omega\otimes \mathcal{M}))\\
			&\simeq\Hom_{\R[\K]}(V,\omega\otimes \mathcal{M})\\
			&\simeq\Hom_{\R[\K]}(V^{\K,\omega},\omega\otimes \mathcal{M})\\
			&\simeq \Hom_{\R}(\Hom_{\R[\K]}(\omega,V),\mathcal{M})\\
			&\simeq \Hom_{\R}(\Hom_{\R[\G]}(\operatorname{c-Ind}_\K^\G(\omega)_{\mathfrak t},V),\mathcal{M})\\
			&\simeq\Hom_{\R}(\Hom_{\R[\G]}(\operatorname{c-Ind}_\U^\G(\Psi_\R),V),\mathcal{M}).
		\end{align*}
		Overall, we obtain an isomorphism $\Hom_{\R}(V_{\U,\Psi_\R},\mathcal{M})\simeq\Hom_{\R}(\Hom_{\R[\G]}(\operatorname{c-Ind}_\U^\G(\Psi_\R),V),\mathcal{M})$ for any $\R$-module $\mathcal{M}$, which is moreover functorial in $\mathcal{M}$. Hence Yoneda's lemma implies that we have an isomorphism of $\R$-modules~$V_{\U,\Psi_\R}\simeq\Hom_{\R[\G]}(\operatorname{c-Ind}_\U^\G(\Psi_\R),V)$.
	\end{proof}

	To a Whittaker datum~$\mathfrak w=(\U,\Psi)$ for $\G$ and a finite sum of primitive idempotents $e_{\mathfrak t}$ in $\mathfrak Z_{\R_0}(\G)$, we associate the~$\R_0$-representations
	\[\Gamma^{\G}_{\mathfrak w}=\operatorname{c-Ind}_{\U}^{\G}(\Psi),\quad \Gamma^{\G}_{\mathfrak w,\mathfrak{t}}=e_{\mathfrak{t}}\operatorname{c-Ind}_{\U}^{\G}(\Psi),\] 
	of $\G$ and their endomorphism algebras:
	\[\E^{\G}_{\mathfrak w}=\End_{\R_0[\G]}(\Gamma^{\G}_{\mathfrak w}),\quad \E^{\G}_{\mathfrak w,\mathfrak{t}}=\End_{\R_0[\G]}(\Gamma^{\G}_{\mathfrak w,\mathfrak{t}}).\]
	Moreover, for a commutative $\R_0$-algebra $\R$ we set
	\[\Gamma^{\G}_{\mathfrak w,\R}=\operatorname{c-Ind}_{\U}^{\G}(\Psi_\R),\quad \Gamma^{\G}_{\mathfrak w,\R,\mathfrak{t}}=e_{\mathfrak{t}}\operatorname{c-Ind}_{\U}^{\G}(\Psi_\R),\quad \E^{\G}_{\mathfrak w,\R,\mathfrak{t}}=\End_{\R[\G]}(\Gamma^{\G}_{\mathfrak w,\R,\mathfrak{t}}),\quad \E^{\G}_{\mathfrak w,\R}=\End_{\R[\G]}(\Gamma^{\G}_{\mathfrak w,\R}).\]
	
	Let us introduce a further Noetherian base ring:  we say~$\G$ satisfies~$(*)$ if one half of the sum of positive coroots of~$\mathbb{G}$ considered as a cocharacter of~$\mathbb{T}/\mathbb{Z}_{\mathbb{G}}$ lifts to an integral cocharacter of~$\mathbb{T}$, where~$\mathbb{Z}_{\mathbb{G}}$ denotes the centre of~$\mathbb{G}$.  Then set
\begin{equation*}
		\mathrm{S}_0=\begin{cases}\mathbb{Z}[1/p]&\text{if~$\G$ satisfies (*)};\\
		\mathbb{Z}[1/p,\mu_{p}]&\text{if~$\G$ does not satisfy~$(*)$, but either~$p$ is odd or char}(F)=p;\\
		\mathbb{Z}[1/p,\mu_{8}]&\text{otherwise.}
		\end{cases}
	\end{equation*}
	By \cite[Proposition 5.2]{DHKMconj}, there exists a~$\G$-stable~$\mathrm{S}_0$-submodule~$\mathcal{W}_{\mathfrak{w}}^{\G}$ of~$\Gamma^{\G}_{\mathfrak{w}}$
	such that the natural map
	\[\mathcal{W}_{\mathfrak{w}}^{\G}\otimes \R_0\rightarrow \Gamma^{\G}_{\mathfrak{w}}\]
	is an isomorphism.   If~$e_{\mathfrak{t}}\in\mathfrak Z_{\mathrm{S}_0}(\G)$, then  we let~$\mathcal{W}_{\mathfrak{w},\mathfrak{t}}^{\G}=e_{\mathfrak{t}}\mathcal{W}_{\mathfrak{w}}^{\G}$.

We then have the following results.
	\begin{lemma}\label{Ebasicfacts}
		\begin{enumerate}
			\item The~$\mathrm{S}_0$-representation~$\mathcal{W}_{\mathfrak{w},\mathfrak{t}}^{\G}$ of~$\G$ is finitely generated and projective.
			\item The~$\R$-representation~$\Gamma^{\G}_{\mathfrak w,\R,\mathfrak{t}}$ of~$\G$ is finitely generated and projective.  
			\item We have~$\Gamma^{\G}_{\mathfrak w,\R,\mathfrak{t}}\simeq \Gamma^{\G}_{\mathfrak w,\mathfrak{t}}\otimes_{\R_0} \R$ and~$\E^{\G}_{\mathfrak w,\R,\mathfrak{t}}\simeq \E^{\G}_{\mathfrak w,\mathfrak{t}}\otimes_{\R_0}\R$.
			\item The ring~$\E^{\G}_{\mathfrak w,\R,\mathfrak{t}}$ is~$\R$-torsion-free, commutative, and is a finitely generated $\R$-algebra. 
			\item If~$\R$ is a characteristic zero domain, then~$\E^\G_{\mathfrak w,\R,\mathfrak{t}}$ is reduced.
			\item Let~$\P$ be a standard parabolic of~$\G$, with~Levi factor~$\M$, and~$\M$-opposite~$\overline{\P}$, and suppose that~$\R$ is an~$\R_0[\sqrt{q}^{-1}]$-algebra.  Then, we have an isomorphism~$r^{\G}_{\M,\overline{\P}}(\Gamma^{\G}_{\mathfrak w,\R})\simeq \Gamma^{\M}_{\mathfrak w_{\M},\R}$, which defines (by functoriality of parabolic restriction) a natural map
			\[r^{\G}_{\M,\overline{\P}}\colon\E^{\G}_{\mathfrak w,\R}\rightarrow \E^{\M}_{\mathfrak w_{\M},\R}.\]
		\end{enumerate}
	\end{lemma}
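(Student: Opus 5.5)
We treat the six items of Lemma~\ref{Ebasicfacts} in order, deducing most of them from (1), from Proposition~\ref{propcoinv}, and from base change.

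\textbf{Items (1) and (2).} For (1) we use the result of \cite[Proposition 5.2]{DHKMconj} together with Rodier approximation as in the proof of Proposition~\ref{propcoinv}. Rodier approximation identifies $e_{\mathfrak t}\mathcal{W}^{\G}_{\mathfrak w}$ with a direct summand $e_{\mathfrak t}\operatorname{c-Ind}_{\K}^{\G}(\omega)$ for a pro-$p$ compact open $\K$. Since $\K$ is pro-$p$ and $p$ is invertible, $\operatorname{c-Ind}_{\K}^{\G}(\omega)$ is projective, so its direct summand is projective. Finite generation holds because $e_{\mathfrak t}$ is a finite sum of primitive idempotents, so $e_{\mathfrak t}\operatorname{c-Ind}_\K^\G(\omega)$ lies in finitely many blocks. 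Dat's finiteness theorem \cite{Datfinitude} then applies: in each such block the $\K$-invariant-type summands are finitely generated. Item (2) follows by base change to $\R$: projectivity and finite generation are preserved under $-\otimes_{\mathrm{S}_0}\R$, because $\operatorname{c-Ind}$ commutes with extension of scalars.

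\textbf{Item (3).} The first isomorphism is immediate since compact induction commutes with $\otimes\R$. For the endomorphism rings, write $\Gamma=\Gamma^{\G}_{\mathfrak w,\mathfrak t}$, which is finitely generated projective by (1)--(2). Then
\[\Hom_{\R[\G]}(\Gamma\otimes\R,\Gamma\otimes\R)=\Hom_{\R_0[\G]}(\Gamma,\Gamma\otimes\R).\]
Because $\Gamma$ is finitely generated, $\Hom_{\R_0[\G]}(\Gamma,-)$ commutes with filtered colimits. Because $\Gamma$ is projective, it is exact. Presenting $\R$ as a cokernel of free $\R_0$-modules therefore gives $\Hom(\Gamma,\Gamma\otimes\R)\simeq\End(\Gamma)\otimes\R$.

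\textbf{Items (4) and (5).} Commutativity follows from Proposition~\ref{propcoinv} with multiplicity one. We embed $\E$ into $\prod_\pi \End(\Hom(\Gamma,\pi))$, where $\pi$ runs over irreducible $\overline{\mathbb Q}$-representations; uniqueness of Whittaker models makes each factor commutative. Alternatively, we quote \cite{DHKMconj} and \cite{HelmCW}, where commutativity is shown via the centre surjecting onto $\E$ after a faithful flat extension.

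Torsion-freeness follows from (3): $\E$ over $\R_0$ is a submodule of $\Hom(\Gamma,\Gamma)$, and $\Gamma$ is torsion-free as a space of functions. Finite generation as an algebra follows from Dat's result that $\mathfrak Z$ acts with $\E$ finite over the Bernstein centre of a finitely generated block, and that centre is finitely generated.

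For (5), in characteristic zero the ring $\E\otimes\overline{\mathrm{Frac}}$ is a product of the rings describing generic families over Bernstein components. These are reduced, being localizations of coordinate rings of reduced varieties. Since $\E$ is torsion-free, it embeds into this product and is therefore reduced.

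\textbf{Item (6).} This is the Bushnell--Henniart computation \cite{BushnellHenniart} of the Jacquet module of a Gelfand--Graev representation along $\overline{\P}$, which works over any $\R$ where $\sqrt q$ is invertible. The map $r^{\G}_{\M,\overline{\P}}$ on endomorphisms is then just functoriality.

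\textbf{Main obstacle.} The hardest step is commutativity and finite generation over arbitrary $\R$ in (4). We would reduce both to $\R_0$ via (3) and then cite \cite{DHKMconj}, where they are proven.
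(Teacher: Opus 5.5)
Your treatment of (2), (3), (6), torsion-freeness and finite generation essentially follows the paper; the scalar-extension argument you give in (3) is the content of \cite[Lemma A.1]{DHKMconj}. However, your primary commutativity argument in (4) has a genuine gap.

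Each $\Hom(\Gamma,\pi)$ has dimension at most one, so the map $\E\to\prod_\pi\End(\Hom(\Gamma,\pi))$ does land in a commutative ring. But you give no reason why this map is injective, and multiplicity one does not supply one. Injectivity says: if $f\circ\phi=0$ for every map $f$ from $\Gamma^{\G}_{\mathfrak w,\overline{\mathbb{Q}},\mathfrak t}$ to an irreducible representation, then $\phi=0$. In other words, no nonzero endomorphism has image inside the radical of $\Gamma^{\G}_{\mathfrak w,\overline{\mathbb{Q}},\mathfrak t}$. Since $\E$ acts on each nonzero $\Hom(\Gamma,\pi)$ through a character, injectivity would immediately make $\E$ commutative and reduced. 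So it is at least as strong as what you are trying to prove; in effect it is the Bushnell--Henniart theorem.

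The fix is an ingredient you already use in (5). For $\R$ a characteristic zero domain, torsion-freeness and \cite[Lemma A.1]{DHKMconj} give an embedding $\E^{\G}_{\mathfrak w,\R,\mathfrak t}\hookrightarrow \End_{\overline K[\G]}(\Gamma^{\G}_{\mathfrak w,\overline K,\mathfrak t})$. The target is the $\Psi$-generic Bernstein centre by \cite[\S 9]{BushnellHenniart}, hence commutative and reduced. This yields commutativity and (5) together. Commutativity for arbitrary $\R$ then follows from $\E^{\G}_{\mathfrak w,\R,\mathfrak t}\simeq \E^{\G}_{\mathfrak w,\mathfrak t}\otimes_{\R_0}\R$. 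Your fallback description is also inaccurate: $\R_0\to\mathbb{C}$ is flat but not faithfully flat. What transports commutativity is the injection $\E\hookrightarrow\E\otimes\mathbb{C}$ coming from torsion-freeness, not faithfully flat descent.

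A smaller point concerns (1). The Rodier character $\omega$ is built from $\Psi$, so in general it takes values in $p$-power roots of unity. Hence the isomorphism $e_{\mathfrak t}\Gamma^{\G}_{\mathfrak w}\simeq e_{\mathfrak t}\operatorname{c-Ind}_{\K}^{\G}(\omega)$ holds over $\R_0$, not over $\mathrm{S}_0$. To get the statement for the $\mathrm{S}_0$-form $\mathcal W^{\G}_{\mathfrak w,\mathfrak t}$ you still need a descent step along $\mathrm{S}_0\subseteq\R_0$; the paper takes this from \cite[Corollary 5.12]{DHKMconj}. Also, finite generation there needs no finiteness theorem: $e_{\mathfrak t}\operatorname{c-Ind}_{\K}^{\G}(\omega)$ is a quotient of a cyclic module.
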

	\begin{proof}
		The first and second parts follow from \cite[Corollary 5.12]{DHKMconj}.  As~$\operatorname{c-Ind}_{\U}^{\G}(\Psi_\R)_{\mathfrak{t}}$ is projective it is torsion-free and hence so is its endomorphism algebra.  The third part follows as compact induction is compatible with scalar extension and \cite[Lemma A.1]{DHKMconj}.
		
		For~$e_{\mathfrak{s}}\in\mathfrak Z_{\mathrm{S}_0}(\G)$, the~$\mathrm{S}_0$-algebra~$\End_{\mathrm{S}_0[\G]}(\mathcal{W}_{\mathfrak{w},\mathfrak{s}}^{\G})$ is finitely generated by \cite{DHKMfiniteness}, \cite[Theorem 1.2]{DHKMconj}.  By \cite[Lemma A.1]{DHKMconj}, we deduce for any~$e_{\mathfrak t}\in\mathfrak Z_{\R_0}(\G)$ the~$\R$-algebra~$\E^{\G}_{\mathfrak w,\R,\mathfrak{t}}$ is finitely generated; indeed, this shows it holds for~$e_{\mathfrak s}\in\mathfrak Z_{\R_0}(\G)$ and~$e_{\mathfrak t}\in\mathfrak Z_{\R_0}(\G)$ is a summand of~$e_{\mathfrak s}\in\mathfrak Z_{\R_0}(\G)$.
		
		Now suppose~$\R$ is a characteristic zero domain, and let~$\K$ be the field of fractions of~$\R$, and~$\overline{\K}$ an algebraic closure of~$\K$.  By torsion-freeness and \cite[Lemma A.1]{DHKMconj} we have
		\[\End_{\R[\G]}(\operatorname{c-Ind}_{\U}^{\G}(\Psi_\R)_{\mathfrak{t}})\hookrightarrow \End_{\R[\G]}(\operatorname{c-Ind}_{\U}^{\G}(\Psi_\R)_{\mathfrak{t}})\otimes \overline{\K}\simeq \End_{\overline{\K}[\G]}(\operatorname{c-Ind}_{\U}^{\G}(\Psi_\R\otimes\overline{\K})_{\mathfrak{t}})\]
		and the algebra on the right is commutative and reduced as it is isomorphic to the~``$\Psi$-generic Bernstein centre'' by \cite[\S 9]{BushnellHenniart}, a subring of the Bernstein centre~$\mathfrak{Z}_{\G,\overline{\K}}$ of~$\Rep_{\overline{\K}}(\G)$, which is reduced for example from Bernstein's description \cite[Theorem 4.1]{DHKMconj}.  Commutativity in general follows from \cite[Lemma A.1]{DHKMconj}. The final part is \cite[2.2 Theorem]{BushnellHenniart}.
	\end{proof}
	
	\begin{definition}
		An~$\R$-representation~$\rho$ of~$\G$ is of \emph{Whittaker type} with respect to a Whittaker datum~$(\U,\Psi)$ for $\G$, if~$\Hom_{\R[\U]}(\rho,\Psi_\R)$ is free of rank one over~$\R$ and a generator of this space is surjective.
	\end{definition}
	
	We have the following basic lemma:
	\begin{lemma}\label{lemwhituniv} 
		Let $\mathfrak w=(\U,\Psi)$ be a Whittaker datum. If~$\pi$ is a Whittaker type $\R$-representation of~$\G$ with respect to $\mathfrak w$, then~$\pi\otimes\chi_{\G,\R}$ is a Whittaker type $ \R[\G/\G^\circ]$-representation of~$\G$ with respect to $\mathfrak w$.
	\end{lemma}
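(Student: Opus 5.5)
We have to show that if $\pi$ is of Whittaker type over $\R$ (write $\R'=\R[\G/\G^\circ]$), then $\Hom_{\R'[\U]}(\pi\otimes\chi_{\G,\R},\Psi_{\R'})$ is free of rank one over $\R'$ and has a surjective generator.

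The key observation is that $\U$ is contained in $\G^\circ$. Indeed, $\U$ is a unipotent group, so any $\F$-rational character $\chi\in\X^*(\mathbb{G})$ is trivial on $\mathbb{U}$. Hence $\chi_{\G,\R}|_{\U}$ is trivial, and as an $\R'[\U]$-module
\[\pi\otimes\chi_{\G,\R}\;\simeq\;\pi\otimes_\R\R'\]
with $\U$ acting through $\pi$ only.

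Next I reduce the Hom space to the one for $\pi$ by adjunction. Since $\R'$-linear maps out of $\pi\otimes_\R\R'$ correspond to $\R$-linear maps out of $\pi$, we get
\[\Hom_{\R'[\U]}(\pi\otimes_\R\R',\Psi_{\R'})\simeq\Hom_{\R[\U]}(\pi,\Psi_\R\otimes_\R\R').\]
Now $\R'$ is a free $\R$-module with basis the elements $g\G^\circ$ of the group $\G/\G^\circ$, so $\Psi_\R\otimes_\R\R'\simeq\bigoplus_{x\in\G/\G^\circ}\Psi_\R$ as $\R[\U]$-modules.

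The only genuine point is commuting $\Hom_{\R[\U]}(\pi,-)$ with this infinite direct sum. For this I use Proposition \ref{propcoinv}: $\Hom_{\R[\U]}(\pi,\mathcal{M})=\Hom_\R(\pi_{\U,\Psi_\R},\mathcal{M})$ for any $\R$-module $\mathcal{M}$ with $\U$ acting by $\Psi_\R$. So I need $\pi_{\U,\Psi_\R}$ to be free of rank one. The hypothesis only says that its dual $\Hom_\R(\pi_{\U,\Psi_\R},\R)$ is free of rank one with a surjective generator $\lambda$.

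I would argue as follows. Surjectivity of $\lambda$ lets us split $\pi_{\U,\Psi_\R}=\R v\oplus \ker\lambda$ with $\lambda(v)=1$. Every $\R$-linear functional vanishing on $v$ is an $\R$-multiple of $\lambda$ that kills $v$, hence is zero. I would prove finite generation and $\ker\lambda=0$ by invoking Proposition \ref{propcoinv} in the finitely generated setting where it applies. This is the main gap; if it fails, I instead keep track of the generator directly, as in the next paragraph.

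Concretely, take $\Lambda=\lambda\otimes\mathrm{id}_{\R'}$. It is surjective. Given any $\Lambda'$ in the Hom space, its component at each $x\in\G/\G^\circ$ is $a_x\lambda$ for some $a_x\in\R$. Smoothness and finite support of the image of a single generator $v$ force only finitely many $a_x$ to be nonzero. Hence $\Lambda'=(\sum_x a_x x)\Lambda$, so the space is $\R'\Lambda$. Freeness follows because $\Lambda(v\otimes 1)=1$, so $\Lambda$ has trivial annihilator.
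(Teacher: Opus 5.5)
Your final paragraph is correct and is essentially the paper's proof. The paper identifies $\R[\G/\G^\circ]$ with a Laurent polynomial ring, writes each coefficient of an arbitrary functional as $c_{\underline{m}}\alpha$ for the surjective generator $\alpha$, and evaluates at some $v_0$ with $\alpha(v_0)=1$ to see that only finitely many $c_{\underline{m}}$ are nonzero; you do exactly this with the basis $\G/\G^\circ$, and your remark $\U\subseteq\G^\circ$ is used there implicitly.

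Discard the detour through freeness of $\pi_{\U,\Psi_\R}$. It is false for general Whittaker type $\pi$: for instance, $\pi\oplus\pi/\ell\pi$ with $\ell$ a non-unit in an $\ell$-torsion-free $\R$ is still of Whittaker type, but its coinvariants contain nonzero torsion. It is also not what Proposition~\ref{propcoinv} says. The identity $\Hom_{\R[\U]}(\pi,\mathcal{M})=\Hom_\R(\pi_{\U,\Psi_\R},\mathcal{M})$ that you quote is just the universal property of coinvariants, not that proposition.
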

	\begin{proof}
		Let $\alpha$ be a surjective generator of $\Hom_{\R[\U]}(\pi,\Psi_\R)$.
		Suppose that $\phi$ is an element of 
		$$\Hom_{\R[\G/\G^\circ][\U]}(\pi\otimes\chi_{\G,\R},\Psi_{\R[\G/\G^\circ]}),$$
		we will prove that
		$\phi(v\otimes r)=cr\alpha(v)$ for some $c\in \R[\G/\G^\circ]$ and all $v\in\pi,r\in \R[\G/\G^\circ]$. Note that $\R[\G/\G^\circ]$ is isomorphic to a Laurent polynomial ring in $r$ variables over $\R$, and we fix an isomorphism
		$$\R[\G/\G^\circ]\simeq\R[X_1^{\pm1},\dotsc,X_r^{\pm1}].$$
		Now for each tuple $\underline{m}=(m_1,\dotsc,m_r)\in\mathbb Z^r$ we obtain a map $\phi_{\underline{m}}\in\Hom_{\R[\U]}(\pi,\Psi_\R)$ by setting $\phi_{\underline{m}}(v)$ to be the coefficient of $X_1^{m_1}\cdots X_r^{m_r}$ in $\phi(v\otimes 1)$. However, this implies that there is a $c_{\underline{m}}\in \R$ such that $\phi_{\underline{m}}=c_{\underline{m}}\cdot\alpha$. We obtain that
		$$\phi(v\otimes 1)=\left(\sum_{\underline{m}\in\mathbb Z^r}c_{\underline{m}}X_1^{m_1}\cdots X_r^{m_r}\right)\cdot\alpha(v)$$
		for all $v\in\pi$. Now since we assume $\alpha$ to be surjective, the $c_{\underline{m}}$ are nonzero for only finitely many tuples $\underline{m}\in\mathbb Z^r$ and we set $$c=\left(\sum_{\underline{m}\in\mathbb Z^r}c_{\underline{m}}X_1^{m_1}\cdots X_r^{m_r}\right)\in \R[X_1^{\pm1},\dotsc,X_r^{\pm1}].$$
		We obtain that 
		$$\phi(v\otimes a)=ca\cdot\alpha(v),$$
		for all $v\in\pi$ and $a\in \R[X_1^{\pm1},\dotsc,X_r^{\pm1}]$, which implies the result.
	\end{proof}
	
	\subsection{Helm's co-Whittaker machinery}\label{cwrepssect}
	Throughout this section we fix a Whittaker datum $\mathfrak w=(\U,\Psi)$ for $\G$ and a commutative $\R_0$-algebra $\R$.

	\begin{definition} Let $e_{\mathfrak t}$ be a finite sum of primitive idempotents of~$\mathfrak{Z}_{\R_0}(\G)$.
		\begin{enumerate}
			\item We call an~$\R$-representation~$\rho$ of~$\G$ \emph{co-Whittaker} with respect to~$\mathfrak w$ of type~$\mathfrak{t}$, if
			
			\begin{itemize}
				\item $\Hom_{\R[{\G}]}(\Gamma_{\mathfrak w,\R}^{\G},\rho)$ is free of rank one over~$\R$,
				\item $\rho=\rho_{\mathfrak{t}}$,
				\item the natural map
				\[\Gamma_{\mathfrak w,\R}^{\G}\otimes \Hom_{\R[{\G}]}(\Gamma_{\mathfrak w,\R}^{\G},\rho)\rightarrow \rho\]
				is surjective.
			\end{itemize}
			\item Given a co-Whittaker~$\R$-representation~$\rho$ of~$\G$ with respect to~$\mathfrak w$ of type~$\mathfrak{t}$ we have a natural morphism of algebras~
			\[f_{\rho}\colon\E^{\G}_{\mathfrak w,\R,\mathfrak{t}}\rightarrow \R\]
			given by the action of~$\E^{\G}_{\mathfrak w,\R,\mathfrak{t}}$ on~$ \Hom_{\R[\G]}(\Gamma^{\G}_{\mathfrak w,\R,\mathfrak{t}},\rho)= \Hom_{\R[\G]}(\Gamma^{\G}_{\mathfrak w,\R},\rho)$ by pre-composition. 
		\end{enumerate}
	\end{definition}
	
	Because~$\E^{\G}_{\mathfrak w,\R,\mathfrak{t}}\simeq \E^{\G}_{\mathfrak w,\mathfrak{t}}\otimes \R$, we have the canonical bijection, between the~$\R$-points of~$\E^{\G}_{\mathfrak w,\R,\mathfrak{t}}$ and the~$\R$-points of~$\E^{\G}_{\mathfrak w,\mathfrak{t}}$.  (Explicitly given by
	\begin{align*}
		\Hom_{\R\text{-alg}}(\E^{\G}_{\mathfrak w,\R,\mathfrak{t}},\R)&\simeq \Hom_{\R_0\text{-alg}}(\E^{\G}_{\mathfrak w,\mathfrak{t}},\R)\\
		f&\mapsto f_0:e_0\mapsto f(e_0\otimes 1), \end{align*}
	with its inverse given by~$f_0\mapsto f:e_0\otimes r\mapsto r f_0(e_0)$.)  Given a co-Whittaker~$\R$-representation~$\rho$, the~$\R$-point~$f_{\rho}\colon\E^{\G}_{\mathfrak w,\R,\mathfrak{t}}\rightarrow \R$ corresponds to the~$\R$-point~$f_{\rho,0}\colon\E^{\G}_{\mathfrak w,\mathfrak{t}}\rightarrow \R$.
		
	It is straightforward to see that the~$\E^{\G}_{\mathfrak w,\mathfrak{t}}$-representation $\Gamma^{\G}_{\mathfrak w,\mathfrak{t}}$ of $\G$ is co-Whittaker with respect to $\mathfrak w$ of type~$\mathfrak{t}$. Following~Helm, we call this representation the \emph{universal co-Whittaker representation} with respect to $\mathfrak w$ of type~$\mathfrak{t}$.   
	
	Given a morphism of~$\R_0$-algebras
	\[\phi_0\colon\E^{\G}_{\mathfrak w,\mathfrak{t}}\rightarrow \R\]
	we can consider the~$\R$-representation
	\[(\Gamma^{\G}_{\mathfrak w,\mathfrak{t}})_{\phi_0}\coloneqq\Gamma^{\G}_{\mathfrak w,\mathfrak{t}}\otimes_{\phi_0} \R.\]
	We have~$(\Gamma^{\G}_{\mathfrak w,\mathfrak{t}})_{\phi_0}\simeq (\Gamma^{\G}_{\mathfrak w,\R,\mathfrak{t}})_{\phi}$ by the natural map, and hence 
	\[\Hom_{\R[\G]}(\Gamma^{\G}_{\mathfrak w,\R,\mathfrak{t}},(\Gamma^{\G}_{\mathfrak w,\R,\mathfrak{t}})_{\phi})\simeq \Hom_{\R[\G]}(\Gamma^{\G}_{\mathfrak w,\R,\mathfrak{t}},\Gamma^{\G}_{\mathfrak w,\R,\mathfrak{t}}\otimes_{\E^{\G}_{\mathfrak w,\R,\mathfrak{t}},\phi} \R)\simeq \E^{\G}_{\mathfrak w,\R,\mathfrak{t}} \otimes_{\E^{\G}_{\mathfrak w,\R,\mathfrak{t}},\phi} \R\simeq \R;\]
	and it follows that~$(\Gamma^{\G}_{\mathfrak w,\mathfrak{t}})_{\phi_0}$ is a co-Whittaker~$\R$-representation of~$\G$ of type~$\mathfrak{t}$.
	
	Conversely, given a co-Whittaker~$\R$-representation~$\rho$ of~$\G$ of type~$\mathfrak{t}$, we can consider
	\[(\Gamma^{\G}_{\mathfrak w,\mathfrak{t}})_{f_{\rho,0}}\simeq \Gamma^{\G}_{\mathfrak w,\R,\mathfrak{t}}\otimes_{f_{\rho}} \R.\]
	In general,~$(\Gamma^{\G}_{\mathfrak w,\mathfrak{t}})_{f_{\rho,0}}$ is not isomorphic to $\rho$, however the defining properties of co-Whittaker representations imply that~$(\Gamma^{\G}_{\mathfrak w,\mathfrak{t}})_{f_{\rho,0}}$ surjects onto~$\rho$, which gives rise to Helm's notion of \emph{dominance}.
	\begin{definition}
		Let $\rho$ and $\rho'$ be two co-Whittaker~$\R$-representations of~$\G$ with respect to $\mathfrak w$ of type~$\mathfrak{t}$. We say that~$\rho$ \emph{dominates}~$\rho'$ if there is a surjection~$\rho\twoheadrightarrow \rho'$ of~$\R$-representations of~$\G$.
	\end{definition}
	 Suppose now that $\rho$ dominates $\rho'$. Then the surjection $\rho\twoheadrightarrow\rho'$ induces an isomorphism
	\[\Hom_{\R[\G]}(\Gamma_{\mathfrak w,\R,\mathfrak t}^{\G},\rho)\xrightarrow{\simeq} \Hom_{\R[\G]}(\Gamma_{\mathfrak w,\R,\mathfrak t}^{\G},\rho')\]
	and hence~$f_{\rho}=f_{\rho'}$.  
	
	We recall some properties of co-Whittaker representations:
	
	\begin{lemma}\label{cwlemma}
		Let~$\rho$ be a co-Whittaker~$\R$-representation of~$\G$ with respect to~$\mathfrak w=(\U,\Psi)$ of type~$\mathfrak{t}$. Then
		\begin{enumerate}
			\item\label{cwlemma1}
			$\rho$ is of Whittaker type with respect to $\mathfrak w$. 
			\item \label{cwlemma2} $\rho$ is finitely generated as an~$\R[\G]$-module. Moreover if~$\R$ is Noetherian then~$\rho$ is admissible as an~$\R[\G]$-module.
			
			\item \label{cwlemma3}$\rho$ satisfies Schur's lemma (in other words, the natural map~$\R\rightarrow \End_{\R[\G]}(\rho)$ is an isomorphism), and~$\rho$ has a central character.
			\item \label{cwlemma4}for an $\R_0$-algebra morphism $f\colon\R\rightarrow \R'$, the $\R'$-representation $\rho\otimes_f \R'$ is co-Whittaker with respect to~$\mathfrak w$ of type~$\mathfrak{t}$.
		\end{enumerate}
	\end{lemma}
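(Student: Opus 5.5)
The plan is to prove the four parts in the order (1), (4), (3), (2), using throughout that $\Gamma:=\Gamma^{\G}_{\mathfrak w,\R,\mathfrak t}$ is finitely generated and projective (Lemma \ref{Ebasicfacts}). Fix a generator $\phi$ of the free rank one module $\Hom_{\R[\G]}(\Gamma^{\G}_{\mathfrak w,\R},\rho)$. By hypothesis, $\phi\colon\Gamma^{\G}_{\mathfrak w,\R}\twoheadrightarrow\rho$ is surjective. Since $\rho=\rho_{\mathfrak t}$, it factors through $\Gamma$.

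\textbf{Part (1).} Because $\rho$ has type $\mathfrak t$, Proposition \ref{propcoinv} gives $\rho_{\U,\Psi_\R}\simeq\Hom_{\R[\G]}(\Gamma^{\G}_{\mathfrak w,\R},\rho)\simeq\R$. Frobenius reciprocity for coinvariants gives $\Hom_{\R[\U]}(\rho,\Psi_\R)=\Hom_\R(\rho_{\U,\Psi_\R},\R)\simeq\R$. A generator of this space is the composite $\rho\to\rho_{\U,\Psi_\R}\xrightarrow{\sim}\R$, which is surjective. So $\rho$ is of Whittaker type.

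\textbf{Part (4).} Put $\rho'=\rho\otimes_f\R'$. Projectivity and finite generation of $\Gamma$, together with Lemma \ref{Ebasicfacts}(3), give
\[\Hom_{\R'[\G]}(\Gamma\otimes_\R\R',\rho')\simeq\Hom_{\R[\G]}(\Gamma,\rho)\otimes_\R\R'\simeq\R'.\]
Here finite generation lets $\Hom$ commute with the tensor product, because $\Gamma$ is a direct summand of a finite sum of $\operatorname{c-Ind}_\K^\G$ of characters. Alternatively, apply Proposition \ref{propcoinv} over $\R'$: coinvariants are right exact, so $\rho'_{\U,\Psi_{\R'}}=\rho_{\U,\Psi_\R}\otimes\R'\simeq\R'$. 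The surjection $\Gamma\twoheadrightarrow\rho$ remains surjective after applying $-\otimes\R'$. The type condition is preserved because $e_{\mathfrak t}$ comes from $\mathfrak Z_{\R_0}(\G)$.

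\textbf{Part (3).} Let $T\in\End_{\R[\G]}(\rho)$. Then $T\circ\phi\in\Hom(\Gamma,\rho)$, so $T\circ\phi=c\phi$ for some $c\in\R$. Since $\phi$ is surjective, $T=c$. Injectivity of $\R\to\End(\rho)$ holds because $r\rho=0$ forces $r\phi=0$, hence $r=0$ by freeness. For the central character, each $z$ in the centre $\Z$ of $\G$ acts by an $\R[\G]$-endomorphism, so $z$ acts by a scalar $\omega(z)\in\R^\times$.

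\textbf{Part (2).} Finite generation follows from $\Gamma\twoheadrightarrow\rho$. For admissibility with $\R$ Noetherian, take a compact open pro-$p$ subgroup $\K'$. Taking $\K'$-invariants is exact, so $\Gamma^{\K'}\twoheadrightarrow\rho^{\K'}$. It then suffices to show $\Gamma^{\K'}$ is finitely generated over $\R$. I would deduce this from the fact that $\Gamma$ is finitely generated, of type $\mathfrak t$, and has finitely generated endomorphism ring over its centre. The key input is Dat's finiteness theorem (\cite{Datfinitude}, \cite{DHKMfiniteness}): $\Gamma^{\K'}$ is finitely generated over $\mathfrak Z_\R(\G)$, and $\mathfrak Z_\R(\G)$ acts on $\rho$ through $\End(\rho)=\R$. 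This gives $\rho^{\K'}$ as a quotient of $\Gamma^{\K'}\otimes_{\mathfrak Z}\R$, which is finitely generated over $\R$. This is the main obstacle: it requires knowing that the action of the centre factors through scalars (part (3)) together with second-adjointness/finiteness results over Noetherian $\R$. I would cite these results rather than reprove them.
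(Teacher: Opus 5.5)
Your proposal is correct and follows essentially the same route as the paper in all four parts: Proposition \ref{propcoinv} for (1), base change of $\Hom$ out of the finitely generated projective $\Gamma$ for (4), surjectivity of a generator $\phi$ to get Schur's lemma in (3), and the DHKM finiteness theorem combined with the centre acting through scalars for (2). The only difference is cosmetic. In (2) you apply finiteness to $\Gamma^{\K'}$ and then invoke (3), whereas the paper applies $\mathfrak{Z}_\R(\G)$-finiteness directly to $\rho$ and uses that the centre acts through $\mathfrak{Z}_\R(\G)\to\E^{\G}_{\mathfrak w,\R,\mathfrak t}\xrightarrow{f_\rho}\R$; you should also add that an arbitrary compact open $\H$ reduces to a pro-$p$ subgroup $\K'\subseteq\H$, since $\rho^{\H}\subseteq\rho^{\K'}$ and $\R$ is Noetherian.
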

	\begin{proof}
		\begin{enumerate}
			\item By Proposition \ref{propcoinv} we obtain that the coinvariants $\rho_{\U,\Psi_\R}$ are free of rank one as an $\R$-module. Hence, 
			$\Hom_{\R[\U]}(\rho_{\U,\Psi_\R},\Psi_\R)\simeq\Hom_{\R[\U]}(\rho,\Psi_\R)$
			is also free of rank one as an $\R$-module. Moreover, the map which sends a generator of $\rho_{\U,\Psi_\R}$ to one is clearly surjective.
			\item Since $\rho$ is a quotient of the finitely generated representation~$\Gamma^{\G}_{\mathfrak w,\R,\mathfrak{t}}$, it is finitely generated.  Now assume that~$\R$ is Noetherian.  By \cite[Lemma 3.2]{DHKMfiniteness} and \cite[Theorem 1.2]{DHKMconj}, it follows that~$\rho$ is~$\mathfrak{Z}_{\R}(\G)$-finite.  In particular,~$\rho$ is admissible over~$e_{\mathfrak{t}}\mathfrak{Z}_{\R}(\G)$; i.e., for any compact open subgroup~$\H$ of~$\G$,~$\rho^{\H}$ is a finitely generated~$e_{\mathfrak{t}}\mathfrak{Z}_{\R}(\G)$-module.  Finally, as~$\rho$ is co-Whittaker, the action of~$e_{\mathfrak{t}}\mathfrak{Z}_\R(\G)$ on~$\rho$ factors through the natural map~$\mathfrak{Z}_\R(\G)\rightarrow \E^{\G}_{\mathfrak w,\R,\mathfrak{t}}\xrightarrow{f_{\rho}} \R$ which implies the result.
			\item The action of~$\End_{\R[\G]}(\rho)$ on~$\Hom_{\R[\G]}(\Gamma_{\mathfrak w,\R}^{\G},\rho)$ defines the morphism~$\Phi\colon\End_{\R[\G]}(\rho)\rightarrow \R$ satisfying, for all~$f\in \Hom_{\R[\G]}(\Gamma_{\mathfrak w,\R}^{\G},\rho)$,~$\alpha\in \End_{\R[\G]}(\rho)$,
			\[\alpha\circ f=\Phi(\alpha) f.\]
			We claim this map is injective.  Indeed, if~$\Phi(\alpha) f=0$ for all~$f\in \Hom_{\R[\G]}(\Gamma_{\mathfrak w,\R,\mathfrak t}^{\G},\rho)$ then~$\alpha \circ f=0$ for all such~$f$, and hence as~$\rho$ is a quotient of~$\Gamma_{\mathfrak w,\R,\mathfrak t}^{\G}$ we have~$\alpha=0$.  Hence we have defined an injection~$\End_{\R[\G]}(\rho)\rightarrow \R$ inverse to the natural map~$\R\rightarrow \End_{\R[\G]}(\rho)$.
			
			For an element $z$ of the centre~$\Z(\G)$ of~$\G$, then~$\rho(z)$ is an element of $\End_{\R[\G]}(\rho)= \R$, and hence~$\rho$ has a central character~$\omega_\rho\colon\Z(\G)\rightarrow \R^\times$ such that~$\rho(z)v=\omega_{\rho}(z)v$ for all~$z\in\Z(\G),v\in \rho$; (note the image of~$\omega_{\rho}$ lies in~$\R^\times$ as~$\omega_{\rho}(z)\omega_{\rho}(z^{-1})=1$).
			\item Because~$\Gamma^{\G}_{\mathfrak w,\R,\mathfrak{t}}$ is finitely generated and projective, by \cite[Lemma A.1]{DHKMconj}, the natural map~$f\otimes r'\mapsto r'(f\otimes 1)$ defines an isomorphism
			\[\Hom_{\R[\G]}(\Gamma^{\G}_{\mathfrak w,\R,\mathfrak{t}},\rho)\otimes \R'\simeq \Hom_{\R'[\G]}(\Gamma^{\G}_{\mathfrak w,\R,\mathfrak{t}}\otimes \R',\rho\otimes \R')\]
			As~$\Gamma^{\G}_{\mathfrak w,\R,\mathfrak{t}}\otimes \R'=\Gamma^{\G}_{\mathfrak w,\R',\mathfrak{t}}$, it follows that~$\Hom_{\R'[{\G}]}(\Gamma^{\G}_{\mathfrak w,\R',\mathfrak{t}},\rho\otimes \R')$ is free of rank one over~$\R'$, and as the tensor product is right exact, we find~$\rho\otimes \R'$ is co-Whittaker.\end{enumerate}\end{proof}
	
	
	Note that when $\R$ is a field an irreducible $\R$-representation~$\rho$ of~$\G$ being of Whittaker type with respect to $\mathfrak w$, is equivalent to $\rho$ being \emph{$\mathfrak w$-generic}, where we call an~$\R$-representation~$\rho$ to be \emph{~$\mathfrak{w}$-generic} if it is irreducible and $\rho_{\U,\Psi_\R}$ is one dimensional as an $\R$-vector space.
	
	\begin{lemma}\label{cowhittakerfields}
		Let~$\R$ be a field and~$\R_0$-algebra, and~$\rho,\rho'$ be co-Whittaker~$\R$-representations of~$\G$ with respect to~$\mathfrak w$ of type~$\mathfrak{t}$.
		\begin{enumerate}
			\item\label{cowhittakerfields1} Then~$\rho$ has finite length, and the cosocle of~$\rho$ is absolutely irreducible and~$\mathfrak w$-generic, and is the unique~$\mathfrak w$-generic irreducible subquotient of~$\rho$.  
			\item\label{cowhittakerfields2} Suppose~$\R$ is algebraically closed.  Write~$\rho_{gen}$ (resp.~$\rho'_{gen}$) for the unique generic (sub)quotient of~$\rho$ (resp.~$\rho'$).  Then~$f_{\rho,0}=f_{\rho',0}$ if and only if~$\rho_{gen}\simeq\rho'_{gen}$.
		\end{enumerate}
	\end{lemma}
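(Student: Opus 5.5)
For part (1), I would first reduce to finite length and then use exactness of Whittaker coinvariants to locate the generic piece. By Lemma \ref{cwlemma}(\ref{cwlemma2}), $\rho$ is finitely generated and admissible over the field $\R$. A finitely generated admissible representation over a field has finite length; to justify this I would invoke the Noetherianity and finiteness results of \cite{DHKMfiniteness} applied to the block $e_{\mathfrak t}$. The key structural input is that the functor $V\mapsto V_{\U,\Psi_\R}$ is exact, since $\U$ is a filtered union of pro-$p$ compact open subgroups. By Lemma \ref{cwlemma}(\ref{cwlemma1}) together with Proposition \ref{propcoinv}, $\rho_{\U,\Psi_\R}$ is one-dimensional. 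Exactness along a composition series then gives the following: exactly one irreducible subquotient $\tau$ has $\tau_{\U,\Psi_\R}\neq 0$, this space is one-dimensional, and so $\tau$ is $\mathfrak w$-generic, while every other subquotient has zero coinvariants.

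Next I would show that the cosocle is this $\tau$. Let $\rho\twoheadrightarrow\sigma$ be an irreducible quotient. The generator of $\Hom_{\R[\G]}(\Gamma^{\G}_{\mathfrak w,\R},\rho)$ has image all of $\rho$, because the natural map is surjective and $\Hom$ is rank one. Composing with $\rho\to\sigma$ therefore gives a nonzero map $\Gamma^{\G}_{\mathfrak w,\R}\to\sigma$. Hence $\Hom(\Gamma,\sigma)\cong\sigma_{\U,\Psi_\R}\neq0$, again by Proposition \ref{propcoinv} (note $\sigma$ has type $\mathfrak t$). So every irreducible quotient has nonzero coinvariants, which forces it to be the unique generic subquotient $\tau$, appearing with multiplicity one. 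It follows that the cosocle is isomorphic to $\tau^{\oplus k}$ with $k\le1$, so $k=1$ and the cosocle is $\tau$.

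For absolute irreducibility I would apply Lemma \ref{cwlemma}(\ref{cwlemma4}) with $\R'=\overline{\R}$. The representation $\rho\otimes\overline{\R}$ is again co-Whittaker, so by the argument above its cosocle is irreducible. Since $\tau\otimes\overline{\R}$ is a quotient of $\rho\otimes\overline{\R}$ and its coinvariants are one-dimensional, any further irreducible constituent of $\tau\otimes\overline{\R}$ would either be a second non-generic quotient or break the uniqueness. Both are ruled out, so $\tau\otimes\overline{\R}$ is irreducible. I expect this step, together with the finite-length input, to be the most delicate point.

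For part (2), the forward direction is as follows. The hypothesis $f_{\rho,0}=f_{\rho',0}$ means that $\Gamma\otimes_{f_\rho}\R$ dominates both $\rho$ and $\rho'$. This representation is itself co-Whittaker, so by (1) it has a unique irreducible quotient. That quotient is therefore the cosocle of both $\rho$ and $\rho'$, giving $\rho_{gen}\simeq\rho'_{gen}$.

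For the converse, $\rho$ dominates $\rho_{gen}$, and $\rho_{gen}$ is co-Whittaker because it is generic, irreducible and generated by its Whittaker map. Dominance gives $f_{\rho}=f_{\rho_{gen}}$ by the remark following the definition of dominance. The same holds for $\rho'$, so if $\rho_{gen}\simeq\rho'_{gen}$ then $f_{\rho,0}=f_{\rho',0}$.
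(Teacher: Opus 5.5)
Most of your argument matches the paper's. Finite length comes from finite generation and admissibility. Exactness of $(\U,\Psi)$-coinvariants isolates a unique generic constituent. For (2), both $\rho$ and $\rho'$ are dominated by $\Gamma^{\G}_{\mathfrak w,\mathfrak t}\otimes_{f_{\rho,0}}\R$, and $\rho\twoheadrightarrow\rho_{gen}$ induces an isomorphism on $\Hom(\Gamma^{\G}_{\mathfrak w,\R,\mathfrak t},-)$. You identify the cosocle by counting Whittaker coinvariants of irreducible quotients. That step is correct, and slightly more explicit than the paper's remark that a semisimple co-Whittaker quotient must be irreducible.

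The gap is the absolute irreducibility of $\tau$. About $\tau\otimes_{\R}\overline{\R}$ you establish three things: its Whittaker coinvariants are one-dimensional, it has a unique irreducible quotient, and it has exactly one generic constituent. All of this is consistent with a non-split extension $0\to\sigma\to\tau\otimes_{\R}\overline{\R}\to\tau'\to 0$ with $\sigma$ non-generic. Such a $\sigma$ is neither an irreducible quotient nor a second generic constituent, so neither of the alternatives you list excludes it. You need an extra input.

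The paper uses Schur's lemma. $\tau$ is itself co-Whittaker; you prove exactly this for $\rho_{gen}$ in your part (2). So Lemma~\ref{cwlemma}(\ref{cwlemma3}) gives $\End_{\R[\G]}(\tau)=\R$. An admissible irreducible representation with scalar endomorphisms is absolutely irreducible, via Jacobson density on the finite-dimensional simple Hecke modules $\tau^{\K}$.

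Alternatively, you can stay inside your coinvariant framework. As an $\R[\G]$-module, $\tau\otimes_{\R}\overline{\R}$ is a direct sum of copies of $\tau$. Hence every nonzero $\overline{\R}[\G]$-subrepresentation, and every nonzero quotient, is $\tau$-isotypic as an $\R[\G]$-module and so has nonzero $(\U,\Psi)$-coinvariants. Coinvariants taken over $\R$ or over $\overline{\R}$ agree, since $\Psi$ is $\R_0$-valued. Exactness, together with $\dim_{\overline{\R}}(\tau\otimes_{\R}\overline{\R})_{\U,\Psi}=1$, then rules out any proper nonzero subrepresentation.
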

	\begin{proof}
		\begin{enumerate}
			\item 

		The~$R$-representation~$\Pi=(\Gamma^{\G}_{\mathfrak w,\mathfrak{t}})_{f_{\rho,0}}$ dominates~$\rho$.  By Lemma \ref{cwlemma} (\ref{cwlemma2}),~$\Pi$ is admissible and finitely generated, hence of finite length (cf.\ II.5.10 in \cite{Vig96}), and hence~$\rho$ too is of finite length.  It follows that the head of~$\Pi$ is irreducible (as it is a semisimple co-Whittaker quotient of~$\Gamma_{\mathfrak w,\R,\mathfrak{t}}^{\G}$) and~$\mathfrak w$-generic, hence the head of~$\rho$ is also (irreducible and)~$\mathfrak w$-generic.  
		Now let~$\overline{\R}$ denote an algebraic closure of~$\R$.   Moreover, from Lemma \ref{cwlemma} \ref{cwlemma3}, the cosocle of~$\rho$ satisfies Schur's lemma and hence is absolutely irreducible.
		The functor of~$(\U,\Psi_\R)$-coinvariants is exact (which follows as~$\U$ is a colimit of pro-$p$-groups, cf., \cite[I 4.11]{Vig96}), and as~$\rho_{\U,\Psi_\R}\simeq \R$ by Proposition \ref{propcoinv}, we obtain that~$\rho$ has a unique~$\mathfrak w$-generic subquotient.

		\item The~$\R$-representation~$(\Gamma^{\G}_{\mathfrak w,\mathfrak{t}})_{f_{\rho,0}}$ is a co-Whittaker~$\R$-representation of~$\G$ of type~$\mathfrak{t}$ and hence of finite length by the first part. If~$f_{\rho,0}=f_{\rho',0}$, then~$(\Gamma^{\G}_{\mathfrak w,\mathfrak{t}})_{f_{\rho,0}}$ dominates both~$\rho$ and~$\rho'$. Thus~$\rho_{gen},\rho'_{gen}$ are (non-zero) summands of the cosocle of~$(\Gamma^{\G}_{\mathfrak w,\mathfrak{t}})_{f_{\rho,0}}$. By the first point the cosocle of~$(\Gamma^{\G}_{\mathfrak w,\mathfrak{t}})_{f_{\rho,0}}$ is irreducible and hence~$\rho_{gen}\simeq\rho'_{gen}$. 
		
		 Conversely, the quotient~$\rho\rightarrow \rho_{gen}$, defines an~$\E_{\mathfrak w,\R,\mathfrak{t}}^{\G}$-equivariant isomorphism~$\Hom_{\R[\G]}(\Gamma^{\G}_{\mathfrak w,\R,\mathfrak{t}},\rho)\simeq \Hom_{\R[\G]}(\Gamma^{\G}_{\mathfrak w,\R,\mathfrak{t}},\rho_{gen})$ and hence~$f_{\rho,0}=f_{ \rho_{gen},0}$.		\end{enumerate}\end{proof}
	
		For a fixed~$\R_0$-algebra structure on an algebraically closed field we have the following bijection:
	\begin{corollary}\label{corbijection}
	Let~$\R$ be an algebraically closed field and~$\R_0$-algebra.  Then the map from the set of isomorphism classes of generic~$\R$-representations of~$\G$ of type~$\mathfrak{t}$ to the set of the~$\R$-points of~$\E^{\G}_{\mathfrak{w},\mathfrak{t}}$, defined by~$\pi\mapsto f_{\pi,0}$, defines a bijection.
	\end{corollary}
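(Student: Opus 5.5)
We need to show that $\pi\mapsto f_{\pi,0}$ is well defined, injective and surjective. The plan is to reduce everything to Lemma \ref{cowhittakerfields}.

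\emph{Well-definedness.} Let $\pi$ be a generic (i.e.\ $\mathfrak w$-generic) irreducible $\R$-representation of type $\mathfrak t$. We first show that $\pi$ is co-Whittaker, so that $f_{\pi}$, and hence $f_{\pi,0}$, makes sense.
\begin{itemize}
\item By Proposition \ref{propcoinv}, $\Hom_{\R[\G]}(\Gamma^{\G}_{\mathfrak w,\R},\pi)\simeq \pi_{\U,\Psi_\R}$, which is one-dimensional by genericity. So the Hom space is free of rank one.
\item A nonzero element of this Hom space has nonzero image. Since $\pi$ is irreducible, the image is all of $\pi$, so the natural map $\Gamma^{\G}_{\mathfrak w,\R}\otimes\Hom(\Gamma^{\G}_{\mathfrak w,\R},\pi)\to\pi$ is surjective.
\item $\pi=\pi_{\mathfrak t}$ by hypothesis.
\end{itemize}
Isomorphic representations clearly give the same point, so the map is well defined.

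\emph{Injectivity.} If $f_{\pi,0}=f_{\pi',0}$, then Lemma \ref{cowhittakerfields}(\ref{cowhittakerfields2}) gives $\pi_{gen}\simeq\pi'_{gen}$. Here $\R$ is algebraically closed, as that lemma requires. Since $\pi,\pi'$ are irreducible and generic, $\pi_{gen}=\pi$ and $\pi'_{gen}=\pi'$. Hence $\pi\simeq\pi'$.

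\emph{Surjectivity.} Let $\phi_0\colon\E^{\G}_{\mathfrak w,\mathfrak t}\to\R$ be an $\R$-point. The discussion before the definition of dominance shows that $\rho:=(\Gamma^{\G}_{\mathfrak w,\mathfrak t})_{\phi_0}$ is a co-Whittaker $\R$-representation of type $\mathfrak t$.
\begin{itemize}
\item We check $f_{\rho,0}=\phi_0$. The identification $\Hom_{\R[\G]}(\Gamma^{\G}_{\mathfrak w,\R,\mathfrak t},\rho)\simeq \E^{\G}_{\mathfrak w,\R,\mathfrak t}\otimes_{\phi}\R\simeq\R$ is equivariant for precomposition by $\E^{\G}_{\mathfrak w,\R,\mathfrak t}$. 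On $\E\otimes_\phi\R$ this action is multiplication through $\phi$, so $f_\rho=\phi$, i.e.\ $f_{\rho,0}=\phi_0$.
\item By Lemma \ref{cowhittakerfields}(\ref{cowhittakerfields1}), $\rho$ has an irreducible generic cosocle $\pi:=\rho_{gen}$. It has type $\mathfrak t$, being a quotient of $\rho=\rho_{\mathfrak t}$.
\item The second part of the proof of Lemma \ref{cowhittakerfields}(\ref{cowhittakerfields2}) shows that the quotient map $\rho\to\pi$ induces an equivariant isomorphism on $\Hom(\Gamma^{\G}_{\mathfrak w,\R,\mathfrak t},-)$. Therefore $f_{\pi,0}=f_{\rho,0}=\phi_0$.
\end{itemize}

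The only mildly delicate point is the equivariance check identifying $f_\rho$ with $\phi$ in the surjectivity step. It is formal, because $\Gamma^{\G}_{\mathfrak w,\R,\mathfrak t}$ is finitely generated projective (Lemma \ref{Ebasicfacts}), so Hom out of it commutes with the base change $-\otimes_{\phi}\R$ as in \cite[Lemma A.1]{DHKMconj}.
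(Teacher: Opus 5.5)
Your proposal is correct and follows the paper's proof: injectivity comes from Lemma \ref{cowhittakerfields}(\ref{cowhittakerfields2}), and surjectivity comes from taking the generic cosocle of $\Gamma^{\G}_{\mathfrak{w},\mathfrak{t}}\otimes_{\phi}\R$. You additionally spell out two points the paper leaves implicit: that generic irreducible representations of type $\mathfrak{t}$ are co-Whittaker, so $f_{\pi,0}$ is defined, and that $f_{\rho,0}=\phi_0$ via the equivariant identification (which uses commutativity of $\E^{\G}_{\mathfrak{w},\R,\mathfrak{t}}$).
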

	
	\begin{proof}
	From Lemma \ref{cowhittakerfields} \eqref{cowhittakerfields2}, the map is injective.   Conversely, given~$\phi\in\Hom_{\R_0\text{-alg}}(\E^{\G}_{\mathfrak{w},\mathfrak{t}},\R)$, the co-Whittaker representation~$\Gamma^{\G}_{\mathfrak{w},\mathfrak{t}}\otimes_{\E^{\G}_{\mathfrak{w},\mathfrak{t}},\phi}\R$ has $\mathfrak w$-generic cosocle~$\rho$ and~$f_{\rho,0}=\phi$.
	\end{proof}

	Let~$\R$ be a discrete valuation ring with field of fractions~$K$, and~$(\rho,V)$ be a~$K$-representation of~$\G$. If there exists a~$\G$-stable~$\R$-submodule~$L$ of~$V$, which is free as an~$\R$-module, and contains a~$K$-basis of~$V$, then we say that~$\rho$ is \emph{integral} and call~$L$ an~$\R$-lattice in~$\rho$.
	
	
	\begin{lemma}\label{cowhittakerDVR}
		Let~$\R$ be a discrete valuation ring and~$\R_0$-algebra, with field of fractions~$K$, maximal ideal~$\mathfrak{m}$, and residue field~$\kappa(\mathfrak{m})$.  Let~$\rho$ be a co-Whittaker~$K$-representation of~$\G$ with respect to~$\mathfrak w$ of type~$\mathfrak{t}$.
		\begin{enumerate}
			\item \label{cowhittakerDVR1} Then~$\rho$ is~$\R$-integral if and only if~$f_{\rho,0}$ takes values in~$\R$.  Moreover, in this case, there exists a co-Whittaker~$\R$-lattice in~$\rho$.
			\item\label{cowhittakerDVR2} If~$\rho$ is~$\R$-integral with co-Whittaker~$\R$-lattice~$L$, then under~$\kappa(\mathfrak{m})\simeq \R\otimes \kappa(\mathfrak{m})$ (applied to the values of the morphisms), we have	\[f_{L\otimes \kappa(\mathfrak{m}),0}=f_{\rho,0}\otimes 1.\]
			Furthermore, let~$J(\rho)$ denote the cosocle of~$L\otimes\kappa(\mathfrak{m})$.  Then
			\[f_{J(\rho),0}=f_{\rho,0}\otimes 1.\]
		\end{enumerate}
	\end{lemma}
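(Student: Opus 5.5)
For (1), one direction is easy, and the other uses Proposition \ref{propcoinv} together with compactness and an averaging argument.

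\textbf{Integral implies integral-valued.} Suppose $\rho$ is $\R$-integral with lattice $L$. Pick a nonzero $\varphi\in\Hom_{K[\G]}(\Gamma^{\G}_{\mathfrak w,K,\mathfrak t},\rho)$. Since $\Gamma^{\G}_{\mathfrak w,\R,\mathfrak t}$ is finitely generated (Lemma \ref{Ebasicfacts}), after scaling by a power of a uniformizer we may assume $\varphi$ maps $\Gamma^{\G}_{\mathfrak w,\R,\mathfrak t}$ into $L$. Let $L'$ be the image; it is a $\G$-stable $\R$-submodule of $L$. It is free, because $\R$ is a DVR, submodules of free modules are free, and $L$ is free. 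Since $\varphi$ generates $\rho$, the module $L'$ spans $\rho$ over $K$, so $L'$ is again a lattice. By construction $L'$ is a quotient of $\Gamma^{\G}_{\mathfrak w,\R,\mathfrak t}$. Now compute $\Hom_{\R[\G]}(\Gamma^{\G}_{\mathfrak w,\R,\mathfrak t},L')$ using Proposition \ref{propcoinv}: it equals $L'_{\U,\Psi_\R}$. This module is finitely generated, being a quotient of $(\Gamma_{\mathfrak w,\R,\mathfrak t}^{\G})_{\U,\Psi}\simeq\E^{\G}_{\mathfrak w,\R,\mathfrak t}$ restricted to the image. By exactness of coinvariants and flatness of $K$ over $\R$, it satisfies
\[
L'_{\U,\Psi_\R}\otimes K=\rho_{\U,\Psi_K}\simeq K.
\]
\textbf{Main obstacle: torsion-freeness of $L'_{\U,\Psi_\R}$.} We need this to conclude it is free of rank one. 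I would argue via the Rodier description used in the proof of Proposition \ref{propcoinv}: $L'_{\U,\Psi}\simeq \Hom_{\R[\K]}(\omega,L')=L'^{\K,\omega}$. The right-hand side is a submodule of the torsion-free module $L'$, hence torsion-free. Being finitely generated, torsion-free and of generic rank one over a DVR, it is free of rank one. The natural map $\Gamma\otimes\Hom\to L'$ is surjective because $L'$ is generated by $\varphi$, and $\varphi$ generates the Hom space up to saturation; if it does not, replace $\varphi$ by a generator and $L'$ by its image. Thus $L'$ is a co-Whittaker lattice. Finally, $\E^{\G}_{\mathfrak w,\R,\mathfrak t}$ preserves the Hom space $\Hom(\Gamma,L')\simeq\R$, and this space sits inside the corresponding space for $\rho$ after $\otimes K$. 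Hence $f_{\rho}$ restricted to $\E_{\mathfrak w,\mathfrak t}^{\G}$ lands in $\R$.

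\textbf{Integral-valued implies integral.} Conversely, suppose $f_{\rho,0}$ is $\R$-valued. Form $\Pi=\Gamma^{\G}_{\mathfrak w,\mathfrak t}\otimes_{f_{\rho,0}}\R$, which is co-Whittaker over $\R$ by the discussion preceding Lemma \ref{cwlemma}. Its base change $\Pi\otimes K$ dominates $\rho$. Take the image of $\Pi$ in $\rho$ under the composite surjection. This image is a finitely generated $\G$-stable $\R$-submodule spanning $\rho$. It is free by admissibility of $\rho$ (Lemma \ref{cwlemma}(\ref{cwlemma2})), since on each $\H$-fixed part it is a finitely generated torsion-free module, and these parts exhaust it. So it is a lattice, and by the previous paragraph it is co-Whittaker.

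For (2), Lemma \ref{cwlemma}(\ref{cwlemma4}) shows $L\otimes\kappa(\mathfrak m)$ is co-Whittaker. The base change isomorphism
\[
\Hom(\Gamma_{\R},L)\otimes\kappa(\mathfrak m)\simeq\Hom(\Gamma_{\kappa(\mathfrak m)},L\otimes\kappa(\mathfrak m))
\]
is $\E$-equivariant, so the character of $\E^{\G}_{\mathfrak w,\mathfrak t}$ on the right is $f_{L,0}\otimes1$. Also $f_{L,0}=f_{\rho,0}$, since $\Hom(\Gamma,L)\otimes K=\Hom(\Gamma,\rho)$. For the cosocle $J(\rho)$, apply the dominance remark: $L\otimes\kappa(\mathfrak m)\twoheadrightarrow J(\rho)$ induces an isomorphism of Hom spaces. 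Here $J(\rho)$ is generic by Lemma \ref{cowhittakerfields}, and therefore co-Whittaker. Hence $f_{J(\rho),0}=f_{L\otimes\kappa(\mathfrak m),0}$.
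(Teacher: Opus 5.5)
Your overall strategy is the paper's. You show $\Hom_{\R[\G]}(\Gamma^{\G}_{\mathfrak w,\R,\mathfrak t},L)$ is free of rank one and stable under $\E^{\G}_{\mathfrak w,\mathfrak t}$. You take the image of $\Pi=\Gamma^{\G}_{\mathfrak w,\mathfrak t}\otimes_{f_{\rho,0}}\R$ in $\rho$ as the lattice. For (2) you use base change of the Hom space and the dominance remark. Part (2) is fine, but the direction ``$f_{\rho,0}$ is $\R$-valued $\Rightarrow$ $\rho$ integral'' has a real gap.

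\textbf{The gap.} You deduce freeness of the image $L''$ from admissibility of $\rho$. Admissibility of the $K$-representation $\rho$ only makes $\rho^{\H}$ finite-dimensional over $K$. A torsion-free $\R$-submodule of a finite-dimensional $K$-space need not be finitely generated.

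For instance, take $\G=\GL_1(\F)$ and $\rho$ the unramified character sending a uniformizer of $\F$ to $\varpi^{-1}$, with $\varpi$ a uniformizer of $\R$. Then $\rho$ is admissible and co-Whittaker, yet $\R[\G]v=\rho$ for every $v\neq 0$. So a finitely generated $\G$-stable $\R$-submodule spanning $\rho$ need not be a lattice. This is exactly where the integrality hypothesis must enter, and your stated reason does not use it.

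\textbf{The repair.} The right input is Lemma~\ref{cwlemma}(\ref{cwlemma2}) applied to $\Pi$ itself, a co-Whittaker representation over the Noetherian ring $\R$. Each $\Pi^{\H}$ is then a finitely generated $\R$-module. Since $\H$-invariants are exact for pro-$p$ $\H$, $L''^{\H}$ is a quotient of $\Pi^{\H}$, hence finitely generated, torsion-free, and so free.

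Even then, ``these parts exhaust it'' does not give freeness, since an increasing union of finite free modules need not be free ($K=\bigcup_n\varpi^{-n}\R$). Take a decreasing neighbourhood basis $\H_1\supseteq\H_2\supseteq\cdots$ of pro-$p$ open subgroups. The averaging idempotents, defined over $\R$ because $p\in\R^\times$, make $L''^{\H_i}$ a direct summand of $L''^{\H_{i+1}}$. Hence $L''$ is a direct sum of finitely generated free modules. This is what the paper's appeal to \cite[I 9.3]{Vig96} supplies.

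\textbf{Smaller points in the other direction.}
\begin{enumerate}
\item Finite generation of $L'_{\U,\Psi_\R}$ does not follow from its being a quotient of $\E^{\G}_{\mathfrak w,\R,\mathfrak t}$. That ring is a finitely generated $\R$-algebra but in general not a finitely generated $\R$-module.
\item You do not need that finite generation, nor the Rodier detour. $\Hom_{\R[\G]}(\Gamma^{\G}_{\mathfrak w,\R,\mathfrak t},L')$ embeds in $L'^{\oplus n}$ for $n$ generators of $\Gamma^{\G}_{\mathfrak w,\R,\mathfrak t}$. It is therefore free because $\R$ is a PID, and of rank one after $\otimes K$. Torsion-freeness is automatic, so your ``main obstacle'' is not one. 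This is the paper's argument, applied to $L$ directly.
\item No replacement of $\varphi$ is needed. If $\varphi=a\varphi_0$ with $\varphi_0$ a generator, then $L'=\varphi(\Gamma^{\G}_{\mathfrak w,\R,\mathfrak t})\subseteq aL'$. Since $L'$ is nonzero and free, this forces $a\in\R^\times$.
\end{enumerate}
With these repairs, your first direction has the merit of exhibiting a co-Whittaker lattice explicitly, which the paper leaves implicit.
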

	
	\begin{proof}
		\begin{enumerate}
			\item If~$f_{\rho,0}$ takes values in~$\R$, then 
			the image of~$\Gamma^{\G}_{\mathfrak w,\mathfrak{t}}\otimes_{f_{\rho,0}} \R$ under the quotient map defines an~$\R$-lattice in~$\rho$, by \cite[I 9.3]{Vig96}, and hence~$\rho$ is integral.
			
			Conversely, suppose~$\rho$ is integral, and let~$L$ be an~$\R$-lattice in~$\rho$.  Then we claim that~$\Phi\coloneqq\Hom_{\R[\G]}(\Gamma^{\G}_{\mathfrak w,\R,\mathfrak{t}},L)\simeq \R$.  Indeed,~$\Phi$ is a free~$\R$-module because~$\Gamma^{\G}_{\mathfrak w,\R,\mathfrak{t}}$ is finitely generated as an~$\R[\G]$-module and $L$ is free as an~$\R$-module and~$\R$ is a principal ideal domain. Hence, as~$\Phi\otimes K\simeq K$ by \cite[Lemma A.1]{DHKMconj}, we deduce~$\Phi\simeq \R$.  As the action of~$\E_{\mathfrak w,\mathfrak{t}}^{\G}$ on~$\Hom_{K[\G]}(\Gamma^{\G}_{\mathfrak w,\mathfrak{t}}\otimes K,\rho)$ stabilizes~$\Hom_{\R[\G]}(\Gamma^{\G}_{\mathfrak w,\mathfrak{t}}\otimes \R,L)$ it must be through a morphism~$\E^{\G}_{\mathfrak w,\mathfrak t}\rightarrow \R$.
			\item 		
			Note that
			\[\Hom_{\R[\G]}(\Gamma^{\G}_{\mathfrak w,\R,\mathfrak{t}},L)\otimes\kappa(\mathfrak{m})\simeq \Hom_{\R[\G]}(\Gamma^{\G}_{\mathfrak w,\R,\mathfrak{t}}\otimes \kappa(\mathfrak{m}),L\otimes \kappa(\mathfrak{m})).\]
			This final part now follows from Lemma \ref{cowhittakerfields}.
		\end{enumerate}
		
	\end{proof}

	\section{Shahidi's local coefficients and Langlands-Shahidi gamma factors in families}
	For this section, we let
	\begin{equation*}
		\R_0=\begin{cases}\mathbb{Z}[\sqrt{q}^{-1}, \mu_{p^\infty}]&\text{if char}(\F)=0\text{ and }~\G\text{ is split;}\\\mathbb{Z}[\sqrt{q}^{-1}, \mu_{4p^\infty}]&\text{if char}(\F)=0\text{ and }~\G\text{ is quasi-split non-split;}\\\mathbb{Z}[\sqrt{q}^{-1},\mu_{p}]&\text{if char}(\F)=p\text{ and }~\G\text{ is split;}\\
		\mathbb{Z}[\sqrt{q}^{-1},\mu_{4p}]&\text{if char}(\F)=p\text{ and }~\G\text{ is quasi-split non-split.}
		\end{cases}
	\end{equation*}
	We let~$\R$ be a~$\R_0$-algebra.
	
	We adjoin~$\sqrt{q}^{-1}$ for the convenience of being able to normalize parabolic induction.  The fourth roots of unity in the quasi-split case allow a normalization in the construction of the gamma factor by a certain product of Langlands $\lambda$-functions (cf.\ Remark \ref{remlambda}).

	\subsection{Whittaker functionals on parabolic inductions}\label{Whittfunctionalssect}
	We fix an~$\F$-pinning~$\Xi$ of $\G$, and a character~$\psi\colon\F\rightarrow \R_0^\times$. Following Shahidi (see also \cite[A.5]{AGIKMS}) these data define a Whittaker datum~$\mathfrak w=(\U,\Psi)$ for $\G$, as in Section \ref{whittdata}.
	
	Let $\theta$ be a subset of $\Delta$ such that there is a simple root $\alpha$ such that $\theta=\Delta\backslash\{\alpha\}$. Then we can associate to $\theta$ a standard maximal parabolic subgroup $\P_\theta$ of $\G$ with a standard Levi decomposition $\P_\theta=\M_\theta\N_\theta$.
	
	Let $w_\ell=w_\ell^\Delta$ be the longest element in $\W$ and $w_\ell^{\theta}$ be the longest element in $\W_\theta$. We set $w_0=w_\ell w_\ell^{\theta}$. Note that then $w_0(\theta)\subset\Delta$ and $w_0(\alpha)<0$. We choose the Langlands--Shelstad Weyl group representative $\widetilde{w}_0\in\N_{\G}(\mathrm{S})$ determined by~$\Xi$ (see also \cite[Section 1.7]{AGIKMS}). Let $\P_{w_0(\theta)}$ be the standard maximal parabolic subgroup associated to $w_0(\theta)$ and let $\N_{w_0(\theta)}$ be its unipotent radical. We fix a nontrivial Haar measure $\nu_{\N_{w_0(\theta)}}$ on $\N_{w_0(\theta)}$.

	\begin{remark}\label{remlambda}[Langlands $\lambda$-functions]
		Following~\cite[Section 4.1]{KeysShahidi} (see also \cite[Section 1.7]{AGIKMS}, and \cite[p.\ 18]{lomelilemma}), we let~$\lambda(w_0,\psi)$ denote a particular finite product of Langlands $\lambda$-factors (each of which is associated to a particular finite extension of~$\F$ and~$\psi$).  By \cite[Remark p195]{BHBook}, for a finite extension~$\E/\F$, the Langlands~$\lambda$-factor $\lambda_{\E/\F}(\psi)$ is a fourth root of unity.  Hence~$\lambda(w_0,\psi)$ is a fourth root of unity.  As~$\lambda(w_0,\psi)$ will appear later in decompositions of Shahidi we use, we need to ensure our coefficient ring contains these values. 
		
	\end{remark}

	As mentioned in Section \ref{whittdata}, by restriction, $\mathfrak w$ gives rise to a Whittaker datum $\mathfrak w_{\M_\theta}=(\U_{\M_{\theta}},\Psi_{\M_\theta})$ for $\M_\theta$, where recall that we set $\U_{\M_\theta}=\U\cap\M_\theta$. Note that $\Psi$ and $\Psi_{\M_\theta}$ are $\widetilde{w}_0$-compatible, i.e., the equation \[\Psi_{\M_\theta}(u)=\Psi(\widetilde{w}_0u\widetilde{w}_0^{-1})\] holds for $u\in\U_{\M_{\theta}}$.
	
	For an $\R_0$-algebra $\R$ we set $\Psi_{\M_\theta,\R}=\Psi_{\M_\theta}\otimes\R$. Let~$\rho$ be a Whittaker type~$\R$-representation of~$\M_\theta$ with respect to $\mathfrak w_{\M_\theta}$, and fix a surjective~$\R$-module generator
	\[\vartheta\in\Hom_{\R[\U\cap\M_{\theta}]}(\rho,\Psi_{\M_\theta,\R}).\]  
	
	The goal of this section is to prove that the $\R[\M_\theta/\M_\theta^{\circ}]$-representation~$i_{\M_\theta,\P_\theta}^{\G}(\rho\otimes\chi_{\M_\theta,\R})$ of $\G$ is of Whittaker type with respect to $\mathfrak w$. To prove this we use ideas similar to the geometric lemma. Note that $\P_\theta w_0^{-1}\B$ is an open double coset in~$\P_\theta\backslash \G/\B$. By \cite[Proposition 1.15]{bernstein1976representations} we can find an ordering $w_0\leqslant w_1\leqslant\dotsc \leqslant w_r$ on a set of Weyl representatives $[\W/\W_{\theta}]$
	such that $\bigcup_{i\leqslant j}\P_\theta w_i^{-1} \B$ is open in $\G$ for all $0\leqslant j\leqslant r$ (note that this is not the Bruhat ordering of Section \ref{parabolics}).
	Let $J_j$ be the $\R[\M_{\theta}/\M_{\theta}^\circ][\B]$-submodule of $i_{\M_{\theta},\P_\theta}^{\G}(\rho\otimes\chi_{\M_{\theta},\R})$ consisting of those functions whose support lies in $\bigcup_{i\leq j}\P_\theta w_i^{-1} \B$. For $J_0$ we have the following result.

	\begin{lemma}\label{lemmaconstfunction}
		For~$f\in  J_0$,  the integral
		\begin{align}
			\label{EQ1} 
			\Omega_{\mathfrak w}(\rho,\vartheta,\nu_{\N_{w_0(\theta)}})(f)=\int_{\N_{w_0(\theta)}} \vartheta(f(\widetilde{w}_{0}^{-1}n))\Psi_\R^{-1}(n)dn
		\end{align}
		is well-defined, and~$\Omega_{\mathfrak w}(\rho,\vartheta,\nu_{\N_{w_0(\theta)}})(u\cdot f)=\Psi_\R(u)\Omega_{\mathfrak w}(\rho,\vartheta,\nu_{\N_{w_0(\theta)}})(f)$ for $u\in\U$. Moreover, there is a function $f_0\in J_0$ such that $\Omega_{\mathfrak w}(\rho,\vartheta,\nu_{\N_{w_0(\theta)}})(f_0)=1$.
	\end{lemma}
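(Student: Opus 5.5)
The plan is to reduce everything to the structure of the open cell $\P_\theta w_0^{-1}\B$. First I will show that $\P_\theta\widetilde{w}_0^{-1}\B=\P_\theta\widetilde{w}_0^{-1}\N_{w_0(\theta)}$, and that the map $\P_\theta\times\N_{w_0(\theta)}\to\P_\theta\widetilde{w}_0^{-1}\N_{w_0(\theta)}$, $(p,n)\mapsto p\widetilde{w}_0^{-1}n$, is a homeomorphism. This is the standard factorization $\U=\U_{\M_{w_0(\theta)}}\N_{w_0(\theta)}$ combined with the identities $\widetilde{w}_0\U_{\M_\theta}\widetilde{w}_0^{-1}=\U_{\M_{w_0(\theta)}}$ and $\widetilde{w}_0^{-1}\T\widetilde{w}_0=\T$. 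These let us absorb $\T\U_{\M_{w_0(\theta)}}$ into $\P_\theta$ on the left. Uniqueness of the decomposition follows because $\widetilde{w}_0^{-1}\N_{w_0(\theta)}\widetilde{w}_0\cap\P_\theta=1$, since $w_0^{-1}$ sends the roots of $\N_{w_0(\theta)}$ to negative roots outside $\M_\theta$.

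\textbf{Well-definedness.} For $f\in J_0$, the function $n\mapsto f(\widetilde{w}_0^{-1}n)$ is locally constant on $\N_{w_0(\theta)}$. Because $\operatorname{supp} f$ is compact modulo $\P_\theta$ and contained in the open cell, the homeomorphism above shows that this function has compact support in $\N_{w_0(\theta)}$. Hence the integrand $\vartheta(f(\widetilde{w}_0^{-1}n))\Psi_\R^{-1}(n)$ is locally constant and compactly supported, and the integral against the $\R$-Haar measure $\nu_{\N_{w_0(\theta)}}$ is a finite sum. The values lie in $\R[\M_\theta/\M_\theta^\circ]$, since $f$ takes values in $\rho\otimes\chi_{\M_\theta,\R}$ and we apply $\vartheta\otimes\mathrm{id}$.

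\textbf{Equivariance.} Write $u\in\U$ as $u=u_Mu_N$ with $u_M\in\U_{\M_{w_0(\theta)}}$ and $u_N\in\N_{w_0(\theta)}$. For $u_N$, substituting $n\mapsto nu_N^{-1}$ (the Haar measure on the unipotent group is invariant) gives the factor $\Psi_\R(u_N)$. For $u_M$, we have
\[\widetilde{w}_0^{-1}nu_M=(\widetilde{w}_0^{-1}u_M\widetilde{w}_0)\,\widetilde{w}_0^{-1}(u_M^{-1}nu_M).\]
Here $m=\widetilde{w}_0^{-1}u_M\widetilde{w}_0\in\U_{\M_\theta}$, and conjugation by $u_M$ preserves both $\N_{w_0(\theta)}$ and its Haar measure. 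Since $\delta_{\P_\theta}$ and $\chi_{\M_\theta,\R}$ are trivial on the unipotent group $\U_{\M_\theta}$, we get $f(\widetilde{w}_0^{-1}nu_M)=\rho(m)f(\widetilde{w}_0^{-1}u_M^{-1}nu_M)$. Applying $\vartheta$ gives the factor $\Psi_{\M_\theta,\R}(m)$, which equals $\Psi_\R(u_M)$ by the $\widetilde{w}_0$-compatibility of $\Psi$ and $\Psi_{\M_\theta}$. It remains to change variables $n\mapsto u_Mnu_M^{-1}$ and check that $\Psi_\R$ on $\N_{w_0(\theta)}$ is invariant under conjugation by $\U_{\M_{w_0(\theta)}}$. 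This holds because $\Psi$ is a character of $\U$.

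\textbf{Existence of $f_0$.} Since $\vartheta$ is surjective, pick $v\in\rho$ with $\vartheta(v)=1$. Choose a compact open subgroup $\N'\subseteq\N_{w_0(\theta)}$ small enough that $\Psi_\R$ is trivial on $\N'$, that $\nu_{\N_{w_0(\theta)}}(\N')\in\R^\times$ (take $\N'$ pro-$p$, as in the convention on Haar measures), and that $v$ is fixed by the compact open subgroup $\widetilde{w}_0^{-1}\N'\widetilde{w}_0\cap\M_\theta$. Using the product structure of the cell, define
\[f_0(p\widetilde{w}_0^{-1}n)=\nu_{\N_{w_0(\theta)}}(\N')^{-1}\,\delta_{\P_\theta}^{1/2}(p)\,(\rho\otimes\chi_{\M_\theta,\R})(p)(v\otimes1)\]
for $n\in\N'$, and set $f_0=0$ elsewhere. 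This function lies in the induced representation, is smooth, and is supported in $\P_\theta\widetilde{w}_0^{-1}\N'$, so it belongs to $J_0$. The integral defining $\Omega_{\mathfrak w}(\rho,\vartheta,\nu_{\N_{w_0(\theta)}})(f_0)$ then equals $\nu(\N')^{-1}\nu(\N')\vartheta(v)=1$.

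The main obstacle is checking smoothness of $f_0$ under right translation by a small compact open subgroup of $\G$. Using the Iwahori-type factorization of a small open subgroup $\K=\K^-\K_M\K^+$ adapted to the open cell, I expect right translation to move points within the cell by elements of $\P_\theta$ near $1$ and elements of $\N'$. Shrinking $\K$ so that $v$ is fixed by the resulting $\M_\theta$-components should handle this, and it is where the most care is needed.
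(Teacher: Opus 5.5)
Your proposal is correct and follows essentially the same route as the paper. The paper also uses the homeomorphism $n\mapsto\widetilde{w}_0^{-1}n$ from $\N_{w_0(\theta)}$ onto $\P_\theta\backslash\P_\theta w_0^{-1}\B$ to get compact support, cites Kim's notes for the $\U$-equivariance that you verify directly via $u=u_Mu_N$ and the $\widetilde{w}_0$-compatibility of $\Psi$ and $\Psi_{\M_\theta}$, and builds $f_0$ by transporting $\operatorname{vol}(\N_0)^{-1}v_0$ times the indicator of $\N_0$ along that homeomorphism. The smoothness of $f_0$, which the paper leaves implicit, is the standard correspondence between functions supported on the open cell and locally constant compactly supported functions on $\N_{w_0(\theta)}$. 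Your condition that $v$ be fixed by $\widetilde{w}_0^{-1}\N'\widetilde{w}_0\cap\M_\theta$ is vacuous, since $\widetilde{w}_0^{-1}\N_{w_0(\theta)}\widetilde{w}_0=\overline{\N}_\theta$ meets $\M_\theta$ trivially; what is actually used is the condition in your last paragraph.
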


	\begin{proof}
		For the first part, it suffices to see that for~$f\in J_0$, the map~$n\mapsto f(\widetilde{w}_{0}^{-1}n)$ has compact support on $\N_{w_0(\theta)}$. The map $b\mapsto\widetilde{w}_0^{-1}b$ yields a homeomorphism $(w_0\P_\theta w_0^{-1}\cap \B)\backslash \B\simeq\P_\theta\backslash \P_\theta w_0^{-1}\B$. The latter is isomorphic to $(w_0\P_\theta w_0^{-1}\cap \U)\backslash \U$ and since $\U=\N_{w_0(\theta)}(\M_{w_0(\theta)}\cap \U)$ and $\M_{w_0(\theta)}\cap \U=w_0\P_\theta w_0^{-1}\cap \U$ we obtain that the map 
		\begin{align*}
			\N_{w_0(\theta)}&\to \P_\theta\backslash \P_\theta w_0^{-1}\B\\
			n&\mapsto\widetilde{w}_0^{-1}n
		\end{align*}
		is a homeomorphism. Now as $\operatorname{supp}(f)$ is open and closed in $\G$ and the map $\G\to \P_\theta\backslash \G$ is open, it is straightforward to see that $\P_\theta\backslash\operatorname{supp}(f)$ is closed in $\P_\theta\backslash\G$. As $\P_\theta\backslash\G$ is compact we obtain that $\P_\theta\backslash\operatorname{supp}(f)$ is compact. The result now follows since by assumption $\P_\theta\backslash\operatorname{supp}(f)$ is contained in $\P_\theta\backslash \P_\theta w_0^{-1}\B$ and by the above homeomorphism.
		
		The second statement is a straightforward computation -- see Proposition 8.9 in Kim's article in \cite{cogdell2004lectures}.
		
		For the third statement let $\N_0$ be a compact open subgroup of $\N_{w_0(\theta)}$ such that $\Psi$ is trivial on $\N_0$ and $\operatorname{vol}(\N_0)\in\R_0^\times$. By assumption there is a $v_0\in\rho$ such that $\vartheta(v_0)=1$ and let $f_0$ be the function in $J_0$ that under the above homeomorphism corresponds to the function on $\N_{w_0(\theta)}$ that is supported on $\N_0$ with constant value $\operatorname{vol}(\N_0)^{-1}v_0$ there. Then clearly
		$$\Omega_{\mathfrak w}(\rho,\vartheta,\nu_{\N_{w_0(\theta)}})(f_0)=\int_{\N_{w_0(\theta)}}\vartheta(f_0(\widetilde{w}_{0}^{-1}n))\Psi_\R^{-1}(n)dn=\frac{1}{\operatorname{vol}(\N_0)}\int_{\N_0}\vartheta(v_0)\Psi_\R^{-1}(n)dn=1.$$
	\end{proof}
	
	We briefly discuss the dependence of $\Omega_{\mathfrak w}(\rho,\vartheta,\nu_{\N_{w_0(\theta)}})$ on the choices of $\vartheta$ and $\nu_{\N_{w_0(\theta)}}$. By assumption~$\Hom_{\R[\U_{\M_\theta}]}(\rho,\Psi_{\M_\theta,\R})$ is free of rank one, so when choosing another generator $\vartheta'$, we obtain that $\vartheta'=c\cdot\vartheta$ for some unit $c\in \R^\times$. Similarly, if $\nu_{\N_{w_0(\theta)}}'$ is another nontrivial Haar measure on $\N_{w_0(\theta)}$ then $\nu_{\N_{w_0(\theta)}}'=d\nu_{\N_{w_0(\theta)}}$ for some $d\in \R^\times$. Then $\Omega_{\mathfrak w}(\rho,\vartheta',\nu_{\N_{w_0(\theta)}}')=cd\cdot\Omega_{\mathfrak w}(\rho,\vartheta,\nu_{\N_{w_0(\theta)}})$. In the following we will often fix choices of $\vartheta$ and $\nu_{\N_{w_0(\theta)}}$ and write $\Omega_{\mathfrak w}(\rho)$ for $\Omega_{\mathfrak w}(\rho,\vartheta,\nu_{\N_{w_0(\theta)}})$.
	
	The above lemma in particular shows that $\Omega_{\mathfrak w}(\rho)$ gives rise to a non-zero element in 
	$$\Hom_{\R[\M_\theta/\M_\theta^\circ]}((J_0)_{\U,\Psi_\R},\R[\M_\theta/\M_\theta^{\circ}])\simeq \Hom_{\R[\M_\theta/\M_\theta^\circ][\U]}(J_0,\Psi_{\R[\M_\theta/\M_\theta^{\circ}]}),$$
	for which we have the following result.
	\begin{lemma}
		The space~$ \Hom_{\R[\M_\theta/\M_\theta^\circ]}((J_0)_{\U,\Psi_{\R}},\R[\M_\theta/\M_\theta^{\circ}])$ is free of rank one over~$\R[\M_\theta/\M_\theta^{\circ}]$ and generated by~$\Omega_{\mathfrak w}(\rho)$.
	\end{lemma}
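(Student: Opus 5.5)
The plan is to identify $J_0$ explicitly as a compactly induced representation of $\U$, and then compute its $(\U,\Psi_\R)$-coinvariants directly.

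\textbf{Step 1: description of $J_0$.} By the homeomorphism $\N_{w_0(\theta)}\simeq \P_\theta\backslash\P_\theta w_0^{-1}\B$, $n\mapsto \widetilde w_0^{-1}n$, from the proof of Lemma~\ref{lemmaconstfunction}, restriction $f\mapsto (n\mapsto f(\widetilde w_0^{-1}n))$ should identify $J_0$ with $C_c^\infty(\N_{w_0(\theta)},\rho\otimes\chi_{\M_\theta,\R})$ as an $\R[\M_\theta/\M_\theta^\circ]$-module. To make this $\U$-equivariant, I use $\U=\N_{w_0(\theta)}\rtimes\U_{\M_{w_0(\theta)}}$ with $\U_{\M_{w_0(\theta)}}=\widetilde w_0\U_{\M_\theta}\widetilde w_0^{-1}$. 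An element $u'\in\U_{\M_{w_0(\theta)}}$ acts by conjugating $n$ and then applying $(\rho\otimes\chi)(\widetilde w_0^{-1}u'\widetilde w_0)$, up to the modulus factor $\delta_{\P_\theta}^{1/2}$, which is trivial on unipotent elements. Hence
\[
J_0\simeq \operatorname{c-Ind}_{\U_{\M_{w_0(\theta)}}}^{\U}\bigl({}^{\widetilde w_0}(\rho\otimes\chi_{\M_\theta,\R})\bigr)
\]
as $\R[\M_\theta/\M_\theta^\circ][\U]$-modules.

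\textbf{Step 2: coinvariants.} Here I would use the standard argument for coinvariants of an induced representation restricted to a semidirect product. Taking $(\N_{w_0(\theta)},\Psi)$-coinvariants of $C_c^\infty(\N_{w_0(\theta)},V)$ with the translation action yields $V$, via $f\mapsto\int_{\N_{w_0(\theta)}} f(n)\Psi_\R^{-1}(n)\,dn$. This works integrally because $\N_{w_0(\theta)}$ is a union of pro-$p$ compact open subgroups whose volumes are units in $\R$: one checks it on each compact open piece and passes to the colimit, using exactness of coinvariants \cite[I.4.11]{Vig96}. The residual action of $\U_{\M_{w_0(\theta)}}$ on this quotient is ${}^{\widetilde w_0}(\rho\otimes\chi)$, because $\Psi$ is invariant under conjugation of $\N_{w_0(\theta)}$ by $\U_{\M_{w_0(\theta)}}$ (it is trivial on non-simple root groups and on the relevant commutators). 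Using $\widetilde w_0$-compatibility, $\Psi_{\M_\theta}(u)=\Psi(\widetilde w_0u\widetilde w_0^{-1})$, we then get
\[
(J_0)_{\U,\Psi_\R}\simeq (\rho\otimes\chi_{\M_\theta,\R})_{\U_{\M_\theta},\Psi_{\M_\theta,\R}}\simeq \rho_{\U_{\M_\theta},\Psi_{\M_\theta,\R}}\otimes_\R\R[\M_\theta/\M_\theta^\circ],
\]
since $\chi_{\M_\theta,\R}$ is trivial on the unipotent group $\U_{\M_\theta}$.

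\textbf{Step 3: conclusion.} Because $\rho$ is of Whittaker type, $\Hom_\R(\rho_{\U_{\M_\theta},\Psi},\R)$ is free of rank one and generated by the surjective $\vartheta$. This forces $\rho_{\U_{\M_\theta},\Psi}\simeq\R$: the surjection induced by $\vartheta$ splits, and its kernel $K$ satisfies $\Hom(K,\R)=0$, so I need $K=0$. If $\rho$ is co-Whittaker this is Proposition~\ref{propcoinv}; in general I would argue as in Lemma~\ref{lemwhituniv}, working directly with $\Hom$ rather than with the coinvariants. In either case, $\Hom_{\R[\M_\theta/\M_\theta^\circ]}((J_0)_{\U,\Psi_\R},\R[\M_\theta/\M_\theta^\circ])$ is identified with $\Hom_\R(\rho_{\U_{\M_\theta},\Psi},\R[\M_\theta/\M_\theta^\circ])$, and by the coefficient-wise argument of Lemma~\ref{lemwhituniv} this is free of rank one, generated by $\vartheta$. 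Tracing through the identifications, the generator is exactly the functional \eqref{EQ1}, that is, $\Omega_{\mathfrak w}(\rho)$.

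The step I expect to be the main obstacle is Step 2 over a general ring $\R$: making the identification of the $(\N_{w_0(\theta)},\Psi)$-coinvariants of $C_c^\infty$ integral, and checking that $\Psi$ is invariant under the $\U_{\M_{w_0(\theta)}}$-conjugation action.
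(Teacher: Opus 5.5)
Your proposal is correct and follows essentially the paper's proof: identify $J_0$ with $\operatorname{c-Ind}_{w_0\M_\theta w_0^{-1}\cap \U}^{\U}({}^{\widetilde{w}_0}(\rho\otimes\chi_{\M_\theta,\R}))$, reduce its $(\U,\Psi_\R)$-coinvariants to $(\rho\otimes\chi_{\M_\theta,\R})_{\U_{\M_\theta},\Psi_{\M_\theta,\R}}$, and conclude with the $\Hom$-form of Lemma~\ref{lemwhituniv}. The paper cites Propositions 3.1 and 3.3 of \cite{Girsch} for the coinvariant step, where you integrate over $\N_{w_0(\theta)}$ by hand; that works integrally, and the invariance of $\Psi$ under $\U_{\M_{w_0(\theta)}}$-conjugation you worried about is automatic because $\Psi$ is a character of $\U$. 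Whittaker type alone does not force $\rho_{\U_{\M_\theta},\Psi_{\M_\theta,\R}}\simeq\R$, so drop that claim, but your fallback $\Hom$ argument (the paper's) does not need it; the paper also identifies the generator more quickly, by evaluating at the $f_0$ of Lemma~\ref{lemmaconstfunction}, where $\Omega_{\mathfrak w}(\rho)(f_0)=1$.
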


	\begin{proof}
		Note that $f\mapsto (x\mapsto f(\widetilde{w}_0^{-1}x))$ yields an isomorphism of $\R[\M_\theta/\M_\theta^\circ][\U]$-modules between $J_0$ and $\operatorname{c-Ind}_{w_0\M_\theta w_0^{-1}\cap \U}^\U({}^{\widetilde{w}_0}(\rho\otimes\chi_{\M_\theta,\R}))$, where we note that $w_0\P_\theta w_0^{-1}\cap \U=w_0\M_\theta w_0^{-1}\cap \U$. We can write
		$$(J_0)_{\U,\Psi_\R}\simeq\Psi_\R^{-1}\otimes_{\R[\U]}\operatorname{c-Ind}_{w_0\P_\theta w_0^{-1}\cap \U}^\U({}^{\widetilde{w}_0}(\rho\otimes\chi_{\M_\theta,\R})).$$
		and use Propositions 3.1 and 3.3 of \cite{Girsch} to see that
		$$\Psi_\R^{-1}\otimes_{\R[\U]}\operatorname{c-Ind}_{w_0\P_\theta w_0^{-1}\cap \U}^\U({}^{\widetilde{w}_0}(\rho\otimes\chi_{\M_\theta,\R}))\simeq\Psi_\R^{-1}\otimes_{\R[w_0\M_\theta w_0^{-1}\cap\U]}({}^{\widetilde{w}_0}(\rho\otimes\chi_{\M_\theta,\R}))$$
		As $\M_\theta\cap w_0^{-1}\U w_0=\M_\theta\cap \U$
		and $\Psi$ and $\Psi_{\M_\theta}$ are $\widetilde{w}_0$-compatible it is straightforward to see that
		$$\Psi_\R^{-1}\otimes_{\R[w_0\M_\theta w_0^{-1}\cap\U]}({}^{\widetilde{w}_0}(\rho\otimes\chi_{\M_\theta,\R}))=(\rho\otimes\chi_{\M_\theta,\R})_{\M_\theta\cap\U,\Psi_{\M_\theta,\R}}$$
		and the first result now follows from Lemma \ref{lemwhituniv} since
		$$\Hom_{\R[\M_\theta/\M_\theta^\circ]}((\rho\otimes\chi_{\M_\theta,\R})_{\U\cap\M_\theta,\Psi_{\M_\theta,\R}},\R[\M_\theta/\M_\theta^\circ])\cong\Hom_{\R[\M_\theta/\M_\theta^\circ][\U\cap\M_\theta]}(\rho\otimes\chi_{\M_\theta,\R},\Psi_{\R[\M_\theta/\M_\theta^{\circ}]}).$$
		
		By Lemma \ref{lemmaconstfunction} there is an $f_0$ such that $\Omega_{\mathfrak w}(\rho)(f_0)=1$ which immediately implies that $\Omega_{\mathfrak w}(\rho)$ is a generator of $\Hom_{\R[\M_\theta/\M_\theta^\circ]}((J_0)_{\U,\Psi_\R},\R[\M_\theta/\M_\theta^{\circ}])$.
	\end{proof}

	We will now show $\Omega_{\mathfrak w}(\rho)$ can be extended to a functional on $i_{\M_\theta,\P_\theta}^{\G}(\rho\otimes \chi_{\M_\theta,\R})$. 
	\begin{lemma}
		For any~$j\geq 1$, we have~$(J_j/J_{j-1})_{\U,\Psi_\R}=0.$
	\end{lemma}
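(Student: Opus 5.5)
The plan is to identify each graded piece $J_j/J_{j-1}$ as a compactly induced representation of $\U$ and then run the standard Rodier-type argument: on the smaller Bruhat cells the character $\Psi$ restricted to a suitable unipotent subgroup clashes with the inducing data.

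Fix $j\geq 1$ and write $w=w_j$. Since $\bigcup_{i\leq j}\P_\theta w_i^{-1}\B$ is open and $\P_\theta w^{-1}\B$ is closed in it, restricting functions gives an isomorphism of $\R[\M_\theta/\M_\theta^\circ][\B]$-modules
\[J_j/J_{j-1}\simeq \operatorname{c-Ind}_{\B\cap w\P_\theta w^{-1}}^{\B}\bigl({}^{\widetilde w}(\delta_{\P_\theta}^{1/2}\rho\otimes\chi_{\M_\theta,\R})\bigr).\]
This is the usual geometric-lemma description (cf.\ \cite[I.5]{Vig96} and the exactness of restriction to open/closed pieces). Restricting to $\U$ and using $\B=\T\U$ with $\T\subset w\P_\theta w^{-1}$, we get
\[J_j/J_{j-1}\simeq\operatorname{c-Ind}_{\U\cap w\P_\theta w^{-1}}^{\U}(\sigma)\]
as $\U$-representations, with $\sigma={}^{\widetilde w}(\ldots)$ restricted. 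By Propositions 3.1 and 3.3 of \cite{Girsch}, exactly as in the previous lemma, the $(\U,\Psi_\R)$-coinvariants become
\[\Psi_\R^{-1}\otimes_{\R[\U\cap w\P_\theta w^{-1}]}\sigma=\sigma_{\U\cap w\P_\theta w^{-1},\Psi_\R}.\]
The task is thus to show that these coinvariants vanish.

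Now $w\in[\W/\W_\theta]$ and $w\neq w_0$; here $w_0$ is the unique such element with $w(\theta)\subset\Delta$ and $w(\alpha)<0$, which is what produces the open cell. The combinatorial step, which I expect to be the main obstacle, is to find a simple root $\beta\in\Delta$ with $w^{-1}(\beta)>0$ lying in the span of roots of $\N_\theta$, i.e.\ $w^{-1}\beta\notin\Phi_\theta$. To do this I would use the minimal-length property $w\theta>0$. If every simple $\beta$ with $w^{-1}\beta>0$ had $w^{-1}\beta\in\Phi_\theta^+$, then the set of such $\beta$ would be mapped into $\Phi^+_\theta$, and counting forces $w^{-1}$ to send $\Phi^+\setminus w(\Phi_\theta)$ to negative roots. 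That makes $w$ the longest element of $[\W/\W_\theta]$, namely $w_\ell w_\ell^\theta=w_0$, a contradiction. Given such a $\beta$, the root subgroup $\U_\beta$ lies in $\U\cap w\N_\theta w^{-1}$, so it is contained in $\U\cap w\P_\theta w^{-1}$ and acts trivially on $\sigma$ (conjugated $\N_\theta$ acts trivially on the inflated representation). On the other hand $\Psi$ is non-trivial on $\U_\beta$ by non-degeneracy.

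To conclude, choose $u\in\U_\beta$ (after shrinking to a compact open subgroup $\U_\beta^0$ on which $\Psi$ is non-trivial) and average. The $\Psi$-coinvariants of a representation on which $\U_\beta^0$ acts trivially are killed by the idempotent $\int_{\U_\beta^0}\Psi_\R^{-1}(u)\,du$. Since $\Psi_\R|_{\U_\beta^0}$ is a non-trivial character of a pro-$p$ group with values in $p$-power roots of unity, this idempotent acts as $0$ on trivial vectors and as $1$ on the coinvariants. Non-triviality over $\R$ is exactly the argument at the start of Section \ref{seccowhit}, and it uses that $1-\zeta$ is a unit. Hence every vector of $\sigma_{\U\cap w\P_\theta w^{-1},\Psi_\R}$ is zero, giving $(J_j/J_{j-1})_{\U,\Psi_\R}=0$.
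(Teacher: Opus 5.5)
Your proposal is correct and follows the paper's proof. You use the same identification of $J_j/J_{j-1}$ with a compact induction from $\U\cap w_j\P_\theta w_j^{-1}$ to $\U$, the same reduction via Propositions 3.1 and 3.3 of \cite{Girsch}, and the same conclusion from an element $x\in\U\cap w_j\N_\theta w_j^{-1}$ with $\Psi_\R(x)\neq 1$ and $1-\Psi_\R(x)\in\R^\times$; your averaging idempotent is just a repackaging of that last step.

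The only difference is that you prove the existence of the simple root $\beta$ with $w_j^{-1}\beta$ a root of $\N_\theta$ yourself, whereas the paper cites Lomeli's Lemma 2.4. Your argument is sound, though the step you call ``counting'' is really this: under your hypothesis, $w_j^{-1}\gamma$ has non-positive $\alpha$-coefficient for every $\gamma\in\Phi^+$. Only after that does the length count $\ell(w_j)\geq|\Phi^+|-|\Phi_\theta^+|=\ell(w_0)$ force $w_j=w_0$.
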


	\begin{proof}
		In the special case that $\R=\mathbb C$, Lomeli (cf.\ \cite[Lemma 2.4]{Lomeli}) checks that the corresponding $\mathbb C$-vector space is zero by computing its Hom-space into the complex numbers. That argument does not directly extend to our setting of $\R$-representations (an~$\R$-module can be non-zero, but have zero Hom-space~(as an~$\R$-module) into~$\R$), so we give a slightly different argument:
		Let $\widetilde{w}_j$ be any lift of $w_j$.
		Note that $(J_j/J_{j-1})_{\U,\Psi_\R}$ can be written as
		$\Psi_\R^{-1}\otimes_{\R[\U]}J_j/J_{j-1}$. 
		We have an $\R[\M_\theta/\M_\theta^\circ][\U]$-module isomorphism
		$$J_j/J_{j-1}\simeq\operatorname{c-Ind}_{w_j\P_\theta w_j^{-1}\cap \U}^\U\left((\rho\otimes\chi_{\M_\theta,\R})^{\widetilde{w}_j^{-1}}\right).$$
		By using Propositions 3.1 and 3.3 of \cite{Girsch} we can compute that 
		$$\Psi_\R^{-1}\otimes_{\R[\U]}\operatorname{c-Ind}_{w_j\P_\theta w_j^{-1}\cap \U}^\U\left((\rho\otimes\chi_{\M_\theta,\R})^{\widetilde{w}_j^{-1}}\right)\simeq \Psi_\R^{-1}\otimes_{\R[w_j\P_\theta w_j^{-1}\cap \U]}(\rho\otimes\chi_{\M_\theta,\R})^{\widetilde{w}_j^{-1}}.$$
		However, as in the proof of \cite[Lemma 2.4]{Lomeli} there is an element $x\in w_j\N_\theta w_j^{-1}\cap \U$ such that $\Psi_\R(x)\not=1$ and hence as we saw at the beginning of Section \ref{seccowhit} that $1-\Psi_\R(x)\in \R^\times$,  which implies that the space is zero.
	\end{proof}

	By exactness of $\Psi_\R$-coinvariance we obtain that $\left(i_{\M_\theta,\P_\theta}^{\G}(\rho\otimes\chi_{\M_\theta,\R})\right)_{\U,\Psi_\R}\simeq (J_0)_{\U,\Psi_\R}$ and we deduce:
	\begin{corollary}\label{generates}
		The~$\R[\M_\theta/\M_\theta^{\circ}]$-representation~$i_{\M_\theta,\P_\theta}^{\G}(\rho\otimes\chi_{\M_\theta,\R})$ is of Whittaker type with respect to $\mathfrak w$. Moreover, the free~$\R[\M_\theta/\M_\theta^{\circ}]$-module of rank one
		\[\Hom_{\R[\M_\theta/\M_\theta^{\circ}][\U]}(i_{\M_\theta,\P_\theta}^{\G}(\rho\otimes\chi_{\M_\theta,\R}),\Psi_{\R[\M_\theta/\M_\theta^{\circ}]})\]
		is generated by~$\Omega_{\mathfrak w}(\rho)$.
	\end{corollary}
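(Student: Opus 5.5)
The corollary follows by assembling the three preceding lemmas along the filtration $0\subset J_0\subset J_1\subset\cdots\subset J_r=i_{\M_\theta,\P_\theta}^{\G}(\rho\otimes\chi_{\M_\theta,\R})$. Write $V=i_{\M_\theta,\P_\theta}^{\G}(\rho\otimes\chi_{\M_\theta,\R})$ and $A=\R[\M_\theta/\M_\theta^{\circ}]$.

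First I would show that the inclusion $J_0\hookrightarrow V$ induces an isomorphism $(J_0)_{\U,\Psi_\R}\simeq V_{\U,\Psi_\R}$ of $A$-modules. Since $\U$ is a filtered union of pro-$p$ compact open subgroups, $(\U,\Psi_\R)$-coinvariants is exact. Apply it to the short exact sequences $0\to J_{j-1}\to J_j\to J_j/J_{j-1}\to 0$. By the preceding lemma, $(J_j/J_{j-1})_{\U,\Psi_\R}=0$ for $j\geq1$. Induction on $j$ then gives $(J_{j-1})_{\U,\Psi_\R}\simeq (J_j)_{\U,\Psi_\R}$. The maps are $A$-linear because each $J_j$ is an $A[\B]$-submodule. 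Note that $J_r=V$ because the sets $\P_\theta w_i^{-1}\B$ exhaust $\G$ by the Bruhat decomposition.

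Next I would identify $(J_0)_{\U,\Psi_\R}$ concretely. The proof of the lemma on $J_0$ shows that
\[(J_0)_{\U,\Psi_\R}\simeq (\rho\otimes\chi_{\M_\theta,\R})_{\M_\theta\cap\U,\Psi_{\M_\theta,\R}}.\]
Since $\rho$ is of Whittaker type, Lemma \ref{lemwhituniv} makes $\rho\otimes\chi_{\M_\theta,\R}$ of Whittaker type over $A$. Hence
\[\Hom_{A[\U]}(V,\Psi_{A})\simeq\Hom_{A}(V_{\U,\Psi_\R},A)\simeq\Hom_A((J_0)_{\U,\Psi_\R},A)\]
is free of rank one. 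By the previous lemma it is generated by the functional induced by $\Omega_{\mathfrak w}(\rho)$, viewed on $V$ through the quotient $V\to V_{\U,\Psi_\R}\simeq (J_0)_{\U,\Psi_\R}$. So $\Omega_{\mathfrak w}(\rho)$ extends uniquely from $J_0$ to all of $V$, and this extension generates the Hom-space.

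Finally, surjectivity of the generator: by Lemma \ref{lemmaconstfunction} there is $f_0\in J_0\subset V$ with $\Omega_{\mathfrak w}(\rho)(f_0)=1$. The functional is $A$-linear, so its image is all of $A$. Together with freeness of rank one, this is exactly the definition of Whittaker type with respect to $\mathfrak w$.

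The only point needing care is the claim that the extension to $V$ is well defined and $A$-linear. This rests entirely on the isomorphism of coinvariants from the first step, which uses exactness over general $\R$. Exactness holds because $\U$ is a union of pro-$p$ groups and $p$ is invertible in $\R$, so I do not expect a real obstacle.
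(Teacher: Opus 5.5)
Your proposal is correct and follows essentially the same argument as the paper. Exactness of $(\U,\Psi_\R)$-coinvariants, together with the vanishing of $(J_j/J_{j-1})_{\U,\Psi_\R}$ for $j\geq 1$, gives $V_{\U,\Psi_\R}\simeq (J_0)_{\U,\Psi_\R}$; the lemma on $J_0$ and the function $f_0$ of Lemma \ref{lemmaconstfunction} then give freeness of rank one, generation by $\Omega_{\mathfrak w}(\rho)$, and surjectivity of the generator.
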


	\subsection{Intertwining operators}\label{int}	
	Recall that $\P_\theta$ is the maximal standard parabolic subgroup of $\G$ that is associated to $\theta\subset\Delta$ and there is an $\alpha\in\Delta$ with $\theta=\Delta\backslash\{\alpha\}$.
	
	We denote by~$\rho_{\theta}$ half the sum of absolute roots of $\T$ in~$\N_\theta$.  We set~$a$ to be an absolute root which restricts to~$\alpha$, and~$a^{\vee}$ is the coroot associated to~$a$. Let $X$ be an indeterminate and we define a ring homomorphism
	\begin{align*}
		\Delta_{X,\R}\colon \R[\M_\theta/\M_\theta^\circ]&\to \R[X^{\pm\langle\rho_\theta,a^\vee\rangle^{-1}/2}]\\
		m&\mapsto X^{\mathrm{val}(\delta_{\P_\theta}(m))\langle\rho_\theta,a^\vee\rangle^{-1}/2},
	\end{align*}
	where we set for $r\in\mathbb Q$ that $\mathrm{val}(q^r)=-r$.
	We will often write $X^c$ for $X^{\langle\rho_\theta,a^\vee\rangle^{-1}/2}$.

	\begin{definition}
		\begin{enumerate}
			\item Let~$\mathcal{S}_{\M_\theta,\R}$ denote the multiplicative subset of~$\R[\M_\theta/\M_\theta^{\circ}]$ generated by elements $$\{a_0+a_1m+\cdots+a_r m^r\mid m\in\M_\theta/\M_\theta^{\circ}\text{ non-trivial},~a_i\in\R,~a_0,a_r\in \R^\times\}.$$
			\item We set,
			\[\mathcal{S}_{X^c,\R}=\{a_{r} X^{cr}+a_{(r+1)} X^{c(r+1)}+\cdots +a_{s-1}X^{c(s-1)}+a_s X^{cs}\mid r,s\in\mathbb Z, r\leq s\text{ and }a_{r},a_s\in \R^\times\},\]
			i.e.,~$\mathcal{S}_{X^c,\R}$ is the multiplicative subset of~$\R[X^{\pm c}]$ of elements with leading and trailing coefficient a unit.
		\end{enumerate}
	\end{definition}
	As before, we write~$\chi_{\M_\theta ,\R}\colon\M\rightarrow \R[\M_\theta/\M_\theta^{\circ}]$ for the \emph{universal unramified character}.
	
	Recall that $\P_{w_0(\theta)}$ is the maximal standard parabolic subgroup associated to $w_0(\theta)$ and $\nu_{\N_{w_0(\theta)}}$ is a nontrivial Haar measure on $\N_{w_0(\theta)}$. In the following we will often for convenience write $\M$ for $\M_\theta$. Note that $\P_\theta w_0^{-1}\P_{w_0(\theta)}w_0$ is open in $\G$.
	
	\begin{thm}
		\label{intertwiningops}
		Suppose that~$\R$ is a Noetherian $\mathbb{Z}[1/\sqrt{q}]$-algebra, and
		let $\rho$ be a finitely generated smooth~$\R[\M]$-module which satisfies Schur's lemma. Then there exists an~$\R[\M/\M^{\circ}][\G]$-morphism
		\[\mathbb{M}(\rho,\nu_{\N_{w_0(\theta)}})\colon i_{\M,\P_\theta}^{\G}(\rho\otimes \chi_{\M,\R})\rightarrow i_{w_0\M w_0^{-1},\P_{w_0(\theta)}}^{\G}((\rho\otimes \chi_{\M,\R})^{\widetilde{w}_0^{-1}})\otimes_{\R[\M/\M^{\circ}]} \mathcal{S}_{\M,\R}^{-1}\R[\M/\M^{\circ}]\]
		such that:
		\begin{enumerate}
			\item for all $f\in i_{\M,\P_\theta}^{\G}(\rho\otimes \chi_{\M,\R})$ that are supported on $\P_\theta w_0^{-1}\P_{w_0(\theta)}w_0$, the equality
			\[\mathbb{M}(\rho,\nu_{\N_{w_0(\theta)}})(f)(\widetilde{w}_0)=\int_{\N_{w_0(\theta)}} f(\widetilde{w}_0^{-1}n\widetilde{w}_0) dn\]
			holds,  
			\item and there exists a polynomial~$\mathcal Q\in \mathcal{S}_{\M,\R}$ such that
			\[\mathcal Q\cdot \mathbb{M}(\rho,\nu_{\N_{w_0(\theta)}})\colon i_{\M,\P_\theta}^{\G}(\rho\otimes \chi_{\M,\R})\rightarrow i_{w_0\M w_0^{-1},\P_{w_0(\theta)}}^{\G}((\rho\otimes \chi_{\M,\R})^{\widetilde{w}_0^{-1}})\]
			and the image of $\mathcal Q$ under $\Delta_{X,\R}$ lies in $\mathcal S_{X,\R}$.
		\end{enumerate}
	\end{thm}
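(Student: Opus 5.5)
We plan to deduce the theorem from the general rationality results of Moss--Trias \cite{MT} (building on Dat \cite{Datnu} and Waldspurger \cite{W}), verifying their hypothesis of the existence of ``$(\sigma,\P,\Q)$-regular elements'' in our setting. First, on the open double coset $\P_\theta w_0^{-1}\P_{w_0(\theta)}w_0$ we define $\mathbb{M}$ by the integral in (1). Here $f(\widetilde{w}_0^{-1}n\widetilde{w}_0)$ has compact support in $n$, by the same homeomorphism argument as in Lemma \ref{lemmaconstfunction}. This gives an $\R[\M/\M^\circ][\P_{w_0(\theta)}]$-equivariant map from the subspace $J'$ of functions supported on the open cell to the target induced representation. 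The problem is then to extend this map from $J'$ to all of $i_{\M,\P_\theta}^{\G}(\rho\otimes\chi_{\M,\R})$ after inverting elements of $\mathcal{S}_{\M,\R}$. We will do this via the ``algebraic'' characterization of intertwining operators: a $\G$-morphism is determined by its restriction to the open cell, and existence reduces to showing that the corresponding element of $\Hom_{\P_{w_0(\theta)}}$ extends $\G$-equivariantly.

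Second, we filter the source by the closed-to-open Bruhat orbits $\P_\theta w^{-1}\P_{w_0(\theta)}$, exactly as with the $J_j$ in the Whittaker section. The obstruction to extending across each smaller orbit is controlled, via the geometric lemma, by Jacquet modules $r(\rho)$ twisted by $\chi_{\M,\R}$ restricted to a split central torus $\A_w$. Following \cite{MT}, the regularity condition asks for an element $a$ of the centre of $\M$ (or of $\M\cap{}^{w}\M$) whose action through the universal character differs from the action on the lower-orbit subquotients by a polynomial with unit leading and trailing coefficients. Because $\rho$ is finitely generated, $\R$ is Noetherian and $\rho$ satisfies Schur's lemma, each Jacquet module $r_{\M\cap{}^{w}\M}(\rho)$ is finitely generated and admissible over the centre. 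We will therefore choose $a$ central in $\M$ with $\delta_{\P_\theta}(a)\neq1$. Its action on these Jacquet modules satisfies a monic polynomial whose constant term is a unit, by Cayley--Hamilton over $\mathfrak{Z}$ together with Schur's lemma. The resulting determinant is a product of elements $P(\chi_{\M,\R}(a))\in\mathcal{S}_{\M,\R}$. We take $\mathcal{Q}$ to be the product of these, and \cite{MT} then provides the rational extension $\mathbb{M}$ with $\mathcal{Q}\cdot\mathbb{M}$ integral.

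Finally, for the condition on $\Delta_{X,\R}(\mathcal{Q})$: a generator $\sum a_i m^i$ with $a_0,a_r$ units maps to $\sum a_i X^{ic\,\mathrm{val}(\delta_{\P_\theta}(m))}$. Since $\mathrm{val}(\delta_{\P_\theta}(m))\neq0$ for the chosen $m=a\M^\circ$, this is again an element with unit extreme coefficients. Here $m$ generates the rank-one group $\M/\M^\circ$ modulo torsion, which is where maximality of $\P_\theta$ enters. After absorbing the fractional exponent (chosen so that $\delta_{\P_\theta}$ values are integral in $X^c$) we land in $\mathcal{S}_{X,\R}$. The main obstacle is the second step: checking carefully that the hypotheses of Moss--Trias hold with these finiteness inputs. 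In particular we must ensure that the polynomials obtained genuinely lie in $\mathcal{S}_{\M,\R}$ (unit trailing coefficient) and not merely that they are monic. For that we intend to use invertibility of the action of $a$ on the Jacquet module, which gives the unit constant term.
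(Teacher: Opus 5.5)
Your overall strategy is the paper's: obtain $\mathbb{M}$ from the Moss--Trias construction, extending the open-cell integral by an element of $\R[\M/\M^\circ][\A_\M]$ that kills the Jacquet modules of the lower Bruhat strata. Part (1) follows directly from this.

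The gap is in part (2), which is the only part of the statement not already contained in \cite{MT}. You treat the denominators as polynomials $P(\chi_{\M,\R}(a))$ in the single class $a\M^\circ$ with coefficients in $\R$, and then argue from $\delta_{\P_\theta}(a)\neq1$. That is not what the construction produces. On the subquotient of the Jacquet module attached to $w\neq1$ in $\W_\M\backslash\W/\W_\M$, the element $a\in\A_\M$ acts through its conjugate $w^{-1}aw$, so the universal character contributes $\chi_{\M,\R}(w^{-1}aw)$ there. On the open stratum, by contrast, $a$ acts by $\omega_\rho(a)\chi_{\M,\R}(a)$. A polynomial in $\chi_{\M,\R}(a)$ with $\R$-coefficients therefore cannot annihilate the lower strata. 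What one actually gets, and what the paper uses, is a product of factors $\sum_{i=0}^k b_i(a_ww^{-1}a_w^{-1}w)^i$ with $b_0,b_k\in\R^\times$ and $a_ww^{-1}a_w^{-1}w\notin\M^\circ$. So what must be checked is $\delta_{\P_\theta}(a_ww^{-1}a_w^{-1}w)\neq1$, i.e.\ $\delta_{\P_\theta}(a_w)\neq\delta_{\P_\theta}(w^{-1}a_ww)$. The condition $\delta_{\P_\theta}(a)\neq1$ does not by itself give this, and your proposal never addresses it. The unit trailing coefficient you single out as the main obstacle is handled inside \cite{MT}; it is not where the difficulty lies.

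Your justification also rests on a false claim. $\M/\M^\circ$ is free of rank $1+\operatorname{rk}\A_\G$, not of rank one; already for $\GL_n$ it has rank two. This is exactly the point: $\Delta_{X,\R}$ is trivial on the image of $\Z_\G$. So an element of $\mathcal{S}_{\M,\R}$ such as $1-z$, with $z\in\Z_\G$ non-trivial in $\M/\M^\circ$, is sent to $0$, and one must know precisely which generators occur in $\mathcal{Q}$.

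The paper closes this gap as follows. The element $a_ww^{-1}a_w^{-1}w$ lies in the kernel of every $\F$-rational character of $\G$. Suppose it were in $\ker\delta_{\P_\theta}$. Since $\ker(\delta_{\P_\theta})/\Z_\G\M^\circ$ is finite for maximal $\P_\theta$, some power would equal $zm$ with $m\in\M^\circ$ and $z\in\Z_\G\cap\G^\circ$. That group is compact, so $z\in\M^\circ$. Torsion-freeness of $\M/\M^\circ$ then contradicts $a_ww^{-1}a_w^{-1}w\notin\M^\circ$.

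Finally, rather than ``absorbing the fractional exponent'', one replaces $a_w$ by a power $a_w^k$. This is still allowed, since $a_w^kw^{-1}a_w^{-k}w=(a_ww^{-1}a_w^{-1}w)^k\notin\M^\circ$, and $k$ is chosen so that $\Delta_{X,\R}(a_w^kw^{-1}a_w^{-k}w)$ is an integral power of $X$. Without this choice you only land in $\mathcal{S}_{X^c,\R}$.
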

	
	\begin{proof}
		To prove this we use the construction in Section 3.4 of \cite{MT}, which we first review:

		Note that $\M$ is a Levi subgroup of both $\P_\theta$ and $w_0^{-1}\P_{w_0(\theta)}w_0$.
		We write $\overline{f}$ for the image of a function~$f\in i_{\M,\P_\theta}^{\G}(\rho\otimes \chi_{\M,\R})$ in $r_{\M,w_0^{-1}\P_{w_0(\theta)}w_0}^\G(i_{\M,\P_\theta}^{\G}(\rho\otimes \chi_{\M,\R}))$. Moreover, let $\mathcal I$ be the $\R[\M/\M^\circ] [w_0^{-1}\P_{w_0(\theta)}w_0]$-submodule of $i_{\M,\P_\theta}^{\G}(\rho\otimes \chi_{\M,\R})$ consisting of those functions whose support is contained in $\P_\theta w_0^{-1}\P_{w_0(\theta)}w_0$. We then have that $r_{\M,w_0^{-1}\P_{w_0(\theta)}w_0}^\G(\mathcal I)\cong\rho\otimes\chi_{\M,\R}$.
		
		Let $\A_\M$ be the split component in the centre of $\M$. By \cite[Corollary 5.4]{MT} there is an element~$\beta\in\R[\M/\M^\circ][\A_\M]$, such that~$\beta$ annihilates
		$$r_{\M,w_0^{-1}\P_{w_0(\theta)}w_0}^\G(i_{\M,\P_\theta}^{\G}(\rho\otimes \chi_{\M,\R})/\mathcal I).$$
		As one can see in the proof of \cite[Theorem 3.8]{MT},~$\beta$ acts on $r_{\M,w_0^{-1}\P_{w_0(\theta)}w_0}^\G(\mathcal I)\cong\rho\otimes\chi_{\M,\R}$ via multiplication with an element $b\in \mathcal S_{\M,\R}$. 
		
		Note that conjugation by $\widetilde{w}_0^{-1}$ induces a Haar measure on $w_0^{-1}\N_{w_0(\theta)}w_0$.
		This can be used to construct a $\R[\M/\M^\circ][\G]$-linear intertwining operator 
		$$J_{w_0^{-1}\P_{w_0(\theta)}w_0\mid\P_\theta}(\rho)\colon i_{\M,\P_\theta}^{\G}(\rho\otimes \chi_{\M,\R})\to i_{\M,w_0^{-1}\P_{w_0(\theta)}w_0}^{\G}(\rho\otimes \chi_{\M,\R})\otimes_{\R[\M/\M^\circ]}\mathcal S^{-1}_{\M,\R}\R[\M/\M^\circ]$$ 
		by setting
		$$J_{w_0^{-1}\P_{w_0(\theta)}w_0\mid\P_\theta}(\rho)(f)(g)=\frac{1}{b}\int_{w_0^{-1}\N_{w_0(\theta)}w_0\cap\overline{\N}_\theta}(\beta (\overline{g\cdot f}))(n)dn$$
		for all $f\in i_{\M,\P_\theta }^{\G}(\rho\otimes \chi_{\M,\R})$ and $g\in\G$. Here, we lift $\beta (\overline{g\cdot f})$ to a function in $\mathcal I$ for which the above integral is well-defined and independent of the chosen lift. Note that $b\cdot J_{w_0^{-1}\P_{w_0(\theta)}w_0\mid\P_\theta}(\rho)$ has image in $i_{\M,w_0^{-1}\P_{w_0(\theta)}w_0}^{\G}(\rho\otimes \chi_{\M,\R})$.


		(1):
		If $f_0\in\mathcal I$ then $\beta$ acts on $\overline{f}_0$ via multiplication with $b$ and since $w_0^{-1}\N_{w_0(\theta)}w_0=\overline{\N}_\theta$ we obtain that 
		$$J_{w_0^{-1}\P_{w_0(\theta)}w_0\mid\P_\theta}(\rho)(f_0)(1)=\int_{w_0^{-1}\N_{w_0(\theta)}w_0}f_0(n)dn.$$

		We have a $\R[\M/\M^\circ][\G]$-linear isomorphism
		\begin{align*}
			i_{\M,w_0^{-1}\P_{w_0(\theta)}w_0}^\G(\rho\otimes \chi_{\M,\R})&\to i_{w_0\M w_0^{-1},\P_{w_0(\theta)}}^\G((\rho\otimes \chi_{\M,\R})^{\widetilde{w}^{-1}_0})\\
			f&\mapsto (g\mapsto f(\widetilde{w}_0^{-1}g)).
		\end{align*}
		Hence by composition with $J_{w_0^{-1}\P_{w_0(\theta)}w_0\mid\P_\theta}(\rho)$ we obtain an intertwining operator 
		\[\mathbb{M}(\rho,\nu_{\N_{w_0(\theta)}})\colon i_{\M,\P_\theta}^{\G}(\rho\otimes \chi_{\M,\R})\rightarrow i_{w_0\M w_0^{-1},\P_{w_0(\theta)}}^{\G}((\rho\otimes \chi_{\M,\R})^{\widetilde{w}_0^{-1}})\otimes_{\R[\M/\M^{\circ}]} \mathcal{S}_{\M,\R}^{-1}\R[\M/\M^{\circ}]\]
		which for functions $f_0\in\mathcal I$ satisfies
		$$\mathbb{M}(\rho,\nu_{\N_{w_0(\theta)}})(f_0)(\widetilde{w}_0)=\int_{w_0^{-1}\N_{w_0(\theta)}w_0}f_0(n)dn.$$
		The result follows since the measure on $w_0^{-1}\N_{w_0(\theta)}w_0$ was induced from $\nu_{\N_{w_0(\theta)}}$ by conjugation with $\widetilde{w}_0^{-1}$ and hence
		$$\int_{w_0^{-1}\N_{w_0(\theta)}w_0}f_0(n)dn=\int_{\N_{w_0(\theta)}}f_0(\widetilde{w}_0^{-1}n\widetilde{w}_0)dn.$$

		(2):
		The element $b$ constructed in \cite{MT} is a product of elements of the form \begin{equation}\label{eq:intpol}
			\sum_{i=0}^kb_i(a_ww^{-1}a_w^{-1}w)^i,
		\end{equation}
		where $w\in \W_\M\backslash \W/\W_\M$ and $w\not=1$ and $a_w$ is an element in the split component of the centre of $\M$ such that $a_ww^{-1}a_w^{-1}w\notin\M^\circ$ and $b_0,b_k\in \R^\times$. 
		
		We first show that for such an $a_w$ we have that $\Delta_{X,\R}(a_ww^{-1}a_w^{-1}w)\not=1$, or equivalently that $\delta_{\P_\theta}(a_ww^{-1}a_w^{-1}w)\not=1$. Note that $\Z_\G\M^\circ$ is contained in $\ker(\delta_{\P_\theta})$ and $\ker(\delta_{\P_\theta})/\Z_\G\M^\circ$ is finite. Hence if $a_ww^{-1}a_w^{-1}w\in\ker(\delta_{\P_\theta})$ there is some positive integer $k$ such that $(a_ww^{-1}a_w^{-1}w)^k\in\Z_\G\M^\circ$. Then we can write $(a_ww^{-1}a_w^{-1}w)^k=zm$ for $z\in\Z_\G$ and $m\in\M^\circ$. Then $(a_ww^{-1}a_w^{-1}w)^km^{-1}$ lies in the kernel of $|\chi|_\F$ for all algebraic characters $\chi\colon \G\to \F^\times$ and we obtain that $z\in\G^\circ$. Hence $z\in\Z_\G\cap\G^\circ$ which is a compact group by Proposition V.2.6 of \cite{renard2010representations} and implies since $z\in\M$ that $z\in\M^\circ$. We obtain that $(a_ww^{-1}a_w^{-1}w)^k\in\M^\circ$ and hence also that $a_ww^{-1}a_w^{-1}w\in\M^\circ$, which yields a contradiction. 
		
		In particular, this implies that the image of an element as in Equation (\ref{eq:intpol}) under $\Delta_{X,\R}$ lies in $\mathcal S_{X^c,\R}$. However, when constructing $b$ the only restriction on the choice of $a_w$ for $w\not=1$ was that $a_ww^{-1}a_w^{-1}w\not\in\M^\circ$. Since $a_w$ and $w^{-1}a_w^{-1}w$ are in the fixed maximal torus of $\G$ we see that $(a_ww^{-1}a_w^{-1}w)^k=a_w^kw^{-1}a_w^{-k}w$ which is also not an element of $\M^\circ$. As $\Delta_{X,\R}(a_w^kw^{-1}a_w^{-k}w)=\Delta_{X,\R}(a_ww^{-1}a_w^{-1}w)^k$ we obtain that we may choose $a_w$ such that $\Delta_{X,\R}(a_w^kw^{-1}a_w^{-k}w)$ is an integral power of $X$, which yields the result.\end{proof}
	
	Note that if $\nu_{\N_{w_0(\theta)}}'$ is another nontrivial Haar measure on $\N_{w_0(\theta)}$, then $\nu_{\N_{w_0(\theta)}}'=c\cdot\nu_{\N_{w_0(\theta)}}$ for some $c\in \R^\times$. It is then straightforward to see that $\mathbb{M}(\rho,\nu_{\N_{w_0(\theta)}}')=c\cdot\mathbb{M}(\rho,\nu_{\N_{w_0(\theta)}})$ and we will often abbreviate $\mathbb{M}(\rho,\nu_{\N_{w_0(\theta)}})$ with $\mathbb{M}(\rho)$.

	Moreover, if $\R$ is an $\R_0$-algebra (not necessarily Noetherian), we can extend the above construction of intertwining operators to co-Whittaker $\R$-representations of $M$ of type $\mathfrak t$ with respect to a Whittaker datum $\mathfrak w_\M$ for $\M$. We set $\Gamma=\Gamma_{\mathfrak w_\M,\mathfrak t}^\M$ and $\E=\E_{\mathfrak w_\M,\mathfrak t}^\M$.  \rob{We have the~$\mathrm{S}_0$-representation $\mathcal W_{\mathfrak w_\M,\mathfrak t}^\M$ of Section \ref{cwrepssect} (where we arrange for~$\mathfrak{t}$ to be defined over~$\mathrm{S}_0$, e.g., an idempotent corresponding to a depth)} with Noetherian endomorphism ring $\mathcal E_{\mathfrak w_\M,\mathfrak t}^\M=\End_{\mathrm{S}_0[\M]}(\mathcal W_{\mathfrak w_\M,\mathfrak t}^\M)$. Hence\rob{, after extending scalars to~$\mathrm{S}_0[\sqrt{q}]$,} we can apply Theorem \ref{intertwiningops} to obtain an intertwining operator $\mathbb M(\mathcal W_{\mathfrak w_\M,\mathfrak t}^\M\otimes \mathrm{S}_0[\sqrt{q}])$. By tensoring this map with $\operatorname{id}_{\R_0}$ we obtain by Lemmas 5.14 and 5.15 of \cite{DHKMconj} a $\E[\M/\M^\circ][\G]$-morphism
	$$\mathbb M(\Gamma)\colon i_{\M,\P_\theta}^{\G}(\Gamma\otimes \chi_{\M,\E})\rightarrow i_{w_0\M w_0^{-1},\P_{w_0(\theta)}}^{\G}((\Gamma\otimes \chi_{\M,\E})^{\widetilde{w}_0^{-1}})\otimes_{\E[\M/\M^{\circ}]} \mathcal{S}_{\M,\E}^{-1}\E[\M/\M^{\circ}].$$
	Now if $\rho$ is a co-Whittaker $\R$-representation of $\M$ of type $\mathfrak t$ with respect to the Whittaker datum $\mathfrak w_\M$, we obtain an $\R_0$-algebra morphism $f_{\rho,0}\colon\E\to\R$ and a surjection $\Gamma\otimes_{f_{\rho,0}}\R\twoheadrightarrow\rho$. Now we define $\mathbb M(\rho)$ of an element in $i_{\M,\P_\theta}^{\G}(\rho\otimes\chi_{\M,\R})$ by
	first choosing a preimage in  
	$$i_{\M,\P_\theta}^{\G}(\Gamma\otimes \chi_{\M,\E})\otimes\R\cong 	i_{\M,\P_\theta}^{\G}((\Gamma\otimes\R)\otimes \chi_{\M,\R})$$
	then applying $\mathbb M(\Gamma)\otimes\operatorname{id}_\R$ and then applying the induced surjection again to obtain an element in
	\[i_{w_0\M w_0^{-1},\P_{w_0(\theta)}}^{\G}((\rho\otimes \chi_{\M,\R})^{\widetilde{w}_0^{-1}})\otimes\mathcal S_{\M,\R}^{-1}\R[\M/\M^\circ],\] i.e.\ the diagram 
		$$\begin{tikzcd}
		i_{\M,\P_\theta}^{\G}((\Gamma\otimes\R)\otimes \chi_{\M,\R})\arrow{r}{\mathbb M(\Gamma)\otimes\operatorname{id}_\R}\arrow{d}
		&i_{w_0\M w_0^{-1},\P_{w_0(\theta)}}^{\G}(((\Gamma\otimes\R)\otimes \chi_{\M,\R})^{\widetilde{w}_0^{-1}})\otimes\mathcal S_{\M,\R}^{-1}\R[\M/\M^\circ]\arrow{d}\\
		i_{\M,\P_\theta}^{\G}(\rho\otimes \chi_{\M,\R})\arrow{r}{\mathbb M(\rho)}
		&i_{w_0\M w_0^{-1},\P_{w_0(\theta)}}^{\G}((\rho\otimes \chi_{\M,\R})^{\widetilde{w}_0^{-1}})\otimes\mathcal S_{\M,\R}^{-1}\R[\M/\M^\circ],
	\end{tikzcd}$$
	commutes. It is straightforward to see from the properties of Theorem \ref{intertwiningops} that this is well-defined. It also follows readily that this definition retains the properties of Theorem \ref{intertwiningops} and of Proposition 3.13 of \cite{MT}.

	\subsection{(The inverse of) Shahidi's local coefficients in families}\label{localcoeffsect}
	For this section, we fix as before a Borel subgroup $\B=\T\U$ and a standard maximal parabolic subgroup~$\P=\M\N$ of~$\G$, i.e.\ ~$\P=\P_{\theta}$. Moreover, let $\R$ be an $\R_0$-algebra.
	
	\subsubsection{Choices}\label{secchoices}
	Having fixed our~$\F$-pinning~$\Xi$, and our character~$\psi\colon\F\rightarrow \R_0^\times$, following Shahidi  (see also \cite[A.5]{AGIKMS}) these data define:
	\begin{enumerate}
		\item the Whittaker datum~$\mathfrak w=(\U,\Psi)$ for $\G$ determined by~$(\Xi,\psi)$, as in Section \ref{whittdata};
		\item for $w_0$, we choose the Langlands--Shelstad Weyl group representative $\widetilde{w}_0\in\N_{\G}(\mathrm{S})$ determined by~$\Xi$ (see also \cite[Section 1.7]{AGIKMS}); 
		\item the~$\R_0$-Haar measure~$\nu_{\N_\theta}$ on $\N_\theta$ determined by~$(\Xi,\psi)$.
	\end{enumerate}

	\subsubsection{Definition of the inverse of Shahidi's local coefficients in families}

	At this point we can already interpolate the inverse of Shahidi's local coefficients in families:
	\begin{proposition}\label{Inverseshahidis}
		Let $\rho$ be a co-Whittaker type~$\R$-representation of~$\M$ with respect to $\mathfrak w_\M$. Then there exists a unique element
		\[\mathcal{D}_\R(\rho,\P_\theta,\Xi,\psi)\in \mathcal{S}_{\M,\R}^{-1}\R[\M/\M^\circ],\] 
		satisfying the functional equation
		\[\mathcal{D}_\R(\rho,\P_\theta,\Xi,\psi)\cdot\Omega_{\mathfrak w}(\rho,\vartheta,\nu_{\N_{w_0(\theta)}})(f)= \Omega_{\mathfrak w}({}^{\widetilde{w}_0}\rho,{}^{\widetilde{w}_0}\vartheta,\nu_{\N_\theta})( \mathbb{M}(\rho,\nu_{\N_{w_0(\theta)}})(f))\]
		for all $f\in i_{\M,\P_\theta}^\G(\rho\otimes\chi_{\M,\R})$. Moreover, there is an element $\mathcal Q\in\mathcal S_{\M,\R}$ such that $\mathcal Q\cdot \mathcal{D}_\R(\rho,\P_\theta,\Xi,\psi)\in \R[\M/\M^\circ]$ and the image of $\mathcal Q$ under $\Delta_{X,\R}$ lies in $\mathcal S_{X,\R}$.
		
	\end{proposition}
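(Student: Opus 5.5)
The plan is to use the uniqueness of Whittaker functionals on the (localized) induced representation, Corollary \ref{generates}. Write $A=\R[\M/\M^\circ]$ and $A_{\mathcal S}=\mathcal S_{\M,\R}^{-1}A$. First I check that ${}^{\widetilde{w}_0}\rho$ is a Whittaker type representation of $w_0\M w_0^{-1}=\M_{w_0(\theta)}$ with respect to $\mathfrak w_{\M_{w_0(\theta)}}$, with surjective generator ${}^{\widetilde{w}_0}\vartheta$. This follows from the $\widetilde{w}_0$-compatibility $\Psi_{\M}(u)=\Psi(\widetilde{w}_0u\widetilde{w}_0^{-1})$, since $\widetilde{w}_0(\U\cap\M)\widetilde{w}_0^{-1}=\U\cap w_0\M w_0^{-1}$. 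I then identify $(\rho\otimes\chi_{\M,\R})^{\widetilde{w}_0^{-1}}$ with ${}^{\widetilde{w}_0}\rho\otimes\chi_{\M_{w_0(\theta)},\R}$, where the scalars $A$ are identified with $\R[\M_{w_0(\theta)}/\M_{w_0(\theta)}^\circ]$ via conjugation. With this identification, Corollary \ref{generates} for the parabolic $\P_{w_0(\theta)}$ (whose ``$w_0$'' element is $w_0^{-1}$, with relevant unipotent radical $\N_\theta$) gives $\Omega_{\mathfrak w}({}^{\widetilde{w}_0}\rho,{}^{\widetilde{w}_0}\vartheta,\nu_{\N_\theta})$. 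I extend it $A_{\mathcal S}$-linearly to the target of $\mathbb M(\rho)$.

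The composite $\Lambda:=\Omega_{\mathfrak w}({}^{\widetilde{w}_0}\rho,\dots)\circ\mathbb M(\rho)$ is an $A[\U]$-equivariant map
\[ i_{\M,\P_\theta}^{\G}(\rho\otimes\chi_{\M,\R})\to \Psi_\R\otimes A_{\mathcal S}. \]
Equivalently, it is an $A$-linear map $\bigl(i_{\M,\P_\theta}^{\G}(\rho\otimes\chi_{\M,\R})\bigr)_{\U,\Psi_\R}\to A_{\mathcal S}$. By Corollary \ref{generates} and its proof, the coinvariants are free of rank one over $A$, and a generator is the class of the function $f_0\in J_0$ of Lemma \ref{lemmaconstfunction}, with $\Omega_{\mathfrak w}(\rho)(f_0)=1$. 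So I set $\mathcal D_\R(\rho,\P_\theta,\Xi,\psi):=\Lambda(f_0)\in A_{\mathcal S}$. Then for every $f$, the class of $f$ equals $\Omega_{\mathfrak w}(\rho)(f)\cdot[f_0]$ in the coinvariants, and the functional equation follows. Uniqueness is immediate by evaluating at $f_0$.

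For the integrality statement I use Theorem \ref{intertwiningops}(2). This is to be applied to the universal $\Gamma$ and then transported to $\rho$ through the commutative diagram defining $\mathbb M(\rho)$. Since the diagram is compatible with base change along $\E\to\R$, the polynomial $\mathcal Q$ maps into $\mathcal S_{\M,\R}$ with image under $\Delta_{X,\R}$ in $\mathcal S_{X,\R}$. The theorem gives $\mathcal Q\in\mathcal S_{\M,\R}$ such that $\mathcal Q\cdot\mathbb M(\rho)$ lands in the unlocalized induced representation. Then $\mathcal Q\Lambda(f_0)=\Omega_{\mathfrak w}({}^{\widetilde{w}_0}\rho)(\mathcal Q\,\mathbb M(\rho)f_0)\in A$, since the unlocalized $\Omega$ is $A$-valued.

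The main obstacle is a hypothesis gap. Theorem \ref{intertwiningops} needs $\R$ Noetherian and $\rho$ satisfying Schur's lemma, whereas $\rho$ here is only of co-Whittaker type, and a Whittaker type $\rho$ need not be a quotient of $\Gamma$. I would handle this by taking $\rho$ co-Whittaker, as in the construction at the end of \S\ref{int}, so that $\mathbb M(\rho)$ exists via the universal family. I also have to check that the $A_{\mathcal S}$-linear extension of $\Omega$ is legitimate. For this I note that localization is exact and commutes with coinvariants, so Corollary \ref{generates} persists after $\otimes_A A_{\mathcal S}$. Finally, the conjugation-of-scalars identification in the first step must be verified against the normalization of $\Delta_{X,\R}$.
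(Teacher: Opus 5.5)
Your proposal is correct and is essentially the paper's argument. The paper composes $\Omega_{\mathfrak w}({}^{\widetilde{w}_0}\rho,{}^{\widetilde{w}_0}\vartheta,\nu_{\N_\theta})$ with $\mathcal Q\cdot\mathbb{M}(\rho)$, uses that $\Omega_{\mathfrak w}(\rho)$ generates the unlocalized rank-one space of Whittaker functionals to get $\mathcal C\in\R[\M/\M^\circ]$, and sets $\mathcal D_\R=\mathcal C\mathcal Q^{-1}$. This coincides with your $\Lambda(f_0)$, and your use of freeness of the coinvariants is legitimate here by Proposition \ref{propcoinv}. The ``hypothesis gap'' you flag does not arise: the statement is about co-Whittaker $\rho$, and $\mathbb{M}(\rho)$ was constructed for those from the universal family at the end of \S\ref{int}.

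The only step the paper includes that you omit is the short check that $\mathcal D_\R$ does not depend on the choices of $\vartheta$ and $\nu_{\N_{w_0(\theta)}}$, which the notation $\mathcal D_\R(\rho,\P_\theta,\Xi,\psi)$ implicitly asserts. Replacing them by $c\vartheta$ and $d\nu_{\N_{w_0(\theta)}}$ multiplies both sides of the functional equation by $cd$, so $\mathcal D_\R$ is unchanged.
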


	\begin{proof}
		From Corollary \ref{generates}, we obtain that $\Omega_{\mathfrak w}(\rho,\vartheta,\nu_{\N_{w_0(\theta)}})$ generates 
		\[\Hom_{\R[\M/\M^{\circ}][\U]}(i_{\M,\P_\theta}^{\G}(\rho\otimes \chi_{\M,\R}),\Psi_{\R[\M/\M^\circ]}).\] 
		Moreover, by using the construction of intertwining operators in Theorem \ref{intertwiningops}, there is an element $\mathcal Q\in \mathcal{S}_{\M,\R}$ such that 
		\[\mathcal Q\cdot \mathbb{M}(\rho,\nu_{\N_{w_0(\theta)}})\colon i_{\M,\P_\theta}^{\G}(\rho\otimes \chi_{\M,\R})\rightarrow i_{w_0\M w_0^{-1},\P_{w_0(\theta)}}^{\G}({}^{\widetilde{w}_0}(\rho\otimes \chi_{\M,\R})).\]
		Clearly, ${}^{\widetilde{w}_0}(\rho\otimes \chi_{\M,\R})\simeq{}^{\widetilde{w}_0}\rho\otimes{}^{\widetilde{w}_0}(\chi_{\M,\R})$ and the conjugation map $\R[\M/\M^\circ]\to \R[w_0\M w_0^{-1}/w_0\M^\circ w_0^{-1}], m\mapsto w_0mw_0^{-1}$ yields an isomorphism between ${}^{\widetilde{w}_0}(\chi_{\M,\R})$ and $\chi_{w_0\M w_0^{-1},\R}$ (note here that $(w_0\M w_0^{-1})^\circ =w_0\M^\circ w_0^{-1}$).
		We write ${}^{\widetilde{w}_0}\vartheta$ for the surjective map in the free rank one $\R$-module $\Hom_{\R[w_0(\M\cap \U)w_0^{-1}]}({}^{\widetilde{w}_0}\rho,{}^{\widetilde{w}_0}\Psi_{\M})$ that is induced by $\vartheta$. Note that $w_0(\M\cap \U)w_0^{-1}=w_0\M w_0^{-1}\cap \U$ and since by assumption $\Psi_{\M}(u)=\Psi(w_0uw_0^{-1})$ for $u\in\M\cap\U$ we see that 
		${}^{\widetilde{w}_0}\vartheta$ is a surjective morphism in
		$$\Hom_{\R[w_0\M w_0^{-1}\cap \U]}({}^{\widetilde{w}_0}\rho,\Psi_\R).$$
		We may apply Corollary \ref{generates} and obtain a nontrivial element $\Omega_{\mathfrak w}({}^{\widetilde{w}_0}\rho,{}^{\widetilde{w}_0}\vartheta,\nu_{\N_\theta})$
		of 
		$$\Hom_{\R[w_0\M w_0^{-1}/w_0\M^\circ w_0^{-1}][\U]}\left(i_{w_0\M w_0^{-1},\P_{w_0(\theta)}}^\G({}^{\widetilde{w}_0}\rho\otimes\chi_{w_0\M w_0^{-1},\R}),\Psi_{\R[w_0\M w_0^{-1}/w_0\M^\circ w_0^{-1}]}\right).$$ Note here that if $w_\ell^{w_0(\theta)}$ is the longest Weyl group element in $\W_{w_0(\theta)}$ then $w_0^{-1}=w_\ell w_{\ell}^{w_0(\theta)}$.
		
		However, note that conjugation by $\widetilde{w}_0^{-1}$ yields an isomorphism $\Psi_{\R[w_0\M w_0^{-1}/w_0\M^\circ w_0^{-1}]}\simeq\Psi_{\R[\M/\M^\circ]}$ of $\R[\U]$-modules and it is straightforward to see that composing $\Omega_{\mathfrak w}({}^{\widetilde{w}_0}\rho,{}^{\widetilde{w}_0}\vartheta,\nu_{\N_{\theta}})$ and $\mathcal Q\cdot \mathbb{M}(\rho,\nu_{\N_{w_0(\theta)}})$ (and using the isomorphism $\R[w_0\M w_0^{-1}/w_0\M^\circ w_0^{-1}]\simeq\R[\M/\M^\circ]$ induced by conjugation with $\widetilde{w}_0^{-1}$) yields an element of
		\[\Hom_{\R[\M/\M^{\circ}][\U]}(i_{\M,\P_\theta}^{\G}(\rho\otimes \chi_{\M,\R}),\Psi_{\R[\M/\M^\circ]}).\]

		Since $\Omega_{\mathfrak w}(\rho,\vartheta,\nu_{\N_{w_0(\theta)}})$ is a generator of this space there is a unique~$\mathcal C\in \R[\M/\M^\circ]$, which satisfies
		\[\mathcal C\cdot \Omega_{\mathfrak w}(\rho,\vartheta,\nu_{\N_{w_0(\theta)}})(f)=\Omega_{\mathfrak w}({}^{\widetilde{w}_0}\rho,{}^{\widetilde{w}_0}\vartheta,\nu_{\N_\theta})(\mathcal Q\cdot \mathbb{M}(\rho,\nu_{\N_{w_0(\theta)}})(f)).\]
		By multiplying this equation with $\mathcal Q^{-1}$ we obtain 
		\begin{equation}\label{eq:funceq}
			\mathcal C\cdot\mathcal Q^{-1}\cdot \Omega_{\mathfrak w}(\rho,\vartheta,\nu_{\N_{w_0(\theta)}})(f)=\Omega_{\mathfrak w}({}^{\widetilde{w}_0}\rho,{}^{\widetilde{w}_0}\vartheta,\nu_{\N_\theta})( \mathbb{M}(\rho,\nu_{\N_{w_0(\theta)}})(f))
		\end{equation}
		in $\mathcal{S}_{\M,\R}^{-1}\R[\M/\M^\circ]$. We set $\mathcal D_\R(\rho,\P_\theta,\Xi,\psi)=\mathcal C\cdot\mathcal Q^{-1}\in\mathcal S_{\M,\R}^{-1}\R[\M/\M^\circ]$. 
		
		Note that if $\vartheta'$ is another generator of $\Hom_{\R[\M\cap \U]}(\rho,\Psi_{\M,\R})$, then by assumption $\vartheta'=c\vartheta$ for some unit $c\in \R^\times$ which also implies that ${}^{\widetilde{w}_0}\vartheta'=c({}^{\widetilde{w}_0}\vartheta)$. Since $\Omega_{\mathfrak w}(\rho,c\vartheta,\nu_{\N_{w_0(\theta)}})=c\Omega_{\mathfrak w}(\rho,\vartheta,\nu_{\N_{w_0(\theta)}})$ it follows by Equation \eqref{eq:funceq} that $\mathcal D_\R(\rho,\P_\theta,\Xi,\psi)$ does not depend on the choice of a generator $\vartheta\in\Hom_{\R[\M\cap \U]}(\rho,\Psi_{\M,\R})$. Similarly, if $\nu_{\N_{w_0(\theta)}}'$ is another Haar measure on $\N_{w_0(\theta)}$ then $\nu_{\N_{w_0(\theta)}}'=d\nu_{\N_{w_0(\theta)}}$ for some $d\in \R^\times$ and we have that $\Omega_{\mathfrak w}(\rho,\vartheta,\nu_{\N_{w_0(\theta)}}')=d\Omega_{\mathfrak w}(\rho,\vartheta,\nu_{\N_{w_0(\theta)}})$ and $\mathbb{M}(\rho,\nu_{\N_{w_0(\theta)}}')=d\mathbb{M}(\rho,\nu_{\N_{w_0(\theta)}})$. By Equation \eqref{eq:funceq} we obtain that $\mathcal D_\R(\rho,\P_\theta,\Xi,\psi)$ does not depend on the chosen Haar measure $\nu_{\N_{w_0(\theta)}}$ on $\N_{w_0(\theta)}$. Theorem \ref{intertwiningops} (2) implies that the image of $\mathcal Q$ under $\Delta_{X,\R}$ lies in $\mathcal S_{X,\R}$ and we obtain the result.
	\end{proof}
		\begin{remark}
		If one assumes that $\R$ is a Noetherian $\R_0$-algebra then the above Proposition \ref{Inverseshahidis} also holds for finitely generated Whittaker type $\R$-representations of~$\M$ satisfying Schur's lemma, for which the same proof applies. Unfortunately, we can not extend the result in this setting to more general rings $\R$, since for Whittaker type~$\R$-representations Theorem \ref{intertwiningops} only yields an intertwining operator for Noetherian rings $\R$ and we cannot extend the construction as in the co-Whittaker case. 
	\end{remark}
	
	\subsubsection{Properties of the inverse of Shahidi's local coefficients in families}
	\begin{lemma}[Compatibility with homomorphisms]\label{lem:comphom}
		Let $\rho$ be a co-Whittaker $\R$-representation of $\M$ with respect to $\mathfrak w_\M$. Suppose that~$\alpha\colon \R\rightarrow \R'$ is a morphism of~$\R_0$-algebras, and~$\alpha_\M\colon\mathcal{S}_{\M,\R}^{-1}\R[\M/\M^\circ]\rightarrow \mathcal{S}_{\M,\R'}^{-1}\R'[\M/\M^\circ]$ is the unique~$\R_0$-algebra morphism extending $\alpha$ which is the identity on elements of $\M/\M^\circ$. Then we have that 
		\[\mathcal{D}_{\R'}(\rho\otimes_\alpha \R',\P_\theta,\Xi,\psi)=\alpha_\M(\mathcal{D}_\R(\rho,\P_\theta,\Xi,\psi)).\]
	\end{lemma}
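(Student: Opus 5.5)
The plan is to base change the defining functional equation of Proposition \ref{Inverseshahidis} along $\alpha$, and then invoke the uniqueness part of that proposition over $\R'$. Write $\rho'=\rho\otimes_\alpha\R'$, which is co-Whittaker by Lemma \ref{cwlemma}(\ref{cwlemma4}). Fix a surjective generator $\vartheta$ of $\Hom_{\R[\U_\M]}(\rho,\Psi_{\M,\R})$. Then $\vartheta'=\vartheta\otimes\operatorname{id}_{\R'}$ is a surjective $\R'$-linear $(\U_\M,\Psi_{\M,\R'})$-equivariant map $\rho'\to\R'$. It is a generator: by Proposition \ref{propcoinv}, $\rho_{\U_\M,\Psi_{\M,\R}}\simeq\R$, and coinvariants commute with $\otimes_\R\R'$ (right exactness), so $\rho'_{\U_\M,\Psi_{\M,\R'}}\simeq\R'$ with $\vartheta'$ as the induced identification. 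We keep the same Haar measures, taking their image in $\R'$; they are still nontrivial because volumes of pro-$p$ subgroups are units.

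Next we compare the three ingredients under the natural surjection
\[i_{\M,\P_\theta}^\G(\rho\otimes\chi_{\M,\R})\otimes_\R\R'\simeq i_{\M,\P_\theta}^\G(\rho'\otimes\chi_{\M,\R'}).\]
\begin{enumerate}
\item For $f\in J_0$, formula \eqref{EQ1} is a finite sum over a compact support, since the integral is a finite sum of values times volumes. Hence $\Omega_{\mathfrak w}(\rho',\vartheta',\nu)(f\otimes1)=\alpha_\M(\Omega_{\mathfrak w}(\rho,\vartheta,\nu)(f))$ on $J_0$.
\item The $\U$-equivariant functional on the whole induced representation is determined by its restriction to $J_0$, via $(i)_{\U,\Psi}\simeq(J_0)_{\U,\Psi}$. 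So this identity holds for all $f$.
\item The same argument applies to $\Omega_{\mathfrak w}({}^{\widetilde w_0}\rho',{}^{\widetilde w_0}\vartheta',\nu_{\N_\theta})$.
\item For the intertwining operator, the construction in the co-Whittaker case is by definition the base change of $\mathbb M(\Gamma)$ along $f_{\rho,0}\colon\E\to\R$, followed by the surjection $\Gamma\otimes\R\twoheadrightarrow\rho$. Since $f_{\rho',0}=\alpha\circ f_{\rho,0}$ and $\Gamma\otimes\R'\twoheadrightarrow\rho'$ is the base change of $\Gamma\otimes\R\twoheadrightarrow\rho$, the commutative square defining $\mathbb M(\rho')$ factors through the one for $\mathbb M(\rho)$. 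This gives $\mathbb M(\rho')(f\otimes1)=(\mathbb M(\rho)(f))\otimes_{\alpha_\M}1$, using that $\alpha$ maps $\mathcal S_{\M,\R}$ into $\mathcal S_{\M,\R'}$ because units go to units.
\end{enumerate}

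Now apply $\alpha_\M$ to the functional equation for $\rho$, evaluated at $f$. This shows that $\alpha_\M(\mathcal D_\R(\rho,\P_\theta,\Xi,\psi))$ satisfies the functional equation for $\rho'$ on all elements $f\otimes1$. These elements span the induced representation over $\R'[\M/\M^\circ]$, and both sides are $\R'[\M/\M^\circ]$-linear, so the equation holds for all elements.

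Uniqueness in Proposition \ref{Inverseshahidis} then gives the claim. To be careful here, the uniqueness comes from the fact that $\Omega_{\mathfrak w}(\rho')$ generates a free rank-one module (Corollary \ref{generates}). Concretely, evaluate at the $f_0$ of Lemma \ref{lemmaconstfunction} with $\Omega(f_0)=1$. Its image $f_0\otimes1$ still has $\Omega(f_0\otimes 1)=1$, so both candidate elements equal the right-hand side at $f_0\otimes1$ and hence coincide.

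The main obstacle is step (iv). One has to verify that the intertwining operator for $\rho'$ really is the base change of the one for $\rho$, independently of the chosen preimages, and that the localization maps $\mathcal S^{-1}_{\M,\R}\R[\M/\M^\circ]\otimes\R'\to\mathcal S^{-1}_{\M,\R'}\R'[\M/\M^\circ]$ are compatible with this. I would handle it by working universally: both operators are images of the single operator $\mathbb M(\Gamma)$. Well-definedness was already noted after Theorem \ref{intertwiningops}, so only the composition of base changes needs checking.
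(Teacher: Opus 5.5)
Your proposal is correct and follows essentially the same route as the paper. Both arguments identify the base-changed Whittaker functionals with those of $\rho\otimes_\alpha\R'$ by checking them on functions supported in the big cell, use base-change compatibility of $\mathbb M$, and conclude by the uniqueness in Proposition~\ref{Inverseshahidis}.

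There is one minor difference. The paper obtains $\mathbb M(\rho\otimes_\alpha\R')(f\otimes x)=\mathbb M(\rho)(f)\otimes x$ by citing Proposition 3.13 of \cite{MT}. You derive it directly from the universal definition via $\mathbb M(\Gamma)$, which is equally valid.
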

	\begin{proof} Let $\nu_{\N_{w_0(\theta)}}$ be a nontrivial Haar-measure on $\N_{w_0(\theta)}$ which we use to define Whittaker functionals. However, for ease of notation we drop the dependencies on this Haar measure (and on $\nu_{\N_\theta})$ in $\Omega_{\mathfrak w}$ in this proof.

		Let $\vartheta$ be a generator of $\Hom_{\R[\U_\M]}(\rho,\Psi_{\M,\R})$. Lemma \ref{cwlemma} (4) implies that $\rho\otimes_\alpha \R'$ is a co-Whittaker $\R'$-representation of $\M$ with respect to $\mathfrak w_\M$ of type $\mathfrak t$. Note that $\vartheta\otimes_\alpha\operatorname{id}_{\R'}$ is a generator of $\Hom_{\R'[\U_\M]}(\rho\otimes_\alpha\R',\Psi_{\M,\R'})$. By Proposition \ref{Inverseshahidis} there is a unique element $\mathcal{D}_{\R'}(\rho\otimes_\alpha \R',\P_\theta,\Xi,\psi)\in\mathcal S_{\M,\R'}^{-1}\R'[\M/\M^\circ]$ such that 
		$$\mathcal{D}_{\R'}(\rho\otimes_\alpha \R',\P_\theta,\Xi,\psi)\cdot\Omega_{\mathfrak w}(\rho\otimes_\alpha \R',\vartheta\otimes_\alpha\operatorname{id}_{\R'})(f')=\Omega_{\mathfrak w}({}^{\widetilde{w}_0}\rho\otimes_\alpha \R',{}^{\widetilde{w}_0}\vartheta\otimes_\alpha\operatorname{id}_{\R'})(\mathbb M(\rho\otimes_\alpha \R')(f'))$$
		for all $f'\in i_{\M,\P_\theta}^\G((\rho\otimes_\alpha \R')\otimes_{\R'}\chi_{\M,\R'})$.

We have a natural isomorphism 
		\begin{equation}\label{eq:parindbase}
			i_{\M,\P_\theta}^\G(\rho\otimes_\R\chi_{\M,\R})\otimes_{\alpha}\R'\simeq i_{\M,\P_\theta}^\G((\rho\otimes_\alpha \R')\otimes_{\R'}\chi_{\M,\R'}).
		\end{equation} 
		By Equation \eqref{eq:parindbase} both $\Omega_{\mathfrak w}(\rho,\vartheta)\otimes\operatorname{id}_{\R'}$ and $\Omega_{\mathfrak w}(\rho\otimes_\alpha \R',\vartheta\otimes_\alpha\operatorname{id}_{\R'})$ are elements of 
		\[\Hom_{\R'[\M/\M^{\circ}][\U]}(i_{\M,\P_\theta}^\G(\rho\otimes_\R\chi_{\M,\R})\otimes_{\alpha}\R',\Psi_{\R'[\M/\M^{\circ}]})\cong\R'[\M/\M^\circ],\]
		and hence are linearly related. Now a straightforward calculation using Lemma \ref{lemmaconstfunction} shows that these elements agree on $f\otimes x$ where $f\in i_{\M,\P_\theta}^{\G}(\rho\otimes \chi_{\M,\R})$  is supported in $\P_\theta w_0^{-1}\B$ and $x\in\R'$. Hence we obtain that 
		$$\Omega_{\mathfrak w}(\rho,\vartheta)\otimes\operatorname{id}_{\R'}=\Omega_{\mathfrak w}(\rho\otimes_\alpha \R',\vartheta\otimes_\alpha\operatorname{id}_{\R'})$$
		and analogously
		$$\Omega_{\mathfrak w}({}^{\widetilde{w}_0}\rho,{}^{\widetilde{w}_0}\vartheta)\otimes\operatorname{id}_{\R'}=\Omega_{\mathfrak w}({}^{\widetilde{w}_0}\rho\otimes_{\alpha}\R',{}^{\widetilde{w}_0}\vartheta\otimes\operatorname{id}_{\R'}).$$
		Proposition 3.13 of \cite{MT} implies for $f\in i_{\M,\P_\theta}^\G(\rho\otimes_\R\chi_{\M,\R})$ and $x\in \R'$ that
		$$\mathbb M(\rho\otimes_\alpha \R')(f\otimes x)=\mathbb M(\rho)(f)\otimes x$$
		and we can compute
		\begin{align*}
		\Omega_{\mathfrak w}({}^{\widetilde{w}_0}\rho\otimes_{\alpha}\R',{}^{\widetilde{w}_0}\vartheta\otimes\operatorname{id}_{\R'})(\mathbb M(\rho\otimes_\alpha \R')(f\otimes x))&=	\Omega_{\mathfrak w}({}^{\widetilde{w}_0}\rho,{}^{\widetilde{w}_0}\vartheta)(\mathbb M(\rho)(f))\otimes x\\	&=(\mathcal{D}_\R(\rho,\P_\theta,\Xi,\psi)\cdot\Omega_{\mathfrak w}(\rho,\vartheta)(f))\otimes x\\
			&=\alpha_\M(\mathcal{D}_\R(\rho,\P_\theta,\Xi,\psi))\cdot(\Omega_{\mathfrak w}(\rho,\vartheta)(f)\otimes x)\\
			&=\alpha_\M(\mathcal{D}_\R(\rho,\P_\theta,\Xi,\psi))\cdot\Omega_{\mathfrak w}(\rho\otimes_\alpha \R',\vartheta\otimes_\alpha\operatorname{id}_{\R'})(f\otimes x).
		\end{align*}
		Equation \eqref{eq:parindbase} then yields that $$\alpha_\M(\mathcal{D}_\R(\rho,\P_\theta,\Xi,\psi))\cdot\Omega_{\mathfrak w}(\rho\otimes_\alpha \R',\vartheta\otimes_\alpha\operatorname{id}_{\R'})(f')=
		\Omega_{\mathfrak w}({}^{\widetilde{w}_0}\rho\otimes_{\alpha}\R',{}^{\widetilde{w}_0}\vartheta\otimes\operatorname{id}_{\R'})(\mathbb M(\rho\otimes_\alpha \R')f')$$
		for all $f'\in i_{\M,\P_\theta}^\G((\rho\otimes_\alpha \R')\otimes_{\R'}\chi_{\M,\R'})$, which implies by the uniqueness in Proposition \ref{Inverseshahidis} that
		\[\mathcal{D}_{\R'}(\rho\otimes_\alpha \R',\P_\theta,\Xi,\psi)=\alpha_\M(\mathcal{D}_\R(\rho,\P_\theta,\Xi,\psi)).\]\end{proof}
	
	\begin{lemma}\label{coefficientscowhittC}
		Let $\rho$ and $\rho'$ be co-Whittaker $\R$-representations of $\M$ with respect to $\mathfrak w_\M$ and of type $\mathfrak t$ and suppose that $\rho$ dominates $\rho'$. Then 
		$$\mathcal D_\R(\rho,\P_\theta,\Xi,\psi)=\mathcal D_\R(\rho',\P_\theta,\Xi,\psi).$$

	\end{lemma}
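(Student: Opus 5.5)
The plan is to transport the functional equation defining $\mathcal D_\R(\rho,\P_\theta,\Xi,\psi)$ along the surjection $p\colon\rho\twoheadrightarrow\rho'$, and then conclude by the uniqueness in Proposition \ref{Inverseshahidis}. Parabolic induction is exact, so $p$ induces a surjection
\[
i(p)\colon i_{\M,\P_\theta}^{\G}(\rho\otimes\chi_{\M,\R})\twoheadrightarrow i_{\M,\P_\theta}^{\G}(\rho'\otimes\chi_{\M,\R})
\]
of $\R[\M/\M^\circ][\G]$-modules. The same holds for the induction from $\P_{w_0(\theta)}$ of the $\widetilde w_0$-conjugates, and both maps are compatible with $\otimes_{\R[\M/\M^\circ]}\mathcal S^{-1}_{\M,\R}\R[\M/\M^\circ]$.

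First I would match the Whittaker generators. As noted after the definition of dominance, $p$ induces an isomorphism
\[
\Hom_{\R[\M]}(\Gamma^{\M}_{\mathfrak w_\M,\R,\mathfrak t},\rho)\simeq \Hom_{\R[\M]}(\Gamma^{\M}_{\mathfrak w_\M,\R,\mathfrak t},\rho').
\]
Via Proposition \ref{propcoinv}, this gives an isomorphism $\rho_{\U_\M,\Psi_\M}\simeq\rho'_{\U_\M,\Psi_\M}$. Hence, if $\vartheta'$ is a surjective generator for $\rho'$, then $\vartheta:=\vartheta'\circ p$ is a surjective generator for $\rho$, and likewise ${}^{\widetilde w_0}\vartheta={}^{\widetilde w_0}\vartheta'\circ p$.

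Next I would check that
\[
\Omega_{\mathfrak w}(\rho',\vartheta')\circ i(p)=\Omega_{\mathfrak w}(\rho,\vartheta).
\]
Both sides lie in the free rank one module of Corollary \ref{generates}, so it suffices to compare them on $J_0$. There it is immediate from the integral formula of Lemma \ref{lemmaconstfunction}, because $i(p)(f)(g)=p(f(g))$ and $i(p)$ preserves supports. The same argument gives the analogous identity for the $\widetilde w_0$-conjugated functionals.

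The main step is the naturality of the intertwining operator:
\[
\mathbb M(\rho')\circ i(p)=i(p)\circ\mathbb M(\rho).
\]
Here I would use the construction for co-Whittaker representations via $\Gamma$. We have $f_{\rho,0}=f_{\rho',0}$, and the surjection $\Gamma\otimes_{f_{\rho,0}}\R\twoheadrightarrow\rho'$ may be taken to be the composite of the one onto $\rho$ with $p$. Indeed, both correspond to generators of the rank one Hom spaces, so they agree up to a unit, and rescaling by a unit does not affect the well-defined $\mathbb M(\rho')$. Both $\mathbb M(\rho)$ and $\mathbb M(\rho')$ are then defined by lifting to $i(\Gamma\otimes\R)$, applying $\mathbb M(\Gamma)\otimes\mathrm{id}$, and pushing down. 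The commutative diagram in Section \ref{int} therefore gives the naturality directly. This is where I expect the only subtlety: the well-definedness of $\mathbb M$, independent of the chosen surjection, has to be invoked carefully.

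Combining these three steps, for $f\in i(\rho\otimes\chi_{\M,\R})$ we get
\[
\mathcal D_\R(\rho)\,\Omega_{\mathfrak w}(\rho',\vartheta')(i(p)f)=\Omega_{\mathfrak w}({}^{\widetilde w_0}\rho',{}^{\widetilde w_0}\vartheta')\bigl(\mathbb M(\rho')(i(p)f)\bigr).
\]
Since $i(p)$ is surjective, $\mathcal D_\R(\rho)$ satisfies the defining functional equation for $\rho'$. The uniqueness in Proposition \ref{Inverseshahidis} then gives $\mathcal D_\R(\rho)=\mathcal D_\R(\rho')$.
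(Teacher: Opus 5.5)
Your proposal is correct and follows essentially the same route as the paper: you match the Whittaker generators through the isomorphism on $(\U_\M,\Psi_\M)$-coinvariants, identify $\Omega_{\mathfrak w}(\rho,\vartheta)$ with $\Omega_{\mathfrak w}(\rho',\vartheta')\circ i(p)$ (and likewise for the $\widetilde w_0$-conjugates) by comparing on functions supported in the open cell, use naturality of $\mathbb M$, and conclude by surjectivity of $i(p)$ and the uniqueness in Proposition \ref{Inverseshahidis}. The only difference is in the naturality step $\mathbb M(\rho')\circ i(p)=i(p)\circ\mathbb M(\rho)$: the paper cites Proposition 3.13 of \cite{MT}, which it asserts the co-Whittaker construction retains, whereas you derive it directly from the lift-through-$\Gamma$ definition (including the unit-rescaling check), and this is equally valid.
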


	\begin{proof}
		Since $\rho$ dominates $\rho'$ we have a surjective $\R[\M]$-morphism $\Upsilon\colon\rho\to\rho'$. By exactness of $(\U_{\M},\Psi_{\M})$-coinvariants, $\Upsilon$ induces a surjective $\R[\U_\M]$-linear morphism $\overline{\Upsilon}\colon\rho_{\U_\M,\Psi_{\M,\R}}\to\rho'_{\U_\M,\Psi_{\M,\R}}$. Since by assumption $\rho$ and $\rho'$ are co-Whittaker, we obtain by Proposition \ref{propcoinv} that $\rho_{\U_\M,\Psi_{\M,\R}}$ and $\rho'_{\U_\M,\Psi_{\M,\R}}$ are free $\R$-modules of rank one, which implies that $\overline{\Upsilon}$ is an isomorphism. Hence fixing an $\R[\U_\M]$-linear isomorphism $\alpha_\rho$ between $\rho_{\U_\M,\Psi_{\M,\R}}$ and $\Psi_{\M,\R}$, induces an $\R[\U_\M]$-linear isomorphism $\alpha_{\rho'}$ between $\rho'_{\U_\M,\Psi_{\M},\R}$ and $\Psi_{\M,\R}$. We have the following commutative diagram of $\R[\U_\M]$-modules

		$$\begin{tikzcd}
			\rho \arrow{rr}{\Upsilon} \arrow{d}[swap]{\operatorname{pr}_\rho}  &   &\rho' \arrow{d}{\operatorname{pr}_{\rho'}}  \\
			\rho_{\U_\M,\Psi_{\M,\R}} \arrow{rr}{\overline{\Upsilon}} \arrow{rd}[swap]{\alpha_\rho} &   & \rho'_{\U_\M,\Psi_{\M,\R}} \arrow{ld}{\alpha_{\rho'}} \\
			& \Psi_{\M,\R}. &             
		\end{tikzcd}$$
		Hence the map $\vartheta=\alpha_\rho\circ\operatorname{pr}_\rho\in\Hom_{\R[\U_\M]}(\rho,\Psi_{\M,\R})$ factors through $\rho'$ and $\vartheta=\omega\circ\Upsilon$ for $\omega=\alpha_{\rho'}\circ\operatorname{pr}_{\rho'}\in\Hom_{\R[\U_\M]}(\rho',\Psi_{\M,\R})$.

		Proposition 3.13 of \cite{MT} implies the commutative diagram 
		$$\begin{tikzcd}
			i_{\M,\P_\theta}^{\G}(\rho\otimes \chi_{\M,\R})\arrow{r}{\mathbb M(\rho)}\arrow{d}[swap]{\Upsilon_1}
			&i_{w_0\M w_0^{-1},\P_{w_0(\theta)}}^{\G}((\rho\otimes \chi_{\M,\R})^{\widetilde{w}_0^{-1}})\otimes\mathcal S_{\M,\R}^{-1}\R[\M/\M^\circ]\arrow{d}{\Upsilon_2\otimes\operatorname{id}}\\
			i_{\M,\P_\theta}^{\G}(\rho'\otimes \chi_{\M,\R})\arrow{r}{\mathbb M(\rho')}
			&i_{w_0\M w_0^{-1},\P_{w_0(\theta)}}^{\G}((\rho'\otimes \chi_{\M,\R})^{\widetilde{w}_0^{-1}})\otimes\mathcal S_{\M,\R}^{-1}\R[\M/\M^\circ],
		\end{tikzcd}$$
		where we write $\Upsilon_1$ and $\Upsilon_2$ for the obvious morphisms induced by $\Upsilon$.

		Note that both $\Omega_{\mathfrak w}(\rho,\vartheta,\nu_{\N_{w_0(\theta)}})$ and $\Omega_{\mathfrak w}(\rho',\omega,\nu_{\N_{w_0(\theta)}})\circ\Upsilon_1$ are elements of 
		\[\Hom_{\R[\M/\M^{\circ}][\U]}(i_{\M,\P_\theta}^{\G}(\rho\otimes\chi_{\M,\R}),\Psi_{\R[\M/\M^{\circ}]})\cong\R[\M/\M^\circ],\]
		and hence are linearly related.
		However, for any $f\in i_{\M,\P_\theta}^{\G}(\rho\otimes \chi_{\M,\R})$ that is supported in $\P_\theta w_0^{-1}\B$ we have that
		\begin{align*}
			\Omega_{\mathfrak w}(\rho,\vartheta,\nu_{\N_{w_0(\theta)}})(f)&=\int_{\N_{w_0(\theta)}} \vartheta(f(\widetilde{w}_{0}^{-1}n))\Psi_{\R}^{-1}(n)dn\\
			&=\int_{\N_{w_0(\theta)}} (\omega\circ\Upsilon)(f(\widetilde{w}_{0}^{-1}n))\Psi_{\R}^{-1}(n)dn\\
			&=\Omega_{\mathfrak w}(\rho',\omega,\nu_{\N_{w_0(\theta)}})(\Upsilon_1f),
		\end{align*}
		where we use that $\Upsilon(f(g))=(\Upsilon_1f)(g)$ for all $g\in\G$. This implies that 
		$$\Omega_{\mathfrak w}(\rho,\vartheta,\nu_{\N_{w_0(\theta)}})=\Omega_{\mathfrak w}(\rho',\omega,\nu_{\N_{w_0(\theta)}})\circ\Upsilon_1.$$
		Analogously, we obtain that
		$$\Omega_{\mathfrak w}({}^{\widetilde{w}_0}\rho,{}^{\widetilde{w}_0}\vartheta,\nu_{\N_{\theta}})=\Omega_{\mathfrak w}({}^{\widetilde{w}_0}\rho',{}^{\widetilde{w}_0}\omega,\nu_{\N_{\theta}})\circ\Upsilon_2.$$
	
		Hence we can compute that
		\begin{align*}
			\mathcal{D}_{\R}(\rho,\P_\theta,\Xi,\psi)\cdot\Omega_{\mathfrak w}(\rho',\omega,\nu_{\N_{w_0(\theta)}})(\Upsilon_1f)&=\mathcal{D}_{\R}(\rho,\P_\theta,\Xi,\psi)\cdot\Omega_{\mathfrak w}(\rho,\vartheta,\nu_{\N_{w_0(\theta)}})(f)\\
			&=\Omega_{\mathfrak w}({}^{\widetilde{w}_0}\rho,{}^{\widetilde{w}_0}\vartheta,\nu_{\N_{\theta}})(\mathbb M(\rho,\nu_{\N_{w_0(\theta)}})f)\\
			&=\Omega_{\mathfrak w}({}^{\widetilde{w}_0}\rho',{}^{\widetilde{w}_0}\omega,\nu_{\N_{\theta}})((\Upsilon_2\otimes\operatorname{id})\circ\mathbb M(\rho,\nu_{\N_{w_0(\theta)}})(f))\\
			&=\Omega_{\mathfrak w}({}^{\widetilde{w}_0}\rho',{}^{\widetilde{w}_0}\omega,\nu_{\N_{\theta}})(\mathbb M(\rho',\nu_{\N_{w_0(\theta)}})(\Upsilon_1f))
		\end{align*}
		for all $f\in i_{\M,\P_\theta}^{\G}(\rho\otimes \chi_{\M,\R})$. Note that $\Upsilon_1$ is surjective and since $\mathcal{D}_{\R}(\rho',\P_\theta,\Xi,\psi)$ is the unique element in $\mathcal{S}_{\M,\R}^{-1}\R[\M/\M^\circ]$ such that 
		$$\mathcal{D}_{\R}(\rho',\P_\theta,\Xi,\psi)\cdot\Omega_{\mathfrak w}(\rho',\omega,\nu_{\N_{w_0(\theta)}})(f')=\Omega_{\mathfrak w}({}^{\widetilde{w}_0}\rho',{}^{\widetilde{w}_0}\omega,\nu_{\N_{\theta}})(\mathbb M(\rho',\nu_{\N_{w_0(\theta)}})(f'))$$
		for all $f'\in i_{\M,\P_\theta}^{\G}(\rho'\otimes \chi_{\M,\R})$, we obtain that
		$$\mathcal{D}_{\R}(\rho,\P_\theta,\Xi,\psi)=\mathcal{D}_{\R}(\rho',\P_\theta,\Xi,\psi).$$
	\end{proof}
	
	If $\rho$ and $\rho'$ are two isomorphic co-Whittaker modules, then clearly $\rho$ dominates $\rho'$ and we immediately obtain by the above that $\mathcal D_\R(\rho,\P,\Xi,\psi)$ depends only on the isomorphism class of $\rho$.

	Let $\rho$ be a co-Whittaker $\R$ representation of $\M$ with respect to $\mathfrak w_\M$ of type $\mathfrak t$. By Proposition \ref{Inverseshahidis} there is an element $\mathcal Q\in\mathcal S_{\M,\R}$ such that $\mathcal Q\cdot\mathcal D_\R(\rho,\P_\theta,\Xi,\psi)\in \R[\M/\M^\circ]$ and the image of $\mathcal Q$ under $\Delta_{X,\R}$ lies in $\mathcal S_{X,\R}$. We then define
	$$\mathcal D_\R(X,\rho,\P_\theta,\Xi,\psi)\coloneqq\Delta_{X,\R}(\mathcal Q)^{-1}\Delta_{X,\R}(\mathcal Q\cdot \mathcal D_\R(\rho,\P_\theta,\Xi,\psi))\in \mathcal S_{X,\R}^{-1}\R[X^{\pm c}],$$
	and it is straightforward to see that $\mathcal D_\R(X,\rho,\P_\theta,\Xi,\psi)$ is independent of the chosen $\mathcal Q$. The following result is the analogue of Lemma \ref{lem:comphom}, i.e.\ $\mathcal D_\R(X,\rho,\P_\theta,\Xi,\psi)$ behaves well under base-change. 
	
	\begin{proposition}\label{Xcoeffbasechange}Let $\rho$ be a co-Whittaker $\R$-representation of $\M$ with respect to $\mathfrak w_\M$ of type $\mathfrak t$ and suppose that $\alpha\colon \R\to \R'$ is an $\R_0$-algebra homomorphism. Let $\alpha_X\colon\mathcal S_{X,\R}^{-1}\R[X^{\pm c}]\to\mathcal S_{X,\R'}^{-1}\R'[X^{\pm c}]$ be the unique extension of $\alpha$ that is the identity on $X^{c}$. Then
		$$\mathcal D_{\R'}(X,\rho\otimes_\alpha \R',\P_\theta,\Xi,\psi)=\alpha_X(\mathcal D_\R(X,\rho,\P_\theta,\Xi,\psi)).$$
		Moreover, if $L_{\rho}\in \R[[X^c]][X^{- c}]$ (respectively $L_{\rho\otimes_{\alpha} \R'}\in \R'[[X^c]][X^{-c}]$) are the associated Laurent series (cf.\ Proposition \ref{powerseriesexpansion}) and we denote by $\overline{\alpha}_X\colon \R[[X^c]][X^{-c}]\to \R'[[X^c]][X^{-c}]$ the induced morphism that is the identity on $X^c$, we have that 
		$$L_{\rho\otimes_{\alpha} \R'}=\overline{\alpha}_X(L_{\rho}).$$
	\end{proposition}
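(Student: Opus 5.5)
The argument reduces to Lemma \ref{lem:comphom}, together with the observation that the specialization maps commute with $\Delta_{X,\cdot}$. Write $\alpha_{\M}$ for the extension of $\alpha$ to $\mathcal{S}_{\M,\R}^{-1}\R[\M/\M^\circ]\to\mathcal{S}_{\M,\R'}^{-1}\R'[\M/\M^\circ]$, and let $\alpha_{\M}^0\colon \R[\M/\M^\circ]\to \R'[\M/\M^\circ]$ be the underlying map on group rings.

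\textbf{Compatibility of the building blocks.} First I check that
\[\Delta_{X,\R'}\circ\alpha_{\M}^0=\alpha_X\circ\Delta_{X,\R}\]
on $\R[\M/\M^\circ]$. Both sides are ring maps that agree on $\R$ (via $\alpha$) and on group elements $m$ (both give $X^{c\cdot \mathrm{val}(\delta_{\P_\theta}(m))\cdot 2}$, up to the paper's normalization). Next, $\alpha$ sends units to units, so $\alpha_{\M}^0(\mathcal S_{\M,\R})\subseteq \mathcal S_{\M,\R'}$. Likewise, $\alpha_X$ preserves leading and trailing unit coefficients, so it maps $\mathcal{S}_{X,\R}$ into $\mathcal{S}_{X,\R'}$. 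Hence $\alpha_X$ is well defined on the localizations.

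\textbf{Transporting the chosen $\mathcal Q$.} Fix $\mathcal Q\in\mathcal{S}_{\M,\R}$ with $\mathcal Q\cdot\mathcal{D}_\R(\rho,\P_\theta,\Xi,\psi)=:P\in \R[\M/\M^\circ]$ and $\Delta_{X,\R}(\mathcal Q)\in\mathcal{S}_{X,\R}$. Set $\mathcal Q'=\alpha^0_{\M}(\mathcal Q)$. By Lemma \ref{lem:comphom},
\[\mathcal Q'\cdot\mathcal{D}_{\R'}(\rho\otimes_\alpha\R',\P_\theta,\Xi,\psi)=\alpha_{\M}(P)=\alpha^0_{\M}(P)\in\R'[\M/\M^\circ].\]
Moreover, $\Delta_{X,\R'}(\mathcal Q')=\alpha_X(\Delta_{X,\R}(\mathcal Q))\in\mathcal{S}_{X,\R'}$. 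So $\mathcal Q'$ is an admissible choice for defining $\mathcal{D}_{\R'}(X,\rho\otimes_\alpha\R',\dots)$. The paper notes this definition is independent of the choice; I would verify that quickly by cross-multiplying two choices $\mathcal Q_1,\mathcal Q_2$ and using that $\Delta_X$ is a ring map. Then
\[\mathcal{D}_{\R'}(X,\rho\otimes_\alpha\R',\dots)=\Delta_{X,\R'}(\mathcal Q')^{-1}\Delta_{X,\R'}(\alpha^0_{\M}(P))=\alpha_X\bigl(\Delta_{X,\R}(\mathcal Q)^{-1}\Delta_{X,\R}(P)\bigr),\]
which is the first claim.

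\textbf{Laurent series.} Proposition \ref{powerseriesexpansion}, applied with variable $X^c$, characterizes $L_\rho$ uniquely by $s\cdot L_\rho=r$ for any representative $r/s$. Take $r=\Delta_{X,\R}(P)$ and $s=\Delta_{X,\R}(\mathcal Q)$. Since $\overline{\alpha}_X$ is a ring homomorphism extending $\alpha_X$ on Laurent polynomials, applying it gives
\[\alpha_X(s)\cdot\overline{\alpha}_X(L_\rho)=\alpha_X(r).\]
Here $\alpha_X(r)/\alpha_X(s)$ is a representative of $\mathcal{D}_{\R'}(X,\dots)$ by the first part. Uniqueness in Proposition \ref{powerseriesexpansion} then yields $L_{\rho\otimes_\alpha\R'}=\overline{\alpha}_X(L_\rho)$.

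\textbf{Main obstacle.} The only delicate point is bookkeeping. One must ensure $\alpha$ preserves the multiplicative sets, and that $\overline{\alpha}_X$ is a well-defined ring map on $\R[[X^c]][X^{-c}]$, which holds coefficientwise. Everything substantive is already contained in Lemma \ref{lem:comphom}.
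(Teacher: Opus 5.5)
Your proposal is correct and follows the paper's proof essentially step for step. Like the paper, you invoke Lemma \ref{lem:comphom}, transport $\mathcal Q$ via $\alpha_\M$, and use $\Delta_{X,\R'}\circ\alpha_\M=\alpha_X\circ\Delta_{X,\R}$; you then apply $\overline{\alpha}_X$ to $s\cdot L_\rho=r$ and conclude by the uniqueness in Proposition \ref{powerseriesexpansion}. You also spell out bookkeeping the paper leaves implicit: that $\alpha$ preserves the multiplicative sets, and that the definition does not depend on the choice of $\mathcal Q$.
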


	\begin{proof}
		By Lemma \ref{lem:comphom} we have that~$\mathcal D_{\R'}(\rho\otimes_{\alpha}\R',\P_\theta,\Xi,\psi)=\alpha_\M(\mathcal D_{\R}(\rho,\P_\theta,\Xi,\psi))$. Let $\mathcal Q\in\mathcal S_{\M,\R}$ be as in Proposition \ref{Inverseshahidis} and hence 
		$$\alpha_\M(\mathcal Q)\cdot\mathcal D_{\R'}(\rho\otimes_{\alpha}\R',\P_\theta,\Xi,\psi)=\alpha_\M(\mathcal Q\cdot\mathcal D_{\R}(\rho,\P_\theta,\Xi,\psi))\in \R'[\M/\M^\circ],$$
		and we see that
		$$\mathcal D_{\R'}(X,\rho\otimes_\alpha \R',\P_\theta,\Xi,\psi)=\Delta_{X,\R'}(\alpha_\M(\mathcal Q))^{-1}\cdot\Delta_{X,\R'}(\alpha_\M(\mathcal Q)\cdot\mathcal D_{\R'}(\rho\otimes_{\alpha}\R',\P_\theta,\Xi,\psi)).$$
		We clearly have that $\Delta_{X,\R'}\circ\alpha_\M=\alpha_X\circ\Delta_{X,\R}$ on $\R[\M/\M^\circ]$, and hence can compute
		\begin{align*}
			\mathcal D_{\R'}(X,\rho\otimes_\alpha \R',\P_\theta,\Xi,\psi)&=\Delta_{X,\R'}(\alpha_\M(\mathcal Q))^{-1}\cdot\Delta_{X,\R'}(\alpha_\M(\mathcal Q)\cdot\mathcal D_{\R'}(\rho\otimes_{\alpha}\R',\P_\theta,\Xi,\psi))\\
			&=\Delta_{X,\R'}(\alpha_\M(\mathcal Q))^{-1}\cdot\Delta_{X,\R'}(\alpha_\M(\mathcal Q\cdot\mathcal D_{\R}(\rho,\P_\theta,\Xi,\psi)))\\
			&=\alpha_X(\Delta_{X,\R}(\mathcal Q))^{-1}\cdot\alpha_X(\Delta_{X,\R}(\mathcal Q\cdot\mathcal D_{\R}(\rho,\P_\theta,\Xi,\psi)))\\
			&=\alpha_X(\mathcal D_\R(X,\rho,\P_\theta,\Xi,\psi)).
		\end{align*}
		
		Suppose that $\mathcal D_{\R}(X,\rho,\P_\theta,\Xi,\psi)=B^{-1}\cdot A$, where $B\in\mathcal S_{X,\R}$ and $A\in \R[X^{\pm c}]$. We then have that $B\cdot L_{\rho}=A$, which implies that $\overline{\alpha}_X(B)\cdot\overline{\alpha}_X(L_{\rho})=\overline{\alpha}_X(A)$. By the first part of this proposition we obtain that $\mathcal D_{\R'}(X,\rho\otimes_\alpha \R',\P_\theta,\Xi,\psi)=\alpha_X(B)^{-1}\cdot\alpha_X(A)$ and hence $\alpha_X(B)\cdot L_{\rho\otimes_{\alpha} \R'}=\alpha_X(A)$. However, we clearly have that $\overline{\alpha}_X(A)=\alpha_X(A)$ and $\overline{\alpha}_X(B)=\alpha_X(B)$ and by uniqueness of Proposition \ref{powerseriesexpansion} we obtain the result.
	\end{proof}

	The analogue of Lemma \ref{coefficientscowhittC} also holds and we can moreover prove that only integral powers of $X$ appear in $\mathcal D_\R(X,\rho,\P_\theta,\Xi,\psi)$, i.e.\ we have the following results.
	\begin{proposition}\label{propositiondowntointX} Let $\rho$ and $\rho'$ be co-Whittaker $\R$-representations of $\M$ with respect to $\mathfrak w_\M$ of type $\mathfrak t$. 
		\begin{enumerate}
			\item If $\rho$ dominates $\rho'$, then
			$$\mathcal D_\R(X,\rho,\P_\theta,\Xi,\psi)=\mathcal D_\R(X,\rho',\P_\theta,\Xi,\psi).$$
			
			\item We have that
			$$\mathcal D_\R(X,\rho,\P_\theta,\Xi,\psi)\in \mathcal{S}_{X,\R}^{-1}\R[X^{\pm 1}].$$
		\end{enumerate}
	\end{proposition}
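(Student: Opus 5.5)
Part (1) is immediate from Lemma \ref{coefficientscowhittC}. If $\rho$ dominates $\rho'$, then $\mathcal D_\R(\rho,\P_\theta,\Xi,\psi)=\mathcal D_\R(\rho',\P_\theta,\Xi,\psi)$ in $\mathcal S_{\M,\R}^{-1}\R[\M/\M^\circ]$. Choose $\mathcal Q\in\mathcal S_{\M,\R}$ as in Proposition \ref{Inverseshahidis} for $\rho$. Then $\mathcal Q\cdot\mathcal D_\R(\rho',\P_\theta,\Xi,\psi)\in\R[\M/\M^\circ]$ and $\Delta_{X,\R}(\mathcal Q)\in\mathcal S_{X,\R}$, so $\mathcal Q$ is an admissible choice for $\rho'$ as well. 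Since $\mathcal D_\R(X,-)$ is independent of the choice of $\mathcal Q$, applying $\Delta_{X,\R}$ gives equal outputs for $\rho$ and $\rho'$.

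For (2), the first step is to reduce to the universal case. Write $\Gamma=\Gamma^{\M}_{\mathfrak w_\M,\mathfrak t}$ over $\E=\E^{\M}_{\mathfrak w_\M,\mathfrak t}$. The representation $\Gamma\otimes_{f_{\rho,0}}\R$ dominates $\rho$, so by (1) and Proposition \ref{Xcoeffbasechange} we get $\mathcal D_\R(X,\rho)=\alpha_X(\mathcal D_\E(X,\Gamma))$, where $\alpha=f_{\rho,0}$. The map $\alpha_X$ sends $\mathcal S_{X,\E}^{-1}\E[X^{\pm1}]$ into $\mathcal S_{X,\R}^{-1}\R[X^{\pm1}]$, so it suffices to treat $\rho=\Gamma$ over $\E$. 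This ring is $\R_0$-torsion-free and reduced, and it embeds into a finite product of fields $\prod\overline{\K}_i$ of characteristic zero (take the minimal primes, using that $\E$ is reduced and Noetherian-ish via $\mathcal W$). By Proposition \ref{Xcoeffbasechange}, base change along the embedding $\E\hookrightarrow\overline{\K}_i$, and we may further pass to $\mathbb C$ by choosing an embedding. Over $\mathbb C$, the co-Whittaker $\mathbb C$-representation is dominated by, and dominates, its generic cosocle in the sense that its coefficient equals that of $\rho_{gen}$ (again by (1) applied to $\rho\twoheadrightarrow\rho_{gen}$). For an irreducible generic $\rho_{gen}$, I would identify $\mathcal D(X,\rho_{gen})$ with the inverse of Shahidi's local coefficient $C_\psi(s,\rho_{gen})^{-1}$ with $X=q^{-s}$. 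Shahidi (and Lomeli) show this is a rational function in $q^{-s}$; equivalently, it involves only integral powers of $X$, since the unramified twist enters through $\tilde\alpha=\langle\rho_\theta,\alpha^\vee\rangle^{-1}\rho_\theta$ normalized so that $q^{-s}$ appears integrally.

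The main obstacle is descending integrality of the exponents from fields back to $\E$, and making the comparison with Shahidi's normalization precise. For the descent I would argue with Laurent series, via Proposition \ref{powerseriesexpansion} applied with variable $X^c$. An element of $\mathcal S_{X,\E}^{-1}\E[X^{\pm c}]$ lies in $\mathcal S_{X,\E}^{-1}\E[X^{\pm1}]$ if its Laurent expansion has nonzero coefficients only at integral exponents. This can be checked after the injective map $\E[[X^c]][X^{-c}]\hookrightarrow\prod_i\overline{\K}_i[[X^c]][X^{-c}]$, using the compatibility of Laurent series with base change in Proposition \ref{Xcoeffbasechange}. Conversely, I need that a ratio $A/B$ with $B\in\mathcal S_{X^c,\E}$ whose expansion is supported on integral exponents can be rewritten with numerator and denominator in $\E[X^{\pm1}]$. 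For this I multiply $B$ by its Galois conjugates under $X^c\mapsto\zeta X^c$, with $\zeta$ ranging over the relevant roots of unity (after adjoining them). This produces a denominator in $\mathcal S_{X,\E}$ whose leading and trailing coefficients remain units. The numerator then automatically has only integral exponents, because its expansion equals the product of two integrally supported series. Should adjoining roots of unity be problematic, I would instead note directly from the proof of Theorem \ref{intertwiningops}(2) that $\mathcal Q$ can already be chosen with $\Delta_{X,\R}(\mathcal Q)\in\mathcal S_{X,\R}$. Then only the numerator $\Delta_{X,\R}(\mathcal Q\mathcal D)$ needs integral exponents, and that follows from the coefficientwise comparison above.
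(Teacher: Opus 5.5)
Your proposal is correct and follows essentially the same route as the paper. Part (1) goes via Lemma \ref{coefficientscowhittC} with a common choice of $\mathcal Q$. Part (2) passes to the universal pair $(\Gamma,\E)$, specializes at minimal primes into $\mathbb C$, replaces $\varsigma$ by its generic cosocle using (1), and invokes integrality of the exponents in Shahidi's local coefficient; that integrality is a genuine theorem (the paper cites Lemmas 11.9--11.10 of Kim's notes), not a consequence of normalizing $\tilde\alpha$ as your heuristic suggests. It then descends coefficientwise on Laurent series. Your second option for clearing denominators is exactly the paper's final step, since $\Delta_{X,\R}(\mathcal Q)\in\mathcal S_{X,\R}$ is already part of Proposition \ref{Inverseshahidis}, so the root-of-unity norm trick is unnecessary.
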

	
	\begin{proof}
		For the first result let $\mathcal Q\in\mathcal S_{\M,\R}$ be as in Proposition \ref{Inverseshahidis}, in particular we have
		$\mathcal Q\cdot\mathcal D_{\R}(\rho,\P_\theta,\Xi,\psi)\in \R[\M/\M^\circ]$. By Lemma \ref{coefficientscowhittC} we obtain
		$$\mathcal Q\cdot \mathcal D_\R(\rho',\P_\theta,\Xi,\psi)=\mathcal Q\cdot\mathcal D_\R(\rho,\P_\theta,\Xi,\psi)\in \R[\M/\M^\circ].$$
		However, we then have
		\begin{align*}
					\mathcal D_\R(X,\rho',\P_\theta,\Xi,\psi)&=\Delta_{X,\R}(\mathcal Q)^{-1}\Delta_{X,\R}(\mathcal Q\cdot\mathcal D_\R(\rho',\P_\theta,\Xi,\psi))\\&=\Delta_{X,\R}(\mathcal Q)^{-1}\Delta_{X,\R}(\mathcal Q\cdot\mathcal D_\R(\rho,\P_\theta,\Xi,\psi))\\&=\mathcal D_\R(X,\rho,\P_\theta,\Xi,\psi).
		\end{align*}

		We first prove the second result in the universal case, i.e.\ $\rho=\Gamma_{\mathfrak w_\M,\mathfrak t}^\M$ and $\R=\E_{\mathfrak w_\M,\mathfrak{t}}^\M$ for some finite sum of idempotents $\mathfrak t$ in the Bernstein centre $\mathfrak Z_{\R_0}(\M)$. For ease of notation we set $\Gamma=\Gamma_{\mathfrak w_\M,\mathfrak t}^\M$ and $\E=\E_{\mathfrak w_\M,\mathfrak{t}}^\M$. By Proposition \ref{powerseriesexpansion} we can write
		$\mathcal D_\E(X,\Gamma,\P_\theta,\Xi,\psi)$ as a unique Laurent series $L_\Gamma\in\E[[X^c]][X^{-c}]$. As~$\E$ is torsion-free, all minimal primes of~$\E$ are characteristic zero, and as~$\E$ is a finitely generated~$\R_0$-algebra by Lemma \ref{Ebasicfacts}, for a minimal prime~$\mathfrak{p}$ of~$\E$, its residue field~$\kappa(\mathfrak{p})$ embeds into~$\mathbb{C}$.  For any minimal prime~$\mathfrak{p}$ of~$\E$, we choose an embedding~$\kappa(\mathfrak{p})\hookrightarrow \mathbb{C}$, and hence obtain an $\R_0$-algebra homomorphism $\iota_{\mathfrak p}\colon \E\to\mathbb C$. Now $\varsigma=\Gamma\otimes_{\iota_{\mathfrak p}}\mathbb C$ is a co-Whittaker $\mathbb C$-representation of $\M$ with respect to $\mathfrak w_\M$ and thus has an irreducible cosocle $\varsigma_{gen}$ which is generic with respect to $\mathfrak w_\M$.
		By Proposition \ref{Xcoeffbasechange} we see that the Laurent series associated to $\mathcal D_{\mathbb C}(X,\Gamma\otimes_{\iota_{\mathfrak p}}\mathbb C,\P_\theta,\Xi,\psi)$ equals $\overline{\iota_{\mathfrak p}}(L_\Gamma)$. Now by the first part of this proposition we have 	
		$$\mathcal D_{\mathbb C}(X,\Gamma\otimes_{\iota_{\mathfrak p}}\mathbb C,\P_\theta,\Xi,\psi)
		=\mathcal D_{\mathbb C}(X,\varsigma_{gen},\P_\theta,\Xi,\psi).$$
		We write $L_{\varsigma_{gen}}$ for the Laurent series associated to $\mathcal D_{\mathbb C}(X,\varsigma_{gen},\P_\theta,\Xi,\psi)$ and by uniqueness in Proposition \ref{powerseriesexpansion} we see that 
		$$\overline{\iota_{\mathfrak p}}_X(L_\Gamma)=L_{\varsigma_{gen}}.$$
		
		However, Lemmas 11.9 and 11.10 of Kim's article in \cite{cogdell2004lectures} imply that the coefficient of $X^{kc}$ in $L_{\varsigma_{gen}}$ can only be nonzero if $kc\in\mathbb Z$. Hence if $kc\notin\mathbb Z$ the coefficient of $X^{kc}$ in $\overline{\iota_{\mathfrak p}}_X(L_\Gamma)\in\mathbb C[[X^{c}]][X^{-c}]$ is zero for all minimal prime ideals $\mathfrak p$ of $\E$. However, since $\E$ is reduced this implies that the coefficient of $X^{kc}$ in $L_\Gamma$ is zero and we obtain that $$L_\Gamma\in\E[[X]][X^{-1}].$$
		By Proposition \ref{Inverseshahidis} there is a $\mathcal Q$ in $\mathcal S_{X,\E}$ such that $\mathcal Q\cdot\mathcal D_\E(X,\Gamma,\P_\theta,\Xi,\psi)\in\E[X^{\pm c}]$. Since
		$$\mathcal Q\cdot\mathcal D_\E(X,\Gamma,\P_\theta,\Xi,\psi)=\mathcal Q\cdot L_\Gamma$$ we immediately obtain that $\mathcal Q\cdot\mathcal D_\E(X,\Gamma,\P_\theta,\Xi,\psi)$ actually lies in $\E[X^{\pm 1}]$ and hence that $$\mathcal D_\E(X,\Gamma,\P_\theta,\Xi,\psi)\in\mathcal S_{X,\E}^{-1}\E[X^{\pm1}].$$
		Now for an arbitrary $\R_0$-algebra $\R$ and a co-Whittaker $\R$-representation $\rho$ of $\M$ with respect to $\mathfrak w_\M$, we have that there is a $\mathfrak t$ and an $\R_0$-algebra homomorphism $f_\rho\colon \E_{\mathfrak w_\M,\mathfrak{t}}^\M\to \R$ such that $\Gamma_{\mathfrak w_\M,\mathfrak t}^\M\otimes_{f_\rho}\R$ dominates $\rho$. However, by Proposition \ref{Xcoeffbasechange} we obtain that 
		$$\mathcal D_\R(X,\Gamma_{\mathfrak w_\M,\mathfrak t}^\M\otimes_{f_\rho}\R,\P_\theta,\Xi,\psi)=(f_\rho)_X(\mathcal D_{\E_{\mathfrak w_\M,\mathfrak{t}}^\M}(X,\Gamma_{\mathfrak w_\M,\mathfrak t}^\M,\P_\theta,\Xi,\psi))\in\mathcal S_{X,\R}^{-1}\R[X^{\pm 1}].$$
		By the first part of this proposition we have that $\mathcal D_\R(X,\Gamma_{\mathfrak w_\M,\mathfrak t}^\M\otimes_{f_\rho}\R,\P_\theta,\Xi,\psi)=\mathcal D_{\R}(X,\rho,\P_\theta,\Xi,\psi)$ and the result follows.
	\end{proof}

	\subsection{A key inductive lemma}
	Let~$\P=\M\N$ be a Levi decomposition of a standard maximal parabolic of~$\G$.  We can write~$\P=\P_{\theta}$ where~$\theta=\Delta\backslash\{a\}$ as in Section \ref{parabolics}.  Let~$\rho=\rho_{\theta}$ denote half the sum of the absolute roots of~$\T$ in the Lie algebra of~$\N_\theta$.  
	Let~\[\widetilde{a}=\langle \rho,a^{\vee}\rangle^{-1}\rho.\]
	where~$a$ is an absolute root which restricts to~$\alpha$, and~$a^{\vee}$ is the coroot of~$a$.

	Let~$\widehat{\P}(\mathbb{C})=\widehat{\M}(\mathbb{C})\widehat{\N}(\mathbb{C})$ be the (complex points of the) dual parabolic subgroup, together with the Langlands dual~${^L}\P(\mathbb{C})={^L}\M(\mathbb{C}){^L}\N(\mathbb{C})$ in~${^L}G(\mathbb{C})$, where~${^L}\P(\mathbb{C})=\widehat{\P}(\mathbb{C})\rtimes \W_\F$, (and for a choice of Levi factor)~${^L}\M(\mathbb{C})=\widehat{\M}(\mathbb{C})\rtimes \W_\F$, and~${^L}\N(\mathbb{C})=\widehat{\N}(\mathbb{C})$.

	The Levi subgroup~${^L}\M(\mathbb{C})=\widehat{\M}(\mathbb{C})\rtimes \mathrm{W}_{F}$ acts on the Lie algebra~${^L}\mathfrak{n}$ of~${^L}\N(\mathbb{C})$ by conjugation, we write
	\[r\colon{^L}\M(\mathbb{C})\rightarrow \GL({^L}\mathfrak{n})\]
	for this representation.  This representation decomposes
	\begin{align}\label{equationr}
		r=\bigoplus_{i=1}^{m_r}r_i
	\end{align}
	where~$r_i$ is the subrepresentation of~$r$ defined on the vector space spanned by
	\[\{x_{\beta^{\vee}}\in{^L}\mathfrak{n}\mid\langle\widetilde{a},\beta^\vee\rangle=i\}.\]
	
	Let~$\widetilde{r}$ denote the contragredient of~$r$.  Let~$\overline{\P}$ denote a parabolic subgroup opposite to~$\P$ with Levi decomposition~$\overline{\P}=\M\overline{\N}$, and let~$\overline{r}\colon{^L}\M(\mathbb{C})\rightarrow \GL({^L}\overline{\mathfrak{n}})$ denote the adjoint action of~${^L}\M(\mathbb{C})$ on~${^L}\overline{\mathfrak{n}}$.   As this Lie algebra is the sum of root subspaces corresponding to the negatives of the original root spaces, one finds that the representations~$\widetilde{r_i}$ and~$\overline{r}_i$ are isomorphic, and~$m_r=m_{\widetilde{r}}=m_{\overline{r}}$.

	A strong version, due to Arthur, of the inductive lemma key to Shahidi's construction of gamma factors (cf.,~\cite[Remark 6.3.7]{MR3307916}) is:
	\begin{lemma}[{\cite{Arthur}}]\label{keylemma}
		Suppose~$\F/\mathbb{Q}_p$ is a~$p$-adic field.  Let~$(\B,\T)$ be a Borel-Torus pair for~$\G$,
		%
		and let~$\P$ be a maximal standard (with respect to~$\B$) parabolic subgroup of~$\G$, with standard Levi decomposition~$\P=\M\N$.  Let~$r\colon{^L}\M(\mathbb{C})\rightarrow \GL({^L}\mathfrak{n})$ denote the adjoint representation of~$^{L}\M(\mathbb{C})$ on~$^{L}\mathfrak{n}$.  Write~$r=\bigoplus_{i=1}^{m_r} r_i$ as above.  Then, for each~$i>1$, there exists a quasi-split reductive group~$\G_i$, a Borel-Torus pair~$(\B_i,\T_i)$ of~$\G_i$, and a maximal standard (with respect to~$\B_i$) parabolic~$\P_i$ of~$\G_i$, with Levi decomposition~$\P_i=\M\N_i$, such that the corresponding adjoint representation
		\[r'\colon{^L}\M(\mathbb{C})\rightarrow \GL({^L}\mathfrak{n}_i)\] of~$^{L}\M(\mathbb{C})$ on~$^{L}\mathfrak{n}_i$ decomposes (following the same recipe as that given for~$r$ above) as~\[r'=\bigoplus_{j=1}^{m_{r'}} r_j',\text{ with~}m_{r'}<m_r,\text{ and~}r_1'=r_i.\]  
	\end{lemma}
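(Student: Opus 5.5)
This is Arthur's lemma; the paper cites it rather than proving it. To prove it I would work entirely on the dual side and use Langlands' construction of Levi subgroups attached to the graded pieces of $r$.

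\textbf{Step 1: the grading.} Fix $i>1$. The pieces $r_j$ are the eigenspaces, for eigenvalue $j$, of the grading of ${^L}\mathfrak{n}$ by $\langle\widetilde{a},\beta^\vee\rangle$. This grading comes from the cocharacter of $Z(\widehat{\M})^{\W_\F}$ dual to $\widetilde{a}$. The pieces are bracket-compatible, $[\mathfrak{n}_j,\mathfrak{n}_k]\subseteq\mathfrak{n}_{j+k}$, and $\mathfrak{n}_1$ contains the simple coroot $a^\vee$.

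\textbf{Step 2: a smaller dual group in which $r_i$ sits in degree one.} Let $\widehat{\G}_i$ be the connected reductive subgroup of $\widehat{\G}$ generated by $\widehat{\M}$ and the root subgroups for coroots $\pm\beta^\vee$ with $\langle\widetilde{a},\beta^\vee\rangle\in i\mathbb{Z}$. Equivalently, $\widehat{\G}_i$ is the identity component of the centralizer of $\zeta(\widetilde{a})$ with $\zeta$ a primitive $i$-th root of unity, so it is the centralizer of a semisimple element. Its root system consists of the coroots of $\widehat{\M}$ together with those $\beta^\vee$ with degree divisible by $i$. Grade it by $\frac{1}{i}\langle\widetilde{a},\cdot\rangle$. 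Then $\widehat{\M}$ is a Levi subgroup, and $\widehat{\P}_i:=\widehat{\M}\cdot\exp(\bigoplus_{k\ge1}\mathfrak{n}_{ik})$ is a parabolic subgroup whose unipotent radical has graded pieces $\mathfrak{n}_i,\mathfrak{n}_{2i},\dots$. Since $ik\le m_r$, there are $m_{r'}=\lfloor m_r/i\rfloor<m_r$ nonzero pieces.

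\textbf{Step 3: the parabolic is maximal and the grading matches the recipe.} I need $\widehat{\M}$ to be maximal in $\widehat{\G}_i$, i.e.\ the centre of $\widehat{\M}$ modulo that of $\widehat{\G}_i$ should have rank one. This holds because $\widehat{\M}$ is already maximal in $\widehat{\G}$. The degree-one piece is then $r_1'=r_i$, provided the normalization $\widetilde{a}_i=\langle\rho_i,a_i^\vee\rangle^{-1}\rho_i$ recovers exactly $\frac1i\langle\widetilde{a},\cdot\rangle$. Both functionals vanish on the coroots of $\widehat{\M}$ and are proportional, so it suffices to compare them on one simple coroot of $\widehat{\G}_i$ outside $\widehat{\M}$. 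That coroot has degree exactly $i$ and is a lowest weight of $r_i$.

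\textbf{Step 4: the Weyl group action and descent to a group over $\F$.} The subgroup $\widehat{\G}_i$ is stable under the $\W_\F$-action because the grading cocharacter is $\W_\F$-fixed. Hence the pinned action preserves the based root datum of $\widehat{\G}_i$ relative to $\widehat{\P}_i$, and ${^L}\G_i=\widehat{\G}_i\rtimes\W_\F$ determines a quasi-split $\G_i$ over $\F$ together with a Borel–torus pair $(\B_i,\T_i)$ and a standard maximal parabolic $\P_i=\M\N_i$. The Levi factor is identified with $\M$ because its dual, $\widehat{\M}$ with its $\W_\F$-action, is unchanged.

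\textbf{Main obstacle.} I expect Step 3 to be the hardest. One must check that the simple coroots of $\widehat{\G}_i$ relative to the new positive system are those of $\widehat{\M}$ plus exactly one more, which requires handling the cases where several degree-$i$ coroots are not simple. It also requires the twisted, non-split bookkeeping, where absolute roots restrict to the same $\alpha$. My first attempt would be to argue via Borel–de Siebenthal / Kac coordinates for centralizers of semisimple elements. Failing that, I would fall back on the classification case checks as in Arthur and Shahidi.
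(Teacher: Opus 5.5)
Your construction ($\widehat{\G}_i$ the identity component of the centralizer of $s\in Z(\widehat{\M})$ with $\beta^\vee(s)=\zeta^{\langle\widetilde a,\beta^\vee\rangle}$) is the endoscopic construction the paper alludes to after the statement. Since $\widetilde a$ is only rational, you should define $s$ this way rather than as ``$\zeta(\widetilde a)$''. For split $\G$ your argument is essentially a complete proof.

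What you call the main obstacle is not one. We have $Z(\widehat{\G})\subseteq Z(\widehat{\G}_i)\subseteq Z(\widehat{\M})$. So $(Z(\widehat{\G}_i)^{\W_\F})^\circ$ has rank equal to, or one less than, that of $A=(Z(\widehat{\M})^{\W_\F})^\circ$. Equality would force the two tori to coincide, giving $\widehat{\G}_i\subseteq Z_{\widehat{\G}}(A)=\widehat{\M}$, which contradicts $\mathfrak n_i\neq 0$. Hence the simple roots of $\widehat{\G}_i$ outside $\widehat{\M}$ form a single $\W_\F$-orbit (not necessarily a single root, as you wrote). They share a common degree $d\in i\mathbb{Z}_{>0}$, and $d=i$ because a root of degree $i$ is a non-negative combination of them and of roots of $\widehat{\M}$. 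No Borel--de Siebenthal argument or case checks are needed.

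The genuine gap is Step 4 when $\G$ is not split. The restriction to $\widehat{\G}_i$ of the pinned $\W_\F$-action on $\widehat{\G}$ preserves $(\widehat{\B}_i,\widehat{\T})$, so it does determine the quasi-split form $\G_i\supset\M$. However, ${}^L\G_i$ is built from an action preserving a \emph{splitting} of $\widehat{\G}_i$, and the restricted action need not preserve one. The new simple roots are non-simple roots of $\widehat{\G}$, and $\W_\F$ can act on their root lines by non-trivial scalars. The two actions agree on $\widehat{\M}$ and differ on $\bigoplus_{k\geq1}\mathfrak n_{ik}$ by $\mathrm{Ad}(t_w)$ with $t_w\in Z(\widehat{\M})$. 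So your argument only yields $r_1'|_{\widehat{\M}}=r_i|_{\widehat{\M}}$.

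This does occur. Take $\U_3(\E/\F)$ with $\P=\B$: then $m_r=2$ and $r_2$ is spanned by $E_{13}=[E_{12},E_{23}]$. The differential of $g\mapsto J_3'\,{}^tg^{-1}(J_3')^{-1}$ sends $E_{12}\mapsto E_{23}$ and $E_{23}\mapsto E_{12}$, but $E_{13}\mapsto -E_{13}$. Hence $r_2(t\rtimes w)=(\alpha_1^\vee+\alpha_2^\vee)(t)\,\omega_{\E/\F}(w)$. Now consider any quasi-split $\G_2$ with maximal Levi $\T$, $m_{r'}=1$ and $r'|_{\widehat{\T}}=r_2|_{\widehat{\T}}$. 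Its ${}^L\mathfrak n_2$ is the root line of the $\W_\F$-fixed simple root $\alpha_1^\vee+\alpha_2^\vee$, which a pinned action fixes. So $r_1'=r_2\otimes\omega_{\E/\F}$ for every such $\G_2$. The same happens for $\U_{2n+1}(\E/\F)$ with Levi $\GL_n(\E)\times\U_1(\E/\F)$ and $i=2$.

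So in the non-split case Step 4 cannot be completed as written. Moreover, the equality $r_1'=r_i$ itself can only be expected after twisting the embedding ${}^L\M\hookrightarrow{}^L\G_i$ by the $Z(\widehat{\M})$-valued correction $t_w$, as the $L$-embedding of an endoscopic datum does; this amounts to a character twist on the $\M$ side. You must either show the relevant root-line characters are trivial (automatic when $\G$ is split) or carry that twist in the conclusion.
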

	
	An analogous lemma also holds for~$\F$ of positive characteristic (cf.,~\cite[§2.3]{lomelilemma}).  Note, that while we do not need it, Arthur shows in \emph{op.~cit.}~that~$\G_i$ can be chosen to be an endoscopic group for~$\G$.
	

	\subsection{Langlands-Shahidi gamma factors in families}
	We now interpolate Shahidi's local coefficients and gamma factors:
	\begin{thm}\label{mainthm}
		Let~$\R$ be a~$\R_0$-algebra.  Let~$\P=\M\N$ be a maximal standard parabolic subgroup of~$\G$, and~$\rho$ be a co-Whittaker type~$\R$-representation of~$\M$ with respect to~$\mathfrak w_\M$, and~$r=\bigoplus_{i=1}^{m_r}r_i$ denote the decomposition of~$r\colon{^L}\M(\mathbb{C})\rightarrow \GL({}^L\mathfrak{n})$ of~\eqref{equationr}.
		\begin{enumerate}
			\item \label{mainthmpart1} (Shahidi's local coefficient)
			The element~$\mathcal{D}_\R(X,\rho,\P,\Xi,\psi)$ of~Section \ref{localcoeffsect} is a unit.  In other words,
			\[\mathcal{D}_\R(X,\rho,\P,\Xi,\psi)\in (\mathcal{S}_{X,\R}^{-1}\R[X^{\pm 1}])^{\times},\]
			and we write~$\mathcal{C}_\R(X,\rho,\P,\Xi,\psi)=\mathcal{D}_\R(X,\rho,\P,\Xi,\psi)^{-1}$. 
		
			\item \label{Cintogamma} (Langlands--Shahidi gamma factors) For each~$i\in\{1,\ldots,m_r\}$, there exists a~
			 $$\gamma(X,\rho,r_i,\psi)\in (\mathcal{S}_{X,\R}^{-1}\R[X^{\pm 1}])^{\times},$$ depending only on the isomorphism class of~$\rho$, satisfying 
			\begin{enumerate}
				\item \label{maintheocomp} (Compatibility with homomorphisms)  Suppose~$\alpha\colon \R\rightarrow \R'$ is a morphism of~$\R_0$-algebras, and $\alpha_X\colon\mathcal{S}_{X,\R}^{-1}\R[X^{\pm 1}]\rightarrow \mathcal{S}_{X,\R'}^{-1}\R'[X^{\pm 1}]$ is the unique~$\R_0$-algebra morphism extending $\alpha$ which is the identity on~$X$. Then, for each~$i\in\{1,\ldots,m_r\}$, we have 
				\[\gamma(X,\rho\otimes_\alpha \R',r_i,\psi)=\alpha_X(\gamma(X,\rho,r_i,\psi)).\]
				\item\label{maintheoC} When~$\R=\mathbb{C}$, and~$\rho$ is a co-Whittaker~$\mathbb{C}$-representation with respect to $\mathfrak w_\M$ with $\mathfrak w_\M$-generic cosocle~$\rho_{gen}$, then
				\[\gamma(X,\rho,r_i,\psi)=\gamma_{Sh}(X,\rho_{gen},r_i,\psi),\]
				where~$\gamma_{Sh}(X,\rho_{gen},r_i,\psi)$ is the gamma factor constructed by Shahidi and Lomeli.
				\item \label{dominancemainthm}(Dominance) Let~$\rho'$ be a co-Whittaker type~$\R$-representation of~$\M$ with respect to $\mathfrak w_\M$ such that~$\rho$ dominates~$\rho'$. Then, for all~$i\in\{1,\ldots,m_r\}$, we have that
				
					$$\gamma(X,\rho,r_i,\psi)=\gamma(X,\rho',r_i,\psi).$$
			\end{enumerate}
			\item\label{LFE} (Local Functional Equation) For each~$i\in\{1,\ldots,m_r\}$,
			\[\gamma(X,\rho,r_i,\psi)\gamma(q^{-1}X^{-1},\rho,\overline{r_i},\psi^{-1})=1.\]
			and moreover,
			\[\mathcal{C}_{\R}(X,\rho,\P,\Xi,\psi)=\lambda(w_0,\psi)^{-1}\prod_{i=1}^{m_r}\gamma(X^i,\rho,r_i,\psi).\]
		

		\end{enumerate}
	\end{thm}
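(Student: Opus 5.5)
The proof reduces everything to the universal case and then to $\mathbb{C}$. By Lemma~\ref{cwlemma}, any co-Whittaker $\rho$ is dominated by $\Gamma\otimes_{f_\rho}\R$, where $\Gamma=\Gamma^{\M}_{\mathfrak w_\M,\mathfrak t}$ and $\E=\E^{\M}_{\mathfrak w_\M,\mathfrak t}$. By Proposition~\ref{propositiondowntointX}(1) and Proposition~\ref{Xcoeffbasechange}, we then have
\[\mathcal D_\R(X,\rho,\P,\Xi,\psi)=(f_\rho)_X\bigl(\mathcal D_\E(X,\Gamma,\P,\Xi,\psi)\bigr).\]
Ring maps preserve units, so for part (\ref{mainthmpart1}) it suffices to show that $\mathcal D_\E(X,\Gamma,\P,\Xi,\psi)$ is a unit in $\mathcal S_{X,\E}^{-1}\E[X^{\pm1}]$.

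Write $\mathcal D_\E(X,\Gamma)=A/B$ with $B\in\mathcal S_{X,\E}$ and $A\in\E[X^{\pm1}]$. We must show $A\in\mathcal S_{X,\E}$, i.e.\ that its leading and trailing coefficients are units of $\E$. Over $\mathbb C$, for each point $\iota\colon\E\to\mathbb C$, the specialization equals Shahidi's inverse local coefficient of the generic cosocle. Shahidi's local coefficient is a nonzero rational function whose numerator and denominator, after normalisation, have leading and trailing coefficients of a controlled shape (powers of $q$ times units and the root number). I expect this to be the main obstacle. Knowing non-vanishing at every $\mathbb C$-point does not give units in $\E$: we need non-vanishing at every point of $\operatorname{Spec}\E$, including points of residue characteristic $\ell$.

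I would attack this in one of two ways. The first is to run the functional equation symmetrically. Composing the intertwining operators $\mathbb M(\rho)$ and $\mathbb M({}^{\widetilde w_0}\rho)$ gives the Plancherel-type scalar $\mu$, and $\mathcal D(X,\rho)\mathcal D(X^{-1},{}^{\widetilde w_0}\rho)$ equals this composite up to the unit $\lambda(w_0,\psi)$-type constants. One would then show that the composite lies in $\mathcal S_{X}^{-1}\R[X^{\pm1}]$ with inverse of the same form, using the Moss--Trias bound for both operators. The second route, which I would try first, is Shahidi's explicit description of the leading and trailing terms: taking $f$ supported on the small cell with $X$-adic asymptotics, the Laurent expansion of $\mathcal D$ from Proposition~\ref{powerseriesexpansion} has lowest term $\lambda\cdot(\text{volume})\cdot\omega_\rho(\cdot)X^{k}$. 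Here $\omega_\rho$ is a unit since the central character is $\R^\times$-valued (Lemma~\ref{cwlemma}(\ref{cwlemma3})). This shows the trailing coefficient is a unit, and the leading one follows symmetrically by expanding in $X^{-1}$.

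For (\ref{Cintogamma}) I would follow Shahidi's induction on $m_r$, using Lemma~\ref{keylemma}. When $m_r=1$, set $\gamma(X,\rho,r_1,\psi)=\lambda(w_0,\psi)\mathcal C_\R(X,\rho,\P,\Xi,\psi)$. In general, for $i>1$ define $\gamma(X,\rho,r_i,\psi)$ as $\gamma(X,\rho,r'_1,\psi)$ computed for $(\G_i,\P_i)$, which has smaller $m$. Then define $\gamma(X,\rho,r_1,\psi)$ by solving
\[\mathcal C_\R(X,\rho,\P,\Xi,\psi)=\lambda(w_0,\psi)^{-1}\prod_{i=1}^{m_r}\gamma(X^i,\rho,r_i,\psi).\]
This requires $\rho$ to be co-Whittaker for the same $\M$, which holds, and uses that the whole theory above applies to $\G_i$ with the same coefficient ring $\R_0$; the $\mu_4$ accounts for $\lambda$. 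All factors are units by part (\ref{mainthmpart1}), and $X\mapsto X^i$ preserves $\mathcal S_X$, so the product formula holds by construction.

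Compatibility (\ref{maintheocomp}) and dominance (\ref{dominancemainthm}) follow inductively from Proposition~\ref{Xcoeffbasechange} and Proposition~\ref{propositiondowntointX}(1). Part (\ref{maintheoC}) uses dominance to reduce to $\rho_{gen}$; there $\mathcal D_\mathbb C$ is the inverse of Shahidi's local coefficient (with $X=q^{-s}$) by uniqueness of Whittaker functionals, and Shahidi--Lomeli define $\gamma_{Sh}$ by the same recursion. For the local functional equation (\ref{LFE}), both sides lie in $\mathcal S_X^{-1}\R[X^{\pm1}]$ and are compatible with base change, so it suffices to check it universally over $\E$. Since $\E$ is reduced with characteristic-zero minimal primes embedding in $\mathbb C$, and Laurent coefficients are detected at minimal primes (as in the proof of Proposition~\ref{propositiondowntointX}), the identity follows from Shahidi's complex functional equation via (\ref{maintheoC}).
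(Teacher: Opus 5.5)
Your reduction to the universal pair $(\Gamma,\E)$ is sound, as are the inductive definition via Lemma \ref{keylemma} and your minimal-prime argument for the local functional equation; all of these match the paper. The gap is part (\ref{mainthmpart1}). Everything else depends on it, since you need $\mathcal{D}$ invertible even to define $\mathcal{C}$ and to solve for $\gamma(X,\rho,r_1,\psi)$, and neither of your routes proves it.

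Route 2 asserts an explicit lowest-order term $\lambda\cdot\mathrm{vol}\cdot\omega_\rho(\cdot)X^k$. Nothing supports this. In families, $\mathbb{M}(\rho)$ is not a convergent integral: it is built algebraically from a Moss--Trias annihilator $\beta$, so there are no ``$X$-adic asymptotics'' to exploit. The claimed shape is also wrong in general. For $\M=\GL_1\times\GL_1\subset\GL_2$ with $\chi=\chi_1\chi_2^{-1}$ ramified, $\mathcal{D}$ is $\lambda$ times the inverse of the monomial $\epsilon(s,\chi,\psi)$. Its coefficient involves a Gauss sum of $\chi$, not just a value of $\omega_\rho$ times a volume, and the fact that such a coefficient is a unit is itself a functional-equation phenomenon.

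Route 1 only moves the difficulty. Moss--Trias controls denominators: it gives $\mathcal{Q}\cdot j\in\R[\M/\M^\circ]$ with $\Delta_{X,\R}(\mathcal Q)\in\mathcal S_{X,\R}$. It says nothing about the extreme coefficients of the numerator of $j$, so it cannot show $j^{-1}$ has the same form. In fact the paper deduces that $j^{\G}_\R(X,\rho)$ is a unit \emph{from} part (\ref{mainthmpart1}).

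The missing idea is the tool you already use for the functional equation: turn invertibility into an identity and verify it at minimal primes of $\E$.
\begin{enumerate}
\item Take $\overline{\P}$ standard for the Borel $(\B\cap\M)\overline{\N}$. Use the pinning $\Xi'$ obtained by negating $x_\alpha$ for $\alpha\in\theta$, so the Whittaker datum of $(\Xi',\psi^{-1})$ restricts to $\mathfrak{w}_\M$.
\item Prove in $\mathcal{S}_{X,\E}^{-1}\E[X^{\pm1}]$ the identity
\[
\mathcal{D}_\E(X,\Gamma,\P,\Xi,\psi)\,\mathcal{D}_\E(q^{-1}X^{-1},\Gamma,\overline{\P},\Xi',\psi^{-1})=\frac{\lambda(w_0,\psi)\,\lambda(w_0',\psi^{-1})}{\prod_{i=2}^{m_r}\gamma(X^{i},\Gamma,r_i,\psi)\,\gamma(q^{-i}X^{-i},\Gamma,\overline{r_i},\psi^{-1})}.
\]
To do so, specialise along each $\iota_{\mathfrak p}$. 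Over $\mathbb{C}$, use the Shahidi--Lomeli relation $\mathcal{D}\cdot\prod_i\gamma_{Sh}(X^i,r_i)=\lambda(w_0,\psi)$ together with Shahidi's functional equation. Reducedness of $\E$ then gives the identity over $\E$.
\item The right-hand side is a unit: the $\lambda$'s are fourth roots of unity in $\R_0$, and the $\gamma(r_i)$ for $i\geq 2$ are units by the induction hypothesis. Hence $\mathcal{D}_\E(X,\Gamma,\P,\Xi,\psi)$ is a unit.
\end{enumerate}
Consequently, part (\ref{mainthmpart1}) cannot be proved before the gamma factors exist. Parts (\ref{mainthmpart1})--(\ref{LFE}) must be proved together by induction on $m_r$. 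With this replacement, the rest of your argument goes through.
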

	
	\begin{proof}
	For ease of notation we will write $\Gamma$ for the universal co-Whittaker representation $\Gamma^{\M}_{\mathfrak w_\M,\mathfrak{t}}$ of type~$\mathfrak{t}$ and $\E$ for its endomorphism ring $\E^{\M}_{\mathfrak w_\M,\mathfrak{t}}$. As~$\E$ is torsion-free, all minimal primes of~$\E$ are characteristic zero, and as~$\E$ is a finitely generated~$\R_0$-algebra by Lemma \ref{Ebasicfacts}, for a minimal prime~$\mathfrak{p}$ of~$\E$, its residue field~$\kappa(\mathfrak{p})$ embeds into~$\mathbb{C}$. For any minimal prime~$\mathfrak{p}$ of~$\E$, we choose an embedding~$\kappa(\mathfrak{p})\hookrightarrow \mathbb{C}$, which induces a map $\iota_{\mathfrak p}\colon\E\to\mathbb C$. 
	
		We proceed via induction on $m_r$, so suppose first that $m_r=1$.
		  By Propositions \ref{Inverseshahidis} and \ref{propositiondowntointX},
		we obtain an element
		\[\mathcal{D}_{\E}(X,\Gamma,\P,\Xi,\psi)\in\mathcal{S}_{X,\E}^{-1}\E[X^{\pm 1}].\]
		
		Now let $\B'=(\B\cap\M)\overline{\N}_{\theta}$, which yields another Borel subgroup of $\G$ and $\overline{\P}$ is a standard maximal parabolic subgroup of $\G$ with respect to $\B'$. Let $\Delta'$ be the set of simple roots with respect to $\B'$. We choose an $\F$-pinning $\Xi'=(\B',\T,\{x_\alpha'\}_{\alpha\in\Delta'})$ on $\G$, where if $\Xi=(\B,\T,\{x_\alpha\}_{\alpha\in\Delta})$ we set for $\alpha\in\theta$ that
		$x_{\alpha}'(a)=x_{\alpha}(-a)$. Then a straightforward calculation shows that the Whittaker datum $\mathfrak w'$ associated to $(\Xi',\psi^{-1})$ satisfies that 
		$\mathfrak w_\M=\mathfrak w'_\M$. Propositions \ref{Inverseshahidis} and \ref{propositiondowntointX} again yield an element
		$$\mathcal{D}_{\E}(q^{-1}X^{-1},\Gamma,\overline{\P},\Xi',\psi^{-1})\in\mathcal{S}_{X,\E}^{-1}\E[X^{\pm 1}]$$
		
		We now consider
		$$
		\mathcal{D}_{\E}(X,\Gamma,\P,\Xi,\psi)\cdot\mathcal{D}_{\E}(q^{-1}X^{-1},\Gamma,\overline{\P},\Xi',\psi^{-1})-\lambda(w_0,\psi)\lambda(w_0',\psi^{-1}).$$
		For any minimal prime ideal $\mathfrak p$ of $\E$ we obtain by Proposition \ref{Xcoeffbasechange}, that
		\begin{align*}
		\iota_{\mathfrak p,X}(\mathcal{D}_{\E}(X,\Gamma,\P,\Xi,\psi)\cdot\mathcal{D}_{\E}(q^{-1}X^{-1},\Gamma,\overline{\P},\Xi',\psi^{-1})-\lambda(w_0,\psi)\lambda(w_0',\psi^{-1})) &=\\\notag\mathcal{D}_{\mathbb{C}}(X,\Gamma\otimes_{\iota_{\mathfrak p}}\mathbb C,\P,\Xi,\psi)\cdot\mathcal{D}_{\mathbb{C}}(q^{-1}X^{-1},\Gamma\otimes_{\iota_{\mathfrak p}}\mathbb C,\overline{\P},\Xi',\psi^{-1})-&\lambda(w_0,\psi)\lambda(w_0',\psi^{-1}).\end{align*}
		Now,~$\varsigma=\Gamma\otimes_{\iota_{\mathfrak p}}\mathbb C$ is a co-Whittaker~$\mathbb{C}$-representation of $\M$ with respect to $\mathfrak w_\M$, and by Lemma \ref{cowhittakerfields} has an irreducible $\mathfrak w_\M$-generic cosocle~$\varsigma_{gen}$.  Moreover, by Proposition \ref{propositiondowntointX}, we have that
		$$\mathcal{D}_{\mathbb{C}}(X,\varsigma,\P,\Xi,\psi)=\mathcal{D}_{\mathbb{C}}(X,\varsigma_{gen},\P,\Xi,\psi)$$
		and
		$$\mathcal{D}_{\mathbb{C}}(q^{-1}X^{-1},\varsigma,\overline{\P},\Xi',\psi^{-1})=\mathcal{D}_{\mathbb{C}}(q^{-1}X^{-1},\varsigma_{gen},\overline{\P},\Xi',\psi^{-1}).$$
	Now as $m_r=1$ we see by Theorem 3.5 in \cite{ShahidiAnnals}, and Equation (5.1) in \cite{lomelilemma} (see also Section 2.6 in \cite{AGIKMS}) that
	$$\mathcal{D}_{\mathbb{C}}(X,\varsigma_{gen},\P,\Xi,\psi)\gamma_{Sh}(X,\varsigma_{gen},r_1,\psi)=\lambda(w_0,\psi)$$
	and
	$$\mathcal{D}_{\mathbb{C}}(q^{-1}X^{-1},\varsigma_{gen},\overline{\P},\Xi',\psi^{-1})\gamma_{Sh}(q^{-1}X^{-1},\varsigma_{gen},\overline{r_1},\psi^{-1})=\lambda(w_0',\psi^{-1}).$$
	Since by Theorem 11.7 3) of Kim's article in \cite{cogdell2004lectures}, $$\gamma_{Sh}(X,\varsigma_{gen},r_1,\psi)\gamma_{Sh}(q^{-1}X^{-1},\varsigma_{gen},\overline{r_1},\psi^{-1})=1$$
	we obtain that
		\[\mathcal{D}_{\mathbb{C}}(X,\varsigma_{gen},\P,\Xi,\psi)\mathcal{D}_{\mathbb{C}}(q^{-1}X^{-1},\varsigma_{gen},\overline{\P},\Xi',\psi^{-1})=\lambda(w_0,\psi)\lambda(w_0',\psi^{-1}).\]
		
Hence, we deduce that for any minimal prime~$\mathfrak{p}$ of~$\E$,
		\begin{equation*}\iota_{\mathfrak p,X}(\mathcal{D}_{\E}(X,\Gamma,\P,\Xi,\psi)\mathcal{D}_{\E}(q^{-1}X^{-1},\Gamma,\overline{\P},\Xi',\psi^{-1})-\lambda(w_0,\psi)\lambda(w_0',\psi^{-1}))=0.\end{equation*}
		As~$\E$ is reduced, the intersection of its minimal primes is trivial, thus we find
		$$\mathcal{D}_{\E}(X,\Gamma,\P,\Xi,\psi)\mathcal{D}_{\E}(q^{-1}X^{-1},\Gamma,\overline{\P},\Xi',\psi^{-1})=\lambda(w_0,\psi)\lambda(w_0',\psi^{-1})$$
		and hence~$\mathcal{D}_{\E}(X,\Gamma,\P,\Xi,\psi)\in (\mathcal{S}_{X,\E}^{-1}\E[X^{\pm1}])^\times$. 
		
		For a co-Whittaker $\R$-representation $\rho$ of $\M$ with respect to $\mathfrak w_\M$ of type $\mathfrak t$ we obtain an $\R_0$-algebra morphism $f_{\rho,0}\colon\E\to\R$ such that $\Gamma\otimes_{f_{\rho,0}}\R$ dominates $\rho$. By Propositions \ref{Xcoeffbasechange} and \ref{propositiondowntointX} we obtain that
		\begin{align*}
			\mathcal{D}_{\R}(X,\rho,\P,\Xi,\psi)\mathcal{D}_{\R}(q^{-1}X^{-1},\rho,\overline{\P},\Xi',\psi^{-1})&=	\mathcal{D}_{\R}(X,\Gamma\otimes_{f_{\rho,0}}\R,\P,\Xi,\psi)\mathcal{D}_{\R}(q^{-1}X^{-1},\Gamma\otimes_{f_{\rho,0}}\R,\overline{\P},\Xi',\psi^{-1})\\
			&=(f_{\rho,0})_X(\mathcal{D}_{\E}(X,\Gamma,\P,\Xi,\psi)\mathcal{D}_{\E}(q^{-1}X^{-1},\Gamma,\overline{\P},\Xi',\psi^{-1}))\\
			&=\lambda(w_0,\psi)\lambda(w_0',\psi^{-1}),
		\end{align*}
		which implies that $\mathcal{D}_{\R}(X,\rho,\P,\Xi,\psi)\in(\mathcal S_{X,\R}^{-1}\R[X^{\pm1}])^{\times}$. Hence we can define 
		$$\gamma(X,\rho,r_1,\psi)\coloneqq\lambda(w_0,\psi)\mathcal{D}_{\R}(X,\rho,\P,\Xi,\psi)^{-1}\in (\mathcal S_{X,\R}^{-1}\R[X^{\pm1}])^{\times}.$$
		Part \ref{LFE} then follows immediately and Properties \ref{maintheocomp} and \ref{dominancemainthm} follow from Propositions \ref{Xcoeffbasechange} and \ref{propositiondowntointX}. Part \ref{maintheoC} follows since our construction is compatible with the one for representations over $\mathbb C$ of Shahidi and Lomeli.

		Now suppose~$m_r>1$, and that for any~$\G'$ with maximal parabolic subgroup $\P'=\M\N'$, such that the corresponding adjoint representation  $r'\colon{^L}\M(\mathbb C)\to\GL(^L\mathfrak n')$ satisfies
		$$r'=\bigoplus_{i=1}^{m_{r'}}r_i'$$
		with~$m_{r'}<m_r$, we have defined gamma factors~$\gamma(X,\rho,r'_j,\psi)$ and proved the stated properties. In particular, by Lemma \ref{keylemma}, we can thus suppose that for all~$1<i\leq m_r$, we have defined~$\gamma(X,\Gamma,r_i,\psi)\in (\mathcal{S}_{X,\E}^{-1}\E[X^{\pm1}])^{\times}$.

		Note that these gamma factors are independent of the choice of~$\G'$:  Indeed, if~$\gamma'(X,\Gamma,r'_j,\psi)$ is defined with respect to another choice~$\G''$, then we can consider
		\[\gamma(X,\Gamma,r'_j,\psi)-\gamma'(X,\Gamma,r'_j,\psi)\]
		and we find that extending scalars to~$\kappa(\mathfrak{p})$ for each minimal prime~$\mathfrak{p}$ of~$\E$ this difference is zero (using Shahidi's gamma factors do not depend on the choice of~$\G'$), hence as~$\E$ is reduced we find~$\gamma(X,\Gamma,r'_j,\psi)=\gamma'(X,\Gamma,r'_j,\psi)$.  In particular, this inductive process is well-defined.

		By Propositions \ref{Inverseshahidis} and \ref{propositiondowntointX},
		we obtain elements
		\[\mathcal{D}_{\E}(X,\Gamma,\P,\Xi,\psi),\mathcal{D}_{\E}(q^{-1}X^{-1},\Gamma,\overline{\P},\Xi',\psi^{-1})\in\mathcal{S}_{X,\E}^{-1}\E[X^{\pm 1}]\]  and consider
				\begin{equation}\label{maintheoeq}\mathcal{D}_{\E}(X,\Gamma,\P,\Xi,\psi)\mathcal{D}_{\E}(q^{-1}X^{-1},\Gamma,\overline{\P},\Xi',\psi^{-1})-\frac{\lambda(w_0,\psi)\lambda(w_0',\psi^{-1})}{\prod_{i=2}^{m_r}\gamma(X^{i},\Gamma,r_i,\psi)\gamma(q^{-i}X^{-i},\Gamma,\overline{r_i},\psi^{-1})}.\end{equation}
		
		For any minimal prime ideal $\mathfrak p$ of $\E$ we obtain by the induction hypothesis and Proposition \ref{Xcoeffbasechange}, that
		the image of \eqref{maintheoeq} under $\iota_{\mathfrak p,X}$ equals
		$$\mathcal{D}_{\mathbb C}(X,\Gamma\otimes_{\iota_{\mathfrak p}}\mathbb C,\P,\Xi,\psi)\mathcal{D}_{\mathbb C}(q^{-1}X^{-1},\Gamma\otimes_{\iota_{\mathfrak p}}\mathbb C,\overline{\P},\Xi',\psi^{-1})-\frac{\lambda(w_0,\psi)\lambda(w_0',\psi^{-1})}{\prod_{i=2}^{m_r}\gamma(X^{i},\Gamma\otimes_{\iota_{\mathfrak p}}\mathbb C,r_i,\psi)\gamma(q^{-i}X^{-i},\Gamma\otimes_{\iota_{\mathfrak p}}\mathbb C,\overline{r_i},\psi^{-1})}.$$
			Again,~$\varsigma=\Gamma\otimes_{\iota_{\mathfrak p}}\mathbb C$ is a co-Whittaker~$\mathbb{C}$-representation of $\M$ with respect to $\mathfrak w_\M$, and by Lemma \ref{cowhittakerfields} has an irreducible $\mathfrak w_\M$-generic cosocle~$\varsigma_{gen}$. By the induction hypothesis and Proposition \ref{propositiondowntointX} the image of \eqref{maintheoeq} under $\iota_{\mathfrak p,X}$ equals 
			$$\mathcal{D}_{\mathbb C}(X,\varsigma_{gen},\P,\Xi,\psi)\mathcal{D}_{\mathbb C}(q^{-1}X^{-1},\varsigma_{gen},\overline{\P},\Xi',\psi^{-1})-\frac{\lambda(w_0,\psi)\lambda(w_0',\psi^{-1})}{\prod_{i=2}^{m_r}\gamma(X^{i},\varsigma_{gen},r_i,\psi)\gamma(q^{-i}X^{-i},\varsigma_{gen},\overline{r_i},\psi^{-1})}.$$
		However, by Theorem 3.5 in \cite{ShahidiAnnals}, Equation (5.1) in \cite{lomelilemma}, and Theorem 11.7 3) of Kim's article in \cite{cogdell2004lectures} this expression equals zero. We obtain that for any minimal prime~$\mathfrak{p}$ of~$\E$, the image of \eqref{maintheoeq} under $\iota_{\mathfrak p,X}$ is zero and since the intersection of all minimal prime ideals of $\E$ is zero, we obtain that 
		$$\mathcal{D}_{\E}(X,\Gamma,\P,\Xi,\psi)\mathcal{D}_{\E}(q^{-1}X^{-1},\Gamma,\overline{\P},\Xi',\psi^{-1})=\frac{\lambda(w_0,\psi)\lambda(w_0',\psi^{-1})}{\prod_{i=2}^{m_r}\gamma(X^{i},\Gamma,r_i,\psi)\gamma(q^{-i}X^{-i},\Gamma,\overline{r_i},\psi^{-1})}.$$
		For a co-Whittaker $\R$ representation $\rho$ with respect to $\mathfrak w_\M$ of type~$\mathfrak{t}$ we obtain an $\R_0$-algebra morphism $f_{\rho,0}\colon\E\to\R$ such that $\Gamma\otimes_{f_{\rho,0}}\R$ dominates $\rho$. By using \ref{Xcoeffbasechange} and \ref{propositiondowntointX} and the induction hypothesis a quick computation yields that 
		\begin{equation}\label{eqmaintheo2}
			\mathcal{D}_{\R}(X,\rho,\P,\Xi,\psi)\mathcal{D}_{\R}(q^{-1}X^{-1},\rho,\overline{\P},\Xi',\psi^{-1})=\frac{\lambda(w_0,\psi)\lambda(w_0',\psi^{-1})}{\prod_{i=2}^{m_r}\gamma(X^{i},\rho,r_i,\psi)\gamma(q^{-i}X^{-i},\rho,\overline{r_i},\psi^{-1})}.\end{equation}
		By the induction hypothesis the right hand side lies in $(\mathcal S_{X,\R}^{-1}\R[X^{\pm1}])^{\times}$ and hence also 
		$$\mathcal{D}_{\R}(X,\rho,\P,\Xi,\psi)\in (\mathcal S_{X,\R}^{-1}\R[X^{\pm1}])^{\times}.$$
		We can thus define
		$$\gamma(X,\rho,r_1,\psi)\coloneqq\lambda(w_0,\psi)\mathcal{D}_{\R}(X,\rho,\P,\Xi,\psi)^{-1}\prod_{i=2}^{m_r}\gamma(X^{i},\rho,r_i,\psi)^{-1}.$$
		Property \ref{LFE} then immediately follows from the definition and Equation \eqref{eqmaintheo2}. Parts \ref{maintheocomp} and \ref{dominancemainthm} follow from Propositions \ref{Xcoeffbasechange} and \ref{propositiondowntointX}. Part \ref{maintheoC} follows since our construction is compatible with Shahidi's and Lomeli's constructions over $\mathbb C$.	\end{proof}

	\subsection{On the $j$-function for co-Whittaker representations}
	We fix a prime number~$\ell\neq p$, and let
	\begin{equation*}\mathbb{Z}_{\ell}':=\begin{cases}\mathbb{Z}_{\ell}^{ur}&\text{if~}\ell\neq 2;\\
			\mathbb{Z}_{\ell}^{ur}[\sqrt{q},\mu_4]&\text{if~}\ell=2;\end{cases}
	\end{equation*} where~$\mathbb{Z}_{\ell}^{ur}$ denotes the ring of integers of the maximal unramified extension of~$\mathbb{Q}_{\ell}$. Suppose for this section that $\R$ is a Noetherian $\mathbb{Z}_\ell'$-algebra. Let~$\P=\M\N$ be a standard maximal parabolic subgroup of~$\G$, and~$\rho$ be a co-Whittaker type~$\R$-representation of~$\M$ with respect to~$\mathfrak w_\M$ of type $\mathfrak t$. We fix the same choices as in Section \ref{secchoices}. Moreover, for this section we assume that $\P$ is self-associate, i.e.\ if $\P$ is associated to the subset $\theta\subset\Delta$, then $w_0(\theta)=\theta$. Note that this implies that $w_0\P w_0^{-1}=\overline{\P}$. Moss and Trias in \cite{MT} (in a more general setting) have introduced the Harish--Chandra $j$-function~$j^{\G}_\R(\rho)$ as an element of~$ \mathcal{S}_{\M,\R}^{-1}\R[\M/\M^{\circ}]$, as a composition of intertwining operators involving~$\P$ and $\overline{\P}$. Hence the $j$-function depends on the choice of measures on $\N$ and $\overline{\N}$ and we choose the same measure for $\N$ as in Section \ref{secchoices} which since $w_0\N w_0^{-1}=\overline{\N}$ also defines a measure on $\overline{\N}$.
	The $j$-function is independent of the choice of standard parabolic~$\P$.

	It follows by Theorem \ref{intertwiningops} that we can find an element $\mathcal Q\in \mathcal{S}_{\M,\R}$ such that $\mathcal Q\cdot j_\R^{\G}(\rho)\in \R[\M/\M^\circ]$ and such that $\Delta_{X,\R}(\mathcal Q)\in\mathcal S_{X,\R}$. We then define $$j_\R^{\G}(X,\rho)\coloneqq\Delta_{X,\R}(\mathcal Q)^{-1}\Delta_{X,\R}(\mathcal Q\cdot j_\R^\G(\rho))\in\mathcal S_{X,\R}^{-1}\R[X^{\pm c}],$$
	and it is straightforward to see that this does not depend on the choice of $\mathcal Q$.
	
	By Proposition 6.3 of \cite{MT} the $j$-function behaves well with respect to base-change, i.e.\ if $\alpha\colon \R\to\R'$ is an $\R_0$-algebra homomorphism and $\alpha_\M\colon\mathcal S_{\M,\R}^{-1}\R[\M/\M^\circ]\to\mathcal S_{\M,\R'}^{-1}\R'[\M/\M^\circ]$ the unique extension of $\alpha$ that is the identity on elements of $\M/\M^\circ$, then $$j^\G_{\R'}(\rho\otimes_\alpha\R')=\alpha_\M(j^\G_\R(\rho))\in\mathcal S_{\M,\R'}^{-1}\R'[\M/\M^\circ].$$
	Moreover, it is straightforward to see that this implies for $\alpha_X\colon\mathcal S_{X,\R}^{-1}\R[X^{\pm c}]\to\mathcal S_{X,\R'}^{-1}\R'[X^{\pm c}]$, the unique extension of $\alpha$ that is the identity on $X^c$, that
	\begin{equation}\label{jbasechange}
		j^\G_{\R'}(X,\rho\otimes_\alpha\R')=\alpha_X(j^\G_\R(X,\rho))\in\mathcal S_{X,\R'}^{-1}\R'[X^{\pm c}].
	\end{equation}
	Furthermore, note that by Proposition 6.3 of \cite{MT}, if $\rho'$ is another co-Whittaker $\R$-representation of $\M$ with respect to $\mathfrak w_\M$ such that $\rho$ dominates $\rho'$, then the equality
	$j^\G_\R(\rho)=j^\G_\R(\rho')\in\mathcal S_{\M,\R}^{-1}\R[\M/\M^\circ]$
	holds and in particular
	\begin{equation}\label{jdomination}
		j^\G_\R(X,\rho)=j^\G_\R(X,\rho').
	\end{equation}

	We define a new $\F$-pinning $\Xi^{-}=(\B,\T,\{x_\alpha'\}_{\alpha\in\Delta})$, where if $\Xi=(\B,\T,\{x_\alpha\}_{\alpha\in\Delta})$, we set for $\alpha\in\Delta$ that
	$x_{\alpha}'(a)=x_{\alpha}(-a)$.
	\begin{corollary}
		Let~$\R$ be a Noetherian $\mathbb{Z}_\ell'$-algebra. Moreover, let~$\P=\M\N$ be a maximal self-associate standard parabolic subgroup of~$\G$, and~$\rho$ be a co-Whittaker type~$\R$-representation of~$\M$ with respect to~$\mathfrak w_\M$ of type $\mathfrak t$. Then
		$$j^\G_\R(X,\rho)= \mathcal{D}_\R(X^{-1},{}^{\widetilde{w}_0}\rho,\P,\Xi^-,\psi^{-1})\cdot\mathcal{D}_\R(X,\rho,\P,\Xi,\psi)$$
		and $j^{\G}_\R(X,\rho)\in  (\mathcal{S}_{X,\R}^{-1}\R[X^{\pm 1}])^{\times}$.
	\end{corollary}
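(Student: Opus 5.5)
The strategy mirrors the proof of Theorem \ref{mainthm}: establish the identity in the universal case $\rho=\Gamma$, $\R=\E$ (after base change to $\mathbb{Z}_\ell'$) by checking it at every minimal prime, where it reduces to the classical complex identity. Then descend to arbitrary $\rho$ via base change and dominance.

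\textbf{Universal case.} Write $\Gamma=\Gamma^{\M}_{\mathfrak w_\M,\mathfrak t}$ and $\E=\E^{\M}_{\mathfrak w_\M,\mathfrak t}$, and set $\E'=\E\otimes\mathbb{Z}_\ell'$, which is still reduced, torsion-free and finitely generated. By the construction of $j$ and Propositions \ref{Inverseshahidis} and \ref{propositiondowntointX}, both sides of the claimed identity for $\Gamma\otimes\E'$ lie in $\mathcal S_{X,\E'}^{-1}\E'[X^{\pm c}]$. Take their difference and expand it as a Laurent series via Proposition \ref{powerseriesexpansion}. For each minimal prime $\mathfrak p$, fix an embedding $\iota_{\mathfrak p}\colon\E'\to\mathbb{C}$, extending one of $\mathbb{Z}_\ell'$ into $\mathbb{C}$. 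Equation \eqref{jbasechange} and Proposition \ref{Xcoeffbasechange} transport both sides to the complex co-Whittaker representation $\varsigma=\Gamma\otimes_{\iota_{\mathfrak p}}\mathbb{C}$, and \eqref{jdomination} together with Proposition \ref{propositiondowntointX} replace $\varsigma$ by its generic cosocle $\varsigma_{gen}$.

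Over $\mathbb{C}$ we need Shahidi's classical relation. Since $w_0\P w_0^{-1}=\overline{\P}$, the $j$-function is the composition $\mathbb{M}(w_0^{-1},{}^{\widetilde w_0}\varsigma_{gen})\circ\mathbb{M}(w_0,\varsigma_{gen})$, with the measures on $\N$ and $\overline\N$ chosen compatibly. We apply the defining functional equation of Proposition \ref{Inverseshahidis} twice. The first application multiplies by $\mathcal D(X,\varsigma_{gen},\P,\Xi,\psi)$. The second applies to the induction from $\overline\P$ of ${}^{\widetilde w_0}\varsigma_{gen}$ back to $\P$; interpreting $\overline\P$ as standard for the opposite-type choice gives the factor $\mathcal D(X^{-1},{}^{\widetilde w_0}\varsigma_{gen},\P,\Xi^-,\psi^{-1})$. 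The substitution $X\mapsto X^{-1}$ comes from $\Delta_{X}$ composed with conjugation by $w_0$, which inverts $\delta_\P$ on $\M/\M^\circ$. The pinning $\Xi^-$ and character $\psi^{-1}$ arise because $\widetilde w_0^{-1}$ is the Langlands--Shelstad representative for the pinning $\Xi^-$, and the Whittaker datum transported by it is $(\Xi^-,\psi^{-1})$. Since the Whittaker functional generates the space of functionals (Corollary \ref{generates}) and $j$ acts as a scalar, we get $j=\mathcal D\cdot\mathcal D$ over $\mathbb{C}$. This is Shahidi's classical identity (Kim's lectures in \cite{cogdell2004lectures}, \S 11), and I would cite it rather than rederive it. I expect this step, namely the precise bookkeeping of representatives, pinnings and measures making the second coefficient exactly $\mathcal D(X^{-1},{}^{\widetilde w_0}\rho,\P,\Xi^-,\psi^{-1})$, to be the main obstacle. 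To avoid it I would argue directly in families: apply the functional equation twice to $\Gamma$ and use the uniqueness in Proposition \ref{Inverseshahidis}. That argument actually works over any $\R$ and bypasses minimal primes entirely.

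\textbf{Descent and unit property.} With the identity established for $\Gamma\otimes\E'$, which is reduced, for general $\rho$ take $f_{\rho,0}\colon\E'\to\R$. Then $\Gamma\otimes_{f_{\rho,0}}\R$ dominates $\rho$, and \eqref{jbasechange}, \eqref{jdomination}, Proposition \ref{Xcoeffbasechange} and Proposition \ref{propositiondowntointX}(1) give the identity for $\rho$. Finally, ${}^{\widetilde w_0}\rho$ is co-Whittaker for $\mathfrak w_\M$ by $\widetilde w_0$-compatibility, so by Theorem \ref{mainthm}\eqref{mainthmpart1} both factors lie in $(\mathcal{S}_{X,\R}^{-1}\R[X^{\pm1}])^{\times}$. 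Substituting $X\mapsto X^{-1}$ preserves this group because it preserves $\mathcal S_{X,\R}$. Hence $j^\G_\R(X,\rho)$ is a unit with only integral powers of $X$.
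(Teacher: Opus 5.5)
Your proposal follows the paper's proof. You reduce to the universal pair over $\E^{\M}_{\mathfrak w_\M,\mathbb{Z}_\ell',\mathfrak t}$ (reduced, torsion-free, Noetherian), check the identity at each minimal prime via \eqref{jbasechange}, \eqref{jdomination}, Propositions \ref{Xcoeffbasechange} and \ref{propositiondowntointX} after passing to the generic cosocle, and invoke the complex identity (the paper cites Corollary 5.3.1 of \cite{shahidi2025eisenstein}). You then conclude by reducedness, descend via $f_\rho$ and dominance, and get the unit property from Theorem \ref{mainthm}\eqref{mainthmpart1}.

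One caveat concerns your aside that arguing ``directly in families'' would avoid the normalization bookkeeping and work over any $\R$. That route must make exactly the same identifications: the $\Xi^-$-representative of $w_0$, the two Whittaker functionals, the measures on $\N$ and $\overline{\N}$, and the match with the Moss--Trias composite. So it relocates the bookkeeping rather than avoiding it, and $j$ is only available for Noetherian $\R$ anyway. Your proof does not depend on that aside.
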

	\begin{proof} 
		
		The proof is similar to some of the arguments above in the sense that we first deduce the result for the universal co-Whittaker module with respect to $\mathfrak w_\M$ of type~$\mathfrak{t}$ and then show that this implies the result. For ease of notation, we set~$\E=\E^{\M}_{\mathfrak w_\M,\mathbb{Z}_\ell',\mathfrak{t}}$ and~$\Gamma=\Gamma^{\M}_{\mathfrak w_\M,\mathbb{Z}_\ell',\mathfrak{t}}$. We consider
		$$j^\G_\E(X,\Gamma)-\mathcal{D}_\E(X^{-1},{}^{\widetilde{w}_0}\Gamma,\P,\Xi^{-},\psi^{-1})\cdot\mathcal{D}_\E(X,\Gamma,\P,\Xi,\psi)$$
		
		As~$\E$ is torsion-free, all minimal primes of~$\E$ are characteristic zero, and as~$\E$ is a finitely generated~$\mathbb{Z}_\ell'$-algebra by Lemma \ref{Ebasicfacts}, for a minimal prime~$\mathfrak{p}$ of~$\E$, its residue field~$\kappa(\mathfrak{p})$ embeds into~$\mathbb{C}$.  For any minimal prime~$\mathfrak{p}$ of~$\E$, choose an embedding~$\kappa(\mathfrak{p})\hookrightarrow \mathbb{C}$ and we overall obtain an $\mathbb{Z}_\ell'$-algebra homomorphism $\iota_{\mathfrak p}\colon\E\to\mathbb C$.
		We set $\varsigma=\Gamma\otimes_{\iota_{\mathfrak p}}\mathbb C$, which is a co-Whittaker $\mathbb C$-representation.
		By Equation (\ref{jbasechange}) and Proposition \ref{Xcoeffbasechange} we see that
		\begin{multline*}
			\iota_{\mathfrak p,X}(j^\G_\E(X,\Gamma)-\mathcal{D}_\E(X^{-1},{}^{\widetilde{w}_0}\Gamma,\P,\Xi^{-},\psi^{-1})\cdot\mathcal{D}_\E(X,\Gamma,\P,\Xi,\psi))=\\
			j^{\G}_{\mathbb C}(X,\varsigma)-\mathcal{D}_{\mathbb C}(X^{-1},{}^{\widetilde{w}_0}\varsigma,\P,\Xi^{-},\psi^{-1})\cdot\mathcal{D}_{\mathbb C}(X,\varsigma,\P,\Xi,\psi).
		\end{multline*}

		By Lemma \ref{cowhittakerfields} we obtain that the cosocle $\varsigma_{gen}$ of $\varsigma$ is irreducible and $\mathfrak w_\M$-generic. By Equation (\ref{jdomination}) and Proposition \ref{propositiondowntointX} we obtain that 
	\begin{multline*}
		j^{\G}_{\mathbb C}(X,\varsigma)-\mathcal{D}_{\mathbb C}(X^{-1},{}^{\widetilde{w}_0}\varsigma,\P,\Xi^{-},\psi^{-1})\cdot\mathcal{D}_{\mathbb C}(X,\varsigma,\P,\Xi,\psi)=\\j^{\G}_{\mathbb C}(X,\varsigma_{gen})-\mathcal{D}_{\mathbb C}(X^{-1},{}^{\widetilde{w}_0}\varsigma_{gen},\P,\Xi^{-},\psi^{-1})\cdot\mathcal{D}_{\mathbb C}(X,\varsigma_{gen},\P,\Xi,\psi).
	\end{multline*}
		However, Corollary 5.3.1 in \cite{shahidi2025eisenstein} implies that 
		$$j^{\G}_{\mathbb C}(X,\varsigma_{gen})-\mathcal{D}_{\mathbb C}(X^{-1},{}^{\widetilde{w}_0}\varsigma_{gen},\P,\Xi^{-},\psi^{-1})\cdot\mathcal{D}_{\mathbb C}(X,\varsigma_{gen},\P,\Xi,\psi)=0.$$
		As $\E$ is reduced, the intersection of its minimal prime ideals is trivial which implies that 
		$$j^\G_\E(X,\Gamma)-\mathcal{D}_\E(X^{-1},{}^{\widetilde{w}_0}\Gamma,\P,\Xi^{-},\psi^{-1})\cdot\mathcal{D}_\E(X,\Gamma,\P,\Xi,\psi)=0.$$
		
		Now there is a $\mathbb{Z}_\ell'$-algebra homomorphism $f_\rho\colon\E\to\R$ such that $\Gamma\otimes_{f_\rho}\R$ dominates $\rho$, i.e.\ there is a surjective $\R[\M]$ morphism $\Gamma\otimes_{f_\rho}\R\twoheadrightarrow\rho$. We can again use Equations (\ref{jbasechange}) and (\ref{jdomination}) and Propositions \ref{Xcoeffbasechange} and \ref{propositiondowntointX} to compute
		\begin{align*}
			j_{\R}^\G(X,\rho)&=j_{\R}^\G(X,\Gamma\otimes_{f_\rho}\R)\\
			&=(f_\rho)_X(j_{\E}^\G(X,\Gamma))\\
			&=(f_\rho)_X(\mathcal{D}_\E(X^{-1},{}^{\widetilde{w}_0}\Gamma,\P,\Xi^{-},\psi^{-1}))\cdot (f_\rho)_X(\mathcal{D}_\E(X,\Gamma,\P,\Xi,\psi))\\
			&=\mathcal{D}_\R(X^{-1},{}^{\widetilde{w}_0}(\Gamma\otimes_{f_\rho}\R),\P,\Xi^{-},\psi^{-1})\cdot\mathcal{D}_\R(X,\Gamma\otimes_{f_\rho}\R,\P,\Xi,\psi)\\
			&=\mathcal{D}_\R(X^{-1},{}^{\widetilde{w}_0}\rho,\P,\Xi^{-},\psi^{-1})\cdot\mathcal{D}_\R(X,\rho,\P,\Xi,\psi),
		\end{align*}
		which proves the first part of this Corollary. Theorem \ref{mainthm} (\ref{mainthmpart1}) implies that $\mathcal{D}_\R(X^{-1},{}^{\widetilde{w}_0}\rho,\P,\Xi^{-},\psi^{-1})\cdot\mathcal{D}_\R(X,\rho,\P,\Xi,\psi)$ is an element of $(\mathcal{S}_{X,\R}^{-1}\R[X^{\pm 1}])^{\times}$ and hence the same is true for $j^{\G}(X,\rho)$.
		\end{proof}
	
	\subsection{Compatibility with the Rankin--Selberg method and Moss's work}
 For any positive integer $n$ let $\U_n$ be the standard unipotent subgroup of $\GL_n(\F)$ and let $\Psi_n$ be the standard nondegenerate character of $\U_n$ induced by $\psi$. We fix positive integers~$n>m$ and let~$\G=\GL_{m+n}(\F)$, and~$\P_{m,n}$ be the standard upper triangular parabolic in~$\G$ with Levi factor~$\M=\GL_{n}(\F)\times\GL_m(\F)$. In this case, as in Example \ref{RSexample}, the representation~$r\simeq \phi_{\GL_n}\otimes \phi_{\GL_m}^{\vee}$ is irreducible.	
	
	We fix a prime number $\ell\not=p$, and let~$\W(\Fl)'=\W(\Fl)[\sqrt{q}]$. Let $\mathfrak t$ be a finite sum of primitive idempotents in $\mathfrak{Z}_{\W(\Fl)'}(\GL_n(\F))$ and $\mathfrak t'$ a finite sum of primitive idempotents in $\mathfrak{Z}_{\W(\Fl)'}(\GL_m(\F))$.  	Let~$\R_1$ and $\R_2$ be Noetherian~$\W(\Fl)'$-algebras.  Let~$\rho_1$ be a co-Whittaker~$\R_1$-representation of~$\GL_{n}(\F)$ with respect to~$\mathfrak w_n=(\U_n,\Psi_n)$ of type $\mathfrak t$ and~$\rho_2$ be a co-Whittaker~$\R_2$-representation of~$\GL_{m}(\F)$ with respect to~$\mathfrak w_m=(\U_m,\Psi_m)$ of type $\mathfrak t'$. This defines a co-Whittaker~$\R=\R_1\otimes_{\W(\Fl)'} \R_2$-representation ~$\rho=\rho_1\otimes\rho_2$ of~$\M$ with respect to~$\mathfrak w_{m,n}=(\U_n\times\U_m,\Psi_n\otimes\Psi_m)$.   In this setting, we have the gamma factor of Theorem \ref{mainthm}
	\[\gamma(X,\rho,r,\psi)\in (\mathcal{S}_{X,\R}^{-1}\R[X^{\pm 1}])^\times.\]
	
	Let~${\rho}_2^{\iota}$ be the co-Whittaker dual of~$\rho_2$, as defined in~\cite[§2.1]{HM}, i.e.,~${\rho}_2^{\iota}$ has the same underlying module as~$\rho_2$, but~${\rho}_2^{\iota}(g)=\rho_2({^t}g^{-1})$.  
	In this setting, with coefficients in~$\R$, Moss in \cite{Moss} also defines a gamma factor
	\[\gamma_{RS}(X,\rho_1,\rho_2^{\iota},\psi)\in  (\mathcal{S}_{X,\R}^{-1}\R[X^{\pm 1}])^\times\]
	interpolating the Rankin--Selberg method. 
	
	\begin{corollary}\label{Mossequality}
		Let~$\R_1,\R_2$ be Noetherian~$\W(\Fl)'$-algebras. Let~$\rho_1$ be a co-Whittaker~$\R_1$-representation of~$\GL_{n}(\F)$ with respect to~$(\U_n,\Psi_n)$ of type $\mathfrak t$ and~$\rho_2$ be a co-Whittaker~$\R_2$-representation of~$\GL_{m}(\F)$ with respect to~$(\U_m,\Psi_m)$ of type $\mathfrak t'$. Then with the notation as above the equation
		\[\gamma(X,\rho,r,\psi)=\gamma_{RS}(q^{(n-m)/2}X,\rho_1,\rho_2^{\iota},\psi)\]
		holds.
	\end{corollary}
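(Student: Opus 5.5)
Both sides are compatible with base change and dominance, so the plan is to reduce to a universal coefficient ring that is reduced and check the identity over $\mathbb{C}$ at every minimal prime. This is the same pattern used in the proofs of Proposition \ref{propositiondowntointX} and Theorem \ref{mainthm}.

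\emph{Reduction to a universal pair.} Set $\E_1=\E^{\GL_n(\F)}_{\mathfrak w_n,\W(\Fl)',\mathfrak t}$ and $\E_2=\E^{\GL_m(\F)}_{\mathfrak w_m,\W(\Fl)',\mathfrak t'}$, with universal co-Whittaker representations $\Gamma_1,\Gamma_2$. Put $\E=\E_1\otimes_{\W(\Fl)'}\E_2$ and $\Gamma=\Gamma_1\otimes\Gamma_2$, which is co-Whittaker for $\M$ with respect to $\mathfrak w_{m,n}$. The morphisms $f_{\rho_1,0}$ and $f_{\rho_2,0}$ give $f\colon\E\to\R$ such that $\Gamma\otimes_f\R=(\Gamma_1\otimes_{f_{\rho_1}}\R_1)\otimes(\Gamma_2\otimes_{f_{\rho_2}}\R_2)$ dominates $\rho_1\otimes\rho_2$.

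For the left side, Theorem \ref{mainthm}(\ref{maintheocomp}) and (\ref{dominancemainthm}) give $\gamma(X,\rho,r,\psi)=f_X(\gamma(X,\Gamma,r,\psi))$. For the right side I will use the corresponding properties of Moss' gamma factor from \cite{Moss}:
\begin{itemize}
\item compatibility with base change;
\item dependence only on the maps $f_{\rho_i}$, since it is defined via the universal pair and depends only on the supercuspidal support / Bernstein centre action;
\item compatibility with the involution $\iota$, which commutes with base change.
\end{itemize}
These give $\gamma_{RS}(q^{(n-m)/2}X,\rho_1,\rho_2^\iota,\psi)=f_X(\gamma_{RS}(q^{(n-m)/2}X,\Gamma_1,\Gamma_2^\iota,\psi))$. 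It therefore suffices to prove the identity over $\E$.

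\emph{Reducedness of $\E$.} Each $\E_i$ is flat (torsion-free) over $\W(\Fl)'$ and reduced by Lemma \ref{Ebasicfacts}, and this descends from the reduced ring $\E_{i}\otimes\overline{\mathbb{Q}}_\ell$. Then $\E\subset(\E_1\otimes\overline{\mathbb{Q}}_\ell)\otimes_{\overline{\mathbb{Q}}_\ell}(\E_2\otimes\overline{\mathbb{Q}}_\ell)$, and a tensor product of reduced algebras over an algebraically closed field is reduced. So $\E$ is reduced, torsion-free and finitely generated. Every minimal prime $\mathfrak p$ then has residue field embeddable in $\mathbb{C}$ via a fixed isomorphism $\overline{\mathbb{Q}}_\ell\simeq\mathbb C$ compatible with $\sqrt q$. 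Consider the difference of the two sides as an element of $\mathcal S_{X,\E}^{-1}\E[X^{\pm1}]$, and expand it as a Laurent series via Proposition \ref{powerseriesexpansion}. It is enough to show that it vanishes under every $\iota_{\mathfrak p}\colon\E\to\mathbb C$.

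\emph{The complex case.} Under $\iota_{\mathfrak p}$, $\Gamma\otimes\mathbb C=\varsigma_1\otimes\varsigma_2$ with $\varsigma_i$ co-Whittaker over $\mathbb C$. By Lemma \ref{cowhittakerfields}, dominance, and the analogous property of Moss' factor, we may replace $\varsigma_i$ by their generic cosocles $\pi_1,\pi_2$. Note that $(\varsigma_2^\iota)_{gen}\simeq\pi_2^\vee$, because $g\mapsto{}^tg^{-1}$ realises the contragredient on irreducibles (Gelfand--Kazhdan). Theorem \ref{mainthm}(\ref{maintheoC}) identifies the left side with Shahidi's $\gamma_{Sh}(X,\pi_1\otimes\pi_2,r,\psi)$. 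By \cite{Moss}, the right side at a $\mathbb C$-point is the Jacquet--Piatetski-Shapiro--Shalika factor $\gamma(s',\pi_1\times\pi_2^\vee,\psi)$, where $X$ corresponds to $q^{-s}$. The required equality of the classical factors is Shahidi's theorem that Langlands--Shahidi and Rankin--Selberg gamma factors for $\GL_n\times\GL_m$ agree (both equal the Artin factor of $\phi_{\pi_1}\otimes\phi_{\pi_2}^\vee$ under local Langlands, by Henniart).

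\emph{Main obstacle.} The hard part is bookkeeping the normalisations. These are:
\begin{itemize}
\item the shift $q^{(n-m)/2}$, which comes from comparing $\Delta_{X,\R}$ (defined via $\delta_{\P}$ and $\langle\rho_\theta,a^\vee\rangle$) with the determinant twist $|\det|^{s}\otimes|\det|^{-s}$ used in the Rankin--Selberg integrals;
\item the roles of $\psi$ versus $\psi^{-1}$;
\item the choice of Whittaker datum $\mathfrak w_{m,n}$;
\item the $\iota$-dual versus the contragredient.
\end{itemize}
I would first pin these down on the unramified principal series over $\mathbb C$, where both factors are explicit products of Tate factors.
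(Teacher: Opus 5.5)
Your proposal is correct and follows essentially the same route as the paper. You pass to the universal pair $\Gamma_n\otimes\Gamma_m$ over the reduced, torsion-free ring $\E_n\otimes_{\W(\Fl)'}\E_m$, and check the identity after each $\iota_{\mathfrak p}\colon\E_n\otimes\E_m\to\mathbb{C}$ by passing to generic cosocles and invoking the classical equality of Langlands--Shahidi and Rankin--Selberg factors. The paper cites Henniart--Lomeli for that equality in exactly the normalized form needed, so your separate check of normalizations on unramified principal series is not required. You then specialize along $f_{\rho_1,0}\otimes f_{\rho_2,0}$ using Theorem \ref{mainthm} and Moss's Corollary 5.5, as the paper does.

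The only real difference is how reducedness of $\E_n\otimes\E_m$ is obtained. You embed it directly into a tensor product of reduced algebras over an algebraically closed field, whereas the paper cites Helm's Theorem 12.8 and Moss's Lemma 5.1. Your argument works provided you take the algebraic closure of $\mathrm{Frac}(\W(\Fl)')$ rather than $\overline{\mathbb{Q}}_\ell$, since $\W(\Fl)$ does not embed naturally in $\overline{\mathbb{Q}}_\ell$. For the same reason, choose the embeddings $\kappa(\mathfrak p)\hookrightarrow\mathbb{C}$ abstractly rather than via a fixed isomorphism $\overline{\mathbb{Q}}_\ell\simeq\mathbb{C}$.
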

	\begin{proof}
		
		We write~$\Gamma_n=\Gamma^{\GL_n(\F)}_{\,\mathfrak{t}}$ for the universal co-Whittaker representation of type $\mathfrak t$ of $\GL_n(\F)$ and $\E_n=\E^{\GL_n(\F)}_{\mathfrak w_n,\mathfrak{t}}$ for its endomorphism ring. Similarly we set~$\E_m=\E^{\GL_m(\F)}_{\mathfrak w_m,\mathfrak{t'}}$ and~$\Gamma_m=\Gamma^{\GL_m(\F)}_{\mathfrak w_m,\mathfrak{t'}}$. Note that by Theorem 12.8 of \cite{HelmBC} both $\E_n$ and $\E_m$ are $\ell$-torsion free and reduced. From this we obtain by Lemma 5.1 in \cite{Moss} that $\E_n\otimes_{\W(\Fl)'}\E_m$ is also $\ell$-torsion free and reduced. In particular the residue fields at minimal primes have characteristic zero and the intersection of all minimal prime ideals is trivial. We can then consider by Theorem \ref{mainthm}
		$$\gamma(X,\Gamma_n\otimes\Gamma_m,r,\psi)\in\mathcal S_{X,\E_n\otimes\E_m}^{-1}(\E_n\otimes\E_m[X^{\pm 1}])$$
		and Theorem 5.3 in \cite{Moss},
		$$\gamma_{RS}(q^{(n-m)/2}X,\Gamma_n,\Gamma_m^{\iota},\psi)\in\mathcal S_{X,\E_n\otimes\E_m}^{-1}(\E_n\otimes\E_m[X^{\pm 1}]).$$
		
		Since by Theorem 12.8 of \cite{HelmBC}, $\E_n\otimes_{\W(\Fl)'}\E_m$ is a finitely generated $\W(\Fl)'$-algebra, we can choose for a minimal prime ideal $\mathfrak p$ of $\E_n\otimes_{\W(\Fl)'}\E_m$ an embedding $\kappa(\mathfrak p)\hookrightarrow\mathbb C$. We obtain a $\W(\Fl)'$-morphism $$\iota_{\mathfrak p}\colon\E_n\otimes_{\W(\Fl)'}\E_m\to\mathbb C,$$
		which gives rise to two morphisms $\iota_{\mathfrak p}^1\colon \E_n\to\mathbb C$ and $\iota_{\mathfrak p}^2\colon\E_m\to\mathbb C$.
		We then have by Theorem \ref{mainthm} (2) and Corollary 5.5 of \cite{Moss} that
		\begin{align*}
		\iota_{\mathfrak p,X}(\gamma(X,\Gamma_n\otimes\Gamma_m,r,\psi)-&\gamma_{RS}(q^{(n-m)/2}X,\Gamma_n,\Gamma_m^{\iota},\psi))\\&=\gamma(X,(\Gamma_n\otimes\Gamma_m)\otimes_{\iota_{\mathfrak p}}\mathbb C,r,\psi)-\gamma_{RS}(q^{(n-m)/2}X,\Gamma_n\otimes_{\iota_{\mathfrak p}^1}\mathbb C,\Gamma_m^{\iota}\otimes_{\iota_{\mathfrak p}^2}\mathbb C,\psi).\end{align*}
		Note that $(\Gamma_n\otimes\Gamma_m)\otimes_{\iota_{\mathfrak p}}\mathbb C\cong (\Gamma_n\otimes_{\iota_{\mathfrak p}^1}\mathbb C)\otimes_{\mathbb C}(\Gamma_m\otimes_{\iota_{\mathfrak p}^2}\mathbb C)$ and $\Gamma_m^{\iota}\otimes_{\iota_{\mathfrak p}^2}\mathbb C\cong (\Gamma_m\otimes_{\iota_{\mathfrak p}^2}\mathbb C)^{\iota}$. Now $\varsigma^1=\Gamma_n\otimes_{\iota_{\mathfrak p}^1}\mathbb C$ (respectively $\varsigma^2=\Gamma_m\otimes_{\iota_{\mathfrak p}^2}\mathbb C$) has by Lemma \ref{cowhittakerfields} an $(\U_n,\Psi_n)$-generic (respectively $(\U_m,\Psi_m)$-generic) irreducible cosocle $\varsigma^1_{gen}$ (respectively $\varsigma^2_{gen}$). Again by Theorem \ref{mainthm} (2) and Corollary 5.5 of \cite{Moss} (where we use that $f_{\varsigma^i,0}=f_{\varsigma^i_{gen},0}$ for $i=1,2$) we obtain that
		$$\gamma(X,\varsigma^1\otimes\varsigma^2,r,\psi)-\gamma_{RS}(q^{(n-m)/2}X,\varsigma^1,(\varsigma^2)^{\iota},\psi)=\gamma(X,\varsigma^1_{gen}\otimes\varsigma^2_{gen},r,\psi)-\gamma_{RS}(q^{(n-m)/2}X,\varsigma^1_{gen},(\varsigma^2)^{\iota}_{gen},\psi).$$
		However, by \cite{HenniartLomeliUniquenessofRSproducts} the above expression is zero and since $\E_n\otimes_{\W(\Fl)'}\E_m$ is reduced, the intersection of all of its minimal prime ideals is zero and hence
		$$\gamma(X,\Gamma_n\otimes\Gamma_m,r,\psi)=\gamma_{RS}(q^{(n-m)/2}X,\Gamma_n,\Gamma_m^{\iota},\psi).$$

		Now there is a $\W(\Fl)'$-algebra homomorphism $f_{\rho_1,0}\colon\E_n\to\R_1$ such that $\Gamma_n\otimes_{f_{\rho_1,0}}\R_1$ dominates $\rho_1$ and analogously a $\W(\Fl)'$-algebra homomorphism $f_{\rho_2,0}\colon\E_m\to\R_2$ such that $\Gamma_m\otimes_{f_{\rho_2,0}}\R_2$ dominates $\rho_2$. We can again use Theorem \ref{mainthm} \ref{maintheocomp} and Corollary 5.5 of \cite{Moss} to compute
		\begin{align*}
			\gamma(X,\rho_1\otimes\rho_2,r,\psi)&=\gamma(X,(\Gamma_n\otimes_{f_{\rho_1,0}}\R_1)\otimes(\Gamma_m\otimes_{f_{\rho_2,0}}\R_2),r,\psi)\\
			&=(f_{\rho_1,0}\otimes f_{\rho_2,0})_X(\gamma(X,\Gamma_n\otimes\Gamma_m,r,\psi))\\
			&=(f_{\rho_1,0}\otimes f_{\rho_2,0})_X(\gamma_{RS}(q^{(n-m)/2}X,\Gamma_n,\Gamma_m^{\iota},\psi))\\
			&=\gamma_{RS}(q^{(n-m)/2}X,\rho_1,\rho_2^{\iota},\psi),
		\end{align*}
		which proves the result.
	\end{proof}

	\section{Fundamental representations for the classical groups}\label{notationclassical}
	In this section, for a classical~$p$-adic group~$\G$, we fix notation for the~$L$-morphisms~$\phi_{\LG}:\LG(\mathbb{C})\rightarrow \presuper{L}\GL_{N}(\mathbb{C})$ associated with functorial lifting and local transfer.  We assume that~$\F/\mathbb{Q}_p$ is a~$p$-adic field.
	
	\subsection{Split classical groups} Let~$\G=\mathbb{G}(\F)$ be a general linear, symplectic, or split special orthogonal group. We let
	\[\J_m=\left(\begin{smallmatrix} &&1\\&\Ddots&\\1&&\end{smallmatrix}\right)\in \GL_m(\mathbb{Z}).\]
	We choose models for the split classical groups as follows:
	\begin{align*}
		\SO_{k}(\F)&=\{g\in\SL_{k}(\F)\mid g^T J_k g=J_k\};\\
		\mathrm{Sp}_{2n}(\F)&=\left\{g\in\GL_{2n}(\F)\mid g^T \left(\begin{smallmatrix} 0&J_n\\-J_n&0\end{smallmatrix}\right) g=\left(\begin{smallmatrix} 0&J_n\\-J_n&0\end{smallmatrix}\right) \right\}.\end{align*}
	where we omit the case~$\SO_{2}(\F)\simeq \F^\times$ here.  
	%
	With respect to these models, we choose our Borel subgroup~$\B$ to be upper triangular matrices in~$\G$, and our torus~$\T$ to be diagonal matrices in~$\G$, and fix~$\Xi$ any pinning compatible with~$(\B,\T)$.
	
	We denote by
	\[\phi_{\widehat{\G}}\colon\widehat{\G}(\mathbb{C})\hookrightarrow \GL_{N}(\mathbb{C})\]
 the \emph{fundamental representation} (or \emph{standard representation}) of~$\widehat{\G}(\mathbb{C})$.  We extend this trivially on~$\W_F$,  to define the embedding 
	\[\phi_{\LG}\colon\LG(\mathbb{C})\hookrightarrow \GL_{N}(\mathbb{C}) \times \W_{\F}=\presuper{L}\GL_N(\mathbb{C}).\]
	For the split classical groups we have:
	\begin{table}[ht]
		\centering
		\begin{tabular}{c c c c}
			\hline
			$\G$ & $\widehat{\G}(\mathbb{C})$ & $N = N(\G)$ & $\phi_{\LG}$ \\
			\hline
			$\mathrm{GL}_{n}(\F)$ & $\mathrm{GL}_{n}(\mathbb{C})$ & $n$ & $\presuper{L}\GL_n(\mathbb{C})\xrightarrow{\sim} \presuper{L}\GL_{n}(\mathbb{C})$ \\
			$\mathrm{Sp}_{2n}(\F)$ & $\mathrm{SO}_{2n+1}(\mathbb{C})$ & $2n+1$ & $\LG(\mathbb{C})\hookrightarrow \presuper{L}\GL_{2n+1}(\mathbb{C})$ \\
			$\mathrm{SO}_{2n+1}(\F)$ & $\mathrm{Sp}_{2n}(\mathbb{C})$ & $2n$ & $\LG(\mathbb{C})\hookrightarrow \presuper{L}\GL_{2n}(\mathbb{C})$ \\
			$\mathrm{SO}_{2n}(\F)$& $\mathrm{SO}_{2n}(\mathbb{C})$ & $2n$ & $\LG(\mathbb{C})\hookrightarrow \presuper{L}\GL_{2n}(\mathbb{C})$ \\
			\hline
		\end{tabular}
	\end{table}
	\begin{example}\label{RSexample}
		\begin{enumerate}
			\item Let~$\P=\M\N$ be a standard maximal parabolic of~$\GL_{n+m}(\F)$ with Levi factor~$\M\simeq \GL_{n}(\F)\times \GL_m(\F)$, then~$r$ is irreducible and~$r\mid_{\widehat{\M}(\mathbb{C})}\simeq \phi_{\GL_n}\otimes (\phi_{\GL_m})^{\vee}$.
			\item Let~$\P=\M\N$ be a standard maximal parabolic of~$\Sp_{2(n+m)}(\F)$ with Levi factor~$\M\simeq \mathrm{Sp}_{2n}(\F)\times\GL_{m}(\F)$.  Then the representation $r\colon{^L}\M(\mathbb{C})\rightarrow \GL(^{L}\mathfrak{n})$ decomposes as
			\[r\simeq r_1\oplus r_2,\]
			with~$r_1\mid_{\widehat{\M}(\mathbb{C})}=\phi_{\SO_{2n+1}}\otimes \phi_{\GL_m}$, and~$r_2\mid_{\widehat{\M}(\mathbb{C})}=\Lambda^2(\phi_{\GL_m})$, and they are extended trivially to~$^{L}\M(\mathbb{C})$.
			\item Let~$\P=\M\N$ be a standard maximal parabolic of~$\SO_{2(n+m)+1}(\F)$ with Levi factor~$\M\simeq \mathrm{SO}_{2n+1}(\F)\times\GL_{m}(\F)$.  Suppose~$n>0$. Then the representation $r\colon{^L}\M(\mathbb{C})\rightarrow \GL(^{L}\mathfrak{n})$ decomposes as
			\[r\simeq r_1\oplus r_2,\]
			with~$r_1\mid_{\widehat{\M}(\mathbb{C})}=\phi_{\Sp_{2n}}\otimes \phi_{\GL_m}$, and~$r_2\mid_{\widehat{\M}(\mathbb{C})}=\mathrm{Sym}^2(\phi_{\GL_m})$, and they are extended trivially to~$^{L}\M(\mathbb{C})$.  If~$n=0$, then~$r$ is irreducible with~$r\mid_{\widehat{\M}(\mathbb{C})}=\mathrm{Sym}^2(\phi_{\GL_m})$.
		\end{enumerate}
	\end{example}
	
	\subsection{Split GSpin groups}
	Consider~$\G=\mathrm{GSpin}_{m}(\F),$ the split algebraic group with root datum as in \cite[§2.1]{ACS}.  If~$m=2n$ is even, then~$\widehat{\G}=\mathrm{GSO}_{2n}$, whereas if~$m=2n+1$ is odd~$\widehat{\G}=\mathrm{GSp}_{2n}$.  We extend the standard representation~$\phi_{\widehat{\G}}$ of their dual group to a morphism~$\phi_{\LG}\colon\LG(\mathbb{C})\hookrightarrow \GL_{N}(\mathbb{C}) \times \W_{\F}=\presuper{L}\GL_N(\mathbb{C})$.
	\begin{table}[h]
		\begin{tabular}{c c c c}
			\hline
			$\G$ & $\widehat{\G}(\mathbb{C})$ & $N = N(\G)$ & $\phi_{\LG}$ \\
			\hline
			$\mathrm{GSpin}_{2n+1}(\F)$ & $\mathrm{GSp}_{2n}(\mathbb{C})$ & $2n$ & $\LG(\mathbb{C})\hookrightarrow \presuper{L}\GL_{2n}(\mathbb{C})$ \\
			$\mathrm{GSpin}_{2n}(\F)$ & $\mathrm{GSO}_{2n}(\mathbb{C})$ & $2n$ & $\LG(\mathbb{C})\hookrightarrow \presuper{L}\GL_{2n}(\mathbb{C})$ \\
			\hline
		\end{tabular}
	\end{table}
	
	\begin{example}
		\begin{enumerate}
			\item Let~$\P$ be a maximal standard parabolic of~$\G=\mathrm{GSpin}_{2(n+m)+1}(\F)$ with standard Levi factor~$\M$.  Then~$\M\simeq \mathrm{GSpin}_{2n+1}(\F)\times \GL_{m}(\F)$.  Suppose~$n>0$.   Then,~$r\simeq r_1\oplus r_2$ with~$r_1=(\phi_{\mathrm{GSp}_{2n}}^{\vee})\otimes\phi_{\GL_m}$ and~$r_2$ the twisted (by the inverse of the similitude character) symmetric square of~$\phi_{\GL_m}$, and they are extended trivially to~$^{L}\M(\mathbb{C})$.
			\item Let~$\P$ be a maximal standard parabolic of~$\G=\mathrm{GSpin}_{2n}(\F)$ with standard Levi factor~$\M$.  Then~$\M\simeq \mathrm{GSpin}_{2s}(\F)\times \GL_{m}(\F)$ with $m+s=n$.  Suppose~$n>0$.  Then,~$r\simeq r_1\oplus r_2$ with~$r_1=\phi_{\mathrm{GSO}_{2n}}^{\vee}\otimes\phi_{\GL_m}$ and~$r_2$ the twisted exterior square of~$\phi_{\GL_m}$, , and they are extended trivially to~$^{L}\M(\mathbb{C})$.
		\end{enumerate}
	\end{example}

	\subsection{Quasi-split unitary groups}
	Let~$\E/\F$ be a quadratic extension with~$\Gal(\E/\F)=\langle\sigma \rangle$, and set
	\[\J'_n=\diag(1,-1,\ldots, (-1)^{n-1})\J_n.\]
	Let~$\H=\Res_{\E/\F}(\GL_n)(\F)=\GL_n(\E)$, and for~$h\in\H$ define
	\[\Sigma_n\cdot h=J'_n\cdot {^T}\overline{h}^{-1} \cdot (J'_n)^{-1},\]
	where~$\overline{h}=(\sigma(h_{i,j}))$, and set~$\G=\H^{\Sigma_n}$.  Then~$\G=\U_n(\E/\F)$ defines the quasi-split unitary group in~$n$-variables (i.e.,~$\G$ is a representative of the unique up to isomorphism quasi-split unitary group in $n$-variables with respect to~$\E/\F$).  We choose our Borel subgroup~$\B$ to be upper triangular matrices in~$\U_n(\E/\F)$, and our torus~$\T$ to be diagonal, and fix~$\Xi$ any pinning compatible with~$(\B,\T)$.  
	
	We have~$\widehat{\G}(\mathbb{C})=\GL_n(\mathbb{C})$,~$\LG(\mathbb{C})=\GL_n(\mathbb{C})\rtimes \W_{\F}$ where~$\W_\F$ acts through the finite quotient~$\Gal(\E/\F)$ with~$\sigma$ acting via
	\[\sigma\cdot g= J'_n\cdot {^T}g^{-1} \cdot (J'_n)^{-1}.\]
	We have~${^L}\H=(\GL_n\times\GL_n)(\mathbb{C})\rtimes \W_\F$ where the action of~$\W_\F$ factors through~$\Gal(\E/\F)$ with~$\sigma$ acting via
	\[\sigma\cdot(h_1,h_2)=(h_2,h_1).\]
	We then have the stable base change~$L$-morphism
	\begin{align*}
		\phi_{\LG}\colon\LG&\rightarrow {^L}\H\\
		g\rtimes w&\mapsto (g,\sigma\cdot g)\rtimes w.
	\end{align*}
	

	\subsection{Quasi-split even special orthogonal groups and GSpin}
	Let~$n\geqslant 2$, and~$\G=\SO_{2n,\E'/\F}(\F)$ be the quasi-split even special orthogonal group associated to the quadratic extension~$\E'/\F$.  We have~$\widehat{\G}=\SO_{2n}$. The action of~$\W_{\F}$ on~$\widehat{\G}$ factors through~$\Gal(\E'/\F)$, and the non-trivial element~$\sigma\in\Gal(\E'/\F)$ acts on~$\SO_{2n}$ by conjugation by an element of~$\mathrm{O}_{2n}$ of negative determinant.  We fix
	\[\phi_{^{L}\G}\colon^{L}\G=(\SO_{2n}(\mathbb{C})\rtimes \W_{\F})\rightarrow \GL_{2n}(\mathbb{C})\times \W_{\F},\]
	as in \cite[§7.1]{CKPSS2}.
	
	Let~$n\geqslant 2$, and~$\G=\mathrm{GSpin}_{2n,\E'/\F}$ be the quasi-split even GSpin group associated to the quadratic extension~$\E'/\F$, as in \cite[§2.1.3]{AsgariShahidiFunct}, following this source we have~$\LG(\mathbb{C})\simeq \mathrm{GSO}_{2n}(\mathbb{C})\rtimes \W_{\F}$, and an embedding~$\phi_{\LG}:\LG(\mathbb{C}) \hookrightarrow \presuper{L}\GL_{2n}(\mathbb{C})$.
	
	\begin{table}[h]
		\begin{tabular}{c c c c}
			\hline
			$\G$ & $\widehat{\G}(\mathbb{C})$ & $N = N(\G)$ & $\phi_{\LG}$ \\
			\hline
			$\mathrm{SO}_{2n,\E'/\F}(\F)$ & $\mathrm{SO}_{2n}(\mathbb{C})$ & $2n$ & $\LG(\mathbb{C})\hookrightarrow \presuper{L}\GL_{2n}(\mathbb{C})$ \\
			$\mathrm{GSpin}_{2n,\E'/\F}(\F)$ & $\mathrm{GSO}_{2n}(\mathbb{C})$ & $2n$ & $\LG(\mathbb{C})\hookrightarrow \presuper{L}\GL_{2n}(\mathbb{C})$ \\
			\hline
		\end{tabular}
	\end{table}
		
		\section{Local transfer in families} 
	For this section, we suppose that~$\F$ is a $p$-adic field,~i.e., a finite extension of~$\mathbb{Q}_p$, and~$\G$ is a classical group.  A semisimple parameter for~$\G$ is an~$L$-homomorphism~$\rho\colon\W_\F\rightarrow \LG(\mathbb{C})$, such that if we choose a finite form of the~$L$-group~$\widehat{\G}(\mathbb{C})\rtimes \Gal(\K/\F)$ then the morphism~$\overline{\rho}:\W_{\F}\rightarrow \widehat{\G}(\mathbb{C})\rtimes \Gal(\K/\F)$ has open kernel and image consisting of semisimple elements in~$\widehat{\G}(\mathbb{C})\rtimes \Gal(\K/\F)$ (the conditions, independent of the choice of~$\Gal(\K/\F)$).  Write~$\Phi_{ss}(\G)$ for the set of~$\widehat{\G}(\mathbb{C})$-conjugacy classes of semisimple parameters for~$\G$.  
	
	We fix a Borel-Torus pair~$(\B,\T)$ in~$\G$ which for split classical groups and quasi-split unitary groups we assume (under the bases chosen in Section \ref{notationclassical}) consists of the upper triangular and diagonal matrices in~$\G$.  We choose a pinning~$\Xi$ compatible with~$(\B,\T)$.  We let~$\psi:\F\rightarrow \R_0^\times$ denote our fixed character, and~$\mathfrak w=(\U,\Psi)$ be the Whittaker datum for~$\G$ defined by~$(\Xi,\psi)$.  For a standard Levi subgroup~$\M$ of a standard parabolic subgroup~$\P=\M\N$, we write~$\mathfrak w_{\M}$ for the Whittaker datum for~$\M$ defined by~$(\U\cap\M, \Psi\mid_{\U\cap\M})$.

	When~$\G$ is split (including~$\GL_n(\F)$), for the standard embedding~$\phi_{\LG}\colon\LG\rightarrow \GL_N(\mathbb{C})\times \W_{\F}$, we often denote this by~$\phi_{\widehat{\G}}$, as it is given by extending the standard embedding on dual groups trivially to~$\W_{\F}$.
	
	 We write $\Pi_{\mathfrak{w}}(\G)$ for the set of isomorphism classes of $\mathfrak w$-generic irreducible $\mathbb C$-representations of $\G$, and write~$ \Pi_{gen}(\GL_N(\E))$ for the set of isomorphism classes of generic irreducible~$\mathbb{C}$-representations of~$\GL_N(\E)$.
	 
	 We introduce some notation to allow us to state our main results for the unitary and non-unitary group cases uniformly.  If~$\G$ is not equal to a unitary group~$\U_n(\E/\F)$ then we set~$\E=\F$:
	 \begin{equation*}
	 \E=\begin{cases} \F&\text{ if $\G$ is not a unitary group};\\
	 \E&\text{ if $\G=\U_n(\E/\F)$.}
	 \end{cases}
	 \end{equation*}
	 We set~$\psi_{\E}:=\psi\circ\Tr_{\E/\F}$.
	 
	 For symplectic, and special orthogonal groups~$\G$, and~$\GL_m(\F)$, we obtain the standard gamma factors directly from the Langlands--Shahidi method: 
	 \[\gamma(X,\pi\otimes\rho, \phi_{\LG}\otimes\phi_{\GL_m},\psi)=\gamma(X,\pi\otimes\rho, r_1,\psi).\]
	 
	 In this section, for unitary group~$\G=\U_n(\E/\F)$, we define our gamma factors by:
	 \[\gamma(X,\pi\otimes \rho,  \phi_{\LG}\otimes\phi_{\GL_m},\psi)=\lambda(\E/\F,\psi)^{-mn}\gamma(X,\pi\otimes \rho^{\iota},  \overline{r_1},\psi),\]
	 where the gamma factor on the right hand side is the Langlands--Shahidi gamma factor defined in previous sections.
	 
	 For~$\GSpin$ groups, excluding~$\GSpin_2(\F)$, we set
	 	 \[\gamma(X,\pi\otimes \rho,  \phi_{\LG}\otimes\phi_{\GL_m},\psi):=\gamma(X,\pi\otimes \rho^{\iota},  \overline{r_1},\psi),\]
		 where the right hand side is the Langlands--Shahidi gamma factor defined in the previous sections.
		 
		 For~$\GSpin_2(\F)\simeq \F^\times\times\F^\times$, fixing our isomorphism as in \cite{AsgariShahidiFunct} we define
		 \[\gamma(X,(\chi_1\otimes\chi_2)\otimes \rho,  \phi_{\LG}\otimes\phi_{\GL_m},\psi):=\gamma(X,\chi_1\otimes \rho,\phi_{\GL_1}\otimes\phi_{\GL_m},\psi)\gamma(X,\chi_1^{-1}\chi_2\otimes\rho,\phi_{\GL_1}\otimes\phi_{\GL_m},\psi).\]

	 
	\subsection{Arthur and Mok's local transfer in the special case of generic representations}\label{ArtMoktransfer}
	%
	
	The local Langlands correspondences of Harris--Taylor, Henniart, Arthur \cite{Arthur2}, and Mok \cite{Mok}, each induce a bijection between isomorphism classes of generic representations (for a fixed Whittaker datum) of the $p$-adic group~$\G$ and conjugacy classes of semisimple parameters for~$\G$:
	\begin{thm}\label{LLforgenerics}
		Let~$\G$ be one of the quasi-split groups
		\[\GL_n(\F),\Sp_{2n}(\F),\SO_{2n+1}(\F),\U_n(\E/\F).\] 
		Then there is a unique bijection
		\begin{align*}
			\Pi_{\mathfrak w}(\G)&\rightarrow \Phi_{ss}(\G)\\
			\pi&\mapsto \phi_{\pi}\\
			\pi_{\phi}&\mapsfrom \phi
		\end{align*}
		satisfying, for all~$m$, and all generic representations $\rho$ of~$\GL_m(\E)$, 
		\[\gamma(X,\pi\otimes \rho,  \phi_{\LG}\otimes\phi_{\GL_m},\psi)=\gamma(X^{f(\E/\F)},(\phi_{\LG}\circ\phi_{\pi})\otimes (\phi_{\GL_m}\circ\phi_{\rho}),\psi_{\E}),\]
		where the gamma factor on the right is defined by the constructions of Artin and Deligne (cf., \cite[§7.5]{DHKMconj}).  
	\end{thm}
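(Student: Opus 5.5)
The plan is to assemble the statement from the known local Langlands correspondences. The correspondence with the prescribed gamma factors gives existence. A converse-theorem argument on the Galois side, together with injectivity of $\phi_{\LG}$ on conjugacy classes, gives uniqueness. I would treat $\GL_n(\F)$ first and then the classical groups $\Sp_{2n}(\F),\SO_{2n+1}(\F),\U_n(\E/\F)$ uniformly.

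\textbf{The case $\GL_n(\F)$.} Start from the Harris--Taylor/Henniart correspondence $\pi\mapsto(\varphi_\pi,N_\pi)$ onto Frobenius-semisimple Weil--Deligne representations, and set $\phi_\pi$ to be the restriction to $\W_\F$ of $\varphi_\pi$. Through the Zelevinsky/Langlands classification, a generic irreducible $\pi$ is an irreducible standard module. Its monodromy is the unique maximal (open-orbit) one in the fibre over a fixed semisimple parameter. Hence, for each semisimple parameter there is exactly one generic $\pi$, and $\pi\mapsto\phi_\pi$ is a bijection $\Pi_{\mathfrak w}(\GL_n)\to\Phi_{ss}(\GL_n)$.

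For the gamma factors, Henniart's characterization gives equality of Rankin--Selberg (equivalently Langlands--Shahidi, via Corollary \ref{Mossequality} over $\mathbb{C}$) gamma factors with the Artin--Deligne ones of $(\varphi_\pi,N_\pi)\otimes(\varphi_\rho,N_\rho)$. I would then use the standard fact that the gamma factor (not $L$ or $\epsilon$ separately) of a Weil--Deligne representation depends only on its underlying Weil representation, since the changes in $L$ and $\epsilon$ cancel. This reduces the identity to semisimple parameters, as required by the convention of \cite[§7.5]{DHKMconj}.

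\textbf{Classical groups, existence.} Arthur \cite{Arthur2} and Mok \cite{Mok} attach to each irreducible $\pi$ an $L$-parameter, characterized through $\phi_{\LG}$ by the local transfer to $\GL_N(\E)$. I would first record that each $L$-packet attached to a generic parameter contains exactly one $\mathfrak w$-generic member. For tempered packets this is \cite[Prop.~8.3.2]{Arthur2} and its analogue in \cite{Mok}, with the normalization by the Whittaker datum fixed by $(\Xi,\psi)$. For the general case I would pass to the Langlands classification and the standard module conjecture (Heiermann--Muić, Heiermann--Opdam): a generic $\pi$ equals its standard module, so its parameter has the maximal monodromy in its fibre. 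Exactly as for $\GL_n$, this yields the bijection with $\Phi_{ss}(\G)$ by taking restrictions to $\W_\F$.

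The gamma identity then follows from three inputs. First, Arthur/Mok's parameters are compatible with the CKPSS/Kim--Krishnamurthy generic transfer, which is characterized by Langlands--Shahidi $\gamma(X,\pi\otimes\rho,r_1,\psi)$. Second, the $\GL_N(\E)$ case above. Third, stability of gamma factors under the normalizations introduced in this section; in the unitary case these are the factor $\lambda(\E/\F,\psi)^{-mn}$, the twist by $\rho^\iota$, and the substitution $X\mapsto X^{f(\E/\F)}$ coming from $\psi_\E$ and the residue degree.

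\textbf{Uniqueness.} Suppose two bijections satisfy the identity, giving parameters $\phi,\phi'$ for the same $\pi$. Then $\phi_{\LG}\circ\phi$ and $\phi_{\LG}\circ\phi'$ are semisimple $N$-dimensional representations of $\W_\E$ with equal twisted gamma factors against all $\phi_{\GL_m}\circ\phi_\rho$, for all $m$. By Henniart's converse theorem for semisimple Weil representations, which is the Galois counterpart of the local converse theorem, they are $\GL_N(\mathbb{C})$-conjugate. For the groups listed, $\GL_N(\mathbb{C})$-conjugacy of $\phi_{\LG}\circ\phi$ determines the $\widehat{\G}(\mathbb{C})$-conjugacy class of $\phi$; this is why $\SO_{2n}$ is excluded, since there the $\mathrm{O}_{2n}$ ambiguity would break injectivity. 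Hence $\phi=\phi'$.

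I expect the main obstacle to be the non-tempered generic case. There one must check that Arthur's and Mok's parameters, defined through the Langlands classification, match the Whittaker-normalized generic member. One must also check that the Shahidi normalizations ($\lambda$-factors, the choice of $\widetilde w_0$, the Whittaker datum from $(\Xi,\psi)$) agree with Arthur/Mok's normalization of packets. I would first try to reduce to the tempered case by multiplicativity of both sides of the gamma identity along standard modules.
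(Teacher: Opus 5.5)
Your uniqueness argument matches the paper. It combines Henniart's characterization on the Galois side with the injectivity of $\phi_{\LG}$ on $\widehat{\G}(\mathbb{C})$-conjugacy classes of semisimple parameters (\cite[Theorem 8.1]{GGP}), which is the content of \cite[Proposition 7.7]{DHKMconj}, the result the paper invokes. Your $\GL_n$ case is also fine. Over $\mathbb{C}$, the comparison of Langlands--Shahidi and Rankin--Selberg factors is Shahidi's theorem, not Corollary \ref{Mossequality}, which is stated over $\W(\Fl)'$-algebras. The existence part for the classical groups, however, has two genuine gaps.

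First, the gamma identity. You derive it from the assertion that the Arthur--Mok parameters are compatible with the CKPSS/Kim--Krishnamurthy generic transfer. Given the $\GL_N(\E)$ case and Henniart's theorem, that assertion is equivalent to the identity you want. Each implies the other by comparing $\gamma(X,\mathrm{t}_{\G,\mathfrak w}(\pi)\otimes\rho,\ldots)$ with the gamma factor of the generic representation of $\GL_N(\E)$ with semisimple parameter $\phi_{\LG}\circ\phi_\pi$. So it is not an independent input. In this paper the agreement of the two transfers is in fact deduced from the present theorem (see the proof of Theorem \ref{achkkspstransfer}), so using it here is circular. Your suggested reduction to the tempered case by multiplicativity does not help, because the discrete and supercuspidal cases still need the same input. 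The missing ingredient is a direct compatibility of the Arthur--Mok correspondence with Langlands--Shahidi twisted gamma factors, which the paper takes from \cite[Appendix]{AGIKMS}. Without it, or an explicit global comparison argument, this step is unsupported.

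Second, the claim that each fibre of $\mathrm{LL}_{ss}$ contains exactly one $\mathfrak w$-generic member. The standard module conjecture tells you a generic $\pi$ equals its irreducible standard module $I(\tau,\nu)$ with $\tau$ generic tempered. However, ``so its parameter has maximal monodromy'' does not follow formally as it does for $\GL_n$. To get it you need three further steps:
\begin{enumerate}
\item the Casselman--Shahidi irreducibility criterion for generic standard modules, in terms of Langlands--Shahidi $L$-functions;
\item the identification of these $L$-functions with Artin $L$-functions, which is again the missing compatibility above;
\item the translation of this into regularity of $L(s,\mathrm{Ad}\circ\phi)$ at $s=1$, i.e.\ openness of the orbit.
\end{enumerate}
You also never establish surjectivity onto $\Phi_{ss}(\G)$, that is, that over every semisimple parameter some packet actually contains a generic member. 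The paper sidesteps all of this by citing \cite[Corollary 7.16]{DHKMconj} for the semisimple correspondence and \cite[Proposition 7.15]{DHKMconj} for existence and uniqueness of the generic member in each fibre.
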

	
	\begin{proof}
		The local Langlands correspondence of Arthur and Mok, induces a semisimple correspondence\[\mathrm{LL}_{ss}\colon\Irr_{\mathbb{C}}(\G)\rightarrow \Phi_{ss}(\G)\] as explained in \cite[Corollary 7.16]{DHKMconj}.  It is compatible with twisted gamma factors by \cite[Appendix]{AGIKMS}.  Moreover, by \cite[Proposition 7.15]{DHKMconj}, there is a unique~$\mathfrak w$-generic representation of~$\G$ in each extended~$L$-packet, i.e., in each fibre of the map~$\mathrm{LL}_{ss}$, so we obtain a bijection~$\mathcal{L}_{\G}\colon\Pi_{\mathfrak{w}}(\G)\rightarrow \Phi_{ss}(\G)$ compatible with twisted gamma factors.   That the identity of gamma factors characterizes this correspondence follows from \cite[Proposition 7.7]{DHKMconj}.
		%
	\end{proof}

	Thus, for symplectic, odd special orthogonal, or quasi-split unitary, composing the semisimple parameter of a~$\mathfrak{w}$-generic representation with the fundamental representation of Section \ref{notationclassical}, the correspondences of Arthur and Mok define generic local transfer, and base change maps: 
	\begin{align*}
		\mathrm{t}_{\G,\mathfrak{w}}\colon\Pi_{\mathfrak{w}}(\G)&\rightarrow \Pi_{gen}(\GL_N(\E))\\
		\pi &\mapsto \pi_{\phi_{\LG}\circ \phi_{\pi}}
	\end{align*}
	where, as in the last section,~$\phi_{\LG}$ denotes the fundamental representation in the split case or the stable base change~$L$-morphism in the quasi-split case, and this transfer satisfies for all~$\pi\in\Pi_{\mathfrak w}$, and all irreducible generic representations~$\rho$ of~$\GL_r(\E)$
	\[\gamma(X,\pi\otimes\rho,\phi_{\LG}\otimes\phi_{\GL_r},\psi)=\gamma(X^{f(\E/\F)},\mathrm{t}_{\G,\mathfrak{w}}(\pi)\otimes \rho ,\phi_{\GL_N}\otimes \phi_{\GL_r},\psi_{\E}).\]
	
		\begin{remark}\label{remarkfulltransfer}
		This construction is compatible for cuspidal representations with the full $L$-parameter/local transfer morphism: Let~$\tau$ denote the~$\SL_2$-parameter of~$\pi$.  Then it follows from Arthur's and Mok's global construction that the~$\SL_2$-parameter of~$\mathrm{t}_{\G,\mathfrak{w}}(\pi)$ is equal (as a~$\GL_N(\mathbb{C})$-conjugacy class) to $\phi_{^{L}\G}\circ \tau$.  This follows as when~$\pi$ is cuspidal, this transfer is generic (cf., \cite[Theorem 1.1]{MS}) and as there is a unique generic representation in a semisimple~$L$-packet.
	\end{remark}

	\subsection{The local generic transfer for groups of classical type}\label{generictransfercomplex}
	For $\mathfrak{w}$-generic representations we also have the local transfer theorem of Asgari--Cogdell--Heiermann--Kim--Krishnamurthy--Shahidi--Piatetski-Shapiro \cite{CKPSS1,MR2127169,MR2149370,MR2219256,CKPSS2,AsgariShahidiFunct,ACS,Heiermanngeneric}:
	\begin{thm}\label{achkkspstransfer}
		Let~$\G$ be one of the quasi-split groups 
		\[\Sp_{2n}(\F),\SO_r(\F),\SO_{2n,\E'/\F}(\F),\U_n(\E/\F)\text{ (with $n>1$)},\mathrm{GSpin}_r(\F),\mathrm{GSpin}_{2n,\E'/\F}(\F).\] 
		There exists a unique map\footnote{Note for~$\SO_{2n,\E'/\F}(\F)$ and~$\mathrm{GSpin}_{2n,\E'/\F}(\F)$, the local transfer is to~$\Pi_{gen}(\GL_{2n}(\F))$.}
		\[\mathrm{t}_{\G,\mathfrak{w}}\colon\Pi_{\mathfrak{w}}(\G)\rightarrow \Pi_{gen}(\GL_N(\E)),\]
		satisfying, for all positive integers~$r$, and all supercuspidal representations $\rho$ of~$\GL_r(\E)$, 
		\begin{equation}
			\label{gammafactoreq}\gamma(X,\pi\otimes \rho, \phi_{\LG}\otimes \phi_{\GL_r},\psi)=\gamma(X^{f(\E/\F)},\mathrm{t}_{\G,\mathfrak{w}}(\pi)\otimes \rho,\phi_{\GL_N}\otimes \phi_{\GL_r},\psi_{\E}).\end{equation}
		Moreover, for~$\G\not\in\{ \U_1(\E/\F),\GSpin_1(\F)\}$, the morphism is characterized by the equalities \eqref{gammafactoreq} being satisfied for~$1\leqslant r\leqslant[N/2]$, and, for~$\Sp_{2n}(\F),\SO_{2n+1}(\F)$, and~$\U_n(\E/\F)$, this map agrees with the local generic transfer defined by Arthur and Mok (Section \ref{ArtMoktransfer}).  
	\end{thm}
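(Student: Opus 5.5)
The plan is to assemble this theorem from the cited literature rather than prove anything new, arguing in three steps: existence, uniqueness and characterization, and comparison with Arthur and Mok.

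\textbf{Existence.} For each group in the list I would quote the corresponding construction: \cite{CKPSS1,CKPSS2} for $\Sp_{2n}$ and the split and quasi-split special orthogonal groups, \cite{MR2127169,MR2149370,MR2219256} for unitary groups, and \cite{AsgariShahidiFunct,ACS} for the $\GSpin$ groups.

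\begin{enumerate}
\item In each case a $\mathfrak{w}$-generic supercuspidal $\pi$ is globalized to a globally generic cuspidal automorphic representation.
\item The converse theorem is applied, with the Langlands--Shahidi gamma factors of Theorem \ref{mainthm} (taken over $\mathbb{C}$) controlling the twisted $L$-functions. This produces a global lift to $\GL_N$.
\item The local component at the chosen place is shown to satisfy \eqref{gammafactoreq} for supercuspidal twists, using stability of gamma factors under highly ramified twists at the auxiliary places.
\item The construction is extended from generic supercuspidals to all $\mathfrak{w}$-generic $\pi$. I would use the standard module conjecture (Heiermann \cite{Heiermanngeneric}) to realize $\pi$ as a Langlands quotient of a standard module induced from generic tempered (ultimately discrete series, then supercuspidal) data. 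The transfer is then defined as the corresponding generic constituent of the induced representation of $\GL_N$.
\item The identity \eqref{gammafactoreq} for general $\pi$ follows from multiplicativity of gamma factors on both sides.
\end{enumerate}

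\textbf{Uniqueness and characterization.} I would argue on the $\GL_N(\E)$ side with the local converse theorem of Jacquet--Liu and Chai. Two generic irreducible representations of $\GL_N(\E)$ with equal twisted gamma factors for all supercuspidal $\rho$ of $\GL_r(\E)$, $1\le r\le[N/2]$, are isomorphic. The converse theorem is usually stated for generic twists, but multiplicativity reduces generic twists to supercuspidal ones. Hence two maps satisfying \eqref{gammafactoreq} for these $r$ agree. The excluded groups $\U_1(\E/\F)$ and $\GSpin_1(\F)$ are those where $N=1$, so the range $1\le r\le[N/2]$ is empty and the statement would be vacuous; this explains the exclusion.

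\textbf{Comparison with Arthur and Mok.} For $\Sp_{2n}(\F)$, $\SO_{2n+1}(\F)$ and $\U_n(\E/\F)$, Theorem \ref{LLforgenerics} shows that $\pi\mapsto\pi_{\phi_{\LG}\circ\phi_\pi}$ satisfies the same identity of gamma factors for all generic, hence in particular supercuspidal, $\rho$. The gamma factor of the parameter $\phi_{\LG}\circ\phi_\pi$ agrees with that of its $\GL_N$-representation by the local Langlands correspondence for $\GL_N$. For unitary groups I would also need the normalization by $\lambda(\E/\F,\psi)^{-mn}$ and the twist by $\rho^{\iota}$ fixed above, so that the two definitions of the left-hand side of \eqref{gammafactoreq} match, as in \cite[Appendix]{AGIKMS}. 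Uniqueness then forces the two maps to coincide.

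\textbf{Main obstacle.} I expect the hardest step to be this last bookkeeping: checking that the gamma factors used in the cited constructions match exactly, including Whittaker datum, $\psi$ versus $\psi_{\E}$, $X^{f(\E/\F)}$, $\lambda$-factors and the contragredient conventions, with those of Theorem \ref{mainthm} and the normalizations of this section. I would handle it case by case, quoting the explicit normalizations in \cite{AGIKMS} and \cite{AsgariShahidiFunct}.
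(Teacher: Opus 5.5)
Your plan is correct and follows the paper's proof. It quotes \cite{CKPSS1,ACS} for generic supercuspidal $\pi$. It extends to all $\mathfrak{w}$-generic $\pi$ by taking the generic subquotient of $\tau_1\times\cdots\times\tau_k\times\mathrm{t}_{\G_0,\mathfrak{w}}(\tau_0)\times\widetilde{\tau_k}\times\cdots\times\widetilde{\tau_1}$ and applying multiplicativity on both sides. Uniqueness comes from a converse theorem for $\GL_N(\E)$, and agreement with Arthur--Mok comes from Theorem \ref{LLforgenerics} plus uniqueness. A few points to tighten:

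\begin{enumerate}
\item The standard module conjecture is unnecessary. The paper works directly with the supercuspidal support $(\M,\tau)$ of $\pi$, which is generic by Rodier, and multiplicativity only needs $\pi$ to be the generic subquotient of $i_{\M,\P}^{\G}(\tau)$.

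\item \cite[Theorem 3.2]{ACS} only records \eqref{gammafactoreq} for \emph{unitary} supercuspidal $\rho$ with $1\leqslant r\leqslant N-1$. The statement claims it for every $r$ and every supercuspidal $\rho$, and your step 3 does not address this. The paper observes that the global argument of loc.\ cit.\ works unchanged for all $r$. It removes unitarity by choosing $t$ with $\rho\otimes\nu^{t}$ unitary ($\nu=|\det|_{\E}$), since both sides of \eqref{gammafactoreq} transform compatibly under $X\mapsto q^{t}X$.

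\item For full uniqueness the paper uses Henniart's theorem \cite{Henniart}, that twisted gamma factors determine supercuspidal support, together with uniqueness of the generic representation with given support. It uses \cite{MR3862069} only for the $[N/2]$ refinement. If you use Jacquet--Liu throughout, note that it assumes equal central characters, which you recover from the $r=1$ twists. It also gives nothing when $[N/2]=0$, so the degenerate cases $\U_1(\E/\F)$ and $\GSpin_1(\F)$ must be checked directly, as the paper does.
\end{enumerate}
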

	\begin{proof}
	We suppose~$N>1$, i.e.,~$\G\not\in\{ \U_1(\E/\F),\GSpin_1(\F)\}$, for which the theorem can be verified directly. 	Suppose there exists a map~$\mathrm{t}_{\G,\mathfrak{w}}$ satisfying the given equalities of gamma factors.  Then as twisted gamma factors for~$\GL_N(\E)$ characterize supercuspidal support by \cite{Henniart}, and as there is a unique generic representation with a given supercuspidal support, so~$\mathrm{t}_{\G,\mathfrak{w}}$ is uniquely defined by this property (and while the definition of \cite{ACS} of local transfer also states the analogous equality of~$L$-functions, this is not needed to characterize the map).  We immediately also deduce the final statement on equality with the transfer maps defined by Arthur and Mok's local Langlands (cf., Theorem \ref{LLforgenerics}) for~$\Sp_{2n},\SO_{2n+1},\U_{n,\E/\F}$.  The local strong converse theorem of \cite{MR3862069} gives the final statement on the subcollection of gamma factors characterizing the map.
		
		If~$\G$ is a split classical group, then the construction of~$\mathrm{t}_{\G,\mathfrak{w}}$ is \cite[Proposition 7.5, Lemma 7.2]{CKPSS1}, which is built on first considering the supercuspidal case.  In all cases, as brought together in \cite[Theorem 3.2]{ACS}, if~$\pi\in\Pi_{\mathfrak w}$ is supercuspidal, then there exists a generic irreducible representation~$\mathrm{t}_{\G,\mathfrak{w}}(\pi)\in \Pi_{\Xi,\psi}(\GL_N(\E))$ satisfying
		\[\gamma(X,\pi\otimes \rho, \phi_{\LG}\otimes \phi_{\GL_r},\psi)=\gamma(X^{f(\E/\F)},\mathrm{t}_{\G,\mathfrak{w}}(\pi)\otimes \rho,\phi_{\GL_N}\otimes \phi_{\GL_r},\psi_{\E}).\]
		for all irreducible unitary supercuspidal representations~$\rho$ of~$\GL_r(\E)$, for all~$1\leqslant r\leqslant N-1$.  While they state this equality of gamma factors for twists up to~$N-1$ (as obtaining the twisting relation up to~$N-1$ suffices to characterize the local transfer), the transfer is compatible with twisted gamma factors of all~$\GL_r(\E)$.  Indeed, the same argument applies in this case as given in ibid., cf.,~\cite[Theorem 4.2]{Heiermanngeneric} for split~$\GSpin$-groups.
		
		Let~$\rho$ denote any cuspidal representation of~$\GL_r(\E)$, and~$\nu=|~|_{\E}\circ\det$, then there exists~$t\in\mathbb{C}$ such that~$\rho'=\rho\otimes\nu^t$ is unitary.  
		Moreover, by Shahidi \cite[Equation (3.12)]{ShahidiAnnals}, 
		\begin{align*}
			\gamma(X^{f(\E/\F)},\mathrm{t}_{\G,\mathfrak{w}}(\pi)\otimes \rho,\phi_{\GL_N}\otimes \phi_{\GL_r},\psi_{\E})&=\gamma(q^{tf(\E/\F)}X^{f(\E/\F)},\mathrm{t}_{\G,\mathfrak{w}}(\pi)\otimes \rho',\phi_{\GL_N}\otimes \phi_{\GL_r},\psi_{\E})\\
			\gamma(X,\pi\otimes \rho, \phi_{\LG}\otimes \phi_{\GL_r},\psi)&=\gamma(q^{t}X,\pi\otimes \rho', \phi_{\LG}\otimes \phi_{\GL_r},\psi);
		\end{align*}
		and thus we deduce that
		\[\gamma(X,\pi\otimes \rho, \phi_{\LG}\otimes \phi_{\GL_r},\psi)=\gamma(X^{f(\E/\F)},\mathrm{t}_{\mathfrak{w}}(\pi)\otimes \rho,\phi_{\GL_N}\otimes \phi_{\GL_r},\psi_{\E}).\]
		for all irreducible supercuspidal representations~$\rho$ of~$\GL_r(\E)$, for all~$1\leqslant r\leqslant N-1$. 
				
		In general, let~$\pi\in \Pi_{\mathfrak w}$ and~$(\M,\tau)$ in the supercuspidal support of~$\pi$ and~$\P=\M\N$ such that~$\pi$ is a quotient of~$i_{\M,\P}^{\G}(\tau)$.  Then~$\M\simeq \G_0\times \prod_{i=1}^k\GL_{n_i}(\E)$ where~$\G_0$ is of the same type as~$\G$, and we write~$\tau\simeq \tau_0\otimes \bigotimes_{i=1}^k\tau_i$ with respect to this decomposition.  Suppose first~$\G_0\neq \GSpin_1(\F)$, then we define the candidate transfer~$\mathrm{t}_{\G,\mathfrak{w}}(\pi)$ to be the unique generic subquotient of
		\[\tau_1\times  \cdots \times \tau_k \times \mathrm{t}_{\G_0,\mathfrak{w}}(\tau_0)\times \widetilde{\tau_k} \times \cdots \times \widetilde{\tau_1}\]
		where
		\begin{equation}\label{widetildedef}
			\widetilde{\rho_i}=\begin{cases}
				\rho_i^{\vee}&\text{if }\G=\Sp_{2n}(\F),~\SO_{r}(\F),~\SO_{2n,\E'/\F}(\F);\\
				\rho_i^{\vee,\sigma}&\text{if~}\G=\U_n(\E/\F);\\
				\rho_i^{\vee}\otimes \chi  &\text{if~}\G=\mathrm{GSpin}_r(\F),\mathrm{GSpin}_{2n,\E'/\F}(\F),
			\end{cases}
		\end{equation}
		where
		\begin{equation*}
		\chi=\begin{cases}
		\tau_0&\text{ if }\G=\GSpin_1(\F);\\
		\chi_1&\text{ if }\G=\GSpin_2(\F),~\tau_0=\chi_1\otimes\chi_2;\\
		(\omega_{\pi}\mid_{\F^\times}\circ\det)&\text{ otherwise.}
		\end{cases}
		\end{equation*}
		By the first paragraph of this proof, it suffices to prove Equation \eqref{gammafactoreq}.
		
		By the multiplicativity relation for~$\gamma$-factors of~$\GL_{N}(\E)$, we have 
		\[\gamma(X^{f(\E/\F)},\mathrm{t}_{\G,\mathfrak{w}}(\pi)\otimes \rho,\psi_{\E})=\gamma(X^{f(\E/\F)},\mathrm{t}_{\G_0,\mathfrak{w}}(\tau_0)\otimes \rho,\psi_{\E})\prod_{i=1}^k\gamma(X^{f(\E/\F)},\tau_i\otimes\rho,\psi_{\E})\gamma(X^{f(\E/\F)},\widetilde{\tau_i}\otimes \rho,\psi_{\E}).\]
		By multiplicativity of~$\gamma$-factors \cite[Theorem 3.5]{Shahidimultiplicativity}, we have
		\begin{align*}\gamma(X,\pi\otimes \rho,\psi)=\gamma(X,\tau_0\otimes \rho,\psi)\prod_{i=1}^k\gamma(X^{f(\E/\F)},\tau_i\otimes\rho,\psi_{\E})\gamma(X^{f(\E/\F)},\widetilde{\tau_i}\otimes \rho,\psi_{\E}), \end{align*}
		using \cite[Lemma 4.4]{ACS}, (for split~$\mathrm{GSpin}$, see \cite[Example 3.5]{Heiermanngeneric}).  Hence we obtain Equation \ref{gammafactoreq}.
		
		For~$\G_0=\GSpin_1(\F)$, the candidate transfer is defined to be the unique generic subquotient of
		\[\tau_1\times \cdots \times \tau_k \times  \widetilde{\tau_k} \times \cdots \times \widetilde{\tau_1}\]
		where~$\widetilde{\tau_i}=\tau_i^{\vee}\otimes (\tau_0\circ\det)$, and the theorem follows similarly using the multiplicativity identity~$\gamma(X,\pi\times\rho,\psi)=\prod_{i=1}^k\gamma(X^{f(\E/\F)},\tau_i\times \rho,\psi)\gamma(X^{f(\E/\F)},\widetilde{\tau_i}\times\rho,\psi).$
	\end{proof}

	\begin{lemma}\label{lemmainjectivity}
		Let~$\G=\SO_{2n+1}(\F), \Sp_{2n}(\F),\U_n(\E/\F)$ then the local transfer
		\[\mathrm{t}_{\G,\mathfrak{w}}\colon\Pi_{\mathfrak w}(\G)\rightarrow \Pi_{gen}(\GL_N(\E))\]
		is injective.
	\end{lemma}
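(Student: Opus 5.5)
The plan is to factor the transfer through the semisimple Langlands parameter. For $\G=\SO_{2n+1}(\F),\Sp_{2n}(\F),\U_n(\E/\F)$, Theorem \ref{achkkspstransfer} says $\mathrm{t}_{\G,\mathfrak{w}}$ agrees with the Arthur--Mok generic transfer of Section \ref{ArtMoktransfer}. That transfer is the composite
\[\Pi_{\mathfrak w}(\G)\xrightarrow{\pi\mapsto\phi_\pi}\Phi_{ss}(\G)\xrightarrow{\phi\mapsto\phi_{\LG}\circ\phi}\Phi_{ss}(\GL_N(\E))\xrightarrow{}\Pi_{gen}(\GL_N(\E)).\]
The first arrow is a bijection by Theorem \ref{LLforgenerics}. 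The last arrow sends a semisimple parameter to the unique generic representation with the corresponding supercuspidal support, and is injective by the local Langlands correspondence for $\GL_N(\E)$. So injectivity reduces to showing that $\phi\mapsto\phi_{\LG}\circ\phi$ is injective on $\widehat{\G}(\mathbb{C})$-conjugacy classes of semisimple parameters.

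For the split cases, $\widehat{\G}=\SO_{2n+1}(\mathbb{C})$ or $\Sp_{2n}(\mathbb{C})$, and $\phi_{\LG}$ is the standard representation extended trivially on $\W_\F$. We must show that two semisimple homomorphisms $\W_\F\to\widehat{\G}(\mathbb{C})$ that are $\GL_N(\mathbb{C})$-conjugate are already $\widehat{\G}(\mathbb{C})$-conjugate. This is the classical fact that semisimple orthogonal (resp.\ symplectic) representations are determined up to isometry by their underlying representation. Decompose into isotypic pieces $\bigoplus_i m_i\sigma_i$, which are self-dual or paired with duals. On a self-dual irreducible piece the invariant form is unique up to scalar. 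On a pair $\sigma\oplus\sigma^\vee$ the forms are unique up to isometry. This gives $\mathrm{O}_{2n+1}$-, resp.\ $\Sp_{2n}$-conjugacy. For $\SO_{2n+1}$, we pass from $\mathrm{O}_{2n+1}$ to $\SO_{2n+1}$ using $\mathrm{O}_{2n+1}=\SO_{2n+1}\times\{\pm1\}$, where $-1$ is central. Note that this argument would fail for even orthogonal groups, which is why they are excluded.

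For the unitary case, $\phi_{\LG}$ is stable base change. Restricting a parameter of $\U_n$ to $\W_\E$ gives a conjugate-self-dual representation of sign $(-1)^{n-1}$. The standard result (Gan--Gross--Prasad, \S8) is that the restriction to $\W_\E$ determines the $L$-parameter of $\U_n$ up to $\widehat{\G}$-conjugacy. I would cite this, or argue as above using isotypic decomposition and uniqueness of conjugate-self-dual forms of a given sign.

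The main obstacle is the unitary step. There the passage between $\GL_n(\mathbb{C})\rtimes\W_\F$-conjugacy and conjugacy of $\W_\E$-restrictions needs care with the signs. I would handle it by invoking Gan--Gross--Prasad, Theorem 8.1, directly.
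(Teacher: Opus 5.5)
Your proposal is correct and follows essentially the same route as the paper. Both arguments use the bijection of Theorem \ref{LLforgenerics} and the uniqueness of the generic member of an extended $L$-packet to reduce to injectivity of $\phi\mapsto\phi_{\LG}\circ\phi$ on conjugacy classes of semisimple parameters, which is Gan--Gross--Prasad, Theorem 8.1. The only difference is that the paper cites that theorem for all three groups, while you also sketch the classical isometry argument in the symplectic and odd orthogonal cases. There, with multiplicities, the accurate statement is that the induced form on each multiplicity space is unique up to isometry, not merely up to scalar.
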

	
	\begin{proof}
		For~$\G=\SO_{2n+1}(\F), \Sp_{2n}(\F),\U_n(\E/\F)$, this follows as Theorem \ref{LLforgenerics} is a bijection, and \cite[Theorem 8.1]{GGP}, which shows that the (conjugacy class of) the transfer of an $L$-parameter of an irreducible representation determines the conjugacy class of the~$L$-parameter.  And as there is a unique generic representation in an extended~$L$-packet, we deduce that~$\mathrm{t}_{\G,\mathfrak{w}}$ is injective.  
	\end{proof}
	For even special orthogonal groups, it follows from \cite[Theorem 8.1]{GGP} the conjugacy class of the transfer only determines the conjugacy class in the full orthogonal group of the parameter.
	
	\begin{remark}\label{remarklocalconverse}
		Suppose again~$\G\neq \U_1(\E/\F)$.  Then the injectivity of the local transfer in these cases, is equivalent, from the statement of Theorem~\ref{achkkspstransfer}, to the following statement known as a \emph{local converse theorem}:
		``Let~$\pi,\pi'\in\Pi_{\mathfrak w}$.  Suppose that, for all $1\leqslant r\leqslant N-1$, and all irreducible supercuspidal representations $\rho$ of~$\GL_{r}(\E)$, we have
		\[\gamma(X,\pi\otimes \rho, \phi_{\LG}\otimes \phi_{\GL_r},\psi)=\gamma(X,\pi'\otimes \rho, \phi_{\LG}\otimes \phi_{\GL_r},\psi),\]
		then~$\pi\simeq \pi'$.''   Such statements have been proved directly in some cases and are considered in \cite{matringelocalconverse} from an L-parameters perspective.  Combining these theorems with the multiplicativity relation of gamma factors (when the theorems are only written for supercuspidal generic representations) gives an alternative proof of the lemma.  Injectivity can fail for quasi-split even special orthogonal groups~$\G=\SO_{2n}(\F),\SO_{2n,\E'/\F}(\F)$.  Indeed, fix an element~$\epsilon\in\mathrm{O}_{2n}(\F)/\SO_{2n}(\F)$ (respectively,~$\mathrm{O}_{2n,\E'/\F}(\F)/\SO_{2n,\E'/\F}(\F)$) preserving~$\mathfrak{w}$,~$\mathrm{t}_{\G,\mathfrak{w}}(\pi)\simeq \mathrm{t}_{\G,\mathfrak{w}}(\pi^{\epsilon})$ and there exist~$\pi\not\simeq \pi^{\epsilon}$.
	\end{remark}

	\subsection{Properties of local transfer}
	For this section, we let~$\G$ be one of the quasi-split groups
	\[\Sp_{2n}(\F),\SO_r(\F),\SO_{2n,\E'/\F}(\F),\U_n(\E/\F),\mathrm{GSpin}_r(\F),\mathrm{GSpin}_{2n,\E'/\F}(\F).\]
	
	\begin{lemma}\label{galoisinvariance}
		Let~$\sigma\colon\mathbb{C}\rightarrow \mathbb{C}$ be a field automorphism fixing~$\mathbb{Q}(\sqrt{q},\mu_{p^\infty})$ if~$\G$ is split, and~$\mathbb{Q}(\sqrt{q},\mu_{4p^\infty})$ if~$\G$ is non-split.  Then, for~$\pi\in\Pi_{\mathfrak w}(\G)$,
		\[\mathrm{t}_{\G,\mathfrak{w}}(\pi^{\sigma})\simeq \mathrm{t}_{\G,\mathfrak{w}}(\pi)^{\sigma}.\]
	\end{lemma}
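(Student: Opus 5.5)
Since $\mathrm{t}_{\G,\mathfrak{w}}$ is characterized by the gamma factor identities \eqref{gammafactoreq} (Theorem \ref{achkkspstransfer}), the natural plan is to show that $\mathrm{t}_{\G,\mathfrak{w}}(\pi)^{\sigma}$ satisfies the identities that characterize $\mathrm{t}_{\G,\mathfrak{w}}(\pi^\sigma)$. This comes down to the equivariance of Langlands--Shahidi gamma factors under automorphisms of $\mathbb{C}$ fixing $\R_0$, which follows from Theorem \ref{mainthm}.

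\textbf{Step 1: $\sigma$-equivariance.} Let $\rho$ be any $\mathfrak w_\M$-generic $\mathbb{C}$-representation of a Levi $\M$. Since $\sigma$ fixes $\R_0$, the twist $\rho^\sigma=\rho\otimes_{\sigma}\mathbb{C}$ is again co-Whittaker with respect to $\mathfrak w_\M$. Indeed, $\Psi$ takes values in $\R_0$, so $\Psi^\sigma=\Psi$ and $\rho^\sigma$ is $\mathfrak w_\M$-generic. Applying Theorem \ref{mainthm} (\ref{maintheocomp}) with $\alpha=\sigma\colon\mathbb{C}\to\mathbb{C}$, viewed as a morphism of $\R_0$-algebras, gives
\[\gamma(X,\rho^\sigma,r_i,\psi)=\sigma_X(\gamma(X,\rho,r_i,\psi)).\]
By (\ref{maintheoC}), the same identity holds for Shahidi's gamma factors. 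The auxiliary constants $\lambda(E/F,\psi)^{-mn}$ appearing in the definitions for unitary groups are fourth roots of unity, so they are fixed by $\sigma$. Likewise the substitution $X\mapsto X^{f(\E/\F)}$ commutes with $\sigma_X$. Hence both sides of \eqref{gammafactoreq} are $\sigma$-equivariant. On the $\GL$ side the analogous statement is obtained by applying the same argument to $\M=\GL_N(\E)\times\GL_r(\E)$, which yields the Rankin--Selberg-type factor $\phi_{\GL_N}\otimes\phi_{\GL_r}$. For $\GSpin_2$ the factor is a product of $\GL$-factors, so it is handled the same way.

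\textbf{Step 2: apply the identity.} Take $\rho$ a supercuspidal representation of $\GL_r(\E)$; then $\rho^{\sigma^{-1}}$ is also supercuspidal. Combining Step 1 with \eqref{gammafactoreq} for $\pi$ gives
\begin{align*}
\gamma(X,\pi^\sigma\otimes\rho,\phi_{\LG}\otimes\phi_{\GL_r},\psi)&=\sigma_X\gamma(X,\pi\otimes\rho^{\sigma^{-1}},\phi_{\LG}\otimes\phi_{\GL_r},\psi)\\
&=\sigma_X\gamma(X^{f},\mathrm{t}_{\G,\mathfrak{w}}(\pi)\otimes\rho^{\sigma^{-1}},\phi_{\GL_N}\otimes\phi_{\GL_r},\psi_\E)\\
&=\gamma(X^{f},\mathrm{t}_{\G,\mathfrak{w}}(\pi)^\sigma\otimes\rho,\phi_{\GL_N}\otimes\phi_{\GL_r},\psi_\E).
\end{align*}
Here we use that $(\pi\otimes\rho)^\sigma=\pi^\sigma\otimes\rho^\sigma$ and $\psi_\E^\sigma=\psi_\E$. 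Since $\pi^\sigma\in\Pi_{\mathfrak w}(\G)$ and $\mathrm{t}_{\G,\mathfrak w}(\pi)^\sigma$ is generic, the uniqueness in Theorem \ref{achkkspstransfer} gives $\mathrm{t}_{\G,\mathfrak w}(\pi^\sigma)\simeq\mathrm{t}_{\G,\mathfrak w}(\pi)^\sigma$. That uniqueness rests on Henniart's result that twisted gamma factors determine supercuspidal support, together with uniqueness of generic representations with given support.

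\textbf{Main obstacle.} The only delicate point is the bookkeeping in Step 1: checking that every normalization is $\sigma$-fixed. This covers the pinning-determined Whittaker datum, the Haar measures (which are $\R_0$-valued), the $\lambda$-factors, and the dual $\rho^\iota$ used for unitary and $\GSpin$ groups, which commutes with $\sigma$. This is exactly why $\sigma$ must fix $\sqrt q$ and $\mu_{4p^\infty}$. The trivial cases $\U_1$ and $\GSpin_1$ can be checked directly.
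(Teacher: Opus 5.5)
Your proof is correct and follows the paper's argument. You obtain $\sigma$-equivariance of the Langlands--Shahidi factors on the $\G$ side from Theorem \ref{mainthm}, using compatibility with the $\R_0$-algebra map $\sigma$ together with the comparison with Shahidi's factors, and then conclude by the uniqueness of the transfer via Henniart's characterization by twisted gamma factors; the one minor difference is on the $\GL_N(\E)$ side, where the paper gets $\sigma$-equivariance from the equality with Rankin--Selberg factors and Bushnell--Henniart's result, while you apply Theorem \ref{mainthm} again to $\GL_N(\E)\times\GL_r(\E)\subset\GL_{N+r}(\E)$, which works equally well once the harmless normalizations are matched (passing between $\rho$ and $\rho^{\iota}$, and rescaling $X$ by a power of $\sqrt{q}$, both of which commute with $\sigma$).
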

	
	\begin{proof}
		For~$\Sp_{2n}(\F)$,~$\SO_{2n+1}(\F)$, and~$\U_n(\E/\F)$, this follows from \cite[Propositions 7.11, 7.12]{DHKMconj}.  For all cases, except~$\G=\U_1(\E/\F),\GSpin_1(\F)$, we give a proof using Langlands--Shahidi gamma factors, inspired by arguments of Bushnell-Henniart summarised in \cite[§35.2]{BHBook}:  We extend~$\sigma$ to~$\mathbb{C}(X)$, by letting it act trivially on~$X$.
		By Theorem \ref{mainthm} (2), we have, for all~$1\leqslant r\leqslant N-1$ and every generic representation~$\rho$ of~$\GL_{r}(\E)$ that
		\[\gamma(X,\pi^{\sigma}\otimes\rho^{\sigma},\phi_{\LG}\otimes \phi_{\GL_r},\psi^{\sigma})=\sigma(\gamma(X,\pi\otimes\rho,\phi_{\LG}\otimes \phi_{\GL_r},\psi)).\]
		 Hence by Theorem \ref{achkkspstransfer} we have
		\[\gamma(X^{f(\E/\F)},\mathrm{t}_{\G,\mathfrak{w}}(\pi^{\sigma})\otimes \rho^{\sigma},\phi_{\GL_N}\otimes \phi_{\GL_r},\psi^{\sigma})=\sigma(\gamma(X^{f(\E/\F)},\mathrm{t}_{\G,\mathfrak{w}}(\pi)\otimes \rho,\phi_{\GL_N}\otimes \phi_{\GL_r},\psi_{\E})).\]
		By the equality with Rankin-Selberg gamma factors and \cite[3.2 Theorem]{BH00}, we have
		\[\sigma(\gamma(X^{f(\E/\F)},\mathrm{t}_{\G,\mathfrak{w}}(\pi)\otimes \rho,\phi_{\GL_N}\otimes \phi_{\GL_r},\psi_{\E}))=\gamma(X^{f(\E/\F)},\mathrm{t}_{\G,\mathfrak{w}}(\pi)^{\sigma}\otimes \rho^{\sigma},\phi_{\GL_N}\otimes \phi_{\GL_r},\psi^{\sigma}_{\E}).\]
		Hence
		\[\gamma(X^{f(\E/\F)},\mathrm{t}_{\G,\mathfrak{w}}(\pi^{\sigma})\otimes \rho^{\sigma},\phi_{\GL_N}\otimes \phi_{\GL_r},\psi^{\sigma}_{\E})=\gamma(X^{f(\E/\F)},\mathrm{t}_{\G,\mathfrak{w}}(\pi)^{\sigma}\otimes \rho^{\sigma},\phi_{\GL_N}\otimes \phi_{\GL_r},\psi^{\sigma}_{\E}),\]
		and applying Henniart's local converse theorem \cite{Henniart} we find~$\mathrm{t}_{\G,\mathfrak{w}}(\pi^{\sigma})\simeq \mathrm{t}_{\G,\mathfrak{w}}(\pi)^{\sigma}$. 
		\end{proof}
	
	We extend local transfer to Levi subgroups of~$\G$ in the following way:  
	
	Suppose first that~$\G\not\in\{\GSpin_r(\F),\GSpin_{2n,\E'/\F}(\F)\}$, then we have fixed our models for~$\G$, and with respect to these models, we choose a basis so that~$\M$ is standard with block diagonal decomposition
	\[\left\{\left(\begin{smallmatrix} a_1 \\ &\ddots\\&&a_k\\&&&a_0\\&&&&a_{-k}\\&&&&&\ddots\\&&&&&&a_{-1}\end{smallmatrix}\right):a_0\in\G_0,~a_i\in\GL_{n_i}(\E),~a_{-i}=\widetilde{a_i}\right\}\simeq \prod_{i=1}^k\GL_{n_i}(\E)\times \G_0\]
	where~$\G_0$ is a classical group of the same type as~$\G$, and
	\begin{equation*}
	\widetilde{a_i}=\begin{cases} \presuper{t}a_i^{-1} &\text{if~}\G\in\{\SO_r(\F),\Sp_{2n}(\F),\SO_{2n,\E'/\F}(\F)\};\\
	\presuper{t}{\overline{a_i}}^{-1} &\text{if~}\G=\U_n(\E/\F).
\end{cases}
	\end{equation*}
For~$\G\in\{\GSpin_{r}(\F),\GSpin_{2n,\E'/\F}(\F)\}$, as explained in \cite{MR2514488}, we have a decomposition
\[\M\simeq \prod_{i=1}^k\GL_{n_i}(\F)\times \G_0\]
where~$\G_0$ has the same type as~$\G$.

Note that when~$\G= \Sp_{2n}(\F)$ and~$\G_0$ is the trivial group, due to dimension reasons we need to define:
\[\mathrm{t}_{\G_0,\mathfrak{w}_{\M}\mid\G_0}:\Pi_{\mathfrak{w}_{\M}\mid\G_0}(\G_0)\rightarrow \Pi_{gen}(\GL_1(\F)),\]
by taking the trivial representation of the trivial group~$\Sp_0(\F)$ to the trivial representation of~$\GL_1(\F)$. 
	
	Suppose first that~$\G_0$ is non-trivial and~$\G_0\neq \GSpin_1(\F)$.  In these cases, we write~$\M'$ for the block diagonal Levi subgroup of~$\GL_{\N}(\E)$ with decomposition
	\[\prod_{i=1}^{k}\GL_{n_i}(\E)\times \GL_{N_0}(\E)\times \prod_{i=1}^{k}\GL_{n_i}(\E).\]
	and, using the notation of Equation \ref{widetildedef}, define
	\begin{align*}
		\mathrm{t}_{\M,\mathfrak{w}_{\M}}:\Pi_{\mathfrak{w}_{\M}}(\M)&\rightarrow \Pi_{gen}\left(\M'\right)\\
		\bigotimes_{i=1}^k \pi_i\otimes \pi_0&\mapsto \bigotimes_{i=1}^k \pi_i\otimes \mathrm{t}_{\G_0,\mathfrak{w}_{\M}\mid_{\G_0}}(\pi_0)\otimes \bigotimes_{i=1}^{k} \widetilde{\pi_i}
	\end{align*}
	where~$\widetilde{\pi_i}$ is defined as in \eqref{widetildedef}.   If~$\G_0$ is one of the trivial groups~$\SO_0(\F),\SO_1(\F), \U_0(\F)$, we make the same definition omitting~$\GL_{N_0}(\E)$ from our transfer, so the transfer~$\mathrm{t}_{\M,\mathfrak{w}_\M}$ is defined completely by the above formula.  For~$\G_0=\GSpin_1(\F)$, we write
	\[\prod_{i=1}^{k}\GL_{n_i}(\E)\times  \prod_{i=1}^{k}\GL_{n_i}(\E),\]
	and define
			\begin{align*}
		\mathrm{t}_{\M,\mathfrak{w}_{\M}}:\Pi_{\mathfrak{w}_{\M}}(\M)&\rightarrow \Pi_{gen}\left(\M'\right)\\
		\bigotimes_{i=1}^k \pi_i\otimes \pi_0&\mapsto \bigotimes_{i=1}^k \pi_i\otimes  \bigotimes_{i=1}^{k} \widetilde{\pi_i}.
	\end{align*}

	From the construction in the proof of Theorem \ref{achkkspstransfer}, we have the following compatibility of local transfer with supercuspidal support:
	\begin{lemma}\label{scscompatibility}
		Let~$\pi\in\Pi_{\mathfrak w}$, and~$(\M,\pi_{\M})$ be in the supercuspidal support of~$\pi$, and write~$\pi_{\M}\simeq \bigotimes_{i=1}^k \pi_i\otimes \pi_0$ as above.  Then~$\pi_0$ is $\mathfrak w_\M\mid\G_0$-generic, and any supercuspidal support of~$(\M',\mathrm{t}_{\M,\mathfrak{w}_{\M}}(\pi_{\M}))$ is in the supercuspidal support of~$\mathrm{t}_{\G,\mathfrak{w}}(\pi)$.  In particular,~$\mathrm{t}_{\G,\mathfrak{w}}$ defines a map
		\[\mathrm{t}_{\G,\mathfrak{w}}^{scs}:\Pi^{scs}_{\mathfrak w}(\G)\rightarrow \Pi^{scs}(\GL_N(\E)),\]
		from the set of supercuspidal supports of~$\mathfrak w$-generic representations to the set of supercuspidal supports for~$\GL_{N}(\E)$
	\end{lemma}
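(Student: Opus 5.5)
The plan is to read everything off from the construction of $\mathrm{t}_{\G,\mathfrak{w}}$ given in the proof of Theorem \ref{achkkspstransfer}, treating the two assertions separately.

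\textbf{Genericity of $\pi_0$.} Since $\pi$ is $\mathfrak{w}$-generic and a quotient of $i_{\M,\P}^{\G}(\pi_{\M})$ for a standard $\P=\M\N$, the Whittaker functional on $\pi$ pulls back to a nonzero $\mathfrak{w}$-Whittaker functional on $i_{\M,\P}^{\G}(\pi_\M)$. Over $\mathbb{C}$, the Bruhat filtration argument of Section \ref{Whittfunctionalssect} (the lemma that $(J_j/J_{j-1})_{\U,\Psi}=0$ for $j\geq1$, together with Corollary \ref{generates}) identifies $\left(i_{\M,\P}^{\G}(\pi_\M)\right)_{\U,\Psi}$ with $(\pi_\M)_{\U_\M,\Psi_\M}$. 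The argument there is written for maximal parabolics, but it works verbatim for any standard parabolic. Hence $\pi_\M$ is $\mathfrak{w}_\M$-generic. Since $\mathfrak{w}_\M$ restricts on the factor $\G_0$ to $\mathfrak{w}_\M\mid\G_0$ and on each $\GL_{n_i}(\E)$ to a generic character, it follows that $\pi_0$ is $\mathfrak{w}_\M\mid\G_0$-generic. The same holds for each $\pi_i$, since supercuspidals of $\GL$ are generic.

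\textbf{Compatibility of supercuspidal supports.} By definition, $\mathrm{t}_{\G,\mathfrak{w}}(\pi)$ is the unique generic subquotient of
\[\tau_1\times\cdots\times\tau_k\times\mathrm{t}_{\G_0,\mathfrak{w}}(\tau_0)\times\widetilde{\tau_k}\times\cdots\times\widetilde{\tau_1},\]
with $\tau_i=\pi_i$. When $\G_0$ is trivial or $\GSpin_1(\F)$, the middle factor is omitted, and for $\Sp_0$ the trivial character of $\GL_1$ is used.

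This representation is exactly the parabolic induction from $\M'$ of $\mathrm{t}_{\M,\mathfrak{w}_\M}(\pi_\M)$. Choose a supercuspidal support $(\L,\sigma)$ of $\mathrm{t}_{\G_0,\mathfrak{w}}(\pi_0)$, so that $\mathrm{t}_{\G_0,\mathfrak{w}}(\pi_0)$ is a subquotient of $i_{\L}^{\GL_{N_0}}(\sigma)$. By transitivity and exactness of parabolic induction, every irreducible subquotient of $i_{\M'}^{\GL_N}(\mathrm{t}_{\M,\mathfrak{w}_\M}(\pi_\M))$ is a subquotient of an induction from the Levi $\prod\GL_{n_i}\times\L\times\prod\GL_{n_i}$. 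The representation induced from there is
\[\bigotimes\tau_i\otimes\sigma\otimes\bigotimes\widetilde{\tau_i},\]
which is supercuspidal, because $\widetilde{\tau_i}$ (a twist of a contragredient or a Galois conjugate) is supercuspidal.

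So every irreducible subquotient, in particular $\mathrm{t}_{\G,\mathfrak{w}}(\pi)$, has this supercuspidal support. This support is, by construction, a supercuspidal support of $(\M',\mathrm{t}_{\M,\mathfrak{w}_\M}(\pi_\M))$, and uniqueness of supercuspidal support up to conjugacy for $\GL$ covers any other choice.

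\textbf{Well-definedness of $\mathrm{t}^{scs}_{\G,\mathfrak{w}}$.} The supercuspidal support of $\mathrm{t}_{\G,\mathfrak{w}}(\pi)$ depends only on the $\G$-conjugacy class of $(\M,\pi_\M)$, hence only on the supercuspidal support of $\pi$. Conjugating $(\M,\pi_\M)$ within $\G$ permutes the $\tau_i$ and may replace some $\tau_i$ by $\widetilde{\tau_i}$; this should be checked using $\widetilde{\widetilde{\tau}}\simeq\tau$, including for the $\GSpin$ twist, where the central character of $\tau_0$ is preserved. Such changes leave the multiset of $\GL$ supercuspidal supports unchanged.

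The one further point to check is that the map depends only on the support and not on $\pi$ itself: two generic representations with the same support receive the same candidate in the construction. This holds because the construction only used $(\M,\tau)$. I expect this conjugation bookkeeping, particularly in the $\GSpin$ cases, to be the main obstacle.
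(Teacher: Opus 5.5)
Your proposal is correct and follows the paper, which reads the lemma directly off the construction in the proof of Theorem~\ref{achkkspstransfer}; your Bruhat-filtration (heredity) argument for the genericity of $\pi_0$ just makes explicit what that construction implicitly uses. The conjugation bookkeeping you flag as the main obstacle is not needed: by the uniqueness clause of Theorem~\ref{achkkspstransfer}, the candidate built from any representative $(\M,\pi_\M)$ already equals $\mathrm{t}_{\G,\mathfrak{w}}(\pi)$, and a $\mathfrak{w}$-generic representation is the unique $\mathfrak{w}$-generic one with its supercuspidal support, so $\mathrm{t}^{scs}_{\G,\mathfrak{w}}$ is automatically well defined --- which is fortunate, since your sketch of that bookkeeping overlooks that $N_{\G}(\M)$ can act on the $\G_0$-factor by an outer automorphism (e.g.\ for even orthogonal groups).
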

	
	%
	%
	
	Let~$\chi:\M\rightarrow\mathbb{C}^\times$ be an unramified character of~$\M$, we can write
	\[\chi\simeq \bigotimes_{i=1}^{k} \chi_{i}\otimes \chi_0\]
	where~$\chi_i$ is an unramified character of~$\GL_{n_i}(\E)$ and~$\chi_0$ is an unramified character of~$\G_0$ (and~$\chi_0$ is trivial unless~$\G_0\in\{\SO_{2}(\F),\mathrm{GSpin}_r(\F),\mathrm{GSpin}_{2n,\E'/\F}(\F)\}$).  
	
	Let~$\pi_{\M}\simeq \bigotimes_{i=1}^k\pi_i\otimes \pi_0\in \Pi_{\mathfrak{w}_{\M}}(\M)$ be supercuspidal.  And choose~$(\M'',\mathrm{t}_{\M,\mathfrak{w}_{\M}}(\pi_{\M})_{scs})$ in the supercuspidal support of~$(\M',\mathrm{t}_{\M,\mathfrak{w}_{\M}}(\pi_{\M}))$.  By above, we can choose this of the form:
	\begin{align*}
		\M''&\simeq \prod_{i=1}^{k}\GL_{n_i}(\E)\times \prod_{j=1}^d\GL_{N_{0,j}}(\E)\times \prod_{i=1}^{k}\GL_{n_i}(\E)\\
		\mathrm{t}_{\M,\mathfrak{w}_{\M}}(\pi_{\M})_{scs}&\simeq  \bigotimes_{i=1}^k \pi_i\otimes ( \mathrm{t}_{\G_0,\mathfrak{w}_{\M}\mid_{\G_0}}(\pi_0))_{scs}\otimes \bigotimes_{i=1}^{k} \widetilde{\pi_i}\end{align*}
	where~$(\M''_0,( \mathrm{t}_{\G_0,\mathfrak{w}_{\M}\mid_{\G_0}}(\pi_0))_{scs})$ with~$\M''_0\simeq \prod_{j=1}^d\GL_{N_{0,j}}(\E)$ is a supercuspidal support of~$ \mathrm{t}_{\G_0,\mathfrak{w}_{\M}\mid_{\G_0}}(\pi_0)$.

	If~$\G_0\not\in\{\mathrm{GSpin}_r(\F),\mathrm{GSpin}_{2n,\E'/\F}(\F)\}$, we write~$\Upsilon:\M''\rightarrow \mathbb{C}^\times$ for the unramified character
	\[\Upsilon= \bigotimes_{i=1}^{k} \chi_{i}\otimes \Upsilon_0\otimes  \bigotimes_{i=1}^{k} \chi_{i}^{-1}.\]
	where
	\begin{equation*}
		\Upsilon_0=\begin{cases} \mathbf{1}&\text{if }\G_0\not\in\{\SO_{2}(\F),\mathrm{GSpin}_r(\F),\mathrm{GSpin}_{2n,\E'/\F}(\F)\};\\
			\chi_0\otimes \chi^{-1}_0&\text{if }\G_0=\SO_2(\F)\simeq \F^\times\xrightarrow{\chi_0}\mathbb{C}^\times.
		\end{cases}
	\end{equation*}
	
		If~$\G_0\in\{\mathrm{GSpin}_r(\F),\mathrm{GSpin}_{2n,\E'/\F}(\F)\}\backslash\{\GSpin_1(\F),\GSpin_2(\F)\}$, then we let~$\xi:\G_0\rightarrow \F^\times$ be fixed as in \cite[§3]{MR3273456}, and write every unramified character~$\G_0\rightarrow \mathbb{C}^\times$ as~$\chi_0\circ\xi$ for~$\chi_0:\F^\times\rightarrow \mathbb{C}^\times$ unramified.  Then we define
	\[\Upsilon= \bigotimes_{i=1}^{k} \chi_{i}\otimes \Upsilon_0\otimes  \bigotimes_{i=1}^{k} \chi_{i}^{-1}(\chi_0\circ\det)^2.\]
	with~$\Upsilon_0=\chi_0\circ\det\mid_{\M_0''}$.
	
	For~$\G_0=\GSpin_2(\F)$, we set~\[\Upsilon_0=\chi_{0,1}\otimes \chi_{0,1}^{-1}\chi_{0,2},\quad \Upsilon= \bigotimes_{i=1}^{k} \chi_{i}\otimes \Upsilon_0\otimes  \bigotimes_{i=1}^{k} \chi_{i}^{-1}(\chi_{0,2}\circ\det).\]
 if~$\chi_{0,1}\otimes \chi_{0,2}:\GSpin_2(\F)\rightarrow \mathbb{C}^\times$ is an unramified character, where we identify~$\GSpin_2(\F)$ with~$\F^\times\times\F^\times$ as in \cite{AsgariShahidiFunct}.
	
	If~$\G\neq \GSpin_1(\F)$, but~$\G_0=\GSpin_1(\F)\simeq \F^\times$, then we set
	\[\Upsilon=\bigotimes_{i=1}^{k} \chi_{i}\otimes   \bigotimes_{i=1}^{k} \chi_{i}^{-1}(\chi_0\circ\det).\]

	\begin{lemma}\label{unramifiedtwisting}
		Let~$[\M,\pi_{\M}]$ be a supercuspidal support of a~$\mathfrak w$-generic representation of~$\G$, and let
		$\chi:\M\rightarrow\mathbb{C}^\times$ be an unramified character of~$\M$.  
		Choose~$(\M'',\mathrm{t}_{\M,\mathfrak{w}_{\M}}(\pi_{\M})_{scs})$ in the supercuspidal support~$\mathrm{t}_{\G,\mathfrak{w}}^{scs}(\pi_{\M})$ as above.  Then~$(\M'',\mathrm{t}_{\M,\mathfrak{w}_{\M}}(\pi_{\M})_{scs}\otimes\Upsilon)$ is in~$\mathrm{t}_{\G,\mathfrak{w}}^{scs}(\pi_{\M}\otimes \chi)$.
	\end{lemma}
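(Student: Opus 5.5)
The plan is to reduce everything to the supercuspidal representation $\pi_\M\otimes\chi$ of $\M$ and to use the explicit construction of the candidate transfer in the proof of Theorem \ref{achkkspstransfer}.

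First, $\pi_\M\otimes\chi$ is again a $\mathfrak w_\M$-generic supercuspidal representation of $\M$. Indeed, twisting by an unramified character preserves supercuspidality, and $\chi$ is trivial on $\U\cap\M$, so genericity is preserved. Writing $\pi_\M\otimes\chi\simeq\bigotimes_{i=1}^k(\pi_i\otimes\chi_i)\otimes(\pi_0\otimes\chi_0)$, any irreducible generic subquotient of $i_{\M,\P}^{\G}(\pi_\M\otimes\chi)$ has supercuspidal support $[\M,\pi_\M\otimes\chi]$. By the construction in Theorem \ref{achkkspstransfer} and Lemma \ref{scscompatibility}, $\mathrm{t}^{scs}_{\G,\mathfrak w}(\pi_\M\otimes\chi)$ is therefore the supercuspidal support of
\[(\pi_1\chi_1)\times\cdots\times(\pi_k\chi_k)\times \mathrm{t}_{\G_0,\mathfrak w}(\pi_0\otimes\chi_0)\times \widetilde{\pi_k\chi_k}\times\cdots\times\widetilde{\pi_1\chi_1},\]
with the obvious modification when $\G_0=\GSpin_1(\F)$.

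The proof then splits into two computations.

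\textbf{The $\GL$-blocks.} For the blocks away from $\G_0$ we compute $\widetilde{\pi_i\chi_i}$ directly from \eqref{widetildedef}:
\begin{itemize}
\item for $\Sp$ and $\SO$, $(\pi_i\chi_i)^\vee=\pi_i^\vee\chi_i^{-1}$;
\item for unitary groups, $(\pi_i\chi_i)^{\vee,\sigma}=\pi_i^{\vee,\sigma}\chi_i^{-1}$, since $\chi_i$ is unramified on $\GL_{n_i}(\E)$ and hence $\sigma$-invariant;
\item for $\GSpin$, the central character of the twisted representation changes. Restricted to $\F^\times$, $\omega_{\pi\otimes\chi}$ is multiplied by the restriction of $\chi_0\circ\xi$ to the centre. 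With the normalisations of \cite{MR3273456} this gives the extra factor $(\chi_0\circ\det)^2$ in the formula for $\Upsilon$. In the $\GSpin_1$ and $\GSpin_2$ cases it gives the factor $\chi_0\circ\det$, respectively $\chi_{0,2}\circ\det$.
\end{itemize}

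\textbf{The middle block.} Here we need $\mathrm{t}^{scs}_{\G_0}(\pi_0\otimes\chi_0)=\mathrm{t}^{scs}_{\G_0}(\pi_0)\otimes\Upsilon_0$. In most cases $\chi_0$ is trivial and there is nothing to prove. For $\SO_2$ and $\GSpin_2$ the transfer is given explicitly, as $\chi_0\otimes\chi_0^{-1}$, respectively $\chi_1\otimes\chi_1^{-1}\chi_2$, so the claim is immediate.

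For supercuspidal $\pi_0$ of $\GSpin$ groups we would argue with twisted gamma factors. Twisting $\pi_0$ by $\chi_0\circ\xi$ changes $\gamma(X,\pi_0\otimes\rho,\phi_{\LG}\otimes\phi_{\GL_r},\psi)$ into the same gamma factor for $\pi_0\otimes(\rho\otimes\chi_0\circ\det)$, using the definition of the standard $\GSpin\times\GL$ factor via $\overline{r_1}$. On the $\GL_N$ side, the Rankin--Selberg factor of $\mathrm{t}(\pi_0)\otimes(\chi_0\circ\det)$ against $\rho$ has the same behaviour. Equation \eqref{gammafactoreq} together with Henniart's local converse theorem \cite{Henniart} then gives
\[\mathrm{t}_{\G_0,\mathfrak w}(\pi_0\otimes\chi_0\circ\xi)\simeq\mathrm{t}_{\G_0,\mathfrak w}(\pi_0)\otimes(\chi_0\circ\det).\]
Passing to supercuspidal supports gives exactly $\Upsilon_0$. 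The main obstacle is checking that the character identities match the normalisations of $\xi$ and of the similitude in \cite{MR3273456,AsgariShahidiFunct}.

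Finally, we combine the two computations. Supercuspidal supports of parabolic inductions of tensor products are just the concatenations of the supercuspidal supports of the factors. Hence $(\M'',\mathrm{t}_{\M,\mathfrak w_\M}(\pi_\M)_{scs}\otimes\Upsilon)$ lies in $\mathrm{t}^{scs}_{\G,\mathfrak w}(\pi_\M\otimes\chi)$, up to the usual $\GL_N(\E)$-conjugacy of supports.
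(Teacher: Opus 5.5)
Your proposal is correct and follows essentially the same route as the paper. The $\GL$-blocks come from the explicit construction in the proof of Theorem~\ref{achkkspstransfer}, and you write out the dual, Galois-conjugate and central-character bookkeeping that the paper simply calls ``clear''. The middle blocks $\SO_2(\F)$ and $\GSpin_2(\F)$ come from the explicit formulas for their transfers. The remaining $\GSpin$ middle blocks come from comparing twisted gamma factors on both sides and using that these determine supercuspidal support for $\GL_N(\E)$ \cite{Henniart}; the normalisation you flag (the twist by $\chi_0\circ\xi$ matching the twist of $\rho$ by $\chi_0\circ\det$, and $\xi$ restricting to squaring on the centre) is exactly what the paper covers by ``standard properties of unramified twisting''.
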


	\begin{proof}
		The only compatibility which is not clear from the explicit construction is the classical piece of the Levi~$\G_0$, when this has non-compact centre, i.e., for~$\G_0\in \{\SO_{2}(\F),\mathrm{GSpin}_r(\F),\mathrm{GSpin}_{2n,\E'/\F}(\F)\}$. 
		
		The local transfer of a character~$\chi:\SO_2(\F)\rightarrow \mathbb{C}^\times$ is
		\[\mathrm{t}_{\SO_2(\F)}(\chi)=\mathrm{Gen}(\chi\times \chi^{-1}),\]
		where~$\mathrm{Gen}(\chi\times \chi^{-1})$ denotes the unique generic subquotient of~$\chi\times \chi^{-1}$, and there is a unique (trivial) Whittaker datum for~$\SO_2(\F)$ so we omit it from our notation.  The formula for~$\SO_2(\F)$ follows from this.
		
			Similarly, by \cite[§4.1]{AsgariShahidiFunct}, for~$\GSpin_2(\F)\simeq \F^\times\times\F^\times$, the generic local transfer of~$\chi_1\otimes\chi_2$ 
			\[\mathrm{t}_{\GSpin_2(\F)}(\chi_1\otimes\chi_2)=\mathrm{Gen}(\chi_1\times \chi_1^{-1}\chi_2),\]
			where again there is a unique (trivial) Whittaker datum for for~$\GSpin_2(\F)$, the formula for~$\GSpin_2(\F)$ follows.

		For~$\G_0\in\{\mathrm{GSpin}_r(\F),\mathrm{GSpin}_{2n,\E'/\F}(\F)\}\backslash \{\GSpin_1(\F),\GSpin_2(\F)\}$, as twisted gamma factors characterize supercuspidal support for~$\GL_{N}(\E)$, we are reduced to showing the identity
		\[\gamma(X^{f(\E/\F)},( \mathrm{t}_{\G_0,\mathfrak{w}_{\M}\mid_{\G_0}}(\pi)\otimes (\chi_0\circ\det))\otimes \rho,\phi_{\GL_N}\otimes \phi_{\GL_r},\psi)=\gamma(X,(\pi\otimes (\chi_0\circ\xi))\otimes \rho, \phi_{\LG_0}\otimes \phi_{\GL_r},\psi).\]
		which follows from standard properties of unramified twisting shifting the variable~$X$ (for the RHS for split~$\GSpin$ see also \cite[Theorem 3.5]{CaiFriedbergKaplanAnnals}, where an analogous formula is given for gamma factors defined via the doubling method; for the LHS, see for example \cite[Theorem 2.2(vi)]{HenniartLomeliUniquenessofRSproducts}).
	\end{proof}

	\subsection{Interpolating local transfer in families in the banal setting}
	For~$\G$ a $p$-adic group, we let~$\mathrm{N}_{\G}$ denote the product of all \emph{non-banal primes} for~$\G$, that is the product of the primes which divide the pro-order of a compact open subgroup of~$\G$.  We let~$\M_{\G}=\mathrm{N}_{\G}\mathrm{N}_{\GL_N(\E)}$.  
	
	For all~$m\geqslant 1$, we let~$\mathfrak{W}_m=(\U_m,\Psi_m)$ denote the Whittaker datum of~$\GL_m(\E)$ where~$\U_m$ denotes the upper triangular unipotent matrices in~$\GL_m(\E)$, and~$\Psi_m(u)= (\psi_{\E})(\sum_{i=1}^{m-1}u_{i,i+1})$.  In the special case of~$\GL_N(\E)$, we write~$\mathfrak{W}=\mathfrak{W}_N$.  Then we have a natural bijection between the (isomorphism classes of) generic~$\mathbb{C}$-representations for~$\GL_{N}(\E)$ of depth~$t$ and the~$\mathbb{C}$-points of~$\E^{\GL_N(\E)}_{\mathfrak{W},t}$ by Corollary \ref{corbijection}.
	
	By \cite[Theorems 4.22, 5.21]{DHKMconj}, for~$\mathfrak{t}$ (respectively~$\mathfrak{t}'$) a finite sum of primitive idempotents of~$\mathfrak{Z}_{\R_0}(\G)$ (respectively $\mathfrak{Z}_{\R_0}(\GL_N(\E))$), we have decompositions
	\begin{align*}
		\E^{\GL_{N}(\E)}_{\mathfrak{W},\mathfrak{t}'}\otimes\overline{\mathbb{Z}}[1/\N_{\GL_N(\E)}]&= \prod_{[\M',\tau']_{\GL_N(\E)}\in\mathfrak{B}(\GL_N(\E))}e_{[\M',\tau']_{\GL_N(\E)}}(\E^{\GL_{N}(\E)}_{\mathfrak{W},\mathfrak{t}'}\otimes\overline{\mathbb{Z}}[1/\N_{\GL_N(\E)}])
		\\
		\E^{\G}_{\mathfrak{w},\mathfrak{t}}\otimes\overline{\mathbb{Z}}[1/\N_{\G}]&= \prod_{[\M,\tau]_{\G}\in\mathfrak{B}_{\mathfrak{w}}(\G)}e_{{[\M,\tau]_{\G}}}(\E^{\G}_{\mathfrak{w},\mathfrak{t}}\otimes\overline{\mathbb{Z}}[1/\N_{\G}])\end{align*}
	where
	\begin{itemize}
		\item $\mathfrak{B}(\GL_N(\E))$ denotes the set of inertial classes of supercuspidal support for~$\GL_N(\E)$;
		\item $\mathfrak{B}_{\mathfrak{w}}(\G)$ denotes the set of~$\mathfrak{w}$-generic inertial classes of supercuspidal support for~$\G$, i.e., the subset of~$\mathfrak{B}(\G)$ consisting of supercuspidal supports whose parabolic induction contains a~$\mathfrak{w}$-generic representation.
		\item $e_{[\M,\tau]_{\G}}$ is the primitive idempotent of the Bernstein centre corresponding to~$[\M,\tau]_{\G}$;
		\item $e_{[\M',\tau']_{\GL_{N}(\E)}}$ is the primitive idempotent of the Bernstein centre corresponding to~$[\M',\tau']_{\GL_{N}(\E)}$.
	\end{itemize}
	Moreover, assuming~$e_{\mathfrak{t}}e_{[\M,\tau]_{\G}}\neq 0$ and~$e_{\mathfrak{t}'}e_{[\M',\tau']_{\GL_N(\E)}}\neq 0$, the choice of representatives for the inertial classes, fixes isomorphisms:
	\begin{align*}
		e_{[\M',\tau']_{\GL_N(\E)}}(\E^{\GL_{N}(\E)}_{\mathfrak{W},\mathfrak{t}'}\otimes\overline{\mathbb{Z}}[1/\N_{\GL_N(\E)}])& \simeq (\overline{\mathbb{Z}}[1/\N_{\GL_N(\E)}][\M'/\M'^{\circ}]^{\H_{[\M',\tau']}})^{\W_{[\M',\tau']}};
		\\
		e_{{[\M,\tau]_{\G}}}(\E^{\G}_{\mathfrak{w},\mathfrak{t}}\otimes\overline{\mathbb{Z}}[1/\N_{\G}])
		&\simeq(\overline{\mathbb{Z}}[1/\N_{\G}][\M/\M^{\circ}]^{\H_{[\M,\tau]}})^{\W_{[\M,\tau]}}.
	\end{align*}
	where
	\begin{itemize}
		\item $\H_{[\M,\tau]}$ is the finite group of unramified characters of~$\M$ fixing~$\tau$;
		\item $\H_{[\M',\tau']}$ is the finite group of unramified characters of~$\M'$ fixing~$\tau'$;
		\item $\W_{[\M,\tau]}=\{g\in\G\mid \M^g=\M,~(\M,\tau^g)\in[\M,\tau]_{\M}\}/\M$;
		\item $\W_{[\M',\tau']}=\{g\in\GL_{N}(\E)\mid \M'^g=\M',~(\M',\tau'^g)\in[\M',\tau']_{\M'}\}/\M'$.
	\end{itemize}
	
	We let~$\mathfrak{B}_{\sim}(\GL_N(\E))$ denote the subset of~$\mathfrak{B}(\GL_N(\E))$ consisting of inertial supercuspidal supports~$[\M',\tau']_{\GL_N(\E)}$ which have a representative~$(\M',\tau')$ which is (twisted) self-dual in the following sense:
	\begin{enumerate}
		\item $\M'$ is~$\GL_{\N}(\E)$-conjugate to a block diagonal Levi~
		\[\GL_{N_1}(\E)\times \cdots \times \GL_{N_k}(\E) \times (\GL_{N_{0,1}}(\E)\times \cdots \times \GL_{N_{0,d}}(\E))\times  \GL_{N_k}(\E) \times \cdots \times \GL_{N_1}(\E).\]
		\item Under this isomorphism, we have
		\begin{align*}
			\tau'&\simeq \tau_1'\otimes \cdots \otimes \tau_k'\otimes (\tau'_{0,1}\otimes \cdots \otimes \tau'_{0,d})\otimes \widetilde{\tau'_k}\otimes \cdots \otimes \widetilde{\tau_1'},\\
			\widetilde{\tau_{0,i}'}&\simeq \tau_{0,i}'\text{ for~$1\leqslant i\leqslant d$},\end{align*}
		where~$\widetilde{~}$ for~$\G\not\in\{\mathrm{GSpin}_r(\F),\mathrm{GSpin}_{2n,\E'/\F}(\F)\}$ is the involution defined by Equation \eqref{widetildedef},  
		and for~$\G\in\{\mathrm{GSpin}_r(\F),\mathrm{GSpin}_{2n,\E'/\F}(\F)\}$, there exists a character~$\chi:\F^\times\rightarrow \mathbb{C}^\times$ such that $\widetilde{~}=(~)^{\vee}\otimes (\chi\circ\det)$.
	\end{enumerate}
	
	\begin{thm}\label{banalmainthm}
		Let~$\G$ be one of the quasi-split groups
		\[\Sp_{2n}(\F),\SO_r(\F),\SO_{2n,\E'/\F}(\F),\U_n(\E/\F),\mathrm{GSpin}_r(\F),\mathrm{GSpin}_{2n,\E'/\F}(\F).\]  
		\begin{enumerate}
		\item For every finite sum of primitive idempotents~$e_{\mathfrak{t}}\in\mathfrak{Z}_{\R_0}(\G)$,  there exists a unique unital morphism of~$\R_0[1/\M_{\G}]$-algebras
		\[\mathbf{t}_{\G,\mathfrak{W},\mathfrak{w},\mathfrak{t}}\colon \E^{\GL_{N}(\E)}_{\mathfrak{W}}[1/\mathrm{M}_{\G}]\rightarrow \E^{\G}_{\mathfrak{w},\mathfrak{t}}[1/\mathrm{M}_{\G}]\]
		such that if~$\pi$ is a~$\mathfrak{w}$-generic irreducible~$\mathbb{C}$-representation of~$\G$ of type~$\mathfrak{t}$ then~$ f_{\pi,0}\circ \mathbf{t}_{\G,\mathfrak{W},\mathfrak{w}}=f_{\mathrm{t}_{\G,\mathfrak{w}}(\pi),0}$.  
		\item  There exists a finite sum of primitive idempotents~$e_\mathfrak{t'}\in\mathfrak{Z}_{\R_0}(\GL_N(\E))$ such that
		\[\mathbf{t}_{\G,\mathfrak{W},\mathfrak{w},\mathfrak{t}}\circ (1-e_{\mathfrak{t}'})=0.\]
		\item	 The extension of~$\mathbf{t}_{\G,\mathfrak{W},\mathfrak{w},\mathfrak{t}}$ to~$\overline{\mathbb{Z}}[1/\M_{\G}]$ satisfies
		\[\mathbf{t}_{\G,\mathfrak{W},\mathfrak{w},\mathfrak{t}}\circ (1-\sum_{\substack{\mathfrak{s}\in\mathfrak{B}_{\sim}(\GL_N(\E))\\e_{\mathfrak{t}'}e_{\mathfrak{s}}\neq 0}}e_{\mathfrak{s}})=0.\]
		\end{enumerate}
	\end{thm}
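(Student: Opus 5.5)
Since $\E^{\G}_{\mathfrak{w},\mathfrak{t}}[1/\mathrm{M}_{\G}]$ is reduced and finitely generated over $\R_0[1/\mathrm{M}_\G]$ (Lemma \ref{Ebasicfacts}), its $\mathbb{C}$-points are Zariski dense. A morphism into it is therefore determined by its composites with all $f_{\pi,0}$, which gives uniqueness in part (1) at once via Corollary \ref{corbijection}. For existence I will first work after extending scalars to $\overline{\mathbb{Z}}[1/\M_\G]$, block by block, using the banal decompositions recalled above. Only afterwards will I descend to $\R_0[1/\M_\G]$, using Galois invariance (Lemma \ref{galoisinvariance}).

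Fix a generic inertial class $[\M,\tau]_\G$ meeting $\mathfrak{t}$, with $\tau$ a $\mathfrak{w}_\M$-generic supercuspidal. Lemma \ref{scscompatibility} provides a self-dual supercuspidal datum $(\M'',\tau'')$ of $\GL_N(\E)$, namely $\tau''=\mathrm{t}_{\M,\mathfrak{w}_\M}(\tau)_{scs}$. Its class $\mathfrak{s}=[\M'',\tau'']$ lies in $\mathfrak{B}_\sim(\GL_N(\E))$, and only finitely many such $\mathfrak{s}$ arise from the finitely many classes in $\mathfrak{t}$; this will give parts (2) and (3). Lemma \ref{unramifiedtwisting} shows that twisting by $\chi\in\Hom(\M/\M^\circ,\mathbb{C}^\times)$ corresponds to twisting by $\Upsilon(\chi)$, which is an algebraic homomorphism of tori, dual to a group homomorphism $u\colon\M''/\M''^\circ\to\M/\M^\circ$. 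I then define a ring map
\[\overline{\mathbb{Z}}[1/\M_\G][\M''/\M''^\circ]\to\overline{\mathbb{Z}}[1/\M_\G][\M/\M^\circ]\]
by $u$. The next step is to check that it carries $\H_{[\M'',\tau'']}$-invariants into $\H_{[\M,\tau]}$-invariants and $\W_{[\M'',\tau'']}$-invariants into $\W_{[\M,\tau]}$-invariants. The point is that the induced map on $\mathbb{C}$-points, from $\M$-supercuspidal supports to $\GL_N$-supports, is well defined on $\W$-orbits: this is exactly the statement that $\mathrm{t}^{scs}_{\G,\mathfrak{w}}$ is well defined (Lemma \ref{scscompatibility}). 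Since the rings are reduced and the torus points are dense, the algebraic inclusion of invariants follows. Composing with projection onto the $e_{\mathfrak{s}}$-block and summing over blocks yields a morphism
\[\E^{\GL_N(\E)}_{\mathfrak W}\otimes\overline{\mathbb{Z}}[1/\M_\G]\to \E^{\G}_{\mathfrak w,\mathfrak t}\otimes\overline{\mathbb{Z}}[1/\M_\G].\]
It is zero on all blocks outside this finite set, which proves (2) and (3). On $\mathbb{C}$-points it sends $\pi$ to the generic representation with supercuspidal support $\mathrm{t}^{scs}(\pi)$. That representation is $\mathrm{t}_{\G,\mathfrak{w}}(\pi)$, since a generic representation is determined by its supercuspidal support; this establishes the interpolation identity in (1).

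For descent, $\mathrm{Aut}(\mathbb{C}/\mathbb{Q}(\sqrt{q},\mu_{4p^\infty}))$ acts on $\overline{\mathbb{Z}}[1/\M_\G]$-points compatibly on both sides, and Lemma \ref{galoisinvariance} makes the constructed map equivariant on $\mathbb{C}$-points. By density and reducedness, the map commutes with the Galois action on coefficients. Hence it restricts to $\R_0[1/\M_\G]$-structures, using $\E\otimes_{\R_0}\overline{\mathbb{Z}}$ with Galois invariants $\E$ (flatness plus \cite[Lemma A.1]{DHKMconj}).

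I expect the main obstacle to be the block-level compatibility of the invariant rings. The difficulty is that the identifications with $(\overline{\mathbb{Z}}[\M/\M^\circ]^{\H})^{\W}$ depend on the choice of representatives. Moreover, $\Upsilon$ involves the extra twists by $\chi_0$ for $\SO_2$ and $\GSpin$, which need not be $\H$-compatible on the nose. I would try first to verify only the $\mathbb{C}$-point statement and deduce the algebra map from density and normality-free reducedness. The fallback is that a polynomial map of varieties compatible on points between reduced affine schemes of finite type over $\overline{\mathbb{Z}}[1/\M_\G]$, given by a ring map defined before taking invariants, lands in the invariants.
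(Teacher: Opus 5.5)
Your overall architecture is the paper's: work block by block over $\overline{\mathbb{Z}}[1/\M_\G]$ with the twisting map $\Upsilon$ of Lemma \ref{unramifiedtwisting}, check invariance on $\mathbb{C}$-points using that $\mathrm{t}^{scs}_{\G,\mathfrak{w}}$ is well defined (Lemma \ref{scscompatibility}), send all other blocks to zero, and descend via Lemma \ref{galoisinvariance}. One point of phrasing: invariance has to be proved for the combined $(\H,\W)$-invariant rings, not separately for $\H$ and for $\W$. The image $\Upsilon(\H_{[\M,\tau]})$ need not lie in $\H_{[\M'',\tau'']}$, only in it up to $\W_{[\M'',\tau'']}$. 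Your pointwise argument does give the combined statement.

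The genuine gap is your claim that $\Upsilon$ is always dual to a group homomorphism $u\colon \M''/\M''^\circ\to\M/\M^\circ$. This fails when the classical factor of the Levi is $\G_0=\GSpin_{2,\E'/\F}(\F)\simeq \E'^\times$ with $\E'/\F$ unramified. That case occurs already in the unramified principal-series block of every $\GSpin_{2n,\E'/\F}(\F)$ with $\E'/\F$ unramified, which the theorem covers.
\begin{itemize}
\item Here $\G_0/\G_0^\circ\simeq\mathbb{Z}$ via $v_{\E'}$, but $v_\F\circ N_{\E'/\F}=2v_{\E'}$. So the unramified character of $\G_0$ with coordinate $Y_0$ equals $\chi_0\circ N_{\E'/\F}$ only when $\chi_0(\varpi_\F)=Z$ with $Z^2=Y_0$.
\item The transferred support is $\{\chi_0,\chi_0\eta_{\E'/\F}\}$. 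As an ordered point it is $(Z,-Z)$, and no continuous choice of such an ordering exists as $Y_0$ runs over $\mathbb{C}^\times$.
\item Consequently there is no ring map $\overline{\mathbb{Z}}[1/\M_\G][\M''/\M''^\circ]\to\overline{\mathbb{Z}}[1/\M_\G][\M/\M^\circ]$ at all. Your fallback presupposes a ring map defined before taking invariants, so it does not apply.
\end{itemize}
The paper's fix is to adjoin an auxiliary variable $Z$ with $Z^2=Y_0$ and map $X_{0,j}\mapsto Z$ and $X_{-i}\mapsto Y_i^{-1}Z^2$ into $\overline{\mathbb{Z}}[1/\M_\G][Z^{\pm1},Y_1^{\pm1},\ldots,Y_k^{\pm1}]$. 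The substitution $Z\mapsto -Z$, i.e.\ twisting by the unramified character $\eta_{\E'/\F}$, swaps $\chi_0$ and $\chi_0\eta_{\E'/\F}$. It is therefore absorbed by $\W_{[\M'',\tau'']}$, so the invariant ring lands in the subring generated by $Y_0^{\pm1},Y_i^{\pm1}$. Your pointwise argument then goes through unchanged.

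There is a smaller omission as well. The banal identifications of blocks with $(\overline{\mathbb{Z}}[1/\M_\G][\M/\M^\circ]^{\H})^{\W}$ depend on representatives defined over $\overline{\mathbb{Z}}[1/\M_\G]$. You need to choose $\tau$ integral with finite-order central character, as in \cite[Proposition 4.8]{DHKMconj}. You then need to check that $\tau''=\mathrm{t}_{\M,\mathfrak{w}_\M}(\tau)_{scs}$ is again such a representative, using finite-order central character via \cite[Theorem 3.2]{ACS}. Otherwise the base points on the two sides need not match, and your group-ring map would not interpolate the transfer.
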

	
	\begin{proof}
		If~$\G$ is symplectic, odd special orthogonal, or quasi-split unitary then this follows from \cite[Theorem 8.2, §7]{DHKMconj}, via the banal Local Langlands in Families correspondence.  
		
		In general, to define a morphism, it suffices to work inertial class, by inertial class.  Let~$[\M,\tau]_{\G}\in\mathfrak{B}_{\mathfrak{w}}(\G)$, and choose a representative~$(\M,\tau)$ with~$\tau$ defined over a number field~$\K/\mathbb{Q}$, with finite order central character, and~$\mathfrak{o}_{\K}$-integral (possible by \cite[Proposition 4.8]{DHKMconj}).  Write~$\M\simeq \G_0\times \prod_{i=1}^k\GL_{n_i}(\E)$ where~$\G_0$ is of the same type as~$\G$, and write~$\tau\simeq \tau_0\otimes \bigotimes_{i=1}^k\tau_i$.

		We consider~$[\M',\tau']_{\GL_N(\E)}=\mathbf{t}_{\G,\mathfrak{w}}^{scs}([\M,\tau]_{\G})$, as in the last section this has the form:
		\begin{align*}
			\M'&\simeq \prod_{i=1}^{k}\GL_{n_i}(\E)\times \prod_{j=1}^d\GL_{N_{0,j}}(\E)\times \prod_{i=1}^{k}\GL_{n_i}(\E)\\
			\tau'&\simeq  \bigotimes_{i=1}^k \tau_i\otimes ( \mathbf{t}_{\G_0,\mathfrak{w}_{\M}\mid_{\G_0}}(\tau_0))_{sc}\otimes \bigotimes_{i=1}^{k} \widetilde{\tau_i}\end{align*}
		where~$ (\prod_{j=1}^d\GL_{N_{0,j}}(\E), \mathbf{t}_{\G_0,\mathfrak{w}_{\M}\mid_{\G_0}}(\tau_0)_{sc})$ lies in the supercuspidal support of~$ \mathbf{t}_{\G_0,\mathfrak{w}_{\M}\mid_{\G_0}}(\tau_0)$.  As~$\tau$ is~$\mathfrak{o}_{\K}$-integral, so are~$\tau_0,\tau_i,\widetilde{\tau}_i$.  
		From \cite[Theorem 3.2]{ACS}, the central character of~$( \mathbf{t}_{\G_0,\mathfrak{w}_{\M}\mid_{\G_0}}(\tau_0))_{sc}$ is of finite order (for~$\K'/\K$ finite), and it follows from \cite[Proposition 4.8]{DHKMconj} that~$\tau'$ is~$\mathfrak{o}_{\K'}$-integral.
		
		We claim there is a unique morphism (where~$(\mathfrak{t}',\mathfrak{t})$ are chosen to support the inertial classes)
		\[e_{[\M',\tau']_{\GL_N(\E)}}(\E^{\GL_{N}(\E)}_{\mathfrak{W},\mathfrak{t}'}\otimes\overline{\mathbb{Z}}[1/\M_{\G}])\rightarrow e_{{[\M,\tau]_{\G}}}(\E^{\G}_{\mathfrak{w},\mathfrak{t}}\otimes\overline{\mathbb{Z}}[1/\M_{\G}])
		\]
		interpolating the local transfer on these direct factors.    From the choice of~$\tau,\tau'$, this morphism is identified with a morphism
		\[(\overline{\mathbb{Z}}[1/\M_{\G}][\M'/\M'^{\circ}]^{\H_{[\M',\tau']}})^{\W_{[\M',\tau']}}
		\rightarrow (\overline{\mathbb{Z}}[1/\M_{\G}][\M/\M^{\circ}]^{\H_{[\M,\tau]}})^{\W_{[\M,\tau]}}.\]
		By the choices of isomorphism above, we have
		\begin{align*}
			\overline{\mathbb{Z}}[1/\M_{\G}][\M'/\M'^{\circ}]&\simeq \overline{\mathbb{Z}}[1/\M_{\G}][X_1^{\pm1},\ldots ,X_k^{\pm 1}, X_{0,1}^{\pm 1},\ldots ,X_{0,d}^{\pm 1},X_{-k}^{\pm 1},\ldots,X_{-1}^{\pm 1}]\\
			\overline{\mathbb{Z}}[1/\M_{\G}][\M/\M^{\circ}]&\simeq\begin{cases}
				\overline{\mathbb{Z}}[1/\M_{\G}][Y_1^{\pm1},\ldots ,Y_k^{\pm 1}] &\text{if }\G_0\not\in\{\SO_2(\F),\GSpin_r(\F),\GSpin_{2n,\E'/\F}(\F)\}\\
				\overline{\mathbb{Z}}[1/\M_{\G}][Y_0^{\pm1},Y_1^{\pm1},\ldots ,Y_k^{\pm 1}] &\text{if }\G_0\in \{\SO_2(\F),\GSpin_r(\F)~(r\neq1,2),\GSpin_{2n,\E'/\F}(\F)\}\\
				\overline{\mathbb{Z}}[1/\M_{\G}][Y_{0,1}^{\pm1},Y_{0,2}^{\pm1},Y_1^{\pm1},\ldots ,Y_k^{\pm 1}]  &\text{if }\G_0=\GSpin_2(\F).
			\end{cases}
		\end{align*}
		Then we define a morphism
		\begin{align*}
			\Phi:\overline{\mathbb{Z}}[1/\M_{\G}][\M'/\M'^{\circ}]
			&\rightarrow \overline{\mathbb{Z}}[1/\M_{\G}][\M/\M^{\circ}]\\
			X_i&\mapsto Y_i\\
			X_{0,j}&\mapsto 1\text{ if }\G_0\not\in\{\SO_2(\F),\GSpin_r(\F),\GSpin_{2n,\E'/\F}(\F)\}\\
			X_{-i}&\mapsto \begin{cases}
				Y_i^{-1}&\text{if }\G_0\not \in\{\GSpin_r(\F),\GSpin_{2n,\E'/\F}(\F)\}\\
				Y_i^{-1}Y_0^2&\text{if }\G_0\in\{\GSpin_r(\F),\GSpin_{2n,\E'/\F}(\F)~(n\neq 1)\}\\
				Y_i^{-1}Y_0&\text{if }\G_0=\GSpin_1(\F)\text{ where~$d=0$};
			\end{cases}
		\end{align*}
		where if~$\G_0=\SO_2(\F)$, then~$d=2$, and we define
		\[X_{0,1}\mapsto Y_0,\quad X_{0,2}\mapsto Y_0^{-1}.\]
		
		If~$\G_0\in\{\GSpin_r(\F),\GSpin_{2n,\E'/\F}(\F)\}$, except in the cases~$r=1,2$, we set then
		\[X_{0,j}\mapsto Y_0;\]
		whereas in the exceptional cases:
		\begin{enumerate}
		\item if~$\G_0=\GSpin_2(\F)$, then~$d=2$, and we define
		\[X_i\mapsto Y_i,\quad X_{0,1}\mapsto Y_{0,1} \quad X_{0,2}\mapsto  Y_{0,1}^{-1}Y_{0,2},\quad X_{-i}\mapsto Y_i^{-1}Y_{0,2};\]
		\item if~$\G_0=\GSpin_1(\F)\simeq \F^\times$, then $d=0$, but we set
		\[X_{-i}\mapsto Y_i^{-1}Y_0.\]
		\item if~$\G_0=\GSpin_{2,\E'/\F}(\F)$ and~$\E'/\F$ is unramified, then we introduce~$Z$ such that~$Z^2=Y_0$.  Then 
		\[X_i\mapsto Y_i,\quad X_{0,j}\mapsto Z,\quad X_{-i}\mapsto Y_i^{-1}Z^2.\]
		Here~$Z$ is an auxiliary variable, when we consider the morphism from~$(\overline{\mathbb{Z}}[1/\M_{\G}][\M'/\M'^{\circ}]^{\H_{[\M',\tau']}})^{\W_{[\M',\tau']}}$, the image lies in the subring~$\overline{\mathbb{Z}}[1/\M_{\G}][Y_0^{\pm1},Y_1^{\pm1},\ldots ,Y_k^{\pm 1}] $.
		 \item If~$\G_0=\GSpin_{2,\E'/\F}(\F)$ and~$\E'/\F$ is ramified, then we have the same formula as the general~$\GSpin$ case:
		 		\[X_i\mapsto Y_i,\quad X_{0,j}\mapsto Y_0,\quad X_{-i}\mapsto Y_i^{-1}Y_0^2.\]

		\end{enumerate}
		
		In all cases, this defines a morphism
		\[(\overline{\mathbb{Z}}[1/\M_{\G}][\M'/\M'^{\circ}]^{\H_{[\M',\tau']}})^{\W_{[\M',\tau']}}
		\rightarrow \overline{\mathbb{Z}}[1/\M_{\G}][\M/\M^{\circ}].\]
		which we need to show has image in~$( \overline{\mathbb{Z}}[1/\M_{\G}][\M/\M^{\circ}]^{\H_{[\M,\tau]}})^{\W_{[\M,\tau]}}$, and is compatible with local transfer over~$\mathbb{C}$ via the above identifications.  Notice, as~$ \overline{\mathbb{Z}}[1/\M_{\G}][\M/\M^{\circ}]$ is torsion-free over~$ \overline{\mathbb{Z}}$, it suffices to show that after extending scalars to~$\mathbb{C}$ the image of the morphism
		\[\Phi_{\mathbb{C}}:(\mathbb{C}[\M'/\M'^{\circ}]^{\H_{[\M',\tau']}})^{\W_{[\M',\tau']}}
		\rightarrow \mathbb{C}[\M/\M^{\circ}],\]
		lies in~$(\mathbb{C}[\M/\M^{\circ}]^{\H_{[\M,\tau]}})^{\W_{[\M,\tau]}}$.
		For any unramified characters~$\chi_1,\chi_2:\M\rightarrow \mathbb{C}^\times$, the morphism
		\[(\mathbb{C}[\M'/\M'^{\circ}]^{\H_{[\M',\tau']}})^{\W_{[\M',\tau']}}\xrightarrow{\Phi_{\mathbb{C}}} \mathbb{C}[\M/\M^{\circ}]\xrightarrow{\chi_i}\mathbb{C}\]
		corresponds to~$ \Psi_i=\Upsilon(\chi_i)$ by Lemma \ref{unramifiedtwisting}.  
		
	Now notice that, by Lemma \ref{scscompatibility}, if the supercuspidal supports~$(\M,\tau\otimes\chi_1),(\M,\tau\otimes\chi_2)$ are~$\G$-conjugate, i.e., if~$\chi_1\mid_{(\mathbb{C}[\M/\M^{\circ}]^{\H_{[\M,\tau]}})^{\W_{[\M,\tau]}}}=\chi_2\mid_{(\mathbb{C}[\M/\M^{\circ}]^{\H_{[\M,\tau]}})^{\W_{[\M,\tau]}}}$, then so too are~$(\M',\tau'\otimes \Psi_i)$, i.e.,
		\[\Psi_1\mid_{(\mathbb{C}[\M'/\M'^{\circ}]^{\H_{[\M',\tau']}})^{\W_{[\M',\tau']}}}=\Psi_2\mid_{(\mathbb{C}[\M'/\M'^{\circ}]^{\H_{[\M',\tau']}})^{\W_{[\M',\tau']}}},\]
		and hence for~$\alpha\in(\mathbb{C}[\M'/\M'^{\circ}]^{\H_{[\M',\tau']}})^{\W_{[\M',\tau']}}$, we have
		\[\chi_1(\Phi_{\mathbb{C}}(\alpha))=\chi_2(\Phi_{\mathbb{C}}(\alpha)).\]
		
	Applying this with~$\chi_2=\chi_1\chi$ for~$\chi\in\H_{[\M,\tau]}$ and~$\chi_1:\M\rightarrow\mathbb{C}^\times$ any unramified character, we deduce that the image of~$\Phi_{\mathbb{C}}$ lies in~$\mathbb{C}[\M/\M^{\circ}]^{\H_{[\M,\tau]}}$.  On the other hand, if~$g\in \W_{[\M,\tau]}$ then~$(\M^g,(\tau\otimes\chi_1)^g)=(\M,\tau\otimes\chi_1^g\chi_g)$ for some~$\chi_g:\M\rightarrow \mathbb{C}^\times$ unramified, which is unique modulo~$\H_{[\M,\tau]}$. We deduce that~$\chi_1$ and~$\chi_1^g\chi_g$ agree on the image of~$\Phi_{\mathbb{C}}$, which shows that the image lies in~$(\mathbb{C}[\M/\M^{\circ}]^{\H_{[\M,\tau]}})^{\W_{[\M,\tau]}}$.  We send all other Bernstein components (i.e., all components not contributing to local transfer) in~$\mathfrak{B}(\GL_N(\E))$ to zero
		

		Running over all inertial classes in~$\mathfrak{B}_{\mathfrak{w}}(\G)$ of type~$\mathfrak{t}$, defines a morphism
		\[\mathbf{t}_{\G,\mathfrak{W},\mathfrak{w}}:\E^{\GL_{N}(\E)}_{\mathfrak{W},\mathfrak{s}}\otimes\overline{\mathbb{Z}}[1/\M_{\G}]\rightarrow \E^{\G}_{\mathfrak{w},\mathfrak{t}}\otimes\overline{\mathbb{Z}}[1/\M_{\G}]
		,\]
		where~$\mathfrak{s}$ is the necessarily finite sum of depths with an inertial class with non-zero image in~$\E^{\G}_{\mathfrak{w},t}\otimes\overline{\mathbb{Z}}[1/\M_{\G}]$.	Note also that the support is contained in~$\mathfrak{B}_{\sim}(\GL_N(\E))$ by \cite[Theorem 3.2]{ACS} (for the supercuspidal transfer, (twisted) self-dual part), and from the construction in the proof of Theorem \ref{achkkspstransfer}.  
		
						Galois invariance, Lemma \ref{galoisinvariance}, defines a morphism
				\[\mathbf{t}_{\G,\mathfrak{W},\mathfrak{w},\mathfrak{t}}:\E^{\GL_{N}(\E)}_{\mathfrak{W}}[1/\M_{\G}]\rightarrow \E^{\G}_{\mathfrak{w},\mathfrak{t}}[1/\M_{\G}],\]
				by \cite[Corollary 3.6]{DHKMconj}.
			\end{proof}
	
	\subsection{Interpolating local transfer in families}
	We can now restate the 
	Helm--Moss' descent theorem using Langlands--Shahidi gamma factors by Corollary \ref{Mossequality}.  We set~$k=\Fl$.
	
	\begin{thm}[{\cite[Theorem 5.1]{HM}}]\label{converse}
		Let~$\R,\R'$ be Noetherian~$W(k)[\sqrt{q}]$-algebras with~$\R'\subseteq \R$.  Suppose that~$\R$ is a finitely generated module over~$\R'$, and let~$\rho$ be a co-Whittaker~$\R$-representation of~$\GL_{N}(\E)$ of type~$\mathfrak{t}$ with~$N\geqslant 2$.  Suppose that for all primitive idempotents~$e_{\mathfrak{s}} \in\mathfrak{Z}_{W(k)}(\GL_{N-1}(\E))$, the coefficients of
		\begin{align*}&\gamma(X^{-f(\E/\F)},\rho\otimes (\Gamma^{\GL_{N-1}(\E)}_{\mathfrak W_{N-1},\mathfrak{s},W(k)})^{\iota},\phi_{\GL_N}\otimes\phi_{\GL_{N-1}}^{\vee},\psi_{\E}),~\text{and}\\ &\gamma(X^{f(\E/\F)},\rho^{\iota}\otimes  (\Gamma^{\GL_{N-1}(\E)}_{\mathfrak W_{N-1},\mathfrak{s},W(k)})^{\iota},\phi_{\GL_N}\otimes\phi_{\GL_{N-1}}^{\vee},\psi^{-1}\circ\Tr_{\E/\F})\end{align*}
		lie in~$\R'\otimes  \E^{\GL_{N-1}(\E)}_{\mathfrak{W}_{N-1},\mathfrak{s},W(k)}$.  Then the map~$f_\rho\colon \E^{\GL_N(\E)}_{\mathfrak{W},\mathfrak{t}}\rightarrow \R$ 
		takes values in~$\R'$.
	\end{thm}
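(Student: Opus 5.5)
The plan is to deduce the statement directly from Helm--Moss's descent theorem \cite[Theorem 5.1]{HM}, whose hypothesis is phrased in terms of Moss's Rankin--Selberg gamma factors $\gamma_{RS}$. The only work is to translate that hypothesis into the Langlands--Shahidi gamma factors of Theorem \ref{mainthm}, using Corollary \ref{Mossequality}.

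First, fix a primitive idempotent $e_{\mathfrak{s}}$ for $\GL_{N-1}(\E)$ and write $\Gamma_{\mathfrak{s}}=\Gamma^{\GL_{N-1}(\E)}_{\mathfrak W_{N-1},\mathfrak{s},W(k)}$. We apply Corollary \ref{Mossequality} over $\E$ rather than $\F$, with $n=N$, $m=N-1$, $\rho_1=\rho$ (respectively $\rho^{\iota}$), $\rho_2=\Gamma_{\mathfrak{s}}^{\iota}$, and coefficient ring $\R\otimes \E^{\GL_{N-1}(\E)}_{\mathfrak{W}_{N-1},\mathfrak{s}}$. This ring is Noetherian, since $\E^{\GL_{N-1}}$ is finitely generated by Lemma \ref{Ebasicfacts}. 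Because $(\Gamma_{\mathfrak{s}}^{\iota})^{\iota}=\Gamma_{\mathfrak{s}}$, the corollary identifies each Langlands--Shahidi factor in the statement with $\gamma_{RS}(\cdot,\rho,\Gamma_{\mathfrak{s}},\psi_{\E})$ (respectively with $\rho^{\iota}$ and $\psi_{\E}^{-1}$), after the substitution $X\mapsto q^{(n-m)/2}X$ (here $(n-m)/2=1/2$). The variable must also be rescaled to match Helm--Moss's normalization, i.e.\ $X^{\mp f(\E/\F)}$ versus the $\E$-variable, and $\phi_{\GL_{N-1}}^{\vee}$ versus $r$ of Example \ref{RSexample}.

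Both of these operations are invertible ring automorphisms of $\mathcal{S}_{X}^{-1}(\ldots)[X^{\pm1}]$ that only multiply the coefficient of $X^{t}$ by a power of $\sqrt{q}$, which is a unit in $W(k)[\sqrt q]\subseteq\R'$. The same holds for $X\mapsto X^{-1}$. So the coefficients of the Langlands--Shahidi factors lie in $\R'\otimes\E^{\GL_{N-1}}_{\mathfrak{W}_{N-1},\mathfrak{s}}$ if and only if the coefficients of the corresponding Rankin--Selberg factors do, in whatever normalization Helm--Moss use. Coefficients are read via the Laurent expansion of Proposition \ref{powerseriesexpansion}, which is compatible with these substitutions by its uniqueness.

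The hypotheses of \cite[Theorem 5.1]{HM} are then satisfied, and its conclusion is exactly that $f_\rho$ takes values in $\R'$.

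The main obstacle is bookkeeping rather than substance. I would check carefully that the conventions match: the contragredient $\phi^{\vee}_{\GL_{N-1}}$ against the duality $\iota$, the power $f(\E/\F)$ and the sign of the exponent, and the choice of $\psi_{\E}$ against $\psi_{\E}^{-1}$. The aim is that the two factors listed correspond precisely to the two families, $\rho\times\Gamma_{\mathfrak{s}}$ and $\rho^{\iota}\times\Gamma_{\mathfrak{s}}$, appearing in \cite{HM}. If a residual constant or epsilon factor appears, for example a $\lambda$-factor or a central character value, I would check that it is a unit lying in $\R'$ (it is a root of unity or a power of $\sqrt q$), so that it does not affect the integrality statement.
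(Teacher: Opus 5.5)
Your route is the paper's own: the paper gives no argument beyond restating \cite[Theorem 5.1]{HM} via Corollary \ref{Mossequality}. But one step of your translation is false as justified.

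You claim that $X\mapsto X^{-1}$, like $X\mapsto cX$ ($c$ a power of $\sqrt{q}$) and $X\mapsto X^{f(\E/\F)}$, merely rescales the coefficient of $X^t$. It does not. The expansions of Proposition \ref{powerseriesexpansion} live in $\R[[X]][X^{-1}]$, and the expansion there of $\gamma(X^{-1})$ is the expansion of $\gamma$ at $X=\infty$, not a reindexing of the one at $0$. For a unit $a$, $(1-aX)^{-1}=\sum_{k\geq 0}a^kX^k$, whereas $(1-aX^{-1})^{-1}=-\sum_{k\geq 1}a^{-k}X^k$. So for a general subring $B\subseteq A$ with $a\in B$ but $a^{-1}\notin B$, the property ``coefficients in $B$'' is not preserved.

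You cannot bypass this step. Corollary \ref{Mossequality} is stated in the non-inverted variable, so the first hypothesis of the statement, at $X^{-f(\E/\F)}$, only says that $\gamma_{RS}(cX^{-f(\E/\F)},\rho,\Gamma_{\mathfrak s},\psi_{\E})$ has coefficients in $\R'\otimes\E^{\GL_{N-1}(\E)}_{\mathfrak W_{N-1},\mathfrak s}$. That controls the expansion of the Rankin--Selberg factor at $\infty$, while the hypotheses of \cite[Theorem 5.1]{HM} concern expansions at $0$.

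The missing ingredient is a Fatou-type descent, and this is where the finiteness of $\R$ over $\R'$ must be used a second time. Let $B=\R'\otimes\E^{\GL_{N-1}(\E)}_{\mathfrak W_{N-1},\mathfrak s}$ inside $A=\R\otimes\E^{\GL_{N-1}(\E)}_{\mathfrak W_{N-1},\mathfrak s}$, so $A$ is a finitely generated $B$-module. The claim you need is: if $\gamma\in\mathcal S_{X,A}^{-1}A[X^{\pm1}]$ has expansion $L_\gamma$ with coefficients in $B$, then $\gamma\in\mathcal S_{X,B}^{-1}B[X^{\pm1}]$. After that, $X\mapsto X^{-1}$ is harmless because it preserves $\mathcal S_{X,B}$.

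To prove the claim, take a polynomial $Q\in\mathcal S_{X,A}$ with unit constant term and $Q\gamma\in A[X^{\pm1}]$. The coefficient sequence of $L_\gamma$ eventually satisfies the recurrence given by $Q$. Since $Q$ has unit extreme coefficients, this sequence extends uniquely to a two-sided sequence $\tilde c$, agreeing with it for $t\gg 0$, with $Q(\tau)\tilde c=0$, where $\tau$ is the shift. The span $\sum_{j\in\mathbb Z}A\tau^j\tilde c$ is a finitely generated $A$-module, hence a finitely generated $B$-module, and it is stable under $\tau^{\pm1}$. Cayley--Hamilton then gives monic $f,g\in B[T]$ with $f(\tau)\tilde c=0=g(\tau^{-1})\tilde c$. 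For $N>\deg g$, the polynomial $h=T^{\deg g}g(T^{-1})+T^Nf(T)\in B[T]$ has constant and leading coefficient $1$ and kills $\tilde c$. Hence $h(X)\in\mathcal S_{X,B}$ and $hL_\gamma\in B[X^{\pm1}]$, and $\gamma=hL_\gamma/h$ by the uniqueness in Proposition \ref{powerseriesexpansion}.

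A smaller point: Helm--Moss's conclusion concerns the integral Bernstein centre. To read it as the statement that $f_\rho$ takes values in $\R'$, you also need Helm's identification of that centre with $\E^{\GL_N(\E)}_{\mathfrak W,\mathfrak t}$.
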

	
	%

	\begin{thm}\label{mainthm2}
		Let~$\G$ be one of the quasi-split groups
		\[\Sp_{2n}(\F),\SO_r(\F),\SO_{2n,\E'/\F}(\F),\U_n(\E/\F),\mathrm{GSpin}_r(\F),\mathrm{GSpin}_{2n,\E'/\F}(\F).\]  
		\begin{enumerate}
			\item There exists a unique morphism of~$\R_0$-algebras
			\[\mathbf{t}_{\G,\mathfrak{W},\mathfrak{w}}\colon\E^{\GL_{N}(\E)}_{\mathfrak{W}}\rightarrow \E^{\G}_{\mathfrak{w}}\]
			such that if~$\pi$ is a generic irreducible~$\mathbb{C}$-representation of~$\G$ then~$(\Gamma_{\mathfrak{W}}^{\GL_N(\E)})_{ f_{\pi,0}\circ\mathbf{t}_{\G,\mathfrak{w}}}$ dominates~$\mathrm{t}_{\G,\mathfrak{w}}(\pi)$.
			\item Let~$e_\mathfrak{t}\in \mathfrak{Z}_{\R_0}(\G)$ be a finite sum of primitive idempotents. Then there exists a finite sum of primitive idempotents~$e_{\mathfrak{s}}\in\mathfrak{Z}_{\R_0}(\GL_{N}(\E))$ such that, for any primitive idempotent~$e_{\mathfrak{t}'}\in\mathfrak{Z}_{\R_0}(\GL_{N}(\E))$ satisfying ~$(1-e_{\mathfrak{s}})e_{\mathfrak{t}'}\neq0$, we have
			\[e_{\mathfrak{t}}\mathbf{t}_{\G,\mathfrak{W},\mathfrak{w}}(\E_{\mathfrak{W},\mathfrak{t}'}^{\GL_N(\E)})= 0.\]
			
			\item Let~$\A=\E^{\G}_{\mathfrak{w},\mathfrak{t}}\otimes_{\R_0} \E^{\GL_r(\E)}_{\mathfrak{W}_r,\mathfrak{t}'}$.  Then the morphism~$\mathbf{t}_{\G,\mathfrak{w}}$, after scalar extension, satisfies the equalities in~$\mathcal{S}_{X,\A}^{-1}\A[X^{\pm1}]$ of universal gamma factors:
			\[\gamma(X,\Gamma^{\G}_{\mathfrak w,\mathfrak{t}}\otimes \Gamma^{\GL_{r}(\E)}_{\mathfrak W_r,\mathfrak{t}'},\phi_{\LG}\otimes\phi_{\GL_r},\psi)=(\mathbf{t}_{\G,\mathfrak{W},\mathfrak{w}}\otimes 1)\gamma(X^{f(\E/\F)},\Gamma^{\GL_N(\E)}_{\mathfrak W,t(\mathfrak{t})}\otimes \Gamma^{\GL_{r}(\E)}_{\mathfrak W_r,\mathfrak{t}'},\phi_{\GL_{N}}\otimes\phi_{\GL_r},\psi_{\E}),\]
			for all~$1\leqslant r\leqslant N-1$, and all primitive idempotents~$e_{\mathfrak{t}}\in\mathfrak{Z}_{\R_0}(\G),e_{\mathfrak{t}'}\in\mathfrak{Z}_{\R_0}(\GL_r(\E))$, 
			and~$t(\mathfrak{t})\in \mathfrak{Z}_{\R_0}(\GL_N(\E))$ is the finite sum of idempotents contained in the support of~$e_{\mathfrak{t}} \circ \mathbf{t}_{\G,\mathfrak{W},\mathfrak{w}}$.  Moreover, if~$\G\not\in\{ \U_1(\E/\F),\GSpin_1(\F)\}$, then these equalities characterize the morphism uniquely.
		\end{enumerate}
	\end{thm}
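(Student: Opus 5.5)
The plan is the Helm--Moss descent strategy: build the morphism after inverting $\M_{\G}$ via Theorem \ref{banalmainthm}, then use Langlands--Shahidi gamma factors in families together with the descent Theorem \ref{converse} to show it is already defined integrally. We work one finite sum of idempotents $e_{\mathfrak t}$ at a time (e.g.\ depth $\leq n$). Theorem \ref{banalmainthm} gives $\mathbf{t}_{\mathfrak t}\colon \E^{\GL_N(\E)}_{\mathfrak W}[1/\M_{\G}]\to \E^{\G}_{\mathfrak w,\mathfrak t}[1/\M_{\G}]$, factoring through a finite sum $e_{\mathfrak s}$ of idempotents; this already yields the support statement (2). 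Since $\E^{\G}_{\mathfrak w,\mathfrak t}$ is $\R_0$-torsion free (Lemma \ref{Ebasicfacts}), $\E^{\G}_{\mathfrak w,\mathfrak t}\subseteq \E^{\G}_{\mathfrak w,\mathfrak t}[1/\M_{\G}]$, and the task is to show $\mathbf{t}_{\mathfrak t}(\E^{\GL_N(\E)}_{\mathfrak W,\mathfrak s})\subseteq \E^{\G}_{\mathfrak w,\mathfrak t}$.

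First I prove (3) after inverting $\M_{\G}$. With $\A=\E^{\G}_{\mathfrak w,\mathfrak t}\otimes\E^{\GL_r(\E)}_{\mathfrak W_r,\mathfrak t'}$, which is reduced and torsion free, both sides of the gamma identity lie in $\mathcal S^{-1}_{X,\A[1/\M_{\G}]}\A[1/\M_{\G}][X^{\pm1}]$. By Theorem \ref{mainthm}(a),(b) and Corollary \ref{Mossequality}, their specialisations at each $\mathbb C$-point over a minimal prime are the complex gamma factors for $\pi\otimes\rho$ and $\mathrm{t}_{\G,\mathfrak w}(\pi)\otimes\rho$, where $\pi,\rho$ are the generic cosocles. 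These agree by Theorem \ref{achkkspstransfer}, extended from supercuspidal to generic $\rho$ by multiplicativity, and using $f_{\pi,0}\circ\mathbf t=f_{\mathrm t(\pi),0}$ together with dominance in Theorem \ref{mainthm}(c). Since $\A$ is reduced, the two sides are equal.

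The integrality step is the main obstacle. Fix $\ell\mid\M_{\G}$ and work over $W(k)[\sqrt q]$ with $k=\Fl$, using faithfully flat base change. Set $\R=\E^{\G}_{\mathfrak w,\mathfrak t,W(k)}[1/\ell]$; this localisation is not finite over anything natural, so I apply Theorem \ref{converse} as Helm--Moss do. I take $\R'=\E^{\G}_{\mathfrak w,\mathfrak t,W(k)}$ and $\R$ to be the (finite, after replacing by the subring generated by finitely many elements) $\R'$-algebra generated by the image of $\mathbf t$; the image is finitely generated because $\E^{\GL_N}_{\mathfrak W,\mathfrak s}$ is. Finiteness over $\R'$ is the delicate point. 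I would try to deduce integrality of the generators from the fact that the images of Bernstein-centre elements satisfy integral characteristic-polynomial-type relations, using that $\E^{\GL_N}_{\mathfrak W,\mathfrak s}$ is finite over $\mathfrak Z$ of $\GL_N$ and that $\mathfrak Z_{\GL_N}$ maps integrally into $\E^{\G}$. I expect this to work via comparison with the $\ell$-adic integrality of Lemma \ref{cowhittakerDVR} at every $\overline{\mathbb Q}_\ell$-point. Failing this, I would fall back on the normalisation and appeal to normality-free variants of the descent.

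Next, let $\rho=\Gamma^{\GL_N}_{\mathfrak W,\mathfrak s}\otimes_{\mathbf t}\R$, which is co-Whittaker by Lemma \ref{cwlemma}(4). By (3) and compatibility with $\iota$-duals and $\overline{r}$ (Theorem \ref{mainthm}(3)), its gamma factors twisted by $\Gamma^{\GL_{N-1}}$ equal $\G$-side gamma factors with coefficients in $\R'\otimes\E^{\GL_{N-1}}$. Theorem \ref{converse} then gives $f_\rho=\mathbf t$ valued in $\R'$. Finally, uniqueness in (1) and (3) follows because $\E^{\G}_{\mathfrak w}$ is reduced with Zariski-dense $\mathbb C$-points (Corollary \ref{corbijection}). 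In (3) I use the converse theorem for $r\leq N-1$, or for $r\leq[N/2]$ from Theorem \ref{achkkspstransfer}, to pin down each $\mathbb C$-point.
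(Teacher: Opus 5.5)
\paragraph{Where the proposal falls short.} Your overall architecture matches the paper's. You start from the banal morphism of Theorem~\ref{banalmainthm}. You reduce, for each $\ell\mid\M_{\G}$, to showing $\mathbf{t}_{\G,\ell}(\E^{\GL_N(\E)}_{\mathfrak W,W(k)',\mathfrak t'})\subseteq \E^{\G}_{\mathfrak w,W(k)',\mathfrak t}$. You then apply the Helm--Moss descent Theorem~\ref{converse}, fed by Langlands--Shahidi gamma factors and the local functional equation. Your handling of (2), of (3) by specialising at minimal primes, and of uniqueness is fine.

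The genuine gap is the hypothesis of Theorem~\ref{converse} that $\R$ be a finitely generated $\R'$-module. Being generated as an algebra by finitely many elements does not give this. You need the images of the generators to be integral over $\R'=\E^{\G}_{\mathfrak w,W(k)',\mathfrak t}$. Your suggested route to that is circular. Since $\mathfrak{Z}_{\GL_N(\E),\R_0}\simeq \E^{\GL_N(\E)}_{\mathfrak W}$, the assertion that $\mathfrak{Z}_{\GL_N}$ ``maps integrally into $\E^{\G}$'' is precisely the integrality of $\mathbf{t}$ you are trying to prove. No integral map compatible with transfer is available a priori: the Fargues--Scholze one is known to agree with $\mathbf{t}$ only in the cases of Section~\ref{FSconsequences}. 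Appealing to Lemma~\ref{cowhittakerDVR} at $\overline{\mathbb{Q}}_\ell$-points only restates the problem as ``local transfer preserves integrality'', for which you give no argument. You also cannot cite Corollary~\ref{MScorollary}(1), since it is deduced from this theorem.

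\paragraph{The missing step.} The paper supplies it as follows. First show that $\mathbf{t}_{\G,\ell}(\E^{\GL_N(\E)}_{\mathfrak W,W(k)',\mathfrak t'})$ lies in the normalization of $\R'$, using the valuative criterion from the proof of \cite[Theorem 9.5]{HelmCurtis}. That criterion says it suffices to check the following: for every discrete valuation ring $A$ over $W(k)'$ with fraction field $K$ of characteristic zero, and every morphism $\R'\to A$, the composite with $\mathbf{t}_{\G,\ell}$ is $A$-valued. By Lemma~\ref{cowhittakerDVR}~\eqref{cowhittakerDVR1}, this is exactly the statement that the transfer of an $A$-integral generic $K$-representation is $A$-integral.

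That statement needs real input, in three pieces:
\begin{enumerate}
\item After passing to $\overline{K}$ and its valuation ring, an irreducible representation is integral if and only if the central character of its supercuspidal support is integral (\cite[Corollary 1.6]{DHKMfiniteness}, \cite[II 4.12]{Vig96}).
\item The transfer is computed through supercuspidal supports (proof of Theorem~\ref{achkkspstransfer}, Lemma~\ref{scscompatibility}).
\item By the explicit description in \cite[Theorem 3.2]{ACS}, the transfer of a supercuspidal representation with integral central character has cuspidal support with integral central character.
\end{enumerate}
Once the finitely many generator images lie in the normalization, they are integral over $\R'$. Then $\R$ is finite over $\R'$, and your descent argument goes through as written.

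A minor point: for $\ell=2$ and non-split $\G$ you must work over $W(k)[\mu_4,\sqrt q]$ rather than $W(k)[\sqrt q]$. Otherwise the Langlands--Shahidi gamma factors, which involve $\lambda(w_0,\psi)$, are not defined over your coefficient ring.
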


	\begin{proof}
	The cases of~$\GSpin_1(\F)$ and~$\U_1(\E/\F)$ are simple and can be considered directly using local Langlands for tori, so we suppose~$N>1$ for the remainder.  Fix a primitive idempotent~$e_\mathfrak{t}\in\mathfrak{Z}_{\R_0}(\G)$.  From Theorem \ref{banalmainthm}, we have a unital morphism of~$\R_0[1/\mathrm{M}_{\G}]$-algebras
		\[ \mathbf{t}_{\G,\mathfrak{W},\mathfrak{w},\mathfrak{t}}\colon\E^{\GL_{N}(\E)}_{\mathfrak{W}}[1/\mathrm{M}_{\G}]\rightarrow \E^{\G}_{\mathfrak{w},\mathfrak{t}}[1/\mathrm{M}_{\G}]\]
		Moreover, by construction of the morphism, there exists a finite sum of primitive idempotents~$\mathfrak{t}'$ such that the image of~$\E^{\GL_N(\E)}_{\mathfrak{W},\mathfrak{t}'}[1/\mathrm{M}_{\G}]$ is contained in~$\E^{\G}_{\mathfrak{w},\mathfrak{t}}[1/\mathrm{M}_{\G}]$. 
				
		Let~$p\neq\ell\mid\M_{\G}$ be prime and set~$k=\Fl$.  Suppose first~$\ell\neq2$, then for each maximal ideal above~$\ell$ we choose an embedding~$\R_0\hookrightarrow W(k)$ and extending scalars locally over~$W(k)$. We obtain from~$\mathbf{t}_{\G,\mathfrak{W},\mathfrak{w},\mathfrak{t}}$ a morphism
		\[\mathbf{t}_{\G,\ell}\colon\E^{\GL_N(\E)}_{\mathfrak{W},W(k),\mathfrak{t}'}[1/\ell]\rightarrow \E^{\G}_{\mathfrak{w},W(k),\mathfrak{t}}[1/\ell]\]
		and if~$p\neq 2$, choosing for each maximal ideal above~$2$ an embedding~$\R_0\hookrightarrow W(
		\overline{\mathbb{F}_2})[\mu_4,\sqrt{q}]$ (or in the split case, we can choose an embedding~$\R_0\hookrightarrow W(\overline{\mathbb{F}_2})[\sqrt{q}]$ and proceed without adjoining fourth roots of unity) we obtain from~$\mathbf{t}_{\G,\mathfrak{W},\mathfrak{w},\mathfrak{t}}$ a morphism   
		\[\mathbf{t}_{\G,2}\colon\E^{\GL_N(\E)}_{\mathfrak{W},W(\overline{\mathbb{F}_2})[\mu_4,\sqrt{q}],\mathfrak{t}'}[1/2]\rightarrow \E^{\G}_{\mathfrak{w},W(\overline{\mathbb{F}_2} )[\mu_4,\sqrt{q}],\mathfrak{t}}[1/2].\]
		Noting when~$\ell\neq 2$ we have~$W(k)=W(k)[\mu_4,\sqrt{q}]$.  We set~$W(k)'=W(k)[\mu_4,\sqrt{q}]$.  It suffices by \cite[Lemma A.3]{DHKMconj}, to show that 
		 for all~$ \ell\mid \mathrm{M}_{\G}$, and each embedding above, that we have
		\begin{equation}\label{WTS}\tag{$\star$}
		\mathbf{t}_{\G,\ell}(\E^{\GL_N(\E)}_{\mathfrak{W},W(k)',\mathfrak{t}'})\subseteq \E^{\G}_{\mathfrak{w},W(k)',\mathfrak{t}}\end{equation}

		We first observe that this image lies in the normalization:
		
		\begin{lemma}
			We have
			\[\mathbf{t}_{\G,\ell}(\E^{\GL_N(\E)}_{\mathfrak{W},W(k)',\mathfrak{t}'})\subseteq (\E^{\G}_{\mathfrak{w},W(k)',\mathfrak{t}})^{\mathrm{norm}}\]
			where~$(\E^{\G}_{\mathfrak{w},W(k)',\mathfrak{t}})^{\mathrm{norm}}$ denotes the normalization of~$\E^{\G}_{\mathfrak{w},W(k)',\mathfrak{t}}$.
		\end{lemma}
		\begin{proof}
			By \cite[Proof of Theorem 9.5]{HelmCurtis}: it suffices to show that if~$A$ is a discrete valuation ring and~$W(k)'$-algebra with field of fractions~$K$ of characteristic zero, and
			\[ \E^{\G}_{\mathfrak{w},W(k)',\mathfrak{t}}\rightarrow A\]
			is a morphism of~$W(k)'$-algebras defining a morphism~$\phi_K\colon\E^{\G}_{\mathfrak{w},W(k)',\mathfrak{t}}[1/\ell]\rightarrow K$, and~$x\in\E^{\GL_N(\E)}_{\mathfrak{W},W(k),\mathfrak{t}'}$, then~$\phi_K\circ \mathbf{t}_{\G,\ell}(x)\in A$.  
			
			To prove this criterion, from Lemma \ref{cowhittakerDVR} \eqref{cowhittakerDVR1}, it suffices to show that the local transfer preserves~$A$-integrality of~$K$-representations.   By extending scalars, we can reduce to proving that local transfer preserves integrality when~$K$ is algebraically closed and~$A$ is its valuation ring. 			Let~$\pi$ be an irreducible~$K$-representation of~$\G$, and let~$(\M,\rho)$ be in the supercuspidal support of~$\pi$.  As~$K$ is algebraically closed,~$\rho$ has a central character~$\omega_{\rho}:\Z(\M)\rightarrow K^\times$, and,~$\pi$ is~$A$-integral if and only if~$\omega_{\rho}$ takes values in~$A$.  This follows from \cite[Corollary 1.6]{DHKMfiniteness} and \cite[II 4.12]{Vig96} (which while they are stated in a different setup, with~$\Ql$ in place of~$K$ and~$\Zl$ in place of~$A$).  If~$\omega_{\rho}$ takes values in~$A$, then the central character of the cuspidal support of its transfer is integral by the description of transfer in \cite[Theorem 3.2]{ACS}, and we are done.
		\end{proof}
		
		Let~$\R'= \E^{\G}_{\mathfrak{w},W(k)',\mathfrak{t}}$, and~$\R=\R'[\mathbf{t}_{\G,\ell}(\E^{\GL_N(\E)}_{\mathfrak{W},W(k)',\mathfrak{t}'})]$, from Lemma~\ref{Ebasicfacts} the rings~$\R,\R'$ are Noetherian.  Then choosing finitely many~$W(k)'$-algebra generators for~$\E^{\GL_N(\E)}_{\mathfrak{W},W(k)',\mathfrak{t}'}$, as the image of each of these elements lies in the normalization of~$\R'$ it is integral over~$\R'$, and we find that~$\R$ is a finitely generated module over~$\R'$.  We are thus in the realms of Theorem \ref{converse}.  Let~
	\[\rho=\Gamma^{\GL_N(\E)}_{\mathfrak{W},W(k)',\mathfrak{t}'}\otimes_{\E^{\GL_N(\E)}_{\mathfrak{W},W(k)',\mathfrak{t}'},\mathbf{t}_{\G,\ell}}\R.\]
	It is a co-Whittaker~$\R$-representation of~$\GL_N(\E)$, by Lemma \ref{cwlemma}.  Moreover,
	\[f_{\rho}\colon \E^{\GL_N(\E)}_{\mathfrak{W},W(k)',\mathfrak{t}}\rightarrow \R\]
	is precisely~$\mathbf{t}_{\G,\ell}$.  In particular, Theorem \ref{converse} shows, that to prove Equation~\eqref{WTS} holds, it suffices to prove that for all primitive idempotents~$e_{\mathfrak{s}}\in\mathfrak{Z}_{W(k)}(\GL_{N-1}(\E))$ that the coefficients 
				\begin{align*}&\gamma(X^{-f(\E/\F)},\rho\otimes (\Gamma^{\GL_{N-1}(\E)}_{\mathfrak W_{N-1},W(k)',\mathfrak{s}})^{\iota},\phi_{\GL_N}\otimes\phi_{\GL_{N-1}}^{\vee},\psi_{\E})_a,~\text{and}\\ &\gamma(X^{f(\E/\F)},\rho^{\iota}\otimes  (\Gamma^{\GL_{N-1}(\E)}_{\mathfrak W_{N-1},W(k)',\mathfrak{s}})^{\iota},\phi_{\GL_N}\otimes\phi_{\GL_{N-1}}^{\vee},\psi^{-1}\circ\Tr_{\E/\F})_a\end{align*}
lie in~$\R'\otimes \E_{\mathfrak{W}_{N-1},W(k)',\mathfrak{s}}^{\GL_{N-1}(\E)}.$
		
		%
		%
		As Langlands--Shahidi gamma factors are compatible with local transfer, we have
		\[\gamma(X^{-f(\E/\F)},\rho\otimes (\Gamma^{\GL_{N-1}(\E)}_{\mathfrak W_{N-1},\mathfrak{s}})^{\iota},\phi_{\GL_N}\otimes\phi_{\GL_{N-1}}^{\vee},\psi_{\E})= \gamma(X^{-1},\Gamma^{\G}_{\mathfrak{w},\mathfrak{t},W(k)'}\otimes \Gamma^{\GL_{N-1}(\E)}_{\mathfrak W_{N-1},\mathfrak{s}},\phi_{\LG}\otimes\phi_{\GL_{N-1}},\psi)\]
	Hence, we have an equality of coefficients in~$\R\otimes  \E_{\mathfrak{W}_{N-1},W(k)',\mathfrak{s}}^{\GL_{N-1}(\E)}$, however by our main result, Theorem \ref{mainthm}, these coefficients lie in~$\R'\otimes \E_{\mathfrak{W}_{N-1},W(k)',\mathfrak{s}}^{\GL_{N-1}(\E)}$ which is what we wanted to show.
		
		Finally, for the coefficients of
		\[\gamma(X^{f(\E/\F)},\rho^{\iota}\otimes  (\Gamma^{\GL_{N-1}(\E)}_{\mathfrak W_{N-1},W(k)',\mathfrak{s}})^{\iota},\phi_{\GL_N}\otimes\phi_{\GL_{N-1}}^{\vee},\psi^{-1}\circ\Tr_{\E/\F}),\]
		by the local functional equation, Theorem \ref{mainthm}, and compatibility of Langlands--Shahidi gamma factors with transfer, we have 
				\begin{align*}
				\gamma(X^{f(\E/\F)},\rho^{\iota}\otimes ( \Gamma^{\GL_{N-1}(\E)}_{\mathfrak W_{N-1},W(k)',\mathfrak{s}})^{\iota},&\phi_{\GL_N}\otimes\phi_{\GL_{N-1}}^{\vee},\psi^{-1}\circ\Tr_{\E/\F})\\&=		\gamma(q^{-f(\E/\F)}X^{-f(\E/\F)},\rho\otimes  (\Gamma^{\GL_{N-1}(\E)}_{\mathfrak W_{N-1},W(k)',\mathfrak{s}})^{\iota},\phi_{\GL_N}\otimes\phi_{\GL_{N-1}},\psi_{\E})^{-1}\\&=\gamma(q^{-1}X^{-1},\Gamma^{\G}_{\mathfrak{w},W(k)',\mathfrak{t}}\otimes (\Gamma^{\GL_{N-1}(\E)}_{\mathfrak W_{N-1},\mathfrak{s}})^{\iota},\phi_{\LG}\otimes\phi_{\GL_{N-1}},\psi)^{-1}\\
				&=\gamma(X,\Gamma^{\G}_{\mathfrak{w},W(k)',\mathfrak{t}}\otimes (\Gamma^{\GL_{N-1}(\E)}_{\mathfrak W_{N-1},\mathfrak{s}})^{\iota},\phi_{\LG}^{\vee}\otimes\phi_{\GL_{N-1}}^{\vee},\psi^{-1}).
				\end{align*}
Moreover, these coefficients lie in~$\R'\otimes \E_{\mathfrak{W}_{N-1},W(k)',\mathfrak{s}}^{\GL_{N-1}(\E)}$ by Theorem \ref{mainthm} which is what we wanted to show. This completes the proof.
			\end{proof}

	We conjecture that this is surjective.
	
	\begin{conj}\label{conjecturesurjectivity}
		Let~$\G$ be one of the quasi-split groups~$\SO_{2n+1}(\F), \Sp_{2n}(\F),\U_n(\E/\F)$.  The morphism
					\[\mathbf{t}_{\G,\mathfrak{W},\mathfrak{w}}\colon\E^{\GL_{N}(\E)}_{\mathfrak{W}}\rightarrow \E^{\G}_{\mathfrak{w}}\]
is surjective.
	\end{conj}
	
	\begin{remark}
		 An approach to this conjecture, using gamma factors, would be to prove an analogue of the Helm--Moss descent theorem for the Langlands-Shahidi gamma factors for~$\G\times\GL_{m}$.
			 For even special orthogonal groups, from Remark \ref{remarklocalconverse} this conjecture would need some mild adjustments. 
	\end{remark}
	
	The canonical map~$\mathfrak{Z}_{\GL_N(\E),\R_0}\xrightarrow{\sim} \E_{\GL_N(\E),\mathfrak{W}}$ is an isomorphism so from Theorem \ref{mainthm2}, we have defined a morphism~$\mathfrak{Z}_{\GL_N(\E),\R_0}\rightarrow  \E_{\G,\mathfrak{w}}$.  When local transfer of all irreducible~$\mathbb{C}$-representations of~$\G$ is defined (for example, for unitary, odd special orthogonal and symplectic groups) it is natural to conjecture:  there exists a transfer morphism~$\mathfrak{Z}_{\GL_N(\E),\R_0}\rightarrow \mathfrak{Z}_{\G,\R_0}$ compatible with local transfer, and~$\mathbf{t}_{\G,\mathfrak{W},\mathfrak{w}}$ via the canonical morphisms.
	\begin{conj}\label{conjfactors}
		There exists a unique transfer morphism~$\mathfrak{Z}_{\GL_N(\E),\R_0}\rightarrow \mathfrak{Z}_{\G,\R_0}$ interpolating the local transfer of irreducible~$\mathbb{C}$-representations.  In particular, this would be compatible with~$\mathbf{t}_{\G,\mathfrak{W},\mathfrak{w}}$ via the canonical morphisms.  
	\end{conj}
	
	\subsection{Fargues--Scholze transfer morphisms and Conjecture \ref{conjfactors}}\label{FSconsequences}
	%
	Let~$\G$ denote one of the quasi-split groups~$\Sp_{2n}(\F),\SO_r(\F),\SO_{2n,\E'/\F}(\F),\U_n(\E/\F),\mathrm{GSpin}_r(\F),\mathrm{GSpin}_{2n,\E'/\F}(\F)$. We have the following diagram 
	\[
	\begin{tikzcd}
		\mathrm{Exc}(\W_\E,\GL_N) \arrow[r, "\mathrm{FS}\,\sim"] \arrow[d] & \mathfrak{Z}_{\GL_N(\E),\R_0} \arrow[r, "\sim"] & \E^{\GL_N(\E)}_{\mathfrak{W}} \\
		\mathrm{Exc}(\W_\F,\widehat{\G}) \arrow[r, "\mathrm{FS}"] & \mathfrak{Z}_{\G,\R_0} \arrow[r,] & \E^{\G}_{\mathfrak{w}}
	\end{tikzcd}
	\]
	where the morphisms~$\mathrm{FS}$ follow from \cite{fargues2024geometrizationlocallanglandscorrespondence,Scholze}, (see \cite[Lemma A.3]{DHKMconj}), where the left vertical arrow is given by pushforward of cocycles along the fundamental representation, and the top line is an isomorphism \cite[Corollary 8.4]{DHKMconj} and \cite[Theorem VIII.5.1]{fargues2024geometrizationlocallanglandscorrespondence} which compares~$\mathrm{Exc}(\W_\E,\GL_N)$ to the ring considered in \cite{DHKMconj}.
	%
	%
	
	In particular, we obtain Fargues--Scholze transfer morphisms
	\[
	\begin{tikzcd}
		\mathfrak{Z}_{\GL_N(\E),\R_0} \arrow[r, "\sim"] \arrow[d, "\mathbf{s}^{\mathrm{FS}}_{\G}"]& \E^{\GL_N(\E)}_{\mathfrak{W}}  \arrow[d, "\mathbf{t}^{\mathrm{FS}}_{\G,\mathfrak{W},\mathfrak{w}}"]\\
		\mathfrak{Z}_{\G,\R_0} \arrow[r,""] & \E^{\G}_{\mathfrak{w}}
	\end{tikzcd}
	\]
	
	Suppose~$\mathrm{FS}$ is compatible with the local generic transfer maps of Arthur \cite{Arthur2}, or Mok \cite{Mok}, or those considered in Section \ref{generictransfercomplex}, for~$\G$.  Then~$\mathbf{t}_{\G,\mathfrak{W},\mathfrak{w}}=\mathbf{t}^{\mathrm{FS}}_{\G,\mathfrak{W},\mathfrak{w}}$ and~$\mathbf{t}^{\mathrm{FS}}_{\G,\mathfrak{W},\mathfrak{w}}$ is compatible with Langlands-Shahidi gamma factors.  And, in particular, a version of Conjecture \ref{conjfactors} is true for Fargues--Scholze semisimple transfer.  Amongst the quasi-split groups we consider here, this compatibility is known for: unitary groups~$\U_{n}(\E/\F)$, and split special orthogonal groups~$\SO_{r}(\F)$ by \cite[Theorem II, Corollary III]{poletal}, and~$\Sp_4(\F)$ with~$\F/\mathbb{Q}_p$ unramified and~$p\neq 2$ by \cite{MR4978375}.
	
	
	\subsection{The~$\ell$-adic local transfer and congruences}
	Fix~$\ell\neq p$, let
	\begin{equation*}\mathbb{Z}_{\ell}':=\begin{cases}\mathbb{Z}_{\ell}^{ur}&\text{if~}\ell\neq 2;\\
	\mathbb{Z}_{\ell}^{ur}[\sqrt{q},\mu_4]&\text{if~}\ell=2;\end{cases}
\end{equation*} where~$\mathbb{Z}_{\ell}^{ur}$ denotes the ring of integers of the maximal unramified extension of~$\mathbb{Q}_{\ell}$.  Then we have defined a transfer morphism 
	\begin{equation*}
		\mathbf{t}_{\G,\mathfrak{W},\mathfrak{w},\ell}\colon \E^{\GL_N(\E)}_{\mathfrak{W},\mathbb{Z}_{\ell}'}\rightarrow\E^{\G}_{\mathfrak{w},\mathbb{Z}_{\ell}'}\end{equation*}
	interpolating the local generic transfer.  (Note it is more convenient to use this morphism over the Noetherian base ring~$\mathbb{Z}_{\ell}'$, rather than extend scalars to~$\Zl$ here).

	We obtain the following corollary, which improves on the generic supercuspidal case of \cite[Theorem 1.1]{MS}, extending it to generic transfer of non-supercuspidal generic representations, and including further groups of classical type, and gives a largely local rather than global proof (albeit there are global considerations in the work of Shahidi and in the proof of the local transfer, on which we depend). 
	
	Let~$\pi$ be an integral (irreducible)~$\mathfrak{w}$-generic representation of a quasi-split~$p$-adic group with Whittaker datum~$\mathfrak{w}$.  Then its reduction modulo~$\ell$,~$r_{\ell}(\pi)$ contains a unique (irreducible)~$\mathfrak{w}$-generic~$\Fl$-representation, and we write~$J_{\ell}(\pi)$ for this~$\mathfrak{w}$-generic~$\Fl$-representation.
	
	There is a unique generic~$\ell$-adic transfer
	\[\mathrm{t}_{\G,\mathfrak{w},\ell}\colon\Pi_{\mathfrak{w},\Ql}(\G)\rightarrow \Pi_{gen,\Ql}(\GL_N(\E)),\]
	compatible with~$\mathbf{t}_{\G,\mathfrak{w},\ell}\colon \E^{\GL_N(\E)}_{\mathfrak{W},\mathbb{Z}_{\ell}'}\rightarrow\E^{\G}_{\mathfrak{w},\mathbb{Z}_{\ell}'}$.

	\begin{corollary}
	\label{MScorollary}
		Let~$\G$ be one of the quasi-split groups
		\[\Sp_{2n}(\F),\SO_r(\F),\SO_{2n,\E'/\F}(\F),\U_n(\E/\F),\mathrm{GSpin}_r(\F),\mathrm{GSpin}_{2n,\E'/\F}(\F).\]  
		\begin{enumerate}
			\item  The generic local transfer\[\mathrm{t}_{\G,\mathfrak{w},\ell}\colon\Pi_{\mathfrak{w},\Ql}(\G)\rightarrow \Pi_{gen,\Ql}(\GL_N(\E))\] preserves integrality of~$\Ql$-representations.  
			\item If~$\pi$ and~$\pi'$ are integral~$\mathfrak{w}$-generic~$\Ql$-representations of~$\G$ then
			\[J_{\ell}(\pi)\simeq J_{\ell}(\pi')\Rightarrow J_{\ell}(\mathrm{t}_{\G,\mathfrak{w},\ell}(\pi))\simeq J_{\ell}(\mathrm{t}_{\G,\mathfrak{w},\ell}(\pi')).\]
			Moreover, in the special case that~$\G$ is one of the quasi-split groups~$\SO_{2n+1}(\F), \Sp_{2n}(\F),\U_n(\E/\F)$, if Conjecture \ref{conjecturesurjectivity} is true then this is an equivalence:
			\[J_{\ell}(\pi)\simeq J_{\ell}(\pi')\Leftrightarrow J_{\ell}(\mathrm{t}_{\G,\mathfrak{w},\ell}(\pi))\simeq J_{\ell}(\mathrm{t}_{\G,\mathfrak{w},\ell}(\pi')).\]
			\item Suppose~$\pi$ and~$\pi'$ are integral generic~$\Ql$-representations of~$\G$ with~$J_{\ell}(\pi)\simeq J_{\ell}(\pi')$.  The irreducible subquotients of~$r_{\ell}(\mathrm{t}_{\G,\mathfrak{w},\ell}(\pi))\oplus r_{\ell}(\mathrm{t}_{\G,\mathfrak{w},\ell}(\pi'))$ all have the same supercuspidal support.
		\end{enumerate}
	\end{corollary}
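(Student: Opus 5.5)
The plan is to deduce everything from the integral morphism $\mathbf{t}_{\G,\mathfrak{W},\mathfrak{w},\ell}\colon \E^{\GL_N(\E)}_{\mathfrak{W},\mathbb{Z}_{\ell}'}\rightarrow\E^{\G}_{\mathfrak{w},\mathbb{Z}_{\ell}'}$ (Theorem \ref{mainthm2}, after base change to $\mathbb{Z}_\ell'$), combined with the dictionary between points of $\E$ and generic representations given by Corollary \ref{corbijection} and Lemma \ref{cowhittakerDVR}. The basic observation is the following. A $\mathfrak{w}$-generic $\Ql$-representation $\pi$ of type $\mathfrak{t}$ corresponds to a point $f_{\pi,0}\colon \E^{\G}_{\mathfrak{w},\mathfrak{t}}\to\Ql$. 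By compatibility of $\mathbf{t}$ with the complex transfer, transported to $\Ql$ via an isomorphism $\mathbb{C}\simeq\Ql$ fixing $\R_0$ and Lemma \ref{galoisinvariance}, we have $f_{\pi,0}\circ\mathbf{t}=f_{\mathrm{t}_{\G,\mathfrak{w},\ell}(\pi),0}$. This identity is the working definition of $\mathrm{t}_{\G,\mathfrak{w},\ell}$.

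For (1), I take $\pi$ integral over a finite extension $K/\mathbb{Q}_\ell$ with ring of integers $A\supseteq\mathbb{Z}_\ell'$. Lemma \ref{cowhittakerDVR}\eqref{cowhittakerDVR1} gives that $f_{\pi,0}$ takes values in $A$. Since $\mathbf{t}$ is integral, meaning it is defined over $\mathbb{Z}_\ell'$, the composite $f_{\pi,0}\circ\mathbf{t}$ also takes values in $A$. Applying the same lemma on the $\GL_N(\E)$ side then shows that $\mathrm{t}_{\G,\mathfrak{w},\ell}(\pi)$ is integral.

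For (2), I use Lemma \ref{cowhittakerDVR}\eqref{cowhittakerDVR2}, which identifies $f_{J_\ell(\pi),0}$ with the reduction $\overline{f_{\pi,0}}$ of $f_{\pi,0}$ modulo $\mathfrak{m}_A$. Here I use that the cosocle of a co-Whittaker lattice reduction is the unique generic subquotient of $r_\ell(\pi)$, so it equals $J_\ell(\pi)$. Suppose $J_\ell(\pi)\simeq J_\ell(\pi')$. Then $\overline{f_{\pi,0}}=\overline{f_{\pi',0}}$ by Lemma \ref{cowhittakerfields}\eqref{cowhittakerfields2} over $\Fl$. Reduction commutes with composing with $\mathbf{t}$, so $\overline{f_{\mathrm{t}(\pi),0}}=\overline{f_{\mathrm{t}(\pi'),0}}$, and applying Lemma \ref{cowhittakerfields}\eqref{cowhittakerfields2} again gives $J_\ell(\mathrm{t}(\pi))\simeq J_\ell(\mathrm{t}(\pi'))$. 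For the converse under Conjecture \ref{conjecturesurjectivity}, surjectivity of $\mathbf{t}$ (and hence of $\mathbf{t}\otimes\Fl$) makes the map on $\Fl$-points injective. Equality of the reduced points after composing with $\mathbf{t}$ therefore forces $\overline{f_{\pi,0}}=\overline{f_{\pi',0}}$, which gives $J_\ell(\pi)\simeq J_\ell(\pi')$.

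For (3), I reduce to the corresponding statement for $\GL_N(\E)$. By (2), $\mathrm{t}(\pi)$ and $\mathrm{t}(\pi')$ are integral generic representations with the same $J_\ell$. Reduction mod $\ell$ of the lattice in $(\Gamma^{\GL_N(\E)}_{\mathfrak{W}})_{f}$ gives a co-Whittaker representation whose action of the Bernstein centre $\mathfrak{Z}_{\GL_N(\E),\mathbb{Z}_\ell'}\simeq\E^{\GL_N(\E)}_{\mathfrak{W}}$ is through the single character $\overline{f}$. Every irreducible subquotient of $r_\ell(\mathrm{t}(\pi))$ therefore has its $\mathfrak{Z}_{\Fl}$-character determined by $\overline{f_{\mathrm{t}(\pi),0}}$. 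This holds because $r_\ell$ is a semisimplification of a lattice on which the integral centre acts by the scalar $f_{\mathrm{t}(\pi),0}$, so the reduction acts by $\overline f$. The same character arises for $\pi'$. The main obstacle is the final step: I must invoke that for $\GL_N$, characters of the integral Bernstein centre modulo $\ell$ determine the supercuspidal support (Helm's description of the centre, or equivalently Vignéras' uniqueness of supercuspidal support together with the integral centre separating supports over $\Fl$). I expect to cite this from \cite{HelmBC}/\cite{HelmCurtis}.
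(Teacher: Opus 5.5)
Your proposal is correct and follows essentially the same route as the paper. Part (1) uses Lemma~\ref{cowhittakerDVR}\eqref{cowhittakerDVR1} together with integrality of $\mathbf{t}_{\G,\mathfrak{W},\mathfrak{w},\ell}$; part (2) uses congruence of the points $f_{\pi,0},f_{\pi',0}$ and Lemma~\ref{cowhittakerfields}, with surjectivity giving the converse; and part (3) uses that the integral Bernstein centre of $\GL_N(\E)$ acts through congruent characters, plus Helm's theorem that these characters determine supercuspidal support. One small correction: take $K$ to be a finite extension of $\mathbb{Q}_\ell^{ur}$, as the paper does, rather than of $\mathbb{Q}_\ell$, since $A\supseteq\mathbb{Z}_\ell'$ forces this (the Whittaker datum involves $\mu_{p^\infty}$); also choose one such $K$ for both $\pi$ and $\pi'$, which have the same depth.
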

	\begin{remark}
		In the case~$\SO_2(\F)$, as there is a unique generic member in an extended~$L$-packet for~$\GL_2(\F)$, the morphism we consider (defines and) interpolates the ``generic local transfer''.  Note that in this case, it does not agree with the usual local transfer of characters considered in \cite[§9.3]{MS}, which is why we do not find the same problems in the~$\SO_2(\F)$ case that they do.  However, in all other cases for classical groups in~$\{\Sp_{2n}(\F),\SO_r(\F),\SO_{2n,\E'/\F}(\F),\U_n(\E/\F)\}$, the local transfer we consider on supercuspidal representations agrees with the local transfer defined using~$L$-parameters that M\'inguez--S\'echerre consider -- see Remark \ref{remarkfulltransfer}, so Corollary \ref{MScorollary} does extend the generic case of theirs.\end{remark}
	
	\begin{proof}
		Let~$\pi\in\Pi_{\mathfrak{w},\Ql}(\G)$, then it is defined over a finite extension~$\K$ of~$\mathbb{Q}_{\ell}^{ur}$ by \cite[II 4.7]{Vig96}.  Moreover,~$\pi$ is~$\Zl$-integral if and only if it is~$\mathfrak{o}_{\K}$-integral, by the criteria for integrality \cite[II 4.12]{Vig96}, \cite[Corollary 1.6]{DHKMfiniteness}.  Thus the first statement is a direct consequence of Lemma \ref{cowhittakerDVR} \eqref{cowhittakerDVR1} and that we have an integral transfer morphism~$\mathbf{t}_{\G,\mathfrak{W},\mathfrak{w},\ell}$ of~$\mathbb{Z}_\ell'$-algebras.\footnote{A more standard proof of the first part (in line with M\'inguez--S\'echerre \cite{MS}) can be found by using the integrality criteria of \cite{DHKMconj}, in particular see Remark 6.24 of \emph{op.~cit}.}
		
		Note that as~$\pi$ and~$\pi'$ contain a common subquotient on reduction modulo~$\ell$ they have the same depth, and we let~$e_{\mathfrak{t}}$ denote the idempotent of~$\mathfrak{Z}_{\G,\mathbb{Z}_\ell'}$ corresponding to this depth.  Again, we can suppose that~$\pi$ and~$\pi'$ are defined over (the same) finite extension~$\K$ of~the maximal unramified extension of~$\mathbb{Q}_{\ell}$.  By Lemma \ref{cowhittakerDVR},~$\pi$ and~$\pi'$ define morphisms~$f_{\pi,0},f_{\pi',0}\colon\E^{\G}_{\mathfrak{w},\mathfrak{t}}\rightarrow \mathfrak{o}_{\K}$ which are congruent modulo~$\ell$.  
		Composing with the integral transfer morphism~$\mathbf{t}_{\G,\mathfrak{W},\mathfrak{w},\ell}\colon\E^{\GL_N(\E)}_{\mathfrak{W},\mathfrak{o}_{\K},\mathfrak{t}'}\rightarrow \E^{\G}_{\mathfrak{w},\mathfrak{o}_{\K},\mathfrak{t}}$ we obtain congruent morphisms~$ f_{\pi,0}\circ\mathbf{t}_{\G,\mathfrak{W},\mathfrak{w},\ell},f_{\pi',0}\circ \mathbf{t}_{\G,\mathfrak{W},\mathfrak{w},\ell}\colon\E^{\GL_N(\E)}_{\mathfrak{W},\mathfrak{o}_{\K},\mathfrak{t}'} \rightarrow \mathfrak{o}_{\K}$ satisfying: 
		\[((\Gamma^{\GL_N(\E)}_{\mathfrak{W},\mathfrak{t}'})_{f_{\pi,0}\circ \mathbf{t}_{\G,\mathfrak{W},\mathfrak{w},\ell}}\otimes \Ql)_{gen}=\mathrm{t}_{\G,\mathfrak{w},\ell}(\pi), \quad ((\Gamma^{\GL_N(\E)}_{\mathfrak{W},\mathfrak{t}'})_{f_{\pi',0}\circ\mathbf{t}_{\G,\mathfrak{W},\mathfrak{w},\ell}}\otimes \Ql)_{gen}=\mathrm{t}_{\G,\mathfrak{w},\ell}(\pi').\]
		%
		%
		And we deduce, from Lemma \ref{cowhittakerfields} that the unique generic subquotient of the reduction modulo~$\ell$ of~$\mathrm{t}_{\G,\mathfrak{w},\ell}(\pi)$ is isomorphic to the unique generic subquotient of the reduction modulo~$\ell$ of~$\mathrm{t}_{\G,\mathfrak{w},\ell}(\pi')$.
		
		If~$\mathbf{t}_{\G,\mathfrak{w},\ell}$ is surjective, then as~$ f_{\pi,0}\circ \mathbf{t}_{\G,\mathfrak{W},\mathfrak{w},\ell},~ f_{\pi',0}\circ\mathbf{t}_{\G,\mathfrak{W},\mathfrak{w},\ell}\colon\E^{\GL_N(\E)}_{\mathfrak{W},\mathfrak{t}'} \rightarrow \mathfrak{o}_{\K}$ being congruent implies that~$f_{\pi,0},f_{\pi',0}\colon\E^{\G}_{\mathfrak{w},\mathfrak{t}} \rightarrow \mathfrak{o}_{\K}$ are congruent and we follow the same argument as above.
		
		The action of the integral Bernstein centre on~$(\Gamma^{\GL_N(\E)}_{\mathfrak{W},\mathfrak{t}'})_{ f_{\pi,0}\circ \mathbf{t}_{\G,\mathfrak{W},\mathfrak{w},\ell}}$ and~$(\Gamma^{\GL_N(\E)}_{\mathfrak{W},\mathfrak{t}'})_{f_{\pi',0}\circ \mathbf{t}_{\G,\mathfrak{W},\mathfrak{w},\ell}}$ factors through~$\E^{\GL_N(\E)}_{\mathfrak{W},\mathfrak{t}'}$ and hence is by congruent morphisms to~$\Zl$.  Moreover, by Helm \cite[Theorem 2.2]{HelmCW} this central action determines the supercuspidal support of all irreducible subquotients of these representations.  Finally, these representations dominate the co-Whittaker reductions~mod~$\ell$ of~$\mathrm{t}_{\ell}(\pi)$ and~$\mathrm{t}_{\ell}(\pi')$, so we deduce that all irreducible subquotients have the same supercuspidal support.
	\end{proof}
		
	This framework also allows one to extend some non-generic cases:  First note, that we have the following lemma: 
	\begin{lemma}\label{lemmacentre}
		Let~$\G$ be a reductive~$p$-adic group.  Let~$\pi$ be an irreducible~$\ell$-adic representation of~$\G$, then the canonical action~of~$\mathfrak{Z}_{\G,\Zl}$ on~$\pi$ is given by a morphism~$\chi_{\pi}:\mathfrak{Z}_{\G,\Zl}\rightarrow \Ql$: for~$z\in\mathfrak{Z}_{\G,\Zl}$, $z_{\pi}=\chi_{\pi}(z)\,\mathrm{id}_\pi$.
		\begin{enumerate}
			\item\label{lemmacentre1} The morphism~$\chi_{\pi}$ is~$\Zl$-valued if and only if~$\pi$ is integral.
			\item \label{lemmacentremodell} Let~$\rho\in \Irr_{\Fl}(\G)$. Then the canonical action of~$\mathfrak{Z}_{\G,\Zl}$ on~$\rho$ is via a morphism
			\[\chi_{\rho}:\mathfrak{Z}_{\G,\Zl}\rightarrow \Fl.\]
			\item Suppose that~$\pi$ is integral.
			\begin{enumerate} 
				\item \label{lemmacentre2} let~$\rho,\rho'$ be irreducible subquotients of~$r_{\ell}(\pi)$, then
				\[\chi_{\rho}=\chi_{\rho'}\]
				\item \label{lemmacentre3} Suppose~$\pi'$ is an irreducible integral~$\ell$-adic representation of~$\G$, and~$r_{\ell}(\pi)$ and~$r_{\ell}(\pi')$ share a common subquotient.  Then
				\[r_{\ell}\circ\chi_{\pi}=r_{\ell}\circ\chi_{\pi'}.\]\end{enumerate}
		\end{enumerate}
	\end{lemma}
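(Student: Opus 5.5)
The action of the centre on an irreducible representation is given by Schur's lemma: $\pi$ is admissible and irreducible over the algebraically closed field $\Ql$, so every $z\in\mathfrak{Z}_{\G,\Zl}$ (acting through $\mathfrak{Z}_{\G,\Ql}$) acts by a scalar $\chi_\pi(z)$. The same argument gives part \eqref{lemmacentremodell} for $\rho\in\Irr_{\Fl}(\G)$, since irreducible smooth $\Fl$-representations are admissible (\cite{Vig96}) and Schur's lemma holds over $\Fl$.

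For \eqref{lemmacentre1}, suppose $\pi$ is integral with a $\G$-stable $\Zl$-lattice $L$. Each $z\in\mathfrak{Z}_{\G,\Zl}$ is an endomorphism of the identity functor on $\Rep_{\Zl}(\G)$, so it preserves $L$, and $z_L\otimes\Ql=z_\pi$. Hence $\chi_\pi(z)L\subseteq L$, and since $L$ is a free $\Zl$-module, $\chi_\pi(z)\in\Zl$.

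For the converse, decompose along the primitive idempotent $e_{\mathfrak t}$ (the depth) so that only finitely generated pieces appear. Use that $\mathfrak{Z}_{\G,\Zl}e_{\mathfrak t}$ is Noetherian and that $\pi$ is a quotient of a finitely generated projective $\Zl[\G]$-module $P$, for instance $\operatorname{c-Ind}_{\K}^{\G}\mathbf{1}$ for a small pro-$p$ subgroup $\K$, cut by $e_{\mathfrak t}$. The module $P\otimes_{\mathfrak{Z},\chi_\pi}\Zl$ is admissible over $\Zl$ by the finiteness results of \cite{DHKMfiniteness}. The image of $P\otimes_{\mathfrak{Z},\chi_\pi}\Zl$ in $\pi$ is then a $\G$-stable finitely generated $\Zl$-submodule at each level of $\K$-invariants, and it spans $\pi$. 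By \cite[I 9.3]{Vig96} it is a lattice, so $\pi$ is integral. Alternatively, one can cite \cite[Corollary 1.6]{DHKMfiniteness} together with \cite[II 4.12]{Vig96}, which reduce integrality to integrality of the cuspidal support's central character, since $\chi_\pi$ determines that support.

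For \eqref{lemmacentre2}, take a lattice $L$ in $\pi$. The centre acts on $L$ by $\chi_\pi$, so it acts on $L\otimes\Fl$ through the scalar $r_\ell\circ\chi_\pi$. This scalar action passes to every subquotient, so $\chi_\rho=r_\ell\circ\chi_\pi=\chi_{\rho'}$. Here we use that the action of $\mathfrak{Z}_{\G,\Zl}$ on an $\Fl$-representation is compatible with the morphism to $\mathfrak{Z}_{\G,\Fl}$. Part \eqref{lemmacentre3} then follows immediately: for a common subquotient $\rho$ we get $r_\ell\circ\chi_\pi=\chi_\rho=r_\ell\circ\chi_{\pi'}$.

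The main obstacle is the ``if'' direction of \eqref{lemmacentre1}. I would simply invoke the cited integrality criteria rather than construct the lattice by hand.
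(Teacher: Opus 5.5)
Your proof is correct and follows the paper's argument essentially step for step. You use Schur's lemma for both scalar actions, stability of a lattice under the centre for the forward direction of (1), finiteness from \cite{DHKMfiniteness} together with integrality of $\chi_\pi$ to get an admissible integral structure (hence a lattice) for the converse, and reduction of a lattice for (3). The only difference in the converse is the integral structure: the paper takes $\mathfrak{o}_{\K}[\G]v$ for a model $\pi_{\K}$ over a finite extension $\K$ of $\mathbb{Q}_\ell^{ur}$, where you take the image of $P\otimes_{\mathfrak{Z},\chi_\pi}\Zl$.

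There are two caveats. First, $\mathfrak{Z}_{\G,\Zl}e_{\mathfrak t}$ is not Noetherian, since $\Zl$ is not. The paper avoids this by working over the discrete valuation ring $\mathfrak{o}_{\K}$; in your version, finite generation of the pro-$p$ invariants of $P$ over the centre still holds by base change from a Noetherian coefficient ring, so nothing breaks. Second, the cuspidal-support shortcut you say you would rather invoke is not justified by ``$\chi_\pi$ determines that support'' alone. You would still need to show that integrality of $\chi_\pi$ forces integrality of the central character of the cuspidal support. So the lattice construction, which is what the paper does, is the argument to keep.
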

	
	\begin{proof}
		Let~$z\in \mathfrak{Z}_{\G,\Zl}$, then~$z_{\pi}\in\End_{\Zl[\G]}(\pi)$, and~$\End_{\Zl[\G]}(\pi)=\End_{\Ql[\G]}(\pi)$ as multiplication by elements of~$\Zl\backslash\{0\}$ is invertible on~$\pi$.  
		Schur's lemma for~$\pi$ thus shows that~$z_{\pi}\in\Ql \,\mathrm{id}_{\pi}$, and the centre acts by a morphism~$\chi_{\pi}:\mathfrak{Z}_{\G,\Zl}\rightarrow \Ql$.
		
		Let~$L$ be a lattice in~$\pi$ and~$z\in \mathfrak{Z}_{\G,\Zl}$.  Then we have~$z_{L}\in\End_{\Zl[\G]}(L)$, and~$z_{\pi}\in\End_{\Zl[\G]}(\pi)$ compatible with the natural map~$\phi:L\hookrightarrow \pi$, i.e.,~$\phi\circ z_{L}=z_{\pi}\circ \phi$.   Moreover, as~$ \mathfrak{Z}_{\G,\Zl}$ acts via a~$\Zl[\G]$-endomorphism on~$L$ and~$z_{L}=\chi_{\pi}(z)\, \mathrm{id_L}$ we have~$\chi_{\pi}(z)\in\Zl$.   Conversely suppose that~$\chi_{\pi}$ takes values in~$\Zl$, and choose a finite extension~$K$ of the maximal unramified extension of~$\mathbb{Q}_{\ell}$ such that~$\pi$ is realisable over~$\K$ with model~$\pi_{\K}$.   It follows from \cite[Theorem 1.2]{DHKMfiniteness} that, for~$v\in\pi_{\K}\backslash\{0\}$,~$L=\mathfrak{o}_{\K}[\G] v$ is a lattice in~$\pi$.  Indeed, this reference shows that~$L$ is admissible over~$\mathfrak{Z}_{\G,\mathfrak{o}_{\K}}$, but by hypothesis the centre acts through~$\Zl\cap \K=\mathfrak{o}_{\K}$ so~$L$ is admissible over~$\mathfrak{o}_{\K}$ and it follows from \cite[§9]{Vig96} that~$\pi$ is~$\mathfrak{o}_{\K}$-integral.
		
		Part \eqref{lemmacentremodell} follows from Schur's lemma for~$\rho$, as~$\End_{\Zl[\G]}(\rho)\simeq \End_{\Fl[\G]}(\rho)$, by reduction modulo~$\ell$, because the maximal ideal of~$\Zl$ annihilates~$\rho$.
		
		Suppose that~$z\in\mathfrak{Z}_{\G,\Zl}$ acts on~$\pi$ by~$y\in\Zl$, we claim that~$z$ acts on any irreducible subquotient of~$r_{\ell}(\pi)$ by~$y\otimes 1\in \Fl$.  Indeed, choose a lattice~$L$ in~$\pi$, then~$z$ acts on~$L$ by multiplication by~$y$.  Then~$z$ acts on~$L\otimes \Fl$ by~$y\otimes 1$.  The first part follows by compatibility of the action of the integral centre, and the second part follows from the first.
	\end{proof}
	
	We now put ourselves in the setting where Conjecture \ref{conjfactors} holds, and we have a transfer morphism
	\begin{equation}\label{Zlmorphism}
		\mathbf{s}_{\G,\ell}\colon \mathfrak{Z}_{\GL_N(\E),\mathbb{Z}_\ell'}\rightarrow\mathfrak{Z}_{\G,\mathbb{Z}_\ell'},\end{equation}
	compatible with the local transfer~$\mathrm{s}_{\G,\ell}\colon\Irr_{\Ql}(\G)\rightarrow \Irr_{\Ql}(\GL_N(\E))$.
	
	\begin{corollary}
		Let~$\G$ be one of the quasi-split groups~$\SO_{r}(\F)$ with~$r\neq 2$, $\U_n(\E/\F)$, or~$\Sp_{4}(\F)$ with~$\F$ unramified and~$p$ odd.
		\begin{enumerate}
			\item The local transfer~$\mathrm{s}_{\G,\ell}\colon\Irr_{\Ql}(\G)\rightarrow \Irr_{\Ql}(\GL_N(\E))$ preserves integrality.  
			\item Moreover, for~$\pi,\pi'\in\Irr_{\Ql}(\G)$ integral, if~$r_{\ell}(\pi),r_{\ell}(\pi')$ share a common subquotient, then the irreducible subquotients of~$r_{\ell}(\mathrm{s}_{\G,\ell}(\pi)) \oplus r_{\ell}(\mathrm{s}_{\G,\ell}(\pi'))$ all have the same supercuspidal support.
			\item Let~$\pi,\pi'\in\Irr_{\Ql}(\G)$ be integral and cuspidal and~$r_{\ell}(\pi),r_{\ell}(\pi')$ share a common subquotient.  Then the transfers~$\mathrm{s}_{\G,\ell}(\pi),\mathrm{s}_{\G,\ell}(\pi')$ are generic, and
			\[J_{\ell}(\mathrm{s}_{\G,\ell}(\pi))\simeq J_{\ell}(\mathrm{s}_{\G,\ell}(\pi')).\]
		\end{enumerate}
	\end{corollary}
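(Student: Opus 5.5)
The plan is to transport everything through the integral Bernstein centre, using Lemma \ref{lemmacentre} together with the morphism $\mathbf{s}_{\G,\ell}$ of \eqref{Zlmorphism}. Compatibility with local transfer means, for $\pi\in\Irr_{\Ql}(\G)$, that $\chi_{\mathrm{s}_{\G,\ell}(\pi)}=\chi_\pi\circ\mathbf{s}_{\G,\ell}$ after extending scalars from $\mathbb{Z}_\ell'$ to $\Zl$. The hypotheses on $\G$ are exactly the cases listed in Section \ref{FSconsequences} where the Fargues--Scholze morphism $\mathbf{s}^{\mathrm{FS}}_{\G}$ is known to be compatible with local transfer. So I take $\mathbf{s}_{\G,\ell}=\mathbf{s}^{\mathrm{FS}}_{\G}\otimes\mathbb{Z}_\ell'$, which is integral by construction.

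For part (1), suppose $\pi$ is integral. By Lemma \ref{lemmacentre}\eqref{lemmacentre1}, $\chi_\pi$ is $\Zl$-valued, so $\chi_{\mathrm{s}_{\G,\ell}(\pi)}=\chi_\pi\circ\mathbf{s}_{\G,\ell}$ is $\Zl$-valued. Applying Lemma \ref{lemmacentre}\eqref{lemmacentre1} again, now for $\GL_N(\E)$, shows that $\mathrm{s}_{\G,\ell}(\pi)$ is integral.

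For part (2), Lemma \ref{lemmacentre}\eqref{lemmacentre3} gives $r_\ell\circ\chi_\pi=r_\ell\circ\chi_{\pi'}$. Composing with $\mathbf{s}_{\G,\ell}$ gives $r_\ell\circ\chi_{\mathrm{s}(\pi)}=r_\ell\circ\chi_{\mathrm{s}(\pi')}$. By Lemma \ref{lemmacentre}\eqref{lemmacentre2}, every irreducible subquotient $\rho$ of $r_\ell(\mathrm{s}(\pi))\oplus r_\ell(\mathrm{s}(\pi'))$ then has the same central character $\chi_\rho:\mathfrak{Z}_{\GL_N(\E),\Zl}\to\Fl$. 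Helm's result \cite[Theorem 2.2]{HelmCW}, that the mod $\ell$ central character of $\GL_N$ determines supercuspidal support, as already used in the proof of Corollary \ref{MScorollary}, then finishes the argument.

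For part (3), the main point is genericity. I would invoke Remark \ref{remarkfulltransfer} and \cite[Theorem 1.1]{MS}: the transfer of a cuspidal representation is generic, and the transfer of the unique $\mathfrak{w}$-generic member of its $L$-packet coincides with it. This is the step I expect to be the main obstacle. $\pi$ need not be generic, so one must check that $\mathrm{s}_{\G,\ell}(\pi)$ equals $\mathrm{t}_{\G,\mathfrak{w},\ell}(\pi_{gen})$ for the generic member $\pi_{gen}$ of its packet. One must also control $r_\ell$ of the transfer, which need not be irreducible. Once $\mathrm{s}(\pi)$ and $\mathrm{s}(\pi')$ are known to be generic and integral, I pass to co-Whittaker lattices via Lemma \ref{cowhittakerDVR}. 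The morphisms $f_{\mathrm{s}(\pi),0}$ and $f_{\mathrm{s}(\pi'),0}$ on $\E^{\GL_N(\E)}_{\mathfrak{W}}$ factor through the canonical isomorphism $\mathfrak{Z}_{\GL_N(\E)}\simeq\E^{\GL_N(\E)}_{\mathfrak{W}}$ as the central characters. By part (2), these are congruent mod $\ell$. Lemma \ref{cowhittakerDVR}\eqref{cowhittakerDVR2} together with Lemma \ref{cowhittakerfields}\eqref{cowhittakerfields2} then gives $J_\ell(\mathrm{s}(\pi))\simeq J_\ell(\mathrm{s}(\pi'))$.
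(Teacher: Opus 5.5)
Your proposal is correct and follows the paper's proof. Parts (1) and (2) are exactly the paper's argument via $\chi_\pi\circ\mathbf{s}_{\G,\ell}$, Lemma \ref{lemmacentre} and Helm's result. Genericity in (3) comes from Remark \ref{remarkfulltransfer}, as in the paper, so your worry about identifying $\mathrm{s}_{\G,\ell}(\pi)$ with $\mathrm{t}_{\G,\mathfrak{w},\ell}(\pi_{gen})$ is unnecessary: only genericity of the transfer is used.

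The only difference is the last step of (3). The paper concludes directly from (2), since there is a unique generic representation with a given supercuspidal support. You instead rerun the co-Whittaker lattice argument from the proof of Corollary \ref{MScorollary}, feeding in the congruent central characters through $\mathfrak{Z}\simeq\E^{\GL_N(\E)}_{\mathfrak{W}}$. That route is equally valid.
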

	
	\begin{proof}
		If~$\pi\in \Irr_{\Ql}(\G)$ is integral, then~$\chi_{\pi}\circ \mathbf{s}_{\G,\ell}$ is~$\Zl$-valued and hence~$\mathrm{s}_{\G,\ell}(\pi)$ is integral.  If~$r_{\ell}(\pi),r_{\ell}(\pi')$ share a common subquotient, then from~$\mathbf{s}_{\G,\ell}$ and Lemma \ref{lemmacentre} \eqref{lemmacentre3} we obtain congruent morphisms~$ \mathfrak{Z}_{\GL_N(\E),\Zl}\rightarrow \Zl$.  But from \cite[§12]{HelmBC}, this implies that all irreducible subquotients of~$r_{\ell}(\mathrm{s}_{\G,\ell}(\pi)) \oplus r_{\ell}(\mathrm{s}_{\G,\ell}(\pi'))$ have the same supercuspidal support.  Moreover, the local transfer of a cuspidal representation is generic (Remark \ref{remarkfulltransfer}), and then the final part follows from the second as there is a unique generic representation with a given supercuspidal support.
	\end{proof}

	\bibliographystyle{alpha}
	\bibliography{LS}
	
\end{document}